\numberwithin{equation}{section}
\newtheorem{thm}{Theorem}[section]
\newtheorem{lem}[thm]{Lemma}
\newtheorem{prop}[thm]{Proposition}
\newtheorem{definition}[thm]{Definition}
\theoremstyle{remark}
\newtheorem{rem}[thm]{Remark}
\theoremstyle{remark}
\theoremstyle{definition}
\newcommand{\cal}{\mathcal}
\newcommand{\lp}[2]{\Vert \, #1 \, \Vert_{#2}}
\newcommand{\llp}[2]{ {|\!|\!| \, #1 \, |\!|\!|}_{#2} }
\newcommand{\td}{\widetilde}
\newcommand{\LE}{ {\mathcal{LE}} }
\newcommand{\U}{U}
\newcommand{\la}{\langle}
\newcommand{\ra}{\rangle}
\renewcommand{\O}{{\mathcal{O}}}
\newcommand{\R}{{\mathbb{R}}}
\begin{document}
\bibliographystyle{plain}

\title[Local Energy Decay on Non-trapping Backgrounds]
{Local Energy Decay for Scalar Fields on Time Dependent Non-trapping
  Backgrounds}
\author{Jason Metcalfe}
\author{Jacob Sterbenz}
\author{Daniel Tataru}
\thanks{}
\subjclass{}
\keywords{}
\date{}
\dedicatory{}
\commby{}

\thanks{The first author is supported in part by NSF grant
  DMS-1054289, and the third author was supported by NSF grant
  DMS-1266182 as well as by a Simons Investigator Award from the
  Simons Foundation.}


\begin{abstract}
  We consider local energy decay estimates for solutions to scalar
  wave equations on nontrapping asymptotically flat space-times.  Our
  goals are two-fold.   First we consider the stationary case, where we can provide a full
  spectral characterization of local energy decay bounds; this
  characterization simplifies in the stationary symmetric case.
Then we consider  the almost stationary, almost symmetric case. There we
  establish two main results: The first is a ``two point'' local
  energy decay estimate which is valid for a general class of
  (non-symmetric) almost stationary wave equations which satisfy a
  certain nonresonance property at zero frequency. The second result,
  which also requires the almost symmetry condition, is to establish
  an exponential trichotomy in the energy space via finite dimensional
  time dependent stable and unstable sub-spaces, with an infinite
  dimensional compliment on which solutions disperse via the usual
  local energy decay estimate.
\end{abstract}

\maketitle
\tableofcontents

\section{Introduction}

The aim of this paper is to contribute to the understanding of local energy decay
bounds for the wave equation on non-trapping, asymptotically flat space-times.
This is a well understood question for small perturbations of the Minkowski space-time.
Our aim here, instead, is to consider large perturbations. In that, our goals are two-fold.
\medskip

First we consider the {\em stationary} case. There we provide a full
spectral characterization of the local energy decay estimates in
terms of the eigenvalues and resonances of the corresponding elliptic
problem. There are three such objects which are of interest to us:
\begin{itemize}
\item Complex eigenvalues outside the continuous spectrum $\mathbb R$
and in the lower half-space,
\item Zero eigenvalues/resonances, and
\item Nonzero resonances embedded inside the continuous spectrum.
\end{itemize}
Our main result here asserts that local energy decay holds iff
none of these 
three obstructions occurs. 

One significant simplification occurs in the {\em symmetric} case,
where no nonzero resonances can occur inside the continuous
spectrum.  There our
results are consistent with the standard spectral theory for
self-adjoint elliptic operators.  In that case, we also consider the
problem of continuity of our spectral assumptions along one parameter
families of operators; the main idea being that complex eigenvalues 
can only emerge via the zero mode. If complex eigenvalues do occur, a slightly more  
complicated picture emerges, and the flow splits into two finite dimensional 
subspaces where exponential growth, respectively decay occurs, and a bulk 
part with uniform energy bounds.

\medskip

Secondly, we study the case of {\em time dependent} operators and show that 
the results in the stationary case are stable with respect to perturbations.
More precisely,  we consider  almost symmetric, almost stationary 
operators which  satisfy a quantitative  zero spectral assumption uniformly in time,
but allowing for eigenvalues off the real axis. Then we establish an exponential 
trichotomy, splitting the energy space as a direct sum of three subspaces as follows:
\begin{itemize}
\item A finite dimensional subspace of spatially localized,
  exponentially increasing solutions, associated to eigenvalues in the
  lower half-space
\item A finite dimensional subspace of spatially localized,
  exponentially decreasing solutions, associated to eigenvalues in the
  upper half-space
\item A remaining infinite dimensional subspace of bounded energy solutions
with good local energy bounds.
\end{itemize}
If there are no eigenvalues off the real axis then only the last subspace is nontrivial,
and local enegy decay holds globally.

One key intermediate step of our approach here is to establish a
weaker bound, which we call {\em two point local energy decay}, see \eqref{LE-2pt} below.
This has the advantage that it  allows for nonreal eigenvalues; however, it
 prohibits nonzero resonances in the symmetric case.

\medskip

Our motivation for this work is that the local energy decay estimates have 
emerged in recent years as the core decay bounds for asymptotically flat 
operators.  As examples we note results such as \cite{MT}, which assert
that local energy decay implies global Strichartz estimates or the results 
in \cite{T}, \cite{MTT} which show that sharp pointwise decay bounds 
can also be obtained as a consequence of local energy decay. 

Another motivation is that we seek to lay out the groundwork for a similar 
analysis in  black hole space-times, where trapping does occur. 
 A subsequent article will be  devoted to the black hole case.

\subsection{The geometric setup}

We consider Lorentzian manifolds $(\mathbb{R}^4,g)$ parametrized by
coordinates $(t,x)\in \mathbb{R}\times\mathbb{R}^3$. We assume that $g$ has 
signature $(3,1)$. We set $r =
|x|$.  On such manifolds we consider wave operators of the form
\begin{equation}\label{P-def}
P(t,x,D)=\Box_{A,g} +   V(t,x), \qquad \Box_{A,g} = (D_{\alpha} + A_{\alpha})g^{{\alpha\beta}} (D_{{\beta}}+A_{{\beta}})
\end{equation}
where $A$ and $V$ are complex valued, and $D_\alpha =
(1/i)\partial_\alpha$.  We assume the metric $g$ satisfies a weak
asymptotic flatness condition and is either stationary or
slowly varying in time. We also put similar conditions on the lower
order terms $A$ and $V$. To describe the class of metrics as well as
the asymptotic flatness assumption we use the class of spaces $l^p
L^q$ where the $l^p$ norm is taken with respect to nonnegative
integers indexing spatial dyadic regions, and the $L^q$ norm is taken
over dyadic regions. Then we define the class of norms for functions
$q(t,x)$:
\begin{equation}\label{triple-def}
	\llp{ q }{k} \ = \  \sum_{|\alpha| \leq k} 	\|\la x\ra^{|\alpha|} \partial^\alpha q \|_{l^1 L^\infty}. 
\end{equation}
We will measure the asymptotic flatness of the metric using the norm
\begin{equation}
		\llp{ (h,A,V) }{AF} \ =\  \llp{h}{2} + \llp{ \la x\ra A}{1} 
		+ \llp{\la x\ra^2 V}{0} \ . \notag
\end{equation}

We use this to introduce the class of asymptotically flat operators $P$ 
by comparing with the Minkowski metric $m$ as follows:

\begin{definition}
 We say that the operator $P$ is asymptotically flat (AF) if $\llp{ (g-m,A,V) }{AF}$ is finite.
\end{definition}

To fix the parameters our estimates and results will depend on,
we set $M_0, R_0$ so that
\begin{equation}\label{AF-def}
\llp{ (g-m,A,V) }{AF} \leq M_0, \qquad \llp{ (g-m,A,V) }{AF, > R_0} \leq { \bf c} \ll 1,
\end{equation}
where the subscript $>R_0$ above restricts the norm to the
cylinder $\{r > R_0\}$.
Here the implicit constant $\bf c$ is small but universal, in that it
does not depend on any of the other parameters in our problem.
As a quantitative way of measuring the decay of the $AF$ norm at infinity
we will use a frequency envelope $\{c_k\}_{k \geq \log_2 R_0}$ with the following properties:
\begin{itemize}
\item Dyadic bound: $\llp{ (g-m,A,V) }{AF(\{r \approx 2^k\})} \lesssim c_k$.
\item Summability: $\sum_k c_k \lesssim \bf c$.  
\item Slowly varying: $c_k/c_j \leq 2^{\delta |j-k|}$ for a small universal constant $\delta$.
\end{itemize}

\bigskip

The first property of AF operators $P$ which plays a role in our analysis
is stationarity:

\begin{definition}
a) We say that $P$ is stationary if $(g,A,V)$ are time independent. 

b) We say that $P$ is  $\epsilon$-slowly varying if there exists a decomposition
\[
(g-m,A,V) = (g_1,A_1,V_1)+ (g_2,A_2,V_2)
\]
so that
\[
\llp{ (g_1,A_1,V_1)}{AF} + \llp{\partial_t (g_2,A_2,V_2)}{AF} \leq \epsilon.
\] 
\end{definition}

\bigskip

The second global property of the operator $P$ that plays a role in our results
is its self-adjointness.  Precisely, given a measure $dV = \mu(x) dx$ we can
define the adjoint operator $P^*$ with respect to $L^2(dV)$ by
\[
\la Pu, v \ra_{\mu} =  \la u, P^* v \ra_{\mu}. 
\]
Here one could consider two standard scenarios:

(i) $\Box_g$ is in the Laplace-Beltrami form, and $dV = \sqrt{g} dx$.

(ii) $\Box_g$ is in divergence  form, and $dV =  dx$.

But the first case reduces to the second after conjugating by
$g^\frac14$, without affecting anything in our analysis. Thus from
here on we assume that we are in the latter case. We remark
that $P^*$ satisfies the same asymptotic flatness bounds as $P$.

\begin{definition}
a) We say that the operator $P$ is symmetric if $P^* = P$, i.e. $A$ and $V$ are real.

b) We say that $P$ is  $\epsilon$-almost symmetric if $\llp{(0,\Im A,\Im
  V)}{AF} \lesssim \epsilon$.
\end{definition}

\bigskip

Next, we state a quantitative form of nontrapping
for the space-time metric $g$, which
will play a major role in the present work:

\begin{definition}\label{NT_def}
We say that the asymptotically Lorentzian metric $g$  is nontrapping if 
\begin{enumerate}
		\item \label{zero_ell} The vector field $\partial_t$ is  uniformly time-like.
		\item \label{nt2} There exists $T_0 > 0$ such that all null geodesics spend a time no 
                longer than $T_0$ inside  $B(0,R_0)$.
\end{enumerate}
\end{definition}
If $g$ were stationary, then a compactness argument shows that 
the above quantitative condition follows from a similar qualitative condition.
However, the quantitative version of
nontrapping is needed for time dependent operators $P$.

We also remark that the first condition above is an ellipticity condition for the operator 
$P_0 = P |_{D_t=0}$. The asymptotic flatness conditions already guarantee that 
this holds outside of a compact set. If in addition the metric is $\epsilon$-slowly varying
then we can obtain this first condition everywhere from the second:

\begin{lem}
  Let $g$ be an $\epsilon$-slowly varying asymptotically flat metric
  which satisfies the condition \eqref{nt2} in
  Definition~\ref{NT_def}.  If $\epsilon$ is sufficiently small,
\[
\epsilon \ll_{R_0,T_0,M_0} 1,
\]
then the vector field $\partial_t$ is uniformly timelike.
\end{lem}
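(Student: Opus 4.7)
The plan is to argue by contradiction: if $\partial_t$ fails to be uniformly timelike, we construct a null geodesic of $g$ which lingers inside $B(0,R_0)$ for time larger than $T_0$, violating condition \eqref{nt2} of Definition~\ref{NT_def}. The argument proceeds by reducing to a stationary problem via the slow-variation decomposition, and applying a critical-point construction familiar from the time-independent case.

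First I would carry out a localization step. By the AF bound, $g$ is $\mathbf{c}$-close to $m$ on $\{|x|\geq R_0\}$, so there is a radius $R_0' < R_0$ depending only on $M_0$ and $\mathbf{c}$ with $g_{00}=g(\partial_t,\partial_t)\leq -1/2$ on $\{|x|\geq R_0'\}$. Failure of uniform timelikeness with a small constant $c_0$ then forces the existence of $p_0=(t_0,x_0)$ with $|x_0|<R_0'$ and $g_{00}(p_0)\geq -c_0$. Replacing $x_0$ by the spatial maximizer of $g_{00}(t_0,\cdot)$ (which by AF lies strictly inside $B(0,R_0')$), we may further assume $\nabla_x g_{00}(p_0)=0$.

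Next, use the decomposition $g-m=g_1+g_2$ with $\llp{g_1}{AF}+\llp{\partial_t g_2}{AF}\leq\epsilon$ to define the stationary comparison metric $\tilde g(x):=m+g_2(t_0,x)$. On the slab $[t_0-T_0-1,t_0+T_0+1]\times B(0,2R_0)$ the AF norm controls up to two derivatives of $g-\tilde g$ pointwise, giving
\[
\sup_{|\alpha|\leq 2}|\partial^\alpha(g-\tilde g)|\lesssim (1+T_0)\,\epsilon.
\]
The integral curve $\tilde\gamma(s)=(t_0+s,x_0)$ of $\partial_t$ is nearly $\tilde g$-null (of length squared $\tilde g_{00}(x_0)=g_{00}(p_0)+O(\epsilon)=O(c_0+\epsilon)$) and nearly $\tilde g$-geodesic, since $\tilde\Gamma^{\alpha}_{00}(x_0)=-\tfrac12\tilde g^{\alpha\beta}\partial_\beta\tilde g_{00}(x_0)=O(\epsilon)$, using $\partial_t\tilde g\equiv 0$ together with $|\nabla_x\tilde g_{00}(x_0)-\nabla_x g_{00}(p_0)|\lesssim\epsilon$. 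I would then upgrade $\tilde\gamma$ to a true null $g$-geodesic $\gamma$ starting at $p_0$ with initial tangent $v=\partial_t+w$, where $w\in\mathbb{R}^3$ solves the null quadratic $g_{\alpha\beta}(p_0)v^\alpha v^\beta=0$. By Lorentzian signature at $p_0$ and smallness of $g_{00}(p_0)$, this has a solution with $|w|\lesssim\sqrt{c_0+\epsilon}$.

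Finally, parametrize $\gamma$ by $t$, so $\gamma(t)=(t,x(t))$ with $x(t_0)=x_0$ and $x'(t_0)=w$. A Gronwall estimate on $[t_0,t_0+T_0+1]$ for the geodesic ODE, using the AF bounds on $\Gamma$ and $\partial\Gamma$ together with the critical point condition $\nabla_x g_{00}(p_0)=0$ (which makes the leading order driving term $\Gamma^i_{00}$ small to first order in $x-x_0$) and the slow-variation bound $|\partial_t\Gamma|\lesssim\epsilon$, yields
\[
|x(t)-x_0|\leq C(M_0,T_0)\bigl(\sqrt{c_0}+\epsilon\bigr)\quad\text{for } t\in[t_0,t_0+T_0+1].
\]
Choosing $c_0$ small depending on $R_0-R_0'$, $M_0$, $T_0$, and then $\epsilon\ll_{R_0,T_0,M_0} 1$, the right side is less than $R_0-R_0'$, so $\gamma$ stays in $B(0,R_0)$ for time exceeding $T_0$, contradicting nontrapping. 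The main obstacle is controlling the cumulative quadratic-in-time growth in the Gronwall bound; this is precisely what the combination of the critical-point condition and the slow-variation assumption is designed to absorb. The sub-case where $g_{00}$ attains a strictly positive value (i.e.\ $\partial_t$ spacelike somewhere) is handled by the same construction applied at the spatial maximum on a suitable time slice, with $w$ now of order $\sqrt{\max g_{00}}$; the conclusion is identical once $c_0$ is chosen small enough.
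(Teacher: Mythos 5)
Your proof takes a genuinely different route from the paper's. The paper's argument is a one-liner built on a conserved quantity: along the null geodesic $\gamma$ with $\dot\gamma(0)=\partial_t$, the Killing-type energy $\langle\partial_t,\dot\gamma\rangle_g$ is exactly conserved when $g$ is stationary and $O(\epsilon)$-approximately conserved when $g$ is $\epsilon$-slowly varying; it starts at $0$ but must be bounded away from $0$ once nontrapping forces $\gamma$ into the exterior region where $g\approx m$. Your proof instead tracks the geodesic in physical space via a Gronwall estimate on the geodesic ODE, using a spatial critical point of $g_{00}$ to suppress the dominant driving term $\Gamma^i_{00}$. For the range $g_{00}(p_0)\in[-c_0,0]$ your argument is essentially correct: the critical-point cancellation and the slow-variation bound on $\partial_t\Gamma$ are exactly what keep the Gronwall constant from blowing up, and the conclusion $|x(t)-x_0|\lesssim\sqrt{c_0}+\epsilon$ over time $T_0+1$ gives the desired contradiction. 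What the conserved-quantity argument buys over yours is robustness: it requires neither the critical point nor any smallness of $g_{00}(p_0)$.

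That robustness matters, because there is a genuine gap in your treatment of the sub-case where $g_{00}$ attains a strictly positive value. You claim to handle it by ``the same construction applied at the spatial maximum\dots with $w$ of order $\sqrt{\max g_{00}}$, once $c_0$ is chosen small enough,'' but $\max g_{00}$ is a property of the metric and is not controlled by your choice of $c_0$; it can be as large as $O(M_0)$. Then $|w|\sim\sqrt{M_0}$ is not small, the geodesic drifts by $\sim\sqrt{M_0}\,T_0$ over the time interval, and there is no contradiction with nontrapping. The natural fix—pass to a point where $g_{00}=0$ by the intermediate value theorem along a radial path to the asymptotically flat region—produces a point that is in general not a spatial critical point, so $\Gamma^i_{00}$ is of size $M_0$ there and your Gronwall argument yields $|x(t)-x_0|\sim M_0 t^2$, which is useless. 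This is precisely where the paper's conservation-law argument is needed: it is indifferent to whether $\nabla_x g_{00}$ vanishes at the starting point, since the near-conservation of $\langle\partial_t,\dot\gamma\rangle_g$ only uses $|\partial_t g|\lesssim\epsilon$, not any spatial stationarity of $g_{00}$. Without either importing that idea or finding another mechanism for the positive-$g_{00}$ case, your proof is incomplete.
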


\begin{proof}
  If that were not the case, then there is a point where $(t_0,x_0)$ where
  $\la\partial_t,\partial_t \ra_g=0$. Then we can  use the $\epsilon$-approximate
  conservation of $\la\partial_t,\partial_t \ra_g$ along the null
  geodesic with initial direction $\partial_t$ at $(t_0,x_0)$ to
  derive a contradiction, because one must have
  $\la\partial_t,\partial_t \ra_g\approx -1$ at points along the null
  geodesic by the time $T_0$, when it reaches the exterior region $|x|> R_0$.
\end{proof}


\subsection{Local energy norms}
 The space-time local energy norms are defined as follows:
\begin{equation}
\lp{u}{LE} \ = \ \lp{u}{\ell^\infty L^{2,-\frac{1}{2}}}, \qquad
		\lp{u}{LE^1} \ = \ \lp{(\partial u,\la x\ra^{-1}u)}{LE}. \ 
\end{equation}
Here $\partial = (\partial_\alpha)_{\alpha=0,\dots, 3}$ and
 $\|w\|_{L^{q,\sigma}} = \|w\|_{L^q(\la x\ra^\sigma\,dx)}$.  We shall
  reserve the notation $\nabla$ for $\nabla =
  (\partial_j)_{j=1,2,3}$. 
For the source term we use the dual space
\begin{equation}
		\qquad \lp{F}{LE^*} \ = \ \lp{F}{\ell^1 L^{2,\frac{1}{2}}} \ . \notag
\end{equation}
For the corresponding analysis of the resolvent we also need the related 
norms at fixed time; these are denoted by $\LE$, $\LE^1$, $\LE^*$.
By $LE_0$, $\LE_0$, etc. we denote the closure of $C_0^\infty$ in these spaces;
this corresponds to replacing $\ell^\infty$ by $c^0$ in the definition
above.  For later purposes when time derivatives will be replaced
  by a spectral parameter $\omega$, we also define
\[\LE^1_\omega = \LE^1\cap |\omega|^{-1}\LE,\quad \dot{H}^1_\omega =
\dot{H}^1 \cap |\omega|^{-1}L^2.\]

Our main goal is to understand which space-times $(\mathbb R^4,g)$ and operators $P$
 satisfy {\em local energy decay} estimates:
\begin{definition}
 We say that local energy decay holds for a nontrapping asymptotically flat operator $P$ 
if  the following estimate holds:
\begin{equation}
		\lp{u}{LE^1[0,T]} +\lp{\partial u}{L^\infty L^2[0,T]}   \ 
\lesssim \ \lp{\partial u(0)}{L^2} 
		 + \lp{P u}{LE^*+ L^1L^2[0,T]
                   } . \label{LE}
\end{equation}
\end{definition}

Estimates of this type have a long history, and were first introduced
in the work of Morawetz~\cite{morawetz1, morawetz2, morawetz3} for the Minkowski space-time;
for further references we refer the reader to \cite{Alinhac},
\cite{bpstz}, \cite{hy},
\cite{KSS}, \cite{kpv}, \cite{MetSo, MetSo2}, \cite{mrs},
\cite{smithsogge}, \cite{sterb}, \cite{Strauss}.  As starting point for this work we
use the result in \cite{MT}, which conveniently uses the same
definition of asymptotic flatness as here.

\begin{thm}[\cite{MT}]\label{t:small}
Assume that $P$ is a small AF perturbation of $\Box$. Then local energy decay holds
for $P$.
\end{thm}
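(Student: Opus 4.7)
The plan is a classical positive commutator (Morawetz) argument for the flat operator $\Box$, combined with a perturbative absorption of the errors coming from the smallness of the $AF$ norm. The strategy proceeds in three stages: first, prove the estimate for the Minkowski wave operator $\Box$; second, combine it with a standard energy bound to obtain \eqref{LE} in the model case; third, treat the perturbation $P-\Box$ by absorption on the left-hand side.

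For the Minkowski case, I would pair $\Box u$ with a multiplier of the form
\begin{equation*}
Qu \ = \ f(r)\, \partial_r u + \frac{g(r)}{r}\, u,
\end{equation*}
where $f$ is bounded and monotone increasing and $g$ is a lower-order correction. Integration by parts produces a Morawetz-type identity of the shape
\begin{equation*}
\Re\! \int_0^T\!\! \int_{\mathbb{R}^3} \Box u\cdot \overline{Qu}\,dx\,dt \ = \ B(T) - B(0) + \mathcal{I}(u),
\end{equation*}
in which $B$ gathers boundary terms bounded by $\lp{\partial u}{L^2}$ and $\mathcal{I}(u)$ is a positive spatial bulk quadratic form in $\partial u$ and $\la x\ra^{-1} u$. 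Choosing $f$ with $f'\gtrsim \la r\ra^{-2}$ controls the $L^2(\la x\ra^{-2}\,dx\,dt)$ part of $\lp{u}{LE^1}$; to upgrade to the $\ell^\infty L^2$ structure of $LE$ I would run the same identity with a family of multipliers whose profile is concentrated on $\{r\sim 2^j\}$ and take the supremum over dyadic scales $j$. A Hardy-type inequality near the origin then recovers the zero-order piece $\la x\ra^{-1} u$ from $\partial u$. Pairing this bound with the usual energy estimate obtained from the multiplier $\partial_t u$ yields \eqref{LE} for $P=\Box$.

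For the general $AF$ operator I split $Pu = \Box u + Eu$ with
\begin{equation*}
Eu \ = \ h^{\alpha\beta}\partial_\alpha\partial_\beta u \ + \ (\text{first order})\,\partial u \ + \ Vu, \qquad h = g-m,
\end{equation*}
apply the Minkowski bound to $u$, and estimate the error dyadically using the $AF$ norm. The three typical contributions obey
\begin{equation*}
\lp{h\,\partial^2 u}{LE^*}\lesssim \llp{h}{2}\,\lp{u}{LE^1},\quad \lp{A\,\partial u}{LE^*}\lesssim \llp{\la x\ra A}{1}\,\lp{u}{LE^1},\quad \lp{Vu}{LE^*}\lesssim \llp{\la x\ra^2 V}{0}\,\lp{u}{LE^1}.
\end{equation*}
Since $\llp{(g-m,A,V)}{AF}$ is assumed small, each error can be absorbed on the left-hand side, producing the desired LED bound for $P$.

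The main obstacle I expect is the design of the multiplier: it must simultaneously yield a positive definite bulk, realize the dyadic $\ell^\infty$ structure of $LE$, and produce boundary terms bounded purely in energy, all while respecting the singularity at $r=0$. The sum-over-scales construction, combined with a Hardy correction at the origin and a zero-order correction $g(r)/r$ tuned to dominate the cross terms generated by integration by parts, is the canonical remedy. Once this multiplier machinery is in place for $\Box$, the $AF$-perturbative step is robust: it relies solely on the smallness of $\llp{(g-m,A,V)}{AF}$, with no spectral input required.
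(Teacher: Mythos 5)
Your overall strategy---a Morawetz multiplier for the flat d'Alembertian, dyadic multipliers to realize the $\ell^\infty$ structure of $LE$, the $\partial_t$ multiplier for the uniform energy piece, and absorption of the perturbation via the smallness of the $AF$ norm---has the right shape, and since the statement is a pure citation to \cite{MT} there is no in-paper proof to compare against. But the absorption step, as you have written it, has a genuine gap at the second-order perturbation: the claimed bound
\begin{equation*}
\lp{h\,\partial^2 u}{LE^*}\ \lesssim\ \llp{h}{2}\,\lp{u}{LE^1}
\end{equation*}
cannot hold, because $LE^1$ controls only one derivative of $u$ while the left-hand side asks for two, and no amount of pointwise decay in $h$ recovers the missing derivative. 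So writing $\Box u = Pu - Eu$, applying the Minkowski estimate, and treating $Eu$ as a black-box source in $LE^*$ does not close.

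The fix, which is the actual content of the argument in \cite{MT}, is to run the multiplier identity directly against $P$ rather than against $\Box$. Because $P$ is in divergence form, pairing $Pu$ with $\overline{Qu}$ and integrating by parts replaces the would-be forcing $h\,\partial^2 u$ by the commutator $\la i[D_\alpha g^{\alpha\beta}D_\beta,\,Q]u,u\ra$, whose principal symbol is the Poisson bracket $\{g^{\alpha\beta}\xi_\alpha\xi_\beta,\, q\}$. This bracket is still quadratic in $\xi$, i.e. a bilinear form in $\partial u$ alone, not involving $\partial^2 u$, and every term that sees $h=g-m$ carries a derivative of $h$; it is therefore genuinely controlled by $\llp{h}{2}\lp{u}{LE^1}^2$ and can be absorbed once the $AF$ norm is small. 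Your handling of the first- and zero-order errors is fine as stated, and the rest of the sketch (scale-by-scale multipliers, Hardy near the origin, pairing with $\partial_t u$ for energy) is consistent with the standard route; but without moving the perturbation inside the Morawetz identity the proof does not go through.
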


Implicit in the above estimate is a global-in-time uniform energy
bound.  A qualitatively weaker version of the local energy decay is
what we call the {\em two point local energy decay} estimate. This allows
for spatially localized exponentially growing or decaying solutions:

\begin{definition}
  We say that two point local energy decay holds for a nontrapping
  asymptotically flat operator $P$ if the following estimate holds:
\begin{equation}
		\lp{u}{LE^1[0,T]} +\lp{\partial u}{L^\infty L^2[0,T]}    \ 
\lesssim \ \lp{\partial u(0)}{L^2}+  \lp{\partial u(T)}{L^2}
		 + \lp{P u}{LE^*+ L^1L^2[0,T] } . \label{LE-2pt}
\end{equation}
\end{definition}

In general this estimate also does not exclude nonzero
  resonances, discussed below, except in the special case of symmetric
  operators $P$. However, it does preclude stationary or nearly
  stationary spatially localized solutions.  An even weaker estimate
whose role is to only preclude stationary solutions is a
condition introduced in \cite{MTT}, called {\em stationary local
  energy decay}:

\begin{definition}
  We say that stationary  local energy decay holds for a nontrapping
  asymptotically flat operator $P$ if the following estimate holds:
\begin{equation}\label{LE-stat}
		\lp{u}{LE^1[0,T]}  +\lp{\partial u}{L^\infty L^2[0,T]} \ 
\lesssim \ \lp{\partial u(0)}{L^2}+  \lp{\partial u(T)}{L^2}+	\lp{\partial_0 u}{LE[0,T]}
		 + \lp{P u}{LE^*+ L^1L^2[0,T]}. \ 
\end{equation}
\end{definition}

\subsection{Energy estimates}

As defined in the previous section, uniform energy bounds of the form
\begin{equation}\label{en}
\| \partial u\|_{L^\infty L^2} \lesssim \| \partial u(0)\|_{L^2} + \|Pu\|_{L^1 L^2}
\end{equation}
are included as a subset of the local energy decay estimates. Further, they provide a direct 
connection between the more robust two point local energy decay bound \eqref{LE-2pt} 
and the local energy bound \eqref{LE}.

For this and other reasons, it is useful to briefly discuss the energy conservation properties
for the wave equation $Pu = f$. This is most easily done in the
symmetric stationary case, where we have a conserved energy functional for the homogeneous problem,
namely
\begin{equation}\label{E}
E(u) = \int_{\R^3}  \bar u P_0 u  - g^{00} |\partial_t u|^2 \,dx, \qquad P_0 = P_{|D_t = 0}.
\end{equation}
For the inhomogeneous problem $Pu = f$, we have the relation
\begin{equation}\label{E-inhom}
\frac{d}{dt} E(u) = 2 \Re \int_{\R^{3}} \bar f \partial_t u \,dx.
\end{equation}
The difficulty in using this energy is that it is not necessarily
positive definite. However, in view of condition \eqref{zero_ell}
in Definition~\ref{NT_def}, the operator $P_0$ is elliptic and
asymptotically flat, so the obstruction to that is localized in a
compact set, i.e.
\begin{equation}\label{e-pos}
\| (u,\partial_t u)\|_{\dot H^1 \times L^2}^2 \lesssim  E(u) + \|
u\|_{L^2_{comp}}^2 .
 \end{equation}
For a further discussion of this case we refer the reader to the last
section of the paper.  

If we drop the stationarity assumption, then the energy functional
defined above becomes time dependent and is no longer
conserved. If we also drop the symmetry assumption then it is natural
to work with the (time dependent) energy associated to the symmetric
part of $P$. This leads us to a straightforward almost
conservation property

\begin{lem} \label{l:en} Assume that $P$ is asymptotically flat,
  $\epsilon$-slowly varying and $\epsilon$-almost symmetric. Then we have 
the energy conservation relation
\begin{equation}
\label{eps-en}
E(u(T)) = E(u(0)) + O(\epsilon) \|u\|_{LE^1[0,T]}^2
  \end{equation}
for solutions to the homogeneous equation $Pu=0$.
\end{lem}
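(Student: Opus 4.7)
The plan is to apply the $\partial_t$-multiplier method to the homogeneous equation $Pu = 0$, carefully accounting for the error terms produced by the time dependence and skew-symmetry of the coefficients. Multiplying $Pu = 0$ by $\overline{\partial_t u}$, integrating over $[0,T] \times \mathbb{R}^3$, and taking twice the real part yields, after integrating by parts in both $t$ and $x$, the identity
\[
E(u(T)) - E(u(0)) = \mathrm{Rem},
\]
where $\mathrm{Rem}$ collects all the terms that do not close into $E(u(T)) - E(u(0))$. In the stationary symmetric case every integration by parts is exact and $\mathrm{Rem}$ vanishes. In our setting, $\mathrm{Rem}$ receives contributions from two sources: the skew-symmetric part of $P$, encoded in $\Im A$ and $\Im V$, which obstructs the $x$-integration by parts; and the time dependence of the coefficients $(g, A, V)$, which produces $\partial_t$-derivatives of the coefficients of $P_0$ and $g^{00}$ when reassembling $E(u(T)) - E(u(0))$ from the $t$-integration by parts.

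First I would carry out this computation explicitly, collecting $\mathrm{Rem}$ as a sum of space-time integrals of the schematic form
\[
\int_0^T \!\!\! \int_{\mathbb{R}^3} c(t,x)\, Q(u, \partial u) \, dx\, dt,
\]
where $Q$ is a quadratic form in $(u, \partial u)$ and $c$ is either $\Im A$, $\Im V$, or a $\partial_t$-derivative of a coefficient. I would then produce an $\epsilon$-small factor in each case via the two structural hypotheses. The almost-symmetric hypothesis gives $\llp{(0, \Im A, \Im V)}{AF} \lesssim \epsilon$ directly. For the time-derivative terms I use the slowly-varying decomposition $(g - m, A, V) = (g_1, A_1, V_1) + (g_2, A_2, V_2)$: a time derivative falling on $(g_2, A_2, V_2)$ is $\epsilon$-small in $AF$ by hypothesis, while for $(g_1, A_1, V_1)$ I integrate the $\partial_t$ by parts back onto the $u$-factors, after which the bare coefficient is itself $\epsilon$-small in $AF$.

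Each such term is then bounded by a dyadic spatial decomposition, pairing the $\ell^1 L^\infty$ control of $c$ (from the $AF$ norm) against the $\ell^\infty L^{2, -\frac{1}{2}}$ control of $(u, \partial u)$ (from the $LE^1$ norm). For the lower-order pieces of $Q$, the $\langle x\rangle$-weights built into the $AF$ norms on $A$ and $V$ match cleanly with the $LE^1$ weights. The main technical obstacle is the top-order term $\partial_t g^{ij}\, \partial_i u\, \overline{\partial_j u}$, where $\partial_t g$ carries only a bare $\ell^1 L^\infty$ bound with no explicit spatial decay; for this term I would integrate one of the spatial derivatives by parts, using the $|\alpha|=1$ part of the $AF$ norm on $g$ to obtain a $\langle x\rangle^{-1}$-weighted bound on $\nabla \partial_t g$, which then pairs naturally with the $\langle x\rangle^{-1} u$ weight appearing in $LE^1$. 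Assembling all pieces yields $|\mathrm{Rem}| \lesssim \epsilon\, \lp{u}{LE^1[0,T]}^2$, which is \eqref{eps-en}.
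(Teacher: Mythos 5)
Your overall strategy is the natural one and the one the lemma clearly intends (the paper simply declares the proof ``straightforward'' and omits it): multiply $Pu=0$ by $\overline{\partial_t u}$, take $2\Re$, integrate over the slab, and reassemble $E(u(T))-E(u(0))$ plus a remainder generated by the skew-adjoint part of $P$ and by the time-dependence of the coefficients. Your identification of the two error sources is correct, and your treatment of the skew-adjoint contributions is sound: a term like $\int \Im V\,\bar u\,\partial_t u$ pairs the bound $\|\la x\ra^2\Im V\|_{\ell^1 L^\infty}\lesssim\epsilon$ against $\|u\|_{L^2(A_m)}\|\partial_t u\|_{L^2(A_m)}\lesssim 2^{2m}\|u\|^2_{LE^1}$, where $A_m=\{|x|\approx 2^m\}$, so the dyadic weights close.

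The treatment of the $\partial_t$-of-coefficient remainder, however, has concrete gaps. First, integrating the $\partial_t$ back onto $u$ for the $(g_1,A_1,V_1)$ piece is not free: it produces boundary terms at $t=0,T$ of size $\epsilon\|\partial u(0)\|_{L^2}^2+\epsilon\|\partial u(T)\|_{L^2}^2$, which are not controlled by $\|u\|_{LE^1[0,T]}^2$ and are not mentioned; moreover the time derivative then lands on the quadratic form, producing terms such as $g_1\,\partial_t\partial_j\bar u\,\partial_k u$ that require the elliptic bound \eqref{Pu-ell} (or substitution via the equation) to reduce — a step you do not address. Second, the spatial integration by parts you propose for the top-order $\partial_t g_2\,\partial_j\bar u\,\partial_k u$ term does not repair the weight deficit. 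Directly one needs $\|\la x\ra\,\partial_t g_2\|_{\ell^1 L^\infty}\lesssim\epsilon$ (since $\|\partial u\|^2_{L^2(A_m)}\lesssim 2^m\|u\|^2_{LE^1}$), but the slowly-varying hypothesis supplies only $\|\partial_t g_2\|_{\ell^1 L^\infty}\leq\epsilon$, without the $\la x\ra$. After moving a derivative onto the coefficient, the $|\alpha|=1$ part of $\llp{\partial_t g_2}{2}\leq\epsilon$ gives $\|\la x\ra\,\nabla\partial_t g_2\|_{\ell^1 L^\infty}\lesssim\epsilon$, but the new pairing $u\cdot\nabla u$ costs $\|u\|_{L^2(A_m)}\|\nabla u\|_{L^2(A_m)}\lesssim 2^{2m}\|u\|^2_{LE^1}$, so you remain a factor of $\la x\ra$ short (and the IBP also creates a $\partial_x^2 u$ term that needs separate handling). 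The same $\la x\ra$-mismatch appears for $\int(\partial_t V_2)|u|^2$, which would require $\|\la x\ra^3\partial_t V_2\|_{\ell^1 L^\infty}\lesssim\epsilon$ rather than the available $\|\la x\ra^2\partial_t V_2\|_{\ell^1 L^\infty}\leq\epsilon$. A complete proof must resolve these discrepancies, either by refining the argument or by interpreting the error in \eqref{eps-en} as also allowing a harmless $\epsilon\,\|\partial u\|_{L^\infty L^2}^2$ contribution.
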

The proof of this is straightforward and is left for the reader.

\section{Main results}

\subsection{ The stationary case.} 
Here we discuss the easier case of operators $P$ which are stationary,
where we can provide a full spectral description for our local energy
decay problem.  Later, we will use this case as a guide for what to
expect in the more general case. 

The results here are more in line with earlier work on the subject,
at least in the self-adjoint case, see for instance \cite{ggh}, \cite{g}, \cite{T}. However,
for clarity and further reference it is perhaps of interest  to provide a full, 
self-contained set of results.

To set the notations and introduce the corresponding spectral problems,
 we consider  special solutions to the homogeneous wave equation $P(x,D) u = 0$
that have the form $u=e^{i\omega t}u_\omega(x)$.
Then we are led to the quadratic
eigenvalue problem:
\begin{equation}\label{omega-mode}
 P_\omega u_\omega  = 0,
\end{equation}\label{p_omega}
where $ P_\omega $ is the operator
\begin{equation}
P_\omega= 	 \Delta_{g,A}  + W(x,D_{x}) + \omega B(x,D_x) + \omega^2 g^{00} 
\end{equation}
with
\begin{align}
		\Delta_{g,A}  \ &= \   (D+A)_j  g^{jk}(D+A)_k \ , \notag\\
		W(x,D_x) \ &= \ 
		A_0g^{0k}(D_k+A_k) + (D_k+A_k) A_0 g^{0k}+ g^{00}A_0^2+ V(x) \ , \notag\\
		B(x,D_x)\ &=\ g^{0k}(D_k+A_k) + (D_k+A_k) g^{0k}   +2A_0g^{00}\ . \notag
\end{align}
We remark that if $P$ is self-adjoint and  $\omega$ is real, then $P_\omega$ is
self-adjoint. However, the above eigenvalue problem is not self-adjoint
so there is no reason for $\omega$ to be real, and in fact if $\omega
\not\in \mathbb R$ then one can expect there to be nontrivial root
space structure associated to $u_\omega$.

The resolvent $R_\omega$ is defined as the inverse of $P_\omega$ whenever the 
inverse exists. It can also be defined as the Fourier-Laplace transform of the 
wave group as follows. Suppose that $u$ solves the homogeneous wave equation
\[
Pu = 0 , \qquad u(0) = 0, \qquad g^{00} u_t(0) = f.
\]
Then the resolvent $R_\omega$ is given by 
\begin{equation}\label{fourier-laplace}
R_\omega f = \int_{0}^\infty e^{-it \omega} u(t) dt.
\end{equation}

A priori the local theory for the wave equation yields an exponential bound of the 
form
\begin{equation}\label{local-en}
\| \partial u(t)\|_{L^2} \lesssim e^{\beta t} \|f\|_{L^2}.
\end{equation}
This implies that the resolvent $R_\omega$ is defined at least in the 
halfspace $ \{ \Im \omega < - \beta \}$, with an uniform bound
\begin{equation}\label{local-res}
\| R_\omega \|_{L^2 \to \dot H^1_{\omega}} \lesssim | \Im \omega+ \beta|^{-1}, \qquad   \Im \omega < - \beta.
\end{equation}
Further, one can invert the Fourier transform and express the wave evolution 
in terms of the resolvent as
\begin{equation}\label{fourier-laplace-inverse}
u(t) =  \int_{\Im \omega = - \sigma}  
e^{i \omega t} R_\omega f d\omega , \qquad \sigma > \beta.
\end{equation}

The local energy bounds for the forward evolution associated to the operator $P$ are 
closely related to uniform energy bounds for the forward evolution. As in the above 
discussion, this corresponds to an extension of the resolvent into the lower half space.
In relation to this, we have the following standard result:

\begin{prop} \label{p:e-res}
  a) The resolvent $R_\omega: L^2 \to \dot H^1_{\omega}$ admits a meromorphic
  extension to the lower half-plane $H = \{ \Im \omega < 0 \}$. 

b) If uniform energy bounds hold for the forward evolution associated to $P$
then the resolvent is holomorphic in $H$ and satisfies 
\begin{equation}\label{enres}
\| R_\omega \|_{L^2 \to \dot H^1_{\omega}} \lesssim | \Im \omega|^{-1}, \qquad   \omega \in H.
\end{equation}

c) Conversely, if the above bound holds then the solutions $u$ to the
homogeneous equation $Pu = 0$ satisfy the energy bound
\begin{equation}\label{S=t}
\| \partial u(t)\|_{L^2} \lesssim (1+t) \| \partial u(0)\|_{L^2} .
\end{equation}
\end{prop}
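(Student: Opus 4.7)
My plan is to handle the three statements in order, using increasingly standard tools from scattering and semigroup theory.

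For part (a), the tool of choice is the analytic Fredholm theorem. Starting from the Fourier--Laplace definition of $R_\omega$ on $\{\Im\omega<-\beta\}$, I would fix a reference point $\omega_0$ deep inside that region and decompose $P_\omega=P_{\omega_0}+K(\omega)$, where $K(\omega)=(\omega-\omega_0)B(x,D_x)+(\omega^2-\omega_0^2)g^{00}$ is polynomial in $\omega$. The formal identity
\[
R_\omega=(I+R_{\omega_0}K(\omega))^{-1}R_{\omega_0}
\]
then reduces the problem to showing that $R_{\omega_0}K(\omega)$ is a compact, operator-valued holomorphic family on an appropriate space (naturally one adapted to the $\LE$ scale). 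The compactness step is the technical heart: the coefficients of $K$ do not decay in general, so it is better to compare $P_\omega$ instead to the free model $-\Delta-\omega^2$ and exploit the asymptotic flatness of $(g-m,A,V)$, which moves the non-decaying part of $K$ into $-\Delta-\omega^2$ itself. The remaining perturbation has decaying coefficients, and Rellich compactness in weighted spaces does the rest; analytic Fredholm then yields the meromorphic extension to $H$.

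For part (b), apply the uniform energy bound to the solution $u$ of $Pu=0$ with $u(0)=0$ and $g^{00}u_t(0)=f$, obtaining $\|\partial u\|_{L^\infty_tL^2}\lesssim\|f\|_{L^2}$. Inserting this into \eqref{fourier-laplace} gives directly
\[
\|\nabla R_\omega f\|_{L^2}\le\int_0^\infty e^{t\,\Im\omega}\|\nabla u(t)\|_{L^2}\,dt\lesssim|\Im\omega|^{-1}\|f\|_{L^2}.
\]
The complementary bound on $\|\omega R_\omega f\|_{L^2}$ comes from integrating by parts against $\partial_te^{-it\omega}=-i\omega e^{-it\omega}$: since $u(0)=0$ and the boundary term at $+\infty$ vanishes under $\Im\omega<0$, one finds $i\omega R_\omega f=\int_0^\infty e^{-it\omega}\partial_tu(t)\,dt$, and the same exponential integration produces the stated bound. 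Together these give \eqref{enres}. Holomorphicity on all of $H$ is then automatic: a pole $\omega_*\in H$ would produce an eigenmode $e^{i\omega_*t}u_{\omega_*}(x)$ growing exponentially in $t$, contradicting the uniform energy bound.

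For part (c), I would pass to the semigroup formulation. Writing $Pu=0$ as a first-order system $\partial_tU=\mathcal{A}U$ with $U=(u,\partial_tu)$ on the energy space $E=\dot H^1\times L^2$, the identification $\lambda=i\omega$ converts \eqref{enres} into the resolvent bound $\|(\lambda-\mathcal{A})^{-1}\|_{E\to E}\lesssim|\Re\lambda|^{-1}$ on $\{\Re\lambda>0\}$. The growth bound \eqref{S=t} then follows from a contour-shift argument in the spirit of Borichev--Tomilov: representing $T(t)U(0)=e^{\mathcal{A}t}U(0)$ via the inverse Laplace transform, I would shift the contour from $\{\Re\lambda=\sigma_0>\beta\}$ down to $\{\Re\lambda=1/(1+t)\}$; on this new contour $|e^{\lambda t}|\le e$ while the resolvent factor contributes $1+t$, producing exactly the linear growth claimed. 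The lack of decay of $(\lambda-\mathcal{A})^{-1}$ in the $\Im\lambda$ direction is handled by Plancherel on a vertical line combined with a density/mollification step on the initial data, which is standard.

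The main obstacle is the compactness step in (a): because $P_\omega$ is non-self-adjoint and its coefficients are not compactly supported, the Rellich argument must be implemented in weighted spaces (the $\LE$ scale is tailored for exactly this), and one must carefully verify that $R_{\omega_0}$ maps into, and $K(\omega)$ out of, compatible spaces. The contour-shift in (c) is conceptually routine but also requires some care to secure integrability in $\Im\lambda$ before passing to the density limit.
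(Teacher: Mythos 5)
Your parts (a) and (b) are essentially right and in the spirit of the paper: part (a) is an analytic Fredholm argument exploiting the ellipticity of $P_\omega$ in $H$ and the asymptotic flatness of the coefficients to get compactness after comparison with a free model, and part (b) follows by direct integration in \eqref{fourier-laplace} exactly as you describe (the paper summarizes this as ``one direction of Hille--Yosida'').

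Part (c), however, has a real gap. The contour-shift down to $\{\Re\lambda = 1/(1+t)\}$ does not converge, as you note, and the proposed fix does not close it. Applying Plancherel to the vector-valued function $s\mapsto(\sigma+is-\mathcal{A})^{-1}U_0$ on a vertical line does not help, because the resolvent bound \eqref{enres} gives no decay in $\Im\lambda$, so that function is not in $L^2_s$. And if instead you mollify by passing to $U_0\in D(\mathcal{A}^2)$ and substitute $(\lambda-\mathcal{A})^{-1}U_0 = \lambda^{-1}U_0 + \lambda^{-2}\mathcal{A}U_0 + \lambda^{-2}(\lambda-\mathcal{A})^{-1}\mathcal{A}^2U_0$ to gain decay, the surviving integral on the shifted contour produces $(1+t)^2\|\mathcal{A}^2U_0\|$, which is both quadratic in $t$ and stated in a stronger norm than $\|U_0\|$; density in $\dot H^1\times L^2$ does not let you remove the $\mathcal{A}^2$ from the right-hand side. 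The paper's argument avoids all this: it applies Plancherel to the \emph{inhomogeneous space-time estimate}
\[
\| e^{-\sigma t} \partial u\|_{L^2_{x,t}} \lesssim \sigma^{-1} \| e^{-\sigma t} Pu \|_{L^2_{x,t}},
\]
which is a bound of the solution against the forcing $Pu$ (not against the data), and this \emph{is} an honest Plancherel consequence of \eqref{enres}. One then reduces the homogeneous Cauchy problem to this by a temporal cutoff near $t=0$ (so $Pu$ is replaced by a compactly supported commutator controlled by the initial energy), chooses $\sigma = t^{-1}$, and finally uses the local theory \eqref{local-en} on the unit interval $[t-1,t]$ to upgrade the averaged $L^2_t$ bound to the pointwise-in-time bound \eqref{S=t}. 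The missing ingredients in your write-up are precisely these last two steps: the truncation that converts data into forcing, and the pigeonhole/local-theory argument that converts $L^2_t$ into $L^\infty_t$.
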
 

The proof of this theorem is fairly standard and the details are left for the
reader. For part (a) it suffices to observe that the operators
$P_\omega$ are uniformly elliptic for $\omega$ in a compact subset of
$H$.  Part (b) follows by direct integration in
\eqref{fourier-laplace} and represents one direction (the easy one)
in the Hille-Yosida theorem.  

Part (c) is a consequence of the
averaged $L^2$ bound
\[
\| e^{-\sigma t} \partial u\|_{L^2_{x,t}} \lesssim \sigma^{-1} \|   e^{-\sigma t} Pu \|_{L^2_{x,t}},
\qquad  \sigma > 0,
\]
which follows by Plancherel from \eqref{enres}. For the forward homogeneous
initial value problem $Pu = 0$ this gives by truncation
\[
\| e^{-\sigma t} \partial u\|_{L^2_{x,t}([0,\infty) \times \R^3)} \lesssim \sigma^{-1} \|\partial u(0)\|_{L^2}
\qquad  \sigma > 0.
\]
One then chooses $\sigma = t^{-1}$, and the energy bound at time $t$ 
follows from the averaged bound on the time interval $[t-1,t]$.
Most likely one should
be able to improve the polynomial growth rate to $(1+t)^\frac12$; it
is not clear to us what the correct threshold is.

The poles of the resolvent $R_{\omega}$ in the lower half-space $H$
are zero eigenfunctions for the corresponding elliptic operators
$P_\omega$. By Fredholm theory the associated generalized eigenspaces
are finite dimensional.  Further, also due to the ellipticity of
$P_{\omega}$, the associated eigenfunctions and generalized
eigenfunctions are spatially localized and decay exponentially at
infinity. These correspond to exponentially growing solutions to the
homogeneous wave equation $Pu = 0$. Thus we have identified a first
obstruction to both uniform energy bounds and local energy decay:
\begin{definition}
A negative eigenfunction for $P$ is a function $u_\omega \in L^2$ so that 
$P_\omega u_\omega = 0$ with $ \omega  \in H$. 
\end{definition}

We also observe that one can similarly define positive eigenfunctions and associate
them to exponentially decreasing solutions to the wave equation. This can also be viewed
as the identical construction but with reversed time.

We now consider the interpretation of the local energy bounds in terms
of corresponding resolvent bounds. For this we have the following result:
\begin{prop}\label{p:le-res}
Local energy decay holds for the operator $P$
if and only if the resolvent $R_\omega$ satisfies a uniform bound
\begin{equation}\label{LE-fourier}
\| R_\omega \|_{\LE^* \to \LE^1_{\omega}} \lesssim 1, \qquad \Im \omega < 0 .
\end{equation}
\end{prop}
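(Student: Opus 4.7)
The plan is to prove both implications by intertwining the space-time wave evolution of $P$ with its frequency-space counterpart $P_\omega$ via the Fourier-Laplace transform in time.

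For the direction local energy decay $\Rightarrow$ resolvent bound, fix $\omega$ with $\Im\omega = -b < 0$ and let $f$ range over a dense subclass of $\LE^*$ (e.g.\ compactly supported $L^2$ functions). Setting $g = R_\omega f$, which lies in $H^2_{\mathrm{loc}}$ by elliptic regularity of $P_\omega$, I form the space-time function $u(t,x) = e^{i\omega t} g(x)$. This solves $Pu = e^{i\omega t} f$ and, crucially for $b > 0$, decays like $e^{bt}$ as $t \to -\infty$. Applying \eqref{LE} on the backward interval $[-T, 0]$, justified by time-translation invariance of the stationary operator $P$, gives
\begin{equation*}
\lp{u}{LE^1[-T, 0]} \lesssim \lp{\partial u(-T)}{L^2} + \lp{e^{i\omega t} f}{LE^*[-T, 0]}.
\end{equation*}
Since both $u$ and $e^{i\omega t} f$ factor as a fixed spatial object times the weight $e^{i\omega t}$ of modulus $e^{bt}$, direct computation yields $\lp{u}{LE^1[-T, 0]} \sim c_T(b) \lp{g}{\LE^1_\omega}$ and $\lp{e^{i\omega t} f}{LE^*[-T, 0]} \sim c_T(b) \lp{f}{\LE^*}$ with common time-integration factor $c_T(b) = ((1 - e^{-2bT})/(2b))^{1/2}$. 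The boundary term $\lp{\partial u(-T)}{L^2} = O(e^{-bT})$ vanishes as $T \to \infty$. Dividing by $c_T(b)$ and sending $T \to \infty$ delivers $\lp{g}{\LE^1_\omega} \lesssim \lp{f}{\LE^*}$ uniformly in $\omega$, and density closes this direction.

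For the converse direction resolvent $\Rightarrow$ local energy decay, given a solution $u$ of $Pu = f$ on $[0, T]$ with data $(u_0, u_1)$, I would extend $u$ by zero for $t < 0$ and by its forward continuation for $t > T$, producing a global function $\tilde u$. Its distributional equation reads
\begin{equation*}
P\tilde u = \mathbf{1}_{[0,\infty)} f_{\mathrm{ext}} + g^{00} u_1 \delta_0 + g^{00} u_0 \delta_0' + (\text{lower-order terms at } t=0),
\end{equation*}
with $f_{\mathrm{ext}}$ equal to $f$ on $[0,T]$ and zero beyond. Fourier-Laplace transforming in $t$ along the line $\{\Im\omega = -\sigma\}$, with $\sigma$ larger than the a priori exponential growth rate in \eqref{local-en}, yields the pointwise identity $\widehat{\tilde u}(\omega, \cdot) = R_\omega \widehat F(\omega, \cdot)$. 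Applying the hypothesized resolvent bound pointwise, integrating in $\Re\omega$, and invoking Plancherel in time converts the estimate into the space-time bound
\begin{equation*}
\lp{e^{-\sigma t} \tilde u}{LE^1} \lesssim \lp{e^{-\sigma t} f}{LE^*} + \lp{\partial u(0)}{L^2},
\end{equation*}
and dominated convergence as $\sigma \to 0$ restores the unweighted form \eqref{LE} on $[0,T]$. The $L^\infty L^2$ bound on $\partial u$ is obtained along the way.

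The principal technical hurdle is to reconcile the spatial-dyadic $\ell^\infty/\ell^1$ structure of $LE^1/LE^*$ with the $L^2$-based Plancherel identity in time, since neither norm makes Plancherel an exact equality. Happily, Minkowski's inequality provides $\lp{\widehat F}{L^2_\omega \LE^*} \leq \lp{F}{LE^*}$ on the source side and $\lp{u}{LE^1} \leq \lp{\widehat u}{L^2_\omega \LE^1_\omega}$ on the solution side, both oriented correctly for the converse step. A secondary point is to verify that the Fourier-transformed Cauchy-data distributions are controlled in $\LE^*$ by $\lp{\partial u(0)}{L^2}$, which is straightforward since compactly supported distributions embed continuously into $\LE^*$, with polynomial-in-$\omega$ factors absorbed into the $|\omega|^{-1}\LE$ component of $\LE^1_\omega$.
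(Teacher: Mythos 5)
Your forward direction is essentially the paper's argument and is sound: set $u = e^{i\omega t} R_\omega f$ with $f$ in a dense subclass, apply \eqref{LE} on $[-T,0]$, and let $T\to\infty$; the boundary term dies at rate $e^{-bT}$ and the common time factor $c_T(b)$ divides out. One small prerequisite you do not state but need is that the resolvent exists in the lower half-space at all, which follows from the uniform energy bound implicit in \eqref{LE}.

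The converse direction, however, has two genuine gaps.

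First, the representation $\widehat{\tilde u}(\tau - i\sigma) = R_{\tau - i\sigma}\widehat F(\tau - i\sigma)$ is only justified a priori for $\sigma$ larger than the exponential rate $\beta$ in \eqref{local-en}, since the Fourier--Laplace transform of $u$ must converge. You then want to ``send $\sigma\to 0$ by dominated convergence,'' but you have no bound that is uniform in $\sigma$ down to $\sigma = 0$ unless you first show $u$ grows subexponentially. The paper proves exactly this as an intermediate step: it derives the energy resolvent bound \eqref{enres} from \eqref{LE-fourier} by a spatial decomposition at scale $|\Im\omega|^{-1}$ into an inner part, where the LE resolvent bound is used, and an outer part, where the small--AF resolvent is available. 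Via Proposition~\ref{p:e-res}(c) this gives polynomial-in-$t$ growth, which legitimizes the Laplace transform at every $\sigma > 0$. Without this step the limit $\sigma\to 0$ is unjustified.

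Second, your treatment of the Cauchy data does not work. The time-Fourier transform of $g^{00}u_1\delta_0 + g^{00}u_0\delta_0'$ is $g^{00}u_1 + i\omega g^{00}u_0$. For the resolvent bound $\lp{R_\omega}{\LE^*\to\LE^1_\omega}\lesssim 1$ to apply, you would need $g^{00}u_1, g^{00}u_0 \in \LE^* = \ell^1 L^{2,1/2}$. But $u_1$ is only in $L^2$ and $u_0$ only in $\dot H^1$, and neither of these embeds in $\LE^*$: the $\ell^1$ dyadic summation with a growing weight $r^{1/2}$ is strictly stronger than $L^2$. Your claim that ``compactly supported distributions embed continuously into $\LE^*$'' conflates temporal support (which is indeed compact, at $t=0$) with spatial support (which is not), and the resolvent bound is a spatial statement. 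Also, the ``polynomial-in-$\omega$ factors absorbed into the $|\omega|^{-1}\LE$ component of $\LE^1_\omega$'' claim concerns the target space and does not help with the failure of membership in the source space $\LE^*$. The paper circumvents this entirely: it first proves the forward estimate with zero Cauchy data and forcing $f \in \LE^*$, and then adds Cauchy data via the extension procedure of Section~\ref{s:extend} together with energy estimates, never passing the Cauchy data through the resolvent.

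Your Minkowski/Plancherel remark at the end is correct and is an equivalent reformulation of the paper's reduction to fixed dyadic annuli for source and solution.
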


This is a fairly straightforward result, which is more or less a consequence of the 
Plancherel formula. The proof is in Section~\ref{s:stat}. See also \cite{T}.

This result leads us to a second obstruction to local energy decay,
which is obtained by taking the limit of the resolvent as $\omega$
approaches the real axis. There are two cases to consider, depending 
whether the limit is zero or not.

  In the nonzero case, in the limit we obtain a boundary condition
at infinity, called the outgoing radiation condition or the
Sommerfeld condition. This takes the form
\begin{equation}\label{outgoing}
2^{-j/2} \|(\partial_r + i \omega) u\|_{L^2(|x|\approx 2^j)} \to 0.
\end{equation}
Thus we define
\begin{definition}
  $\omega \in \mathbb R\setminus\{ 0\}$ is an embedded resonance for
  $P$ if there exists a function $u_\omega \in \LE^1_{\omega}$, satisfying the
  outgoing radiation condition \eqref{outgoing}, so that $P_\omega
  u_\omega = 0$.
\end{definition}

We carefully exclude the case $\omega = 0$ here.
It is not too difficult to show that in polar coordinates $(r,\theta)$ the 
embedded resonances must have the asymptotics
\[
u_{\omega}(r,\theta) = v(\theta) r^{-1} e^{i\omega r} + o(r^{-1}), \qquad v \neq 0
\]
where the condition $v \neq 0$ is a consequence of unique continuation
results, see \cite{KT}.  We observe that solutions of the form
$e^{i\omega t} u_\omega$ cannot be directly used as obstructions to
local energy decay or energy estimates, as they do not have finite
energy. However, at least for nice enough metrics,
one can consider truncated versions of them, of the
form
\[
\tilde u_{\omega}   = \chi(t -r) e^{i\omega t} u_\omega  
\]
with a smooth cutoff function $\chi$ which selects the positive real line and 
using suitable Regge-Wheeler type coordinates.
Then a direct computation yields the asymptotic relations
\[
\| \partial \tilde u_\omega(t)\|_{L^2} \approx t^{\frac12}, \qquad 
\| P  \tilde u_\omega(t)\|_{L^2} \in L^1.
\]
This shows that embedded resonances are obstructions to both uniform energy 
bounds and to local energy decay. The generated energy growth is $t^\frac12$.
This is localized in the outgoing region but not concentrated near a light cone.

It may also be interesting to observe that, as part of the proof of our main result 
in Theorem~\ref{t:stat}, we establish the following:

\begin{prop}[Limiting Absorption Principle] \label{p-nonzero}
The following statements are equivalent for stationary, asymptotically flat 
operators $P$ and real nonzero $\omega_0$: 

a) $\omega_0$ is not a resonance.

b) The estimate 
\begin{equation}\label{LE-}
\|u\|_{\LE^1_{\omega_0}} \lesssim \| P_{\omega_0} u\|_{\LE^*}
\end{equation}
holds for all $u \in \LE^1_{\omega_0}$ satisfying the outgoing resonance condition \eqref{outgoing}.

c) The resolvent bound \eqref{LE-fourier} holds uniformly for $\omega \in H$
near $\omega_0$, and the limit   
\[
R_{\omega_0} f = \lim_{H \ni \omega \to \omega_0} R_\omega f 
\]
exists uniformly on compact sets and satisfies the outgoing resonance
condition \eqref{outgoing}.

Furthermore, these properties are stable with respect to small, stationary, asymptotically flat 
perturbations of $P$.
\end{prop}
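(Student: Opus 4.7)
The plan is to organize the proof around a Fredholm-type a priori estimate with compact error,
\[
\|u\|_{\LE^1_{\omega_0}} \lesssim \|P_{\omega_0} u\|_{\LE^*} + \|u\|_{L^2(B_R)},
\]
valid for $u \in \LE^1_{\omega_0}$ satisfying the outgoing radiation condition \eqref{outgoing}, with $R$ depending only on $R_0$ and $M_0$. To produce it, I would first fix a large $R$ and an auxiliary stationary wave operator $P^{\mathrm{out}}$ that agrees with $P$ in $\{|x|>R\}$ and is a small AF perturbation of $\Box$ globally. Theorem \ref{t:small} applied to $P^{\mathrm{out}}$ combined with Plancherel in time converts the time-integrated LE bound into the $\omega$-independent Helmholtz resolvent estimate $\|v\|_{\LE^1_{\omega_0}}\lesssim \|P^{\mathrm{out}}_{\omega_0} v\|_{\LE^*}$, with the outgoing solution singled out as the limit $\Im\omega\uparrow 0$ of resolvents in $H$. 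Taking a cutoff $\chi$ supported in $\{|x|>R\}$, the decomposition $u=\chi u + (1-\chi)u$ together with commutator and interior elliptic estimates reduces the a priori bound for $P_{\omega_0}$ to the one for $P^{\mathrm{out}}_{\omega_0}$ plus the compact error $\|u\|_{L^2(B_R)}$.

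The equivalence (a) $\Leftrightarrow$ (b) then follows by a concentration-compactness argument. The direction (b) $\Rightarrow$ (a) is immediate, since a nonzero outgoing resonance would saturate (b) with zero right-hand side. For (a) $\Rightarrow$ (b), if (b) fails there is a normalized sequence $u_n$ with $\|u_n\|_{\LE^1_{\omega_0}}=1$ and $P_{\omega_0} u_n \to 0$ in $\LE^*$; the a priori estimate forces $\liminf \|u_n\|_{L^2(B_R)}>0$, and Rellich compactness produces a nonzero weak limit $u$ solving $P_{\omega_0} u = 0$. The main technical point is to pass the outgoing condition \eqref{outgoing} to the limit. I would do this by using the quantitative small-perturbation resolvent bound on $P^{\mathrm{out}}_{\omega_0}$ to extract, in the region $|x|>R$, a uniform outgoing asymptotic profile $u_n(r,\theta)=v_n(\theta)r^{-1}e^{i\omega_0 r}+\text{l.o.t.}$, and then checking that the profile $v_n$ converges. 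The resulting $u$ is a resonance, contradicting (a).

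For (b) $\Rightarrow$ (c), the same a priori scheme applied for $\omega \in H$ near $\omega_0$ (where the outgoing condition is automatic) together with (b) and continuity of the compact error in $\omega$ yields a uniform bound on $R_\omega$ in a lower-half neighborhood of $\omega_0$. Cauchy convergence of $R_\omega f$ as $\omega\to\omega_0$ within $H$ then follows by applying the uniform bound to the difference $R_\omega f - R_{\omega'} f$, which satisfies $P_{\omega}(R_\omega f - R_{\omega'} f)=(P_{\omega'}-P_\omega)R_{\omega'} f$; the right-hand side tends to zero in $\LE^*$ on compact sets. The limit inherits \eqref{outgoing} from the asymptotic expansion step. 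Conversely, (c) $\Rightarrow$ (a) is immediate: a resonance would lie in the kernel of the limiting operator and obstruct its uniform boundedness.

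Finally, stability under small stationary AF perturbations $P\to P+\delta P$ follows from the same scheme: only the compact error in the a priori estimate is altered, by a small bounded operator on a fixed finite-radius space, and the Fredholm dichotomy then ensures that injectivity (equivalently, nonresonance) is an open condition. The principal obstacle throughout, and the technical core of the argument, is the preservation of the outgoing radiation condition \eqref{outgoing} under weak limits, handled via the quantitative asymptotic expansion described above.
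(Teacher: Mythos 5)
Your architecture is a genuinely different route from the paper's. The paper reduces the equivalence of (a), (b), (c) to analytic Fredholm theory: it introduces the auxiliary symmetric operator $\tilde P$ with $A = V = 0$ (whose resolvent extends holomorphically across the real axis since local energy decay holds for $\tilde P$ by Theorem~\ref{2pt_thm}), writes $P_\omega = \tilde P_\omega + Q_\omega$ with $Q_\omega \tilde R_\omega$ compact and $\omega$-analytic, and concludes via the zero Fredholm index of $I + Q_\omega \tilde R_\omega$. You instead propose a concentration-compactness argument built around an a priori estimate with compact error $\|u\|_{L^2(B_R)}$, obtained by cutting off against an exterior small-AF operator $P^{\mathrm{out}}$. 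Both are Fredholm in spirit; the compactness argument is, in fact, closer to what the paper does for the separate \emph{stability} clause, so the bones of your scheme are sound.

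The gap is in your treatment of the outgoing radiation condition under weak limits, which you flag yourself as ``the technical core.'' You propose to extract a uniform asymptotic profile $u_n(r,\theta) = v_n(\theta) r^{-1} e^{i\omega_0 r} + o(r^{-1})$ from the exterior resolvent bound. But such an expansion with error control \emph{uniform in} $n$ is not available at the level of regularity you have: the $u_n$ solve $P_{\omega_0} u_n = f_n$ with $f_n$ merely in $\LE^*$ (not compactly supported), so the source contributes to the far field and the expansion does not come for free; and even for compactly supported data one would need quantitative Sommerfeld-type control that is nowhere established in the paper. The paper sidesteps profile extraction entirely with a positive commutator identity (modeled on the Schr\"odinger case of \cite{MMT}), built from a multiplier of the form $Q = f(R)\bigl(\tfrac{x_i g^{ij}}{R} D_j + g^{0k}D_k + \omega_0\bigr) + \mathrm{h.c.}$, which yields a scale- and $n$-uniform bound on $2^{-j}\|(\partial_r + i\omega_0) u_n\|_{L^2(|x|\approx 2^j)}^2$ directly in terms of the known resolvent bounds. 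To close your argument you would need an estimate of exactly this kind; as written, the profile-extraction step does not obviously go through and is the one place where the proposal would need a genuinely new ingredient.
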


A significant simplification occurs in the case when $P$ is symmetric:

\begin{thm}[\cite{KT}] \label{nores}
  Assume that $P$ is a symmetric, asymptotically flat wave operator. Then
  there are no nonzero resonances for $P$.
\end{thm}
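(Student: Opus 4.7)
The plan is to argue by contradiction using a Green's identity that exploits the formal self-adjointness of $P_{\omega_0}$ for real $\omega_0$, combined with the unique continuation statement from \cite{KT}. Suppose $u := u_{\omega_0} \in \LE^1_{\omega_0}$ is a nontrivial resonance at some $\omega_0 \in \R \setminus \{0\}$, so $P_{\omega_0} u = 0$ and $u$ satisfies the outgoing radiation condition \eqref{outgoing}. Since $P$ is symmetric, $A$ and $V$ are real, and inspection of the decomposition
\[
P_{\omega_0} = \Delta_{g,A} + W(x,D_x) + \omega_0 B(x,D_x) + \omega_0^2 g^{00}
\]
shows that each of the four summands is formally symmetric on $L^2(dx)$; hence $P_{\omega_0}$ is formally self-adjoint.

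Next, I would apply Green's identity on the ball $B_R$ to the pairing $(P_{\omega_0} u)\bar u - u \overline{P_{\omega_0} u}$: the bulk integral vanishes since $P_{\omega_0} u = 0$, leaving only a boundary flux at $\{|x|=R\}$ which must vanish for every $R$. For $R$ large enough to lie in the asymptotically flat region, this flux takes the leading-order form
\[
F(R) = \int_{|x|=R} \bigl( \partial_r u \cdot \bar u - u \cdot \overline{\partial_r u} \bigr)\, dS + (\text{lower order}).
\]
Substituting the outgoing asymptotic $u = v(\theta)\, r^{-1} e^{\mp i\omega_0 r} + o(r^{-1})$ and the matching asymptotic for $\partial_r u$ obtained from \eqref{outgoing}, and integrating over the sphere of radius $R$, the leading terms combine to give $F(R) \to c\,\omega_0 \int_{S^2} |v(\theta)|^2 \, d\sigma(\theta)$ as $R \to \infty$ with $c$ a nonzero imaginary constant; since $\omega_0 \neq 0$, this forces $v \equiv 0$.

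To conclude, I invoke the unique continuation principle from \cite{KT} quoted in the discussion preceding the theorem statement: for a genuine resonance, the leading coefficient $v(\theta)$ of the outgoing asymptotic expansion must be nontrivial. Combined with $v \equiv 0$, this forces $u \equiv 0$, contradicting the assumption of a nontrivial resonance. The main obstacle I foresee is rigorously justifying the outgoing asymptotic expansion and the flux limit in the weak class $\LE^1_{\omega_0}$, since \eqref{outgoing} only furnishes averaged $L^2$ control of radial derivatives rather than pointwise information. This should be handled by dyadic cutoff and averaging in $R$ along dyadic annuli: away from the origin, $P_{\omega_0}$ is a small asymptotically flat perturbation of the flat Helmholtz operator, and for fixed nonzero $\omega_0$ the outgoing condition together with the equation $P_{\omega_0} u = 0$ allows one to iterate the radial ODE and resolve $u$ asymptotically into the profile $v(\theta)\, r^{-1} e^{\mp i\omega_0 r}$ with errors compatible with passing to the limit in $F(R)$.
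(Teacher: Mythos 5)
Your proposal is correct and follows essentially the same two-step scheme that the paper sketches in the remark after the theorem: the Green's identity flux argument, using the symmetry of $P_{\omega_0}$ and the outgoing radiation condition, shows that the leading asymptotic coefficient $v$ vanishes (equivalently, that $u_\omega \in \LE^1_0$), and the unique continuation from infinity of \cite{KT} (Carleman estimates) then forces $u_\omega \equiv 0$. You correctly identify the genuine technical content---justifying the pointwise asymptotic profile and the flux limit from the averaged $L^2$ control provided by \eqref{outgoing}---which is precisely what the cited reference handles.
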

This is a classical result but, in the context we are using, it was
proved in \cite{KT}.  We emphasize here the self-adjointness
requirement, without which the result is no longer true. However,
assuming nontrapping, it is stable with respect to small
non-self-adjoint perturbations. 

\begin{rem} The proof of this result has two complementary
steps:
\begin{enumerate}
\item  Use the symmetry and the outgoing radiation
condition \eqref{outgoing} to improve the decay of a resonance
$u_\omega \in \LE^1$ to $u_\omega \in \LE^1_0$.

\item Use Carleman estimates to establish a unique continuation type result from infinity
which shows that $u_\omega \in \LE^1_0$ implies $u_\omega = 0$.
\end{enumerate}
The second step above no longer uses the symmetry, and we will exploit this fact in the 
nonstationary part of the paper.
\end{rem}
\bigskip

The second limit we need to consider is as the spectral parameter $\omega$  
goes to zero. In that case the outgoing radiation condition is no longer meaningful,
so we set  
\begin{definition}
A function $u_0$ is  a zero resonance/eigenfunction  for $P$ if $u_0 \in \LE_0$ 
and $ P_0 u_0 = 0$.
\end{definition} 

The condition $u_0 \in \LE_0$ excludes asymptotically constant functions.
Then by standard elliptic analysis one can show that zero resonances
 have better decay at infinity and simpler asymptotics, 
\[
u(r,\omega) = c r^{-1} +o(r^{-1}), \qquad \nabla u = O(r^{-2}).
\]
We remark that zero resonances for $P_0$ correspond to stationary
finite energy solutions for $P$. These have infinite local energy
norm.  Generically the constant $c$ in the above asymptotics is
nonzero. However, unlike the case of resonances, here we may encounter
a stronger decay (though no faster than polynomial, due to unique
continuation results from infinity, see \cite{KT-ucp}). Also, the
homogeneous equation $Pu = 0$ may have further solutions with
polynomial growth in time.

In a direct fashion, one would relate the fact that zero is not an eigenvalue/resonance
to a bound of the form
\begin{equation}\label{zero-res}
\| u\|_{\LE^1} \lesssim \|P_0 u\|_{\LE^*}, \qquad u \in \LE^1_0.
\end{equation}
However, since $P_0$ is a classical elliptic operator, its behavior at infinity 
is perturbative off the Laplacian. But the Laplacian satisfies a larger family of 
weighted estimates, so the choice of weights at infinity here is not so important.
This motivates the following definition:

\begin{definition}
We say that $P$ satisfies a zero resolvent bound (or zero
  nonresonance condition) if
\begin{equation}\label{zero-res2}
\| u\|_{\dot H^1} \leq K_0 \|P_0 u\|_{\dot H^{-1}}.
\end{equation}
\end{definition} 

 By analogy with Proposition~\ref{p-nonzero}, we have the following result,
which is proved in Section~\ref{s:low}:

\begin{prop}[Zero resolvent bound] \label{p-zero}
The following statements are equivalent for stationary, asymptotically flat 
operators $P$: 

a) $0$ is not a resonance or eigenvalue.

b) The estimate \eqref{zero-res} holds for all $u \in \LE^1_0$.

c) The estimate \eqref{zero-res2} holds for all $u \in \dot{H}^1$.

d) The resolvent bound \eqref{LE-fourier} holds uniformly for $\omega \in H$
near $0$, and the limit   
\[
R_{0} f = \lim_{H \ni \omega \to 0} R_\omega f  \in \LE^1_0
\]
exists uniformly on compact sets.

e) The stationary local energy decay bound \eqref{LE-stat}
holds.{\footnote{{In the proof we work with \eqref{S_LE}
  rather than \eqref{LE-stat}.  We note, however, that the latter
  follows from the former for bounded ranges of frequencies and that,
  by \eqref{high_LE}, the bound is generically true for high frequencies.}}}

Furthermore, these properties are stable with respect to small, stationary, asymptotically flat 
perturbations of $P$.
\end{prop}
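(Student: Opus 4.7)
The plan is to prove the equivalences via the chain (a) $\Leftrightarrow$ (b) $\Leftrightarrow$ (c), then (b) $\Rightarrow$ (d) $\Rightarrow$ (e) $\Rightarrow$ (a), with the stability assertion handled by perturbation of (c). The underlying structure driving the argument is that $P_0$ is a uniformly elliptic operator which agrees with $-\Delta$ modulo an asymptotically flat perturbation that is small outside $B(0,R_0)$, so all estimates can be compared against the model bounds for the Laplacian and reduced to a compact-region analysis.

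For (a) $\Leftrightarrow$ (b) $\Leftrightarrow$ (c), I would combine the model bounds $\|u\|_{\dot H^1} \lesssim \|\Delta u\|_{\dot H^{-1}}$ and $\|u\|_{\LE^1_0} \lesssim \|\Delta u\|_{\LE^*}$ for $-\Delta$ on $\R^3$ with a local elliptic estimate on $B(0,R_0)$. Smallness of the AF perturbation in $\{|x|>R_0\}$ absorbs the non-constant coefficient terms there, and one obtains that $P_0$ is Fredholm of index zero both as a map $\LE^1_0 \to \LE^*$ and as $\dot H^1 \to \dot H^{-1}$. Hence the estimates (b), (c) each hold if and only if the corresponding kernel is trivial, and elliptic regularity for $P_0$ identifies each such kernel with the space of zero resonances $u_0\in \LE_0$ with $P_0u_0=0$, which is (a). The direct equivalence (b) $\Leftrightarrow$ (c) also follows since the two weights differ only in behavior that $-\Delta$ controls at infinity.

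For (b) $\Rightarrow$ (d) I would expand $P_\omega = P_0 + \omega B + \omega^2 g^{00}$ and invert by a Neumann series, using the definition $\LE^1_\omega = \LE^1 \cap |\omega|^{-1}\LE$ so that the first-order piece $\omega B$ and the zeroth-order piece $\omega^2 g^{00}$ each produce an $O(|\omega|)$ perturbation into $\LE^*$: the $\omega$ in front of $B$ supplies the smallness on the $\LE^1$ component, while the $\omega^2$ coefficient pairs with the $|\omega|^{-1}\LE$ component to yield the same factor. Combined with (b), this gives uniform invertibility of $P_\omega$ on a $H$-neighborhood of $0$, and the limit $\omega \to 0$ produces $R_0 f \in \LE^1_0$. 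The step (d) $\Rightarrow$ (e) then proceeds as in Proposition~\ref{p:le-res}: by Plancherel, the extra $\|\partial_0 u\|_{LE}$ term on the right of \eqref{LE-stat} takes care of all nonzero frequencies, so the only thing needed globally is the uniform resolvent bound on a compact neighborhood of $\omega=0$, which is exactly (d), supplemented at high frequencies by Theorem~\ref{t:small} applied after the $\omega^2$ term dominates. Finally (e) $\Rightarrow$ (a) is by obstruction: a nontrivial zero resonance $u_0$ gives a stationary solution $u(t,x):=u_0(x)$ with $\partial_t u \equiv 0$, $\nabla u_0 \in L^2$ by elliptic regularity, so the right-hand side of \eqref{LE-stat} is $O(1)$ while $\|u\|_{LE^1[0,T]} \gtrsim T^{1/2}\|u_0\|_{\LE^1}\to\infty$.

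The main obstacle I anticipate is the uniform low-frequency step (b) $\Rightarrow$ (d): one has to check that the coefficients in $B$ coming from $g^{0k}$, $A$, and the zeroth-order term $2A_0 g^{00}$ interact with the split norm $\LE^1_\omega$ in just the right way to make the Neumann series converge uniformly down to $\omega=0$, with constants controlled solely by the AF norm. The stability statement under small stationary AF perturbations is then a direct consequence: condition (c) is the invertibility of a bounded linear operator from $\dot H^1$ to $\dot H^{-1}$ in a suitable norm, and a small AF perturbation of $P_0$ translates into a small perturbation in that operator norm, which preserves (c) with a slightly worse constant $K_0$.
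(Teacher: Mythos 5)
Your high-level architecture around the cycle is sound, the $(e)\implies(a)$ obstruction argument via the $T^{1/2}$ growth of the stationary solution is exactly what the paper does, the Fredholm-index-zero packaging of $(a)\iff(b)\iff(c)$ is a legitimate (if slightly condensed) reorganization of the content of Lemma~\ref{zspec_assm_lem}(ii) and Lemma~\ref{p:other-LE0}, and your handling of the stability assertion via perturbation of (c) matches the paper's use of Lemma~\ref{zspec_assm_lem}(i).

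The step that does not go through is $(b)\implies(d)$ by a Neumann series, and the obstacle you ``anticipate'' at the end is in fact a genuine failure rather than a technicality to be checked. Convergence of the Neumann series requires $\|\omega B + \omega^2 g^{00}\|_{\LE^1_\omega\to\LE^*}$ to tend to $0$ as $\omega\to 0$, but the term $\omega^2 g^{00}$ defeats this because $g^{00}$ does not decay (it tends to $-1$ at infinity). Your argument pairs the $\omega^2$ factor with the $|\omega|^{-1}\LE$ part of $\LE^1_\omega$, which would require $\|g^{00}u\|_{\LE^*}\lesssim\|u\|_{\LE}$; but $\LE=\ell^\infty L^{2,-1/2}$ does not embed in $\LE^*=\ell^1 L^{2,1/2}$, the mismatch being a full factor of $\langle x\rangle$ together with an $\ell^\infty$-versus-$\ell^1$ dyadic summation. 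Testing on a bump $u$ at scale $R$ gives $\|\omega^2 g^{00}u\|_{\LE^*}/\|u\|_{\LE^1_\omega}\sim\min\bigl(\omega^2R^2,|\omega|R\bigr)$, which is $\gtrsim 1$ once $R\gtrsim|\omega|^{-1}$. That is precisely the Helmholtz scale, where $P_\omega$ has $e^{i\omega r}/r$-type outgoing behavior rather than the $1/r$-type behavior of $P_0^{-1}$, and this transition cannot be captured perturbatively off $P_0$. (The first-order piece $\omega B$ has a related, though less severe, issue: the AF norm gives $g^{0k}\in\ell^1L^\infty$ but not $\langle x\rangle g^{0k}\in\ell^1L^\infty$, which is what a direct $\LE^1\to\LE^*$ bound would require.) The paper circumvents all of this: the implication $(c)\implies(d)$ is obtained by ``repeating the proof of Theorem~\ref{low_LE_thm} for the resolvent equation,'' and that proof does not invert globally off $P_0$ at all — instead it applies the exterior small-AF local energy estimate \eqref{low_Mourre} at a scale $R_2$, uses the $\omega$-independent zero-frequency bounds of Lemma~\ref{p:other-LE0} only on a ball $\{r<R_1\}$, and glues the two via the truncated estimate \eqref{zero_LE-trunc} with a pigeonholed intermediate radius. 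Without an exterior Morawetz/Mourre estimate of this kind to carry the non-perturbative far field, the uniform resolvent bound (d) is not accessible from (b) or (c) alone.
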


The discussion above of eigenvalues and resonances is local in nature, so in order 
to obtain global information out of it, it would be very convenient if we were 
able to restrict these considerations to a compact set. This is where the nontrapping 
assumption first comes in. Precisely, we have the following high frequency local 
energy decay bound:

\begin{thm}[High frequency local energy decay]\label{1pt_thm_high}
For any nontrapping asymptotically flat operator $P$  we have the estimate:
\begin{equation}
		\lp{u}{LE^1[0,T]} + \lp{\partial u}{L^\infty L^2[0,T]}  \ \lesssim \ \lp{\partial u(0)}{L^2} 
		+ \lp{\la x \ra^{-2} u}{LE} + \lp{Pu}{LE^*+ L^1L^2[0,T]} \ , \label{high_LE}
\end{equation}
where the implicit constant is uniform in $T$.
\end{thm}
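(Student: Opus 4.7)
The plan is to combine two spatial regimes. Outside the ball $\{|x| \le 2R_0\}$ the operator $P$ is a small AF perturbation of $\Box$ by \eqref{AF-def}, so Theorem~\ref{t:small} applies directly. Inside this ball I use a positive commutator argument driven by a phase-space escape function whose existence is guaranteed by the quantitative nontrapping condition~\eqref{nt2}. The lossy lower-order error $\lp{\la x\ra^{-2} u}{LE}$ on the right is what distinguishes this ``high frequency'' bound from true LE decay: it absorbs low-frequency obstructions (zero modes, embedded resonances) arising from the interior elliptic problem which cannot be ruled out without further spectral hypotheses.

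\emph{Step 1 (Exterior).} Let $\chi$ be a smooth radial cutoff with $\chi\equiv 1$ for $|x|\ge 2R_0$ and $\chi \equiv 0$ for $|x|\le R_0$. The coefficients of $P$ on $\mathrm{supp}\,\chi$ are small in AF norm by \eqref{AF-def}, so after extending these coefficients by those of $\Box$ on $\{|x|\le R_0\}$, one obtains an operator $\tilde P$ to which Theorem~\ref{t:small} applies. Since $\tilde P(\chi u) = \chi Pu + [P,\chi] u$ and $[P,\chi]$ is a first-order differential operator supported in the compact annulus $\{R_0 \le |x|\le 2R_0\}$, this yields
\[
\lp{\chi u}{LE^1[0,T]} + \lp{\partial(\chi u)}{L^\infty L^2[0,T]} \lesssim \lp{\partial u(0)}{L^2} + \lp{Pu}{LE^*+L^1L^2[0,T]} + \lp{(\partial u,u)\mathbf{1}_{|x|\le 2R_0}}{L^2_{t,x}}.
\]
The error is controlled by a truncated interior version of $\lp{u}{LE^1}$, so it suffices to estimate $u$ on $\{|x|\le 2R_0\}$.

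\emph{Step 2 (Interior via nontrapping).} For the interior I run a positive commutator argument using a phase-space escape function. The quantitative form of~\eqref{nt2} together with~\eqref{zero_ell} yields a $\xi$-homogeneous symbol $q(t,x,\xi)$ of order one, supported in $\{|x| \le 3R_0\}$, such that
\[
\{p,q\}(t,x,\tau,\xi) \gtrsim |\xi|^2 \ \text{on } \{p=0\}\cap\{|x|\le 2R_0\},
\]
where $p$ is the principal symbol of $P$. Such $q$ is built by averaging a fixed weight along the null bicharacteristic flow over the escape time $T_0$; the averaging is what converts a pointwise transversality property into a globally positive bracket. Quantizing $q$ to $Q$, matching smoothly with the standard radial Morawetz multiplier from~\cite{MT} on the transition annulus, and applying the positive commutator identity $\mathrm{Re}\,\la i[P,Q]u,u\ra$ with integration by parts in $(t,x)$, one obtains
\[
\lp{u}{LE^1[0,T]}^2 \lesssim \lp{\partial u(0)}{L^2}^2 + \lp{\partial u(T)}{L^2}^2 + \lp{Pu}{LE^*+L^1L^2[0,T]}^2 + \lp{\la x\ra^{-2} u}{LE[0,T]}^2,
\]
where the $\la x\ra^{-2}u$ residue comes from the subprincipal and lower-order terms in the pseudodifferential expansion of $[P,Q]$.

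\emph{Step 3 (Closing with energy).} To replace $\lp{\partial u(T)}{L^2}$ on the right by $\lp{\partial u}{L^\infty L^2[0,T]}$ on the left uniformly in $T$, I pair Step~2 with the energy identity \eqref{E-inhom} for the symmetric part of $P$. Using the coercivity \eqref{e-pos} of $E(u)$ modulo a compact $L^2$ correction (valid by \eqref{zero_ell}) and Hardy's inequality in $\R^3$ to bound $\lp{u}{L^2_{comp}}$ by $\lp{\la x\ra^{-2} u}{LE}^{1/2}\lp{\partial u}{L^\infty L^2}^{1/2}$, one controls $\lp{\partial u(T)}{L^2}^2$ by the data, by $\lp{Pu}{L^1L^2}\lp{\partial u}{L^\infty L^2}$, by $\lp{\la x\ra^{-2}u}{LE}^2$, and by $\lp{u}{LE^1[0,T]}^2$-type errors coming from $P - P^*$ and $\partial_t$ of the coefficients. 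Cauchy-Schwarz absorption closes the estimate. The main obstacle is Step~2: constructing a time-dependent escape function $q$ that is uniform in $t$ for a possibly non-stationary operator, which is precisely where the \emph{quantitative} rather than merely qualitative form of~\eqref{nt2} is essential; a secondary subtlety is the smooth matching of $q$ with the exterior radial multiplier on the transition annulus without generating uncontrolled sign errors.
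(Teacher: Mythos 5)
Your overall architecture (exterior small-AF regime via Theorem~\ref{t:small}, plus an interior escape-function multiplier built from the quantitative nontrapping condition) is the same as the paper's, and Step 1 is essentially the paper's first reduction. But there are two genuine gaps, the second of which is decisive.

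\emph{Step 2 omits the frequency threshold.} You claim that after quantizing a purely spatial, $\xi$-homogeneous order-one symbol $q$ the ``subprincipal and lower-order terms'' of $[P,Q]$ land directly in $\lp{\la x\ra^{-2}u}{LE}$. That is not so: the G\r{a}rding and symbol-calculus errors for an order-one multiplier against an order-two operator are one order below the main term, i.e.\ of size $\lp{\partial u}{L^2_{comp}}\lp{u}{L^2_{comp}}$, which is comparable to $\lp{u}{LE^1_{comp}}\lp{\la x\ra^{-2}u}{LE}$ --- the $\lp{u}{LE^1}$ factor cannot be absorbed. The paper avoids this by truncating the escape function to frequencies $|\xi|,|\tau|\gtrsim\lambda$ (the symbol $q^{\pm}_{>\lambda}=e^{\sigma q^\pm}\chi(|\xi|e^{\sigma q^\pm}/\lambda)\chi(|\tau|e^{\sigma q^\pm}/\lambda)$), so that the pseudodifferential errors carry a factor $\lambda^{-\delta}$ and are absorbable for $\lambda$ large. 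The low-frequency part $u_{<\lambda}$ is then bounded separately by Bernstein, contributing $\lambda\lp{\la x\ra^{-2}u}{LE}$; choosing $\lambda=\lambda(M_0,R_0,T_0)$ closes the estimate. Without the threshold your residue claim is an assertion, not an estimate.

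\emph{Step 3 relies on energy conservation, which the theorem does not grant.} The theorem assumes only nontrapping and asymptotic flatness --- no stationarity, no slow variation, no symmetry --- and the paper stresses precisely this: ``This is a classical positive commutator estimate, with the added difficulty that we do not use any energy conservation property.'' The error in \eqref{E-inhom} for the symmetrized $P$ produced by $\partial_t g$ and by $P-P^*$ is of size $O(M_0)\lp{\partial u}{LE}^2$ (the coefficients are $O(M_0)$, not small, on $|x|\lesssim R_0$), which feeds back into $\lp{u}{LE^1}^2$ with an $O(M_0)$ constant and cannot be absorbed; moreover the coercivity \eqref{e-pos} requires a pointwise-in-time control of $\lp{u(T)}{L^2_{comp}}$ which the $L^2_t$-type norm $\lp{\la x\ra^{-2}u}{LE}$ does not provide (your interpolation inequality mixes $L^2_t$ and $L^\infty_t$ incorrectly). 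The paper's route is entirely different: first a reduction (solve an auxiliary small-AF problem and time-slice) to data-free, $LE^*$-forcing, compactly supported problems, then a time cutoff $\beta(t-T+2)$ so no $\lp{\partial u(T)}{L^2}$ term ever appears, and finally a Mean Value Theorem argument --- the $L^2_t$ structure of $LE^1$ produces good times $t_k\in[k,k+1]$ with $\lp{\partial u(t_k)}{L^2_{<2R_0}}\lesssim\lp{u}{LE^1_{<2R_0}}$, and the local-in-time exponential bound \eqref{local-en} then gives $L^\infty_t L^2$ from the sequence. You should replace Step 3 with this mechanism.
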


We remark that stationarity plays no role in this theorem. However, as
we shall see in the proof, in the stationary case this yields good
resolvent bounds outside a compact set of frequencies.  Now we can
state our main result for stationary operators:

\begin{thm}\label{t:stat}
 a)  Let $P$ be stationary, nontrapping and asymptotically
  flat. Then local energy decay holds for $P$ if and only if $P$ has
  no negative eigenvalues or real  resonances.

 b)  Let $P$ be stationary, symmetric, nontrapping and asymptotically
  flat. Then local energy decay holds for $P$ if and only if $P$ has
  no negative eigenvalues or zero  eigenvalues/resonances.
\end{thm}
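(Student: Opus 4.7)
The plan is to reduce Theorem~\ref{t:stat} to the resolvent characterization of local energy decay in Proposition~\ref{p:le-res}. That result tells us local energy decay is equivalent to the uniform bound $\|R_\omega\|_{\LE^* \to \LE^1_\omega} \lesssim 1$ for $\omega \in H$, and our task is to match this uniform bound with the three spectral obstructions. Part (b) will then follow immediately from (a) together with Theorem~\ref{nores}, which eliminates the possibility of nonzero real resonances in the symmetric case and so collapses the full list of obstructions to the two types listed.

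For the forward direction of (a), suppose local energy decay holds. Since the $LE^1$ norm dominates the uniform energy, Proposition~\ref{p:e-res}(b) combined with Proposition~\ref{p:le-res} gives a resolvent $R_\omega$ which is holomorphic and uniformly bounded on $H$; in particular there can be no negative eigenvalues, since those would produce poles of $R_\omega$ in $H$. For a candidate nonzero real resonance $\omega_0$, the truncation computation sketched after \eqref{outgoing} produces a family of approximate solutions whose energy grows like $t^{1/2}$ while their $LE^*$ source norm stays bounded, contradicting \eqref{LE}; alternatively, the uniform resolvent bound on $H$ lets us pass to the boundary and apply Proposition~\ref{p-nonzero} to rule $\omega_0$ out. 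Finally, \eqref{LE} clearly implies the weaker stationary estimate \eqref{LE-stat}, so Proposition~\ref{p-zero}(e) excludes a zero eigenvalue/resonance.

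For the backward direction of (a) we must produce the uniform bound \eqref{LE-fourier} on $H$ assuming the three obstructions are absent. I would cover $\overline H$ by three regions. First, for $|\omega| \geq R$ large, Theorem~\ref{1pt_thm_high} applied to $u(t,x) = e^{-i\omega t} u_\omega(x)$ yields the estimate after absorbing the error $\|\langle x\rangle^{-2} u\|_{LE}$ into the left-hand side via the extra factor $|\omega|^{-1}$ available in $\LE^1_\omega$ (this is where nontrapping enters). Second, the zero nonresonance hypothesis and Proposition~\ref{p-zero} give \eqref{LE-fourier} uniformly in a neighborhood of $\omega = 0$. Third, on the compact strip $\{\omega \in \overline H : \epsilon \leq |\omega| \leq R\}$, Proposition~\ref{p-nonzero} provides a uniform bound in a neighborhood of every real nonzero $\omega_0$ (since no such $\omega_0$ is a resonance), while on the interior $\{\Im \omega < 0\}$ part of the strip $R_\omega$ is meromorphic with no poles (no negative eigenvalues) and hence bounded on any compact set by Fredholm theory. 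A finite subcover then yields a uniform bound on the whole strip, which combined with the high- and low-frequency pieces gives the global bound \eqref{LE-fourier}.

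The main obstacle is the patching argument in the last step: each of Propositions~\ref{p-nonzero} and~\ref{p-zero} supplies only \emph{local} uniform resolvent bounds near a given spectral point, and one must use their stability-under-perturbation clauses, together with the meromorphy and absence of interior poles, to synthesize these into a single uniform bound on the intermediate-frequency strip. The compactness of the strip and the ellipticity of $P_\omega$ off a compact set in $x$ reduce this to a finite-dimensional obstruction that is ruled out precisely by the two hypotheses. Once this is done, part (b) is immediate: Theorem~\ref{nores} says a symmetric asymptotically flat $P$ has no nonzero real resonances, so under the assumptions of (b) the hypotheses of (a) are satisfied, and conversely any zero eigenvalue/resonance or negative eigenvalue remains an obstruction.
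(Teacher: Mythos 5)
Your proposal is correct and follows essentially the same route as the paper: both reduce via Proposition~\ref{p:le-res} to the uniform resolvent bound \eqref{LE-fourier}, both cover $\overline{H}$ with high-frequency (Theorem~\ref{1pt_thm_high}), low-frequency (Proposition~\ref{p-zero} via Theorem~\ref{low_LE_thm}), and intermediate-frequency (Proposition~\ref{p-nonzero} near the real axis plus Fredholm/meromorphy in the interior and a compactness cover) regions, and both deduce (b) from (a) by Theorem~\ref{nores}. The paper devotes its explicit argument to the backward ("if'') direction and leaves the forward direction implicit in the earlier discussion of eigenvalues, resonances, and Propositions~\ref{p-nonzero} and~\ref{p-zero}; you spell out both directions, which is fine and doesn't change the substance.
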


We remark that this result  is also stable with respect to small asymptotically
flat  perturbations. In particular, in the symmetric case one can continue
the property of absence of negative eigenvalues along continuous families
of operators $P(h)$ for as long as no zero eigenvalues/resonances occur
(see Proposition~\ref{p:3S} below). 

\bigskip

In what follows we take a closer look at the symmetric case if we
retain the no zero resonance assumption \eqref{zero-res2}, but drop
the requirement that there are no negative eigenfunctions.  As the
eigenfunctions with negative imaginary part are isolated and localized to a
compact\footnote{This is a consequence of the nontrapping assumption,
  and part of the proof of the last theorem.} set, it follows that
there are finitely many of them. Further, by Fredholm theory, each of
them has an associated finite dimensional invariant subspace. The sum
of these subspaces is associated to solutions which grow exponentially
in time.  One can argue in a similar manner for eigenvalues with
positive imaginary part (which are the complex conjugates of the ones
with negative imaginary part). This heuristic reasoning leads to the
following theorem:

\begin{thm}\label{t:stat+}Let $P$ be symmetric, stationary, nontrapping, asymptotically
  flat, and satisfying the zero resolvent bound \eqref{zero-res2}.
Then there is a direct sum decomposition of the energy space into invariant subspaces
\begin{equation}
\mathcal E = S_0 + S^+ + S^-,
\end{equation}
with $S^+$ and $S^-$ of equal finite dimension, so that the wave flow on the
subspaces $S^+$, $S^0$ and $S^-$ is as follows:

a) Solutions in $S^+$ are spatially localized and grow exponentially in time.

b) Solutions in $S^0$ are uniformly bounded and satisfy local energy decay.

c)   Solutions in $S^-$ are spatially localized and decay exponentially in time.
\end{thm}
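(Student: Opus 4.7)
The plan is to use spectral projections associated to the finitely many non-real poles of the pencil $P_\omega$ to split off the exponentially growing and decaying modes, and then apply Theorem~\ref{t:stat}(b) to the remaining subspace. First I pin down the non-real spectrum. By Proposition~\ref{p:e-res}(a), the resolvent $R_\omega$ is meromorphic on $H=\{\Im\omega<0\}$. The high frequency bound \eqref{high_LE}, combined with a Plancherel argument of the type used in Proposition~\ref{p:le-res}, yields uniform bounds on $R_\omega$ for $|\omega|$ large and $\Im\omega\le 0$, so poles cannot accumulate at infinity. Theorem~\ref{nores} rules out nonzero real resonances, and by Proposition~\ref{p-nonzero} the resolvent bound \eqref{LE-fourier} holds uniformly near every nonzero real $\omega$; the assumption \eqref{zero-res2} together with Proposition~\ref{p-zero} gives the same near $\omega=0$. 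Hence the set of $\omega\in H$ for which $P_\omega$ has nontrivial kernel is finite, with each generalized eigenspace finite dimensional and consisting of spatially exponentially decaying functions (by ellipticity of $P_\omega$ off the real axis and asymptotic flatness). Since $A$ and $V$ are real, complex conjugation shows the pencil spectrum is invariant under $\omega\mapsto -\bar\omega$, which pairs each eigenvalue in $H$ with one in $\{\Im\omega>0\}$ of equal multiplicity.

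Next I pass to the first order formulation on the energy space $\mathcal E=\dot H^1\times L^2$: writing $U=(u,\partial_t u)$, the wave equation becomes $\partial_t U=\mathcal A U$, whose spectrum is related to that of the pencil by $\lambda=i\omega$. Let $\gamma^+\subset\{\Re\lambda>0\}$ be a closed contour surrounding all eigenvalues of $\mathcal A$ with positive real part (the images of the negative eigenvalues of $P_\omega$), and similarly $\gamma^-$ in $\{\Re\lambda<0\}$. Define the Riesz projections
\[
\Pi^\pm \ = \ \frac{1}{2\pi i}\oint_{\gamma^\pm}(\lambda-\mathcal A)^{-1}\,d\lambda,
\]
and set $S^\pm=\mathrm{Range}\,\Pi^\pm$, $S^0=(I-\Pi^+-\Pi^-)\mathcal E$. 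The projections commute and are mutually annihilating by standard contour deformation, so $\mathcal E=S^+\oplus S^0\oplus S^-$ as a direct sum of invariant subspaces, with $\dim S^+=\dim S^-$ by the pairing above. On $S^\pm$, $\mathcal A$ acts as a finite dimensional matrix with spectrum in $\{\pm\Re\lambda>0\}$, so elementary linear algebra gives the exponential growth and decay asserted in (a) and (c), while the spatial localization is inherited from the spatial localization of the generalized eigenfunctions of the elliptic pencil.

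Finally I verify (b). By construction, when $\mathcal A$ is restricted to $S^0$ there is no spectrum off the real axis (removed by $\Pi^\pm$), none at $\omega=0$ (by the zero nonresonance assumption), and none at nonzero real $\omega$ (by Theorem~\ref{nores}). Equivalently, the restricted resolvent extends continuously from $H$ to all of $\overline H=\{\Im\omega\le 0\}$ with the uniform bound \eqref{LE-fourier}, so by the sufficiency direction of Proposition~\ref{p:le-res} applied within $S^0$, local energy decay \eqref{LE} holds for solutions with data in $S^0$. The main obstacle, and the step requiring the most care, is to promote the pointwise absence of spectrum on $\overline H$ after projection to the \emph{uniform} resolvent estimate required by Proposition~\ref{p:le-res}. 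This is handled by decomposing
\[
R_\omega \ = \ R_\omega(I-\Pi^+-\Pi^-) + R_\omega(\Pi^++\Pi^-),
\]
where the second summand is a finite rank meromorphic term with no poles in $\overline H$, while the first summand inherits the uniform $\LE^*\to\LE^1_\omega$ bound down to and along the real axis from Propositions~\ref{p-nonzero} and~\ref{p-zero} together with \eqref{high_LE}.
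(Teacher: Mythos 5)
Your construction of $S^\pm$ via Riesz projections is in the same spirit as the paper (cf.\ \eqref{ppm} and \eqref{eih-ppm}), but your verification of part (b) takes a genuinely different route and it has a real gap. Two remarks first on the finite-dimensional side. The symmetry you invoke is wrong: for real $A,V,g$ one has $\overline{P_\omega u}=P_{\bar\omega}\bar u$, so the pencil spectrum is invariant under $\omega\mapsto\bar\omega$; the map $\omega\mapsto-\bar\omega$ you wrote fixes the lower half-plane and does not pair $S^+$ with $S^-$. This is easy to repair, as is the notational mismatch between the pencil resolvent $R_\omega$ (acting on scalar functions) and the Riesz projections $\Pi^\pm$ (acting on the first-order energy space $\dot H^1\times L^2$); when you decompose $R_\omega$ you should really be decomposing the matrix resolvent $\mathcal{R}_\omega$ of the Hamiltonian, since the two are related but not the same object.

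The genuine gap is in the last step. You want to deduce local energy decay on $S^0$ from ``the restricted resolvent is pole-free and uniformly bounded on $\overline{H}$, hence Proposition~\ref{p:le-res} applies within $S^0$.'' Proposition~\ref{p:le-res} is an equivalence for the full operator $P$: it characterizes the estimate \eqref{LE} for the \emph{inhomogeneous} problem with arbitrary $f\in LE^*$ by a bound on the unrestricted resolvent. When you restrict initial data to $S^0$, the inhomogeneous evolution leaves $S^0$, so the ``if'' direction cannot simply be applied inside the subspace; you would have to rederive the Plancherel/Fourier--Laplace argument for the projected evolution, and you would also have to supply the compactness argument promoting the pointwise limiting-absorption bounds of Propositions~\ref{p-nonzero}, \ref{p-zero} and the high-frequency bound \eqref{high_LE} to a single uniform constant on all of $\overline{H}\setminus\{\text{poles}\}$. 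Neither step is carried out. The paper sidesteps both difficulties by a structural observation you do not use: in the symmetric case the energy form $\langle\cdot,\cdot\rangle_E$ is a Pontryagin (Krein) inner product with finite negative index, and $S^\pm$ are precisely the maximal $\mathcal H$-invariant isotropic subspaces (Proposition~\ref{z_spec_prop}). On the $E$-orthocomplement $S^0$ the form is \emph{coercive} and \emph{conserved}, so solutions in $S^0$ have uniformly bounded energy for free; local energy decay on $S^0$ then follows by feeding this bound into the already-established exponential dichotomy, Theorem~\ref{exp_dich_thm}. That is the key idea missing from your proposal, and it is what makes part (b) close without additional resolvent analysis.
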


Likely this result extends fully to the non-self-adjoint case under a no
resonance assumption; we leave this as an open problem. Further, this
result has a nice stability property with respect to $P$.  Precisely, as a consequence of
Propositions~\ref{p-nonzero}, \ref{p-zero} and \ref{p:ppm} we have
the following:

\begin{prop} \label{p:3S} Let $P(h)$ be a family of
  stationary, symmetric, nontrapping and asymptotically flat wave
  operators which is continuous in the $AF$ topology.  Suppose that for each $h$ there are no zero
  eigenvalues/resonances for $P(h)$.  Then the (equal) dimensions of
  the  invariant subspaces $S^\pm$ for $P(h)$ are independent of $h$.
 \end{prop}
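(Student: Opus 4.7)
The plan is to identify $\dim S^+(h)$ (equivalently, by conjugation symmetry, $\dim S^-(h)$) with the rank of a finite-rank spectral projection and then invoke the classical fact that the rank of a continuously varying family of finite-rank projections is locally constant. The eigenvalues of $P_\omega(h)$ in the lower half-plane $H$ correspond (via $e^{i\omega t}u_\omega$) to exponentially growing modes spanning $S^+(h)$; by standard Fredholm/Dunford theory the invariant subspace $S^+(h)$ can be realized as the range of the spectral projection
\[
\Pi^+(h)\ =\ \frac{1}{2\pi i}\oint_\gamma R_\omega(h)\,d\omega,
\]
suitably interpreted on the energy space, where $\gamma$ is a closed contour in $H$ that encloses all eigenvalues of $P_\omega(h)$ in $H$.

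The real work is to choose one contour $\gamma$ valid uniformly for $h$ in a small $AF$-neighborhood of a given $h_0$. Two constraints must be met: (i) all eigenvalues of $P_\omega(h)$ in $H$ must lie inside $\gamma$; (ii) $R_\omega(h)$ must be uniformly bounded on $\gamma$. For (i), the eigenvalues are confined to a fixed compact subset of $H$, uniformly in $h$: in the $|\Re\omega|$ direction by the nontrapping high-frequency bound of Theorem~\ref{1pt_thm_high} (which supplies uniform resolvent control for $|\omega|$ large), and in the $|\Im\omega|$ direction by the a priori exponential energy bound \eqref{local-en}. An eigenvalue cannot escape through the real axis as $h$ varies: an escape at $\omega_*\in\mathbb R\setminus\{0\}$ would produce a real nonzero resonance for the limiting operator, contradicting Theorem~\ref{nores} together with the stability part of Proposition~\ref{p-nonzero}, while an escape at $\omega_*=0$ would contradict the zero nonresonance hypothesis via the stability part of Proposition~\ref{p-zero}. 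Covering the compact real segment over which such escapes could occur by finitely many of the open $AF$-neighborhoods provided by these two propositions (each of which provides a joint $(\omega,h)$-neighborhood of uniform resolvent control) yields a single $AF$-neighborhood of $h_0$ on which no eigenvalues lie on $\mathbb R$ and on which one fixed $\gamma$ works for all $h$.

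With $\gamma$ in hand, $h\mapsto R_\omega(h)$ is operator-norm continuous uniformly for $\omega\in\gamma$, so $h\mapsto\Pi^+(h)$ is norm continuous on this neighborhood. Its rank, or equivalently its integer-valued trace, is therefore locally constant. Applying this local constancy along any continuous path in the parameter space of the family $P(h)$ yields that $\dim S^+(h)$ is independent of $h$. The equality $\dim S^-(h)=\dim S^+(h)$ then follows from the symmetry of $P(h)$: eigenvalues of $P_\omega(h)$ come in complex conjugate pairs with equal generalized multiplicity.

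The main obstacle is step two, the uniform construction of the contour $\gamma$. It requires promoting the pointwise-in-$\omega_0$ stability statements of Propositions~\ref{p-nonzero} and \ref{p-zero} to uniformity over the compact real segment where eigenvalues could a priori approach $\mathbb R$; the compactness of that segment is exactly what Theorem~\ref{1pt_thm_high} and \eqref{local-en} deliver, and a finite covering argument completes the reduction. The other ingredients — the contour representation of the invariant subspace, continuity of the resolvent on $\gamma$, and local constancy of rank for continuous finite-rank projections — are standard operator-theoretic facts.
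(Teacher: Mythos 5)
Your proof is correct and follows essentially the same strategy the paper has in mind: represent the invariant subspaces via a contour integral of the resolvent, arrange one contour that works uniformly in an $AF$-neighborhood of each $h_0$, and conclude local constancy of rank. The paper implements this through Proposition~\ref{p:ppm}: part (a) there builds the contour quantitatively as the boundary of a rectangle $[-M,M]\times[-\alpha,-\beta]$, finding $\beta$ from \eqref{local-res}, $M$ from the high-frequency analysis of Section~\ref{s:le-high}, and $\alpha$ from the two-point local energy bound of Theorem~\ref{2pt_thm} combined with the energy relation \eqref{E-inhom} (i.e.~using the symmetry of $P$ directly to push the spectrum a fixed distance off the real axis); part (c) then records the Lipschitz dependence of $P^\pm$ on the coefficients. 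You instead obtain the near-real-axis resolvent control by a softer route, combining Theorem~\ref{nores} with the stability clauses of Propositions~\ref{p-nonzero} and \ref{p-zero} and a finite-cover argument over the compact real segment. Both routes ultimately exploit symmetry and nontrapping through the same building blocks, so the substance is the same; the paper's version additionally delivers quantitative (Lipschitz) control of $P^\pm$, which you do not need for rank constancy.

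One step you state but do not justify is that $h\mapsto R_\omega(h)$ is operator-norm continuous uniformly on $\gamma$. Because $P_\omega(h)-P_\omega(h_0)$ contains second-order terms, the resolvent identity requires the resolvent to gain two spatial derivatives; this is exactly what the enhanced bound \eqref{res-le+} supplies, and it is how the paper proves part (c) of Proposition~\ref{p:ppm}. That point should be made explicit, though it is a gap in exposition rather than in the argument.
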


\subsection{The nonstationary case}
 
Our main goal here is to show that the results in the stationary case
extend without any change to almost symmetric, almost stationary
operators.  We remark that this includes small AF perturbations of
stationary metrics, but it is certainly not limited to that. While the results
here are very similar\footnote{Except for the almost symmetry hypothesis,
which we need in order to avoid resonaces related issues.} to those in the 
stationary case, our approach requires some  new ideas, in part to account
for the need to have good bounds of the implicit constants in the estimates
in terms of our control parameters $R_0$, $M_0$, $T_0$ and $K_0$.     

Our starting point here is a local energy decay estimate with bounds in terms of
initial and final energies:

\begin{thm}[Two point local energy decay]\label{2pt_thm}
  Let $P$ be an $\epsilon$-slowly varying, nontrapping and
  asymptotically flat operator, which
  satisfies the zero nonresonance condition \eqref{zero-res2} for some
  $K_0>0$. Suppose that $\epsilon$ is small enough
\[
\epsilon \ll_{R_0,M_0,T_0,K_0} 1.
\]
  Then we have the estimate:
\begin{equation}
		\lp{u}{LE^1[0,T]}+ \lp{\partial u}{L^\infty L^2[0,T]}   \ \lesssim \ \lp{\partial u(0)}{L^2} 
		+ \lp{\partial u(T)}{L^2} + \lp{P u}{LE^*+ L^1L^2[0,T]} \ , \label{2pt_LE}
\end{equation}
where the implicit constant is uniform in $T$ and depends only on
$R_0$, $M_0$, $T_0$ and $K_0$..
\end{thm}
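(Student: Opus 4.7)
The strategy combines the high-frequency local energy decay of Theorem~\ref{1pt_thm_high} with a low-frequency analysis driven by the zero nonresonance hypothesis \eqref{zero-res2}, bridged by a stationary approximation that exploits the $\epsilon$-slow variation.

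First, invoking Theorem~\ref{1pt_thm_high} reduces the claimed bound to the low-frequency estimate
\[
\lp{\la x\ra^{-2} u}{LE[0,T]} \lesssim \lp{\partial u(0)}{L^2} + \lp{\partial u(T)}{L^2} + \lp{Pu}{LE^*+L^1L^2[0,T]}.
\]
The weight $\la x \ra^{-2}$ effectively localizes the left-hand side to a compact spatial set, so the problem becomes one of controlling the bound-state behaviour of $P$ at bounded spatial and temporal frequencies.

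Second, I would freeze coefficients in time. Using the decomposition $P = P_1 + P_2(t)$ provided by the $\epsilon$-slowly varying hypothesis, associate to each time $t_0$ the stationary operator $P^{t_0}$ obtained by evaluating the slow component $P_2$ at $t_0$ and dropping $P_1$. Then $P^{t_0}$ differs from $P(t)$ by an $O(\epsilon+\epsilon|t-t_0|)$ AF perturbation on a window around $t_0$. By the stability clause of Proposition~\ref{p-zero}, each $P^{t_0}$ satisfies \eqref{zero-res2} with constant $\lesssim K_0$ once $\epsilon \ll_{K_0} 1$; combined with Proposition~\ref{p:e-res} and the uniform nontrapping hypothesis, this yields, for each $P^{t_0}$, a stationary \emph{two-point} local energy decay bound with constant depending only on $K_0, R_0, M_0, T_0$. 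The key observation is that complex eigenvalues of $P^{t_0}$ in the lower half-plane and any non-zero real resonances are not genuine obstructions to the two-point bound: exponentially growing localized modes have $\lp{\partial u(T)}{L^2}$ of the same size as the interior $LE^1$ norm, and the polynomial-in-$T$ growth generated by embedded real resonances is likewise absorbed by the endpoint energies.

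Third, I would stitch these stationary bounds together. Partition $[0,T]$ into consecutive windows $I_k=[t_k,t_{k+1}]$ of length $L$ with $\epsilon L \ll 1$, apply the stationary two-point bound for $P^{t_k}$ on each $I_k$, and absorb the perturbative error $\lp{(P-P^{t_k})u}{LE^*+L^1L^2(I_k)} \lesssim (\epsilon+\epsilon L)\lp{u}{LE^1(I_k)}$ into the left-hand side by smallness of $\epsilon$ and $\epsilon L$. Summing over $k$ produces intermediate endpoint energies $\lp{\partial u(t_k)}{L^2}$ for $1 \le k \le N-1$ on the right-hand side; these are controlled by the $L^\infty L^2$ norm already on the left via a staggered pair of partitions (or a direct discrete manipulation), so that only $\lp{\partial u(0)}{L^2}$ and $\lp{\partial u(T)}{L^2}$ survive.

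The main obstacle is making Step~2 quantitative: establishing the stationary two-point LE for the frozen operators $P^{t_0}$ with constants depending only on $K_0, R_0, M_0, T_0$, uniformly across all $t_0$. This requires a careful contour deformation of \eqref{fourier-laplace-inverse} that separates the finite-rank contributions of the (at most finitely many) lower half-plane eigenvalues and of any real resonances from the uniformly bounded continuous spectral part, using the limiting absorption principle of Proposition~\ref{p-nonzero} away from zero, the zero resolvent bound of Proposition~\ref{p-zero} near zero, and the spectral interpretation of Theorem~\ref{1pt_thm_high} at high frequencies. Once this stationary ingredient is in hand, Steps~3 becomes a routine perturbative gluing.
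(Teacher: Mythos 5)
Your proposal takes a genuinely different route from the paper, but each of its three steps contains a gap.

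Regarding Step~1, the reduction via Theorem~\ref{1pt_thm_high} does not isolate ``low temporal frequencies'': the residual term $\lp{\la x\ra^{-2}u}{LE}$ is a spatially weighted quantity involving all temporal frequencies. To interpret it as a bounded-frequency object one must run a time-frequency decomposition, at which point the medium-frequency regime appears. Controlling $u$ there requires the Carleman estimates of Theorem~\ref{med_LE_thm} (Proposition~\ref{p:Carl-cut} and \ref{p:Carl-in}), which your plan never mentions. This is the heart of the difficulty, not a corollary of the zero nonresonance condition: the bound \eqref{zero-res2} gives no information on the characteristic set of $P$ away from frequency zero.

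Regarding Step~2, a stationary two-point local energy bound \emph{in the presence of eigenvalues off the real axis and real resonances} with constants depending only on $R_0,M_0,T_0,K_0$ is never established in the paper and is not an easy consequence of the propositions you cite. Proposition~\ref{p-nonzero} applies only when $\omega_0$ is \emph{not} a resonance, which is precisely the hypothesis you wish to drop. And a contour deformation of \eqref{fourier-laplace-inverse} around eigenvalue poles would produce residue terms whose growth needs to be quantitatively matched against $\lp{\partial u(T)}{L^2}$; the required lower bound on the endpoint energy for the eigenmode part is a separate nontrivial input. You flag this step as ``the main obstacle,'' which is accurate, but then leave it unresolved.

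Regarding Step~3, the stitching argument accumulates intermediate endpoint energies in a way that cannot be controlled. Because the $LE^1[0,T]$ norm is an $\ell^\infty$ sum over dyadic annuli of an $L^2$-in-time integral, one has $\lp{u}{LE^1[0,T]}^2 \le \sum_k \lp{u}{LE^1(I_k)}^2$, and substituting the window bound produces $\sum_k \lp{\partial u(t_k)}{L^2}^2$ on the right, which is generically larger than $\lp{\partial u(0)}{L^2}^2 + \lp{\partial u(T)}{L^2}^2$ by a factor comparable to $T/L$. The ``staggered partitions'' suggestion does not fix this: controlling each $\lp{\partial u(t_k)}{L^2}$ by the endpoint energies is essentially the exponential trichotomy, which is a \emph{consequence} of \eqref{2pt_LE}, not an available ingredient. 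The paper avoids this entirely by an extension argument (Section~\ref{s:extend}) that removes the boundary terms once and for all, reducing to the unconditional estimate of Theorem~\ref{comp_LE_thm} for Schwartz functions on $\mathbb{R}^{3+1}$; being Schwartz in $t$ excludes the exponentially growing modes, so eigenvalues never enter, and the frequency decomposition is then performed globally in time rather than via time windows, avoiding accumulation of endpoint energies.
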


We remark here one key advantage of working with the two point local
energy decay, namely that it does not see the negative eigenvalues
of $P$ or nonzero resonances. Because of this, such an estimate is not
limited to the (almost) symmetric case. However, in order to use this
estimate to get bounds on the energy growth rate, we do need to use an
almost conservation of energy, which requires the almost symmetry
condition.  Under an additional almost symmetry assumption, from this
theorem one gains the following simple dichotomy:

\begin{thm}[Exponential dichotomy]\label{exp_dich_thm}
  Let $P$ be an $\epsilon$-slowly varying, $\epsilon$-almost
  symmetric, nontrapping and asymptotically flat operator, which
  satisfies the zero nonresonance condition \eqref{zero-res2} for
  some $K_0>0$. Suppose that $\epsilon$ is small enough
\[
\epsilon \ll_{R_0,M_0,T_0,K_0} 1.
\]
Then there exists an $\alpha_0>0$, depending on the parameters
${R_0,M_0,T_0,K_0}$, such that any solution $u$ to the inhomogeneous equation
$Pu = f$ with $u[0] \in \mathcal E$ and $f \in LE^*+ L^1L^2$
has one of the following two properties:

(i) Exponential growth for large $t$:
\begin{equation}
		\lp{\partial u(t)}{L^2} \ \ge c e^{\alpha_0 t}(\lp{\partial u(0)}{L^2}+
		 \lp{Pu}{LE^*+ L^1L^2[0,\infty)}) \ \qquad t > t_0 , \label{slow_growth}
\end{equation}

(ii)  Global bound and local energy decay estimate:
\begin{equation}
		\lp{u}{LE^1[0,T]} +\lp{\partial u}{L^\infty L^2} \ \lesssim \ \lp{\partial u(0)}{L^2} 
		 + \lp{Pu}{LE^*+  L^1L^2[0,\infty)} \ . \label{LE-last}
\end{equation}
\end{thm}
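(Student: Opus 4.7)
The plan is to combine the two-point local energy estimate (Theorem~\ref{2pt_thm}) with the almost energy conservation of Lemma~\ref{l:en} and the ellipticity bound \eqref{e-pos} to produce a strict dichotomy for $e(t) := \|\partial u(t)\|_{L^2}$: either uniform boundedness plus local energy decay, or strict exponential growth at a fixed rate.

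First I would normalize by setting $N = \|\partial u(0)\|_{L^2} + \|Pu\|_{LE^*+L^1L^2[0,\infty)}$ and applying \eqref{2pt_LE} on $[0,T]$ to get, uniformly in $T$,
\[
\|u\|_{LE^1[0,T]} + \sup_{s\in[0,T]} e(s) \leq C_*\bigl(e(0) + e(T) + N\bigr),
\]
with $C_*$ depending only on $R_0, M_0, T_0, K_0$. Then split on the behavior of $\liminf_{T\to\infty} e(T)$. If this is finite, select a sequence $T_k\to\infty$ along which $e(T_k)$ stays bounded; applying the estimate on each $[0,T_k]$ and letting $k\to\infty$, the monotonicity of the $LE^1$ and $L^\infty L^2$ norms in $T$ yields the uniform bound \eqref{LE-last}, landing in case (ii).

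In the complementary case $e(T)\to\infty$, the task is to extract \eqref{slow_growth}. The mechanism is that $\epsilon$-almost symmetry combined with \eqref{E-inhom} yields
\[
|E(u(T)) - E(u(0))| \lesssim \epsilon \|u\|_{LE^1[0,T]}^2 + N\,\sup_{[0,T]} e,
\]
so $E(u(t))$ is nearly conserved along the flow. Ellipticity \eqref{e-pos} gives $e(T)^2 \lesssim E(u(T)) + \|u(T)\|_{L^2_{\mathrm{comp}}}^2$; the compact-support $L^2$ correction is handled by applying the zero non-resonance bound \eqref{zero-res2} to an appropriate time-frequency localization of $u$ near time $T$, reducing it to a controlled fraction of $e(T)$ plus lower-order quantities. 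Once $e(T)$ exceeds a threshold $R_*$ determined by the parameters, this forces $|E(u(T))| \gtrsim e(T)^2$, and near-conservation of $E$ propagates this largeness to all later times with fixed sign.

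To upgrade persistence to strict exponential growth at rate $\alpha_0$, I would bootstrap on sliding windows $[T, T+T_1]$ of length $T_1$ depending on the parameters. On each window the $\epsilon$-slow variation allows comparison with the frozen stationary operator at the window's midpoint; the stationary spectral picture from Propositions~\ref{p-nonzero} and~\ref{p-zero}, together with the high-frequency bound of Theorem~\ref{1pt_thm_high}, confines the lower-half-plane eigenvalues of the frozen operator to a compact set bounded away from the real axis, providing a definite exponential rate $\alpha_*>0$ for its unstable mode. Choosing $\alpha_0 < \alpha_*$ and $\epsilon \ll \alpha_0$ so that window-to-window errors are dominated by the exponential growth, iteration gives \eqref{slow_growth} for $t > t_0$. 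The main obstacle is precisely this last step: transferring the stationary spectral gap quantitatively and uniformly in time to the slowly varying problem while keeping every implicit constant controlled by $R_0, M_0, T_0, K_0$ and absorbing the cumulative errors from both the $\epsilon$-time variation and the inhomogeneity $f$.
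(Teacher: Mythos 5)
Your proposal correctly begins with the two-point estimate \eqref{2pt_LE} and the almost-conservation of energy (Lemma \ref{l:en}), and your case split on $\liminf_T e(T)$ would indeed produce alternative (ii) when that liminf is finite. The genuine gap is in the unbounded branch: you never actually establish the quantitative rate $\alpha_0$, and the route you sketch toward it (sliding windows, freezing the coefficients, importing a spectral gap for the frozen stationary operator, and controlling cumulative errors) is both incomplete and unnecessary. That frozen-operator program would require knowing that the frozen operators have a spectral gap away from the real axis, which is not among the hypotheses; the assumptions only give a zero nonresonance bound, not absence of real resonances or a discrete-spectrum gap, so it is not clear the windowed comparison closes. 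You also acknowledge this yourself as the "main obstacle" without resolving it.

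The paper's argument is much more elementary and completely bypasses frozen-operator spectral theory. Set $D = \|\partial u(0)\|_{L^2}^2 + \|f\|^2_{L^1L^2+LE^*}$, $E(t)=\|\partial u(t)\|_{L^2}^2$, and $E_{\mathrm{comp}}(t)=\|\partial^{\le 1}u(t)\|^2_{L^2_{\mathrm{comp}}}$. The almost-symmetric energy relation \eqref{almost-en}, after absorbing the $\epsilon\|u\|_{LE^1}$ term via \eqref{2pt_LE}, gives $E(T)\lesssim D + E_{\mathrm{comp}}(T)$; the two-point LE bound directly gives $\int_0^T E_{\mathrm{comp}}(t)\,dt \lesssim D + E(T)$. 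Integrating the first and inserting the second yields $\int_0^T E\,dt\lesssim TD + E(T)$, so $m(T):=\int_0^T E\,dt$ satisfies the differential inequality $m'(T)\ge c\,m(T) - CTD$. Once $m$ crosses the threshold $2c^{-1}CTD$, it grows at a fixed exponential rate $\alpha_0 = c/2$ (a constant produced entirely by the two-point LE and almost-conservation constants, hence controlled by $R_0,M_0,T_0,K_0$); otherwise $m(T)\lesssim TD$ forces a sequence $T_n\to\infty$ with $E(T_n)\lesssim D$, and \eqref{2pt_LE} on $[0,T_n]$ gives \eqref{LE-last}. No spectral information about frozen operators is needed, and the rate $\alpha_0$ falls out of the Gronwall-type inequality rather than from a spectral gap. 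Your near-conservation and ellipticity computations point in the right direction but stop short of the single integrating step that makes the dichotomy quantitative.
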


Heuristically, the role played by the (almost) symmetry is to
eliminate the nonzero resonances, at least in the stationary case.
Indeed, the proof of this result is a time dependent take on the classical
proof of the absence of embedded resonances for the stationary problem.

Finally, for a more accurate result we can also take into account the exponentially 
decaying solutions.

\begin{thm}[Exponential trichotomy]\label{exp_trich_thm}
Let $P$ satisfy all of the assumptions of Theorem \ref{exp_dich_thm}.
Then there exist  a direct sum decomposition
\[
\dot H^1 \times L^2 = S^+ + S^0 + S^-
\]
with $\kappa$ dimensional linear subspaces $S^\pm$, and
corresponding  commuting projections $P^0,\ P^\pm$,
such that if $\mathcal{F}(t)$ denotes
the flow of the homogeneous equation $P u=0$ on  $\dot{H}^1\times L^2$ with data at $t=0$
then:
\begin{align}
		\lp{\mathcal{F}(t)P^- \mathcal{F}^{-1}(s)}{\dot{H}^1\times L^2\to \dot{H}^1\times L^2}
		\ &\leq\  Ce^{-\alpha(t-s)} \ , &t\geq s\geq 0 \ , \label{ed1}\\
		\lp{\mathcal{F}(s)P^+ \mathcal{F}^{-1}(t)}{\dot{H}^1\times L^2\to \dot{H}^1\times L^2}
		\ &\leq\  Ce^{-\alpha(t-s)} \ , &t\geq s\geq 0 \ , \label{ed2}\\
		\lp{\mathcal{F}(t)P^0 \mathcal{F}^{-1}(s)}{\dot{H}^1\times L^2\to \dot{H}^1\times L^2}
		\ &\leq\  C \ , &\hbox{all\ \ } t,s\geq 0 \ . \label{ed3}
\end{align}
Here $\mathcal{F}^{-1}(s)$ is the algebraic inverse map of $\mathcal{F}(s)$ on $\dot{H}^1\times L^2$
(its existence as a bounded operator is easily established via standard energy estimates).
\end{thm}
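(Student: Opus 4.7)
The plan is to leverage Theorem \ref{exp_dich_thm} together with its time-reversed analogue, noting that the almost symmetry and almost stationarity hypotheses are invariant under $t \mapsto -t$, and then to extract the uniform exponential rates from the interplay between the two-point estimate \eqref{2pt_LE} and the almost energy conservation \eqref{eps-en}.

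First I would define $S^-\subset \mathcal{E}$ to consist of initial data $v$ at $t=0$ whose forward evolution $\mathcal{F}(t)v$ decays exponentially in energy as $t \to \infty$, and symmetrically $S^+$ for data whose backward evolution decays exponentially. To show $\dim S^-<\infty$, let $v \in S^-$ and set $u(t) = \mathcal{F}(t)v$. Since $\|\partial u(T)\|_{L^2}\to 0$, the two-point LE \eqref{2pt_LE} on $[0,T]$ passed to $T\to \infty$ gives $\|u\|_{LE^1[0,\infty)}+\|\partial u\|_{L^\infty L^2[0,\infty)}\lesssim \|\partial u(0)\|_{L^2}$. Almost energy conservation \eqref{eps-en} then yields $E(u(0))=O(\epsilon)\|u\|_{LE^1}^2\lesssim \epsilon \|\partial u(0)\|_{L^2}^2$, and the coercivity \eqref{e-pos} turns this into $\|\partial u(0)\|_{L^2}^2\lesssim \|u(0)\|^2_{L^2(|x|\le R)}$ for some $R$ depending only on $M_0, R_0$. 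The resulting compact embedding $S^-\hookrightarrow L^2_{\mathrm{comp}}$ forces $\dim S^-<\infty$; a symmetric backward-in-time argument bounds $\dim S^+$.

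Next, for the direct sum decomposition $\mathcal{E} = S^-\oplus S^0\oplus S^+$, let $U^0\subset \mathcal{E}$ denote the closed subspace of data with forward LE bounds, and $\tilde U^0$ its backward analogue; clearly $S^-\subset U^0$ and $S^+\subset \tilde U^0$. The dichotomy tells us that data outside $U^0$ grows exponentially forward. The heart of the construction is to show that $\mathcal{E} = U^0 \oplus S^+$, i.e., that the spatially localized exponentially growing modes account for all of the ``bad'' forward behavior. I would establish this by a contradiction argument: if $\mathcal{E}/(U^0+S^+)$ were nontrivial, pick $v$ outside $U^0 + S^+$; passing to $w_T = \mathcal{F}(T)v/\|\mathcal{F}(T)v\|$ and using compactness of the backward-LE space along with the time-reversed dichotomy, one extracts a backward-exponentially-decaying limit, which after pulling back to $t=0$ contradicts $v \notin S^+$. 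A symmetric argument gives $\mathcal{E}=\tilde U^0\oplus S^-$. Setting $S^0 = U^0\cap \tilde U^0$ and using $S^-\cap S^+=0$ (since forward and backward exponential decay are incompatible) yields the three-term direct sum with commuting bounded projections $P^\pm, P^0$. This is where I expect the main technical difficulty to lie.

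Finally, for the bounds \eqref{ed1}--\eqref{ed3}: bound \eqref{ed3} is immediate since $S^0$ data has forward LE and uniform energy (from almost conservation). For \eqref{ed1}--\eqref{ed2}, the key point is uniformity in $s$: the time-shifted operator $P^{(s)}(t,x,D):=P(t+s,x,D)$ satisfies all hypotheses of Theorem \ref{exp_dich_thm} with the same parameters $R_0,M_0,T_0,K_0$, so the entire construction applies at time $s$, yielding $\mathcal{F}(s)S^\pm$ as the stable/unstable subspaces at time $s$ with decay rates $e^{-\alpha(t-s)}$, where $\alpha$ depends only on $R_0,M_0,T_0,K_0$. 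A careful tracking of the constants through the dichotomy and the compactness argument is needed to ensure that the rate $\alpha$ and the constant $C$ are genuinely independent of $s$, and that $\dim \mathcal{F}(s)S^\pm$ is $s$-independent.
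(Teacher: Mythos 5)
Your route is genuinely different from the paper's. The paper constructs $S^{\pm,0}$ by freezing coefficients: it develops the Krein/Pontryagin space spectral theory for the symmetrized stationary operators $P_{\mathrm{sym}}(t_0,x,D)$ (Proposition~\ref{z_spec_prop}, with uniform projector and resolvent bounds in Proposition~\ref{p:ppm}), shows the time-dependent flow shadows the frozen flow on intervals of length $T_*\to\infty$ as $\epsilon\to 0$ (Proposition~\ref{loc_sus_prop}, estimate~\eqref{S_match}), and glues these local trichotomies together by a discrete Perron fixed-point argument (Proposition~\ref{Perron_prop}), following Coppel. You instead work directly with the full flow and try to pull out $S^\pm$ by a compactness argument. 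Your finite-dimensionality step for $S^\pm$ is clean and, I believe, correct: two-point LE plus almost conservation~\eqref{eps-en} plus coercivity~\eqref{e-pos} gives $\|\partial u(0)\|_{L^2}^2\lesssim \|u(0)\|_{L^2_{\mathrm{comp}}}^2$ on $S^-$, and Rellich finishes it; the observation that the time-shifted operator preserves all parameters is also the right starting point for uniformity.

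However, there are two genuine gaps, and they are exactly where the content of the theorem lies. First, the direct sum argument is only a sketch and the proposed contradiction does not close. From $w_T=\mathcal{F}(T)v/\|\mathcal{F}(T)v\|$ and a convergent subsequence $w_{T_n}\to w_\infty$ you cannot pull anything back to a statement about $v$: the relation $v=\|\mathcal{F}(T)v\|\,\mathcal{F}^{-1}(T)w_T$ with $\|\mathcal{F}(T)v\|\to\infty$ gives no control on the component of $v$ along $S^+$. What is actually needed is a \emph{bounded} projector $P^+$ onto $S^+$ along $U^0$, and the compactness argument does not construct one. Second, and more fundamentally, \eqref{ed1}--\eqref{ed3} (equivalently the uniform transversality \eqref{ed4'}) require constants uniform in $s$ and $t$; compactness is qualitative and loses exactly this uniformity, as you yourself flag. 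The paper's Perron construction is a contraction mapping in a weighted operator space and directly outputs the uniform exponential rates, the uniform projector norms, and the $O(\epsilon)$ proximity of $\mathcal{F}(t_n)P^{\pm,0}\mathcal{F}^{-1}(t_n)$ to the frozen projectors $P^{\pm,0}(t_n)$. Without an argument of that kind -- which genuinely exploits the $\epsilon$-slow variation rather than just the dichotomy at each time -- your construction does not deliver the quantitative statement of the theorem.
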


By the previous theorem, the solutions with data in $S^0$ will also
satisfy local energy decay bounds.  To unravel a bit what this last
Theorem says let $P^\pm(t)= \mathcal{F}(t)P^\pm \mathcal{F}^{-1}(t)$
be the flow projections onto the image $S^\pm(t)=\mathcal{F}(t)S^\pm$,
set $P^0(t)=1-P^+(t)-P^-(t)$ with corresponding image $S^0(t)$, and
let $\mathcal{F}(t,s)= \mathcal{F}(t) \mathcal{F}^{-1}(s)$ denote the
propagator from times $s\to t$. Then estimates
\eqref{ed1}--\eqref{ed3} can be written in the equivalent form (for a
different $C$):
\begin{align}
		\lp{\mathcal{F}(t,s) }{S^-(s)\to  S^-(t)}
		\ &\leq\  Ce^{-\alpha(t-s)} \ , &t\geq s\geq 0 \ , \label{ed1'}\\
		\lp{\mathcal{F}(t,s) }{S^+(s)\to  S^+(t)}
		\ &\geq\  C^{-1}e^{\alpha(t-s)} \ , &t\geq s\geq 0 \ , \label{ed2'}\\
		C^{-1}\ \leq\ \lp{\mathcal{F}(t,s) }{ S^0(s)\to  S^0(t)}
		\ &\leq\  C \ , &\hbox{all\ \ } t,s\geq 0 \ . \label{ed3'}\\
		\lp{P^{\pm,0}(t)}{\dot{H}^1\times L^2\to \dot{H}^1\times L^2}  
		\ &\leq\  C \ , &\hbox{all\ \ } t,s\geq 0 \ . \label{ed4'}
\end{align}
In other words the flow along $S^-(t)$ is uniformly contracting at an
exponential rate forward in time, while the flow along $S^+(t)$ is
uniformly expanding at an exponential rate forward in time. On the
other hand $S^0(t)$ is a center manifold on which the flow has
uniformly bounded energy both forward and backward in time. According
to Theorem \ref{2pt_thm} the flow along $S^0(t)$ is therefore
uniformly dispersive with bounds on $\lp{\phi}{LE[s,t]}$ in terms of
$\lp{\partial\phi(s)}{L^2}\approx \lp{\partial \phi(t)}{L^2}$ for any
set of times $s,t\geq 0$. Finally, there is a bit of additional
information given by \eqref{ed4'} which says that the time dependent
subspaces $S^{\pm,0}(t)$ are ``uniformly transverse''.

\subsection{ An overview of the paper }

The bulk of the article is devoted to the proof of the various local energy decay bounds.
This is developed as follows:

In Section~\ref{s:ext} we expand upon the result of \cite{MT}, see Theorem~\ref{t:small},
which asserts that local energy decay holds for small
asymptotically flat perturbations 
of the d'Alembertian. Our goal is to work with operators $P$ which have this property 
outside a cylinder and to describe two efficient ways of truncating the estimates
to such an exterior region.

Section~\ref{s:high} is devoted to the proof of the high frequency
local energy bound in Theorem~\ref{1pt_thm_high}. This is a classical positive 
commutator estimate, with the added difficulty that we do not use any energy 
conservation property.

The following two sections contain the remaining two blocks in the
proof of the two point local energy estimate in
Theorem~\ref{2pt_thm}. We begin with the medium frequencies in Section~\ref{s:med};
the analysis there is based on Carleman estimates and can be viewed as 
as time dependent version of the Carleman estimates for the resolvent in \cite{KT}.
The low frequency analysis follows in Section~\ref{s:low}.

The three building blocks above, namely the high, medium and 
low frequency analysis are assembled together in Section~\ref{s:2pt}
in order to prove the two point local energy estimate in
Theorem~\ref{2pt_thm}. We also give there the closely related proof 
of the exponential dichotomy in Theorem~\ref{exp_dich_thm}.
 
In Section~\ref{s:stat} we consider the relation between the local energy estimates 
and resolvent bounds, leading to the proof of Propositions~\ref{p:le-res}, \ref{p-nonzero}
and \ref{p-zero}. We conclude with the proof of Theorem~\ref{t:stat}.

Last but not least, in Section~\ref{s:nonstat} we fully exploit the
two point local energy decay bound. We begin with the stationary case,
where we establish the exponential trichotomy in
Theorem~\ref{t:stat+}.  Then we switch to the nonstationary case, and
prove Theorem~\ref{exp_trich_thm}.


\section{Exterior estimates for  asymptotically 
flat perturbations of Minkowski}
\label{s:ext}

There are two exterior local energy decay estimates which we will
use. Our starting point is the standard local energy decay estimate
for small AF perturbations of $\Box$ in Theorem~\ref{t:small}, which
was proved in \cite{MT}.  Applying a cutoff function which selects 
an exterior region $\{r \geq R\}$ we directly obtain a truncated estimate
\begin{equation}
	\lp{  \phi }{LE^1_{>R}} + \| \partial \phi \|_{L^\infty L^2_{>R}}  
	\lesssim \ \|\partial \phi(0)\|_{L^2_{>R}}+ 	\lp{  \phi }{LE^1_{R}}   + 
	\lp{P\phi}{LE^*_{>R}}.
		\label{small-cut}
\end{equation}
Here we explore two nontrivial ways of truncating the bound in
Theorem~\ref{t:small} to an exterior region.  The first will handle
very low frequencies, while the second is useful in situations where
one can absorb a low frequency error.

Our first truncated estimate uses only the localized $L^2$ norm of
$\partial u$ as the truncation error: 

\begin{prop}
Suppose $P$ is asymptotically flat. Then for $R > R_0$ we have
\begin{equation}
	\lp{  \phi }{LE^1_{>R}}  
	\lesssim \ \|\partial \phi(0)\|_{L^2_{>R}}+ \lp{\partial \phi}{LE_{R}}  + 
	\lp{P\phi}{LE^*_{>R}}.
		\label{low_Mourre}
\end{equation}
\end{prop}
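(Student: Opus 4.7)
The plan is to deduce \eqref{low_Mourre} from the global small-AF estimate Theorem~\ref{t:small} by a cutoff-and-extension argument. First, I extend $P$ to an operator $\tilde P$ on all of $\mathbb{R}^{3+1}$ which agrees with $P$ on $\{r > R_0\}$ and coincides with $\Box$ on $\{r \leq R_0/2\}$, interpolating smoothly in between. By \eqref{AF-def}, $\tilde P - \Box$ has $AF$-norm bounded by a constant $\lesssim \mathbf{c} \ll 1$ globally, so Theorem~\ref{t:small} applies to $\tilde P$.

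Next, let $\chi = \chi(r)$ be a smooth cutoff equal to $1$ for $r > R$ and $0$ for $r < R/2$, with derivative bounds $|\chi^{(k)}| \lesssim R^{-k}$. Applying Theorem~\ref{t:small} to $\chi\phi$, and using the identity $\tilde P(\chi\phi) = \chi P\phi + [\tilde P, \chi]\phi$ (valid because $\operatorname{supp}\chi \subset \{r > R_0\}$ where $\tilde P = P$), one obtains
\[
\|\phi\|_{LE^1_{>R}} \ \leq\  \|\chi\phi\|_{LE^1} \ \lesssim\  \|\partial(\chi\phi)(0)\|_{L^2} + \|\chi P\phi\|_{LE^*} + \|[\tilde P, \chi]\phi\|_{LE^*}.
\]
The source piece $\chi P\phi$ is bounded by $\|P\phi\|_{LE^*_{>R/2}}$ and the energy piece $\chi \partial\phi(0)$ by $\|\partial\phi(0)\|_{L^2_{>R/2}}$; both match the desired RHS quantities up to a bounded iteration over dyadic scales (which shrinks $R/2$ back to $R$). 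The commutator $[\tilde P, \chi]$ is a first order differential operator supported in the dyadic annulus $\{R/2 < r < R\}$, with first order coefficients of size $R^{-1}$ and zeroth order coefficients of size $R^{-2}$; its first order part yields precisely the $\|\partial\phi\|_{LE_R}$ term appearing in \eqref{low_Mourre}.

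The main obstacle is the zeroth order part of $[\tilde P, \chi]\phi$ together with the initial data error $\|(\nabla\chi)\phi(0)\|_{L^2}$, both of which produce $\|\phi\|_{L^2}$-type contributions on the annulus at scale $R$ rather than the $\|\partial\phi\|$-type contributions allowed by the RHS of \eqref{low_Mourre}. To eliminate them I would invoke a localized three-dimensional Hardy inequality: since $\chi\phi$ is supported in $\{r > R/2\}$ and lies in $\dot H^1$, one has $\|r^{-1}(\chi\phi)(t)\|_{L^2} \lesssim \|\nabla(\chi\phi)(t)\|_{L^2}$ for each $t$, together with an analogous space-time version adapted to the $\ell^\infty L^{2,-1/2}$ structure. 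The $R^{-1}$ gain from Hardy exactly matches the $R^{-1}$ deficit in the commutator coefficients, so the offending $\|\phi\|$-terms can be reorganized into $\|\partial\phi\|_{LE_R}$ and $\|\partial\phi(0)\|_{L^2_{>R}}$. A brief bootstrap over a bounded number of dyadic shells near $r = R$ then closes the estimate. This Hardy-based reorganization is the delicate step, since one must avoid recreating the same $\|(\nabla\chi)\phi\|$ term after applying Hardy to $\chi\phi$; this is handled by using several nested cutoffs at slightly different dyadic scales so that each Hardy application feeds controlled error into the next scale rather than back into itself.
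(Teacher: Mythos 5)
The first step is right: restrict to an exterior region, extend $P$ to a small AF perturbation $\tilde P$ of $\Box$, cut off $\phi$ with a spatial cutoff $\chi = \beta_{>R}$, and apply Theorem~\ref{t:small} to $\chi\phi$.  The difficulty, as you correctly identify, is the zeroth-order part of $[\tilde P,\chi]\phi$ together with the initial-data error $(\nabla\chi)\phi(0)$: both are of size $R^{-1}\phi$ on the annulus $\{r\approx R\}$, whereas the admissible RHS only sees $\partial\phi$ there.  Your proposed fix, however, does not work: the Hardy inequality applied to $\chi\phi$ reads
\[
\|\,r^{-1}\chi\phi\,\|_{L^2}\ \lesssim\ \|\chi\nabla\phi\|_{L^2}\ +\ \|(\nabla\chi)\phi\|_{L^2},
\]
and the last term on the right is exactly the quantity you are trying to bound (size $R^{-1}\|\phi\|_{L^2(\{r\approx R\})}$, at the same dyadic scale).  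Hardy on a shell does not gain a power of $R$: at a single dyadic scale the Hardy constant is $O(1)$, and a $\phi$ that is (nearly) constant on the annulus makes the desired bound $R^{-1}\|\phi\|_{L^2_R}\lesssim\|\partial\phi\|_{L^2_R}$ simply false.  Nested cutoffs at nearby scales cannot rescue this, because the error produced by cutting off at $\{r\approx R\}$ lives at $\{r\approx R\}$; there is no mechanism by which it is ``fed into the next scale.''

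The paper avoids the circularity by extending $\phi$ inward not by zero but by its local space--time average $\bar\phi_R$ over the $R$-annulus, setting $\phi_{ext}=\beta_{>R}\phi+(1-\beta_{>R})\bar\phi_R$.  Then the commutator acts on $\phi-\bar\phi_R$, and a genuine Poincar\'e inequality on the annulus, $\|\langle r\rangle^{-1}(\phi-\bar\phi_R)\|_{LE_R}\lesssim\|\partial\phi\|_{LE_R}$, converts the dangerous $\phi$-terms into $\partial\phi$-terms with no boundary residue.  The cost is that the average $\bar\phi_R$ now enters the equation via $(1-\beta_{>R})P\bar\phi_R$ and via $\partial\phi_{ext}(0)$; these are controlled because $\bar\phi_R$ is a function of $t$ alone, its time derivatives satisfy $\|\partial_t^j\bar\phi_R\|_{L^2}\lesssim R^{-j}\|\partial\phi\|_{LE_R}$ by integration by parts and Schwarz, and the term $\|\bar\phi_R\|_{L^2}$ itself appears on the left of the global estimate for $\phi_{ext}$ and is absorbed by bootstrap using the smallness $\mathbf{c}\ll 1$.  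Replacing your Hardy step by this mean-subtraction plus Poincar\'e plus the bound on $\partial_t\bar\phi_R$ is the content you are missing; without it the estimate does not close.
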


\begin{proof}
The restriction $R > R_0$ simply says that we are in a regime where $P$ is a small AF
perturbation of $\Box$. Without any restriction in generality we assume that this is the case 
globally, with the same AF constant $\bf c$ as in \eqref{AF-def}. 

To prove the above bound we will apply the global LE bound
for small AF perturbations of $\Box$ to a suitably chosen extension of $\phi$ inside  
the $R$ cylinder. To define this extension we introduce the local average of $\phi$
adapted to the $R$ annulus as 
\[
\bar \phi_{R}(t_0) = R^{-4} \int \phi(t,x) \chi\Bigl(\frac{t-t_0}{R}, \frac{|x|}{R}\Bigr) dx dt
\]
where $\chi(t,r)$ is a smooth, nonnegative bump function supported in $[-1,1]
\times [1,2]$, with unit integral.  By integration by parts, the
Schwarz inequality, and Young's inequality, we have
\begin{equation}
  \label{convolution}
  \|\partial_t^j \bar\phi_R\|_{L^2}\lesssim R^{-j} \|\partial
  \phi\|_{LE_R},\quad j\ge 1.
\end{equation}
We also have a Poincar\'e type inequality
\begin{equation}\label{poincare}
 \| \la r \ra^{-1}(\bar \phi_{R} -\phi)\|_{LE_R} \lesssim \| \partial \phi\|_{LE_R}. 
\end{equation}

Now we extend the function $\phi$ inside the $R$ cylinder by
\[
\phi_{ext} (t,x) = \beta_{>R}(|x|) \phi(t,x) + (1-\beta_{>R})(|x|)  \bar \phi_{R}(t),
\]
where, e.g., we may take $\beta_{>R}(r)=1-\beta(r/R)$ with $\beta\in C^\infty(\R_+)$ with $\beta\equiv 1$
on $[0,1]$ and supported in $[0,2]$.
By Theorem \ref{t:small}, it follows that
\[\|\phi_{ext}\|_{LE^1}\lesssim \|\partial \phi_{ext}(0)\|_{L^2}
+ \|P\phi_{ext}\|_{LE^*},\]
and its local energy norm is given by 
\[
\| \phi_{ext} \|_{LE^1} \approx \|\beta_{>R} \phi \|_{LE^1} + \|  \bar \phi_{R}\|_{L^2}
+ R \|\partial_t \bar \phi_{R}\|_{L^2}.
\]

On the other hand, we compute
\[
P \phi_{ext} = \beta_{>R} P \phi + (1-\beta_{>R})  P \bar \phi_{R} + [P,\beta_{> R}] (\phi - \bar \phi_{R}).
\]
The commutator is first order and supported inside the $R$ annulus with
\[[P,\beta_{>R}] (\phi-\bar\phi_R) = O(R^{-1})|\partial \phi| + O(R^{-1})
|\partial_t \bar\phi_{R}| + O(R^{-2})|\phi-\bar\phi_R|,\quad |x|\approx R.\]
Thus, using \eqref{convolution} and \eqref{poincare} respectively on
each of the last two terms, we see that
\[\|[P,\beta_{>R}](\phi-\bar\phi_R)\|_{LE^*}\lesssim
\|P\phi\|_{LE^*_{>R}} + \|\partial \phi\|_{LE_R}.\] Moreover, we note
that, as we are taking without loss of generality
$\llp{(g-m,A,V)}{AF}\le \mathbf{c}$,
\[\|(1-\beta_{>R})P\bar\phi_R\|_{LE^*} \lesssim R^2
\|\partial_t^2\bar\phi_R\|_{L^2} + R \|\partial_t \bar\phi_R\|_{L^2} +
\mathbf{c}\|\bar\phi_R\|_{L^2}.\]

Using \eqref{convolution}, using $\mathbf{c}\ll 1$ in order to bootstrap, and combining we have that
\[\|\beta_{>R} \phi\|_{LE^1} + \|\bar\phi_R\|_{L^2} \lesssim
\|\partial\phi_{ext}(0)\|_{L^2} + \|P\phi\|_{LE^*_{>R}} + \|\partial
\phi\|_{LE_R}.\]
Observing that the Schwarz inequality yields
\[\|\partial \phi_{ext}(0)\|_{L^2} \lesssim \|\partial
\phi(0)\|_{L^2_{>R}} + \|\partial \phi\|_{LE_R}\]
then completes the proof.
%
\end{proof}

Our second truncated estimate uses instead a localized $L^2$ norm of
$u$ as the truncation error. The price to pay is that we need the
energy of $u$ at both the initial and the final time. This bound will
be used later in the proof of the Carleman estimate \eqref{e:carl-cut}
in Proposition~\ref{p:Carl-cut}.

 \begin{prop}\label{Mourre_prop}
Suppose $P$ is asymptotically flat. Then for $R\geq R_0$ we have:
\begin{align}
		\lp{  u }{LE^1_{>R}} 
		\ &\lesssim \|\partial u(T)\|_{L^2_{>R}} + \|\partial
                u(0)\|_{L^2_{>R}} + 
		\lp{Pu}{LE^*_{>R}} + \ R^{-1}\lp{u}{LE_R}
		\ . \label{high_Mourre}
\end{align}
\end{prop}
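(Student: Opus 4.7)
I would follow closely the strategy used for the previous proposition, extending $u$ from the exterior $r>R$ to a global function $u_{ext}=\beta_{>R}(r)u+(1-\beta_{>R})(r)\bar u_R(t)$ by reusing the averaging construction from \eqref{convolution}--\eqref{poincare}, and then applying the small-AF local energy decay bound of Theorem~\ref{t:small} (with no loss of generality $P$ is globally a small AF perturbation of $\Box$). Tracking the commutator contributions of $[P,\beta_{>R}](u-\bar u_R)$, all supported in $r\sim R$, this step produces
\[
\|u\|_{LE^1_{>R}} \ \lesssim\ \|\partial u(0)\|_{L^2_{>R}} + \|Pu\|_{LE^*_{>R}} + \|\partial u\|_{LE_R} + R^{-1}\|u\|_{LE_R},
\]
essentially replicating \eqref{low_Mourre}. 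The remaining task is to replace the derivative error $\|\partial u\|_{LE_R}$ by the two endpoint energies at the price of only the already-permitted $R^{-1}\|u\|_{LE_R}$.

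For this I would run a positive commutator (Morawetz-type) identity on the spacetime slab $[0,T]\times\{r\sim R\}$ with a radial multiplier of the form $Xu=f(r)\partial_r u+\tfrac12(\partial_r f)u$, where $f$ is smooth, nonnegative, supported in a slightly thicker annulus around $r=R$ and with $f'>0$ on $r\approx R$. The standard Morawetz calculation produces a positive bulk term comparable to $R\|\partial u\|_{LE_R}^2$, while on the right hand side one obtains (i) a forcing contribution $\int\!\!\int \overline{Xu}\cdot Pu$, controlled by $\|Pu\|_{LE^*_{>R}}\|\partial u\|_{LE_R}$; (ii) time boundary terms at $t=0,T$ of the form $\int f\,\partial_r u\,\overline{\partial_t u}\,dx|_{t=0,T}$, bounded by $\|\partial u(0)\|_{L^2_{>R}}^2+\|\partial u(T)\|_{L^2_{>R}}^2$; and (iii) transition-region errors at the edges of the support of $f$, quadratic in $\partial u$ and $u$ in a strictly larger annulus, which arise whenever $f'$ or $f''$ changes sign. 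The undifferentiated part of (iii), which carries an $R^{-3}|u|^2$ weight, is directly absorbed into $R^{-2}\|u\|_{LE_R}^2$.

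The genuinely delicate point is handling the derivative transition errors from (iii), which naively recreate a $\|\partial u\|_{LE_{R'}}$ term at a nearby scale $R'\in[R/2,2R]$; I would close the argument by averaging the multiplier identity over a dyadic range of scales and using that the nearby-scale feedback can be reabsorbed via a second application of the Step~1 bound, at the cost of a small constant. This produces
\[
\|\partial u\|_{LE_R}^2 \ \lesssim\ \|\partial u(0)\|_{L^2_{>R}}^2 + \|\partial u(T)\|_{L^2_{>R}}^2 + \|Pu\|_{LE^*_{>R}}^2 + R^{-2}\|u\|_{LE_R}^2,
\]
and substituting back into Step~1, after absorbing cross terms via Cauchy--Schwarz and using $\mathbf{c}\ll 1$ in the exterior to absorb the $P-\Box$ corrections, yields \eqref{high_Mourre}. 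The main technical obstacle is precisely this scale-averaging step: ensuring that the derivative feedback from the transition region comes with a sufficiently small constant relative to the bulk positive term so that it can be absorbed without loss.
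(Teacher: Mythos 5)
The paper's proof of this proposition is a single-pass positive commutator argument, not a two-step bootstrap like yours, and the difference is essential. The paper chooses the multiplier
\[
Q=\beta(r) f(r) \tfrac{x_l}{r}g^{lm}D_m + D_m \beta(r)f(r)\tfrac{x_l}{r}g^{lm},
\qquad f(r)=\tfrac{r}{r+2^j},
\]
where $\beta$ is a smooth \emph{increasing} cutoff, equal to $0$ for $r\le R$ and $1$ for $r\ge 2R$ (so $\beta'\ge 0$ everywhere). The crucial consequence is that in the commutator $[D_jg^{jk}D_k,Q]$ the contribution from $\beta'$,
\[
\tfrac{4}{i}D_jg^{jk}\tfrac{x_k}{r}\,\beta'(r)f(r)\,\tfrac{x_l}{r}g^{lm}D_m \;-\;\tfrac{1}{i}\partial_m\bigl(f\tfrac{x_l}{r}g^{lm}\partial_k(g^{jk}\beta'\tfrac{x_k}{r})\bigr),
\]
has a manifestly non-negative leading part (since $\beta',f\ge 0$); the remaining piece is in divergence form, so after integrating by parts it produces only zeroth-order errors of size $R^{-1}u$ on the annulus, with no $\partial u$ error appearing. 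The transition region is thus handled for free. A Lagrangian correction then recovers $\partial_t u$. No extension $u_{ext}$ and no second application of Theorem~\ref{t:small} enter.

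Your proposal has a genuine gap precisely where you flag the "delicate point." Your Step~2 uses a bump multiplier $Xu=f\partial_r u+\tfrac12 f' u$ with $f$ compactly supported in a thickened annulus, so necessarily $f'<0$ on the outer edge. In the resulting identity the integrand $f'(\partial_r u)^2$ changes sign, and the negative contribution is of the same order as the positive bulk (both come from regions where $|f'|\sim R^{-1}$ over an annulus of volume $\sim R^3$). So after rearranging you obtain
\[
\|\partial u\|^2_{LE_R}\ \lesssim\ (\text{endpoints})+(\text{forcing})+R^{-2}\|u\|^2_{LE_R}+\|\partial u\|^2_{LE_{R'}},\qquad R'\approx 2R,
\]
with a constant in front of the last term that is \emph{not} small. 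Your proposed fix — bound $\|\partial u\|_{LE_{R'}}$ by $\|u\|_{LE^1_{>R}}$ and feed it back through Step~1 — is circular, because Step~1 gives $\|u\|_{LE^1_{>R}}\lesssim\dots+\|\partial u\|_{LE_R}$ with a constant $\ge 1$, so the term you are trying to absorb reappears at unit size. Averaging over dyadic scales does not help either, since the $LE$ norm is an $\ell^\infty$ over dyadic annuli and each scale's identity leaks to the neighboring scale with $O(1)$ constant. The resolution in the paper is structural: make the cutoff monotone rather than a bump, so there is no bad-sign transition in the first place.
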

 






\begin{proof}
  This is a multiplier estimate, which is similar to an estimate for
  the corresponding Schr\"odinger equation in \cite{MMT}, and is related 
to the earlier work \cite{BT}.

  Setting $\Box_g = \Box_{0,g}$, we begin by noticing that if
  $Q(x,D_x)$ is self-adjoint, then
\begin{multline}\label{commute}\frac{d}{dt}\Bigl\{2\Re\la g^{00}D_tu, Qu\ra + 2\Re\la g^{j0}D_ju,
Qu\ra\Bigr\} \\= -2\Im\la \Box_g u, Qu\ra + \la i[D_j g^{jk}D_k, Q]u,u\ra +
2\Im\la [Q,g^{j0}D_j]u, D_tu\ra -\la
i[Q,g^{00}]D_tu,D_tu\ra.\end{multline}
Here, the inner product is that of $L^2(\R^3)$.
We set 
\[Q=\beta(r) f(r) \frac{x_l}{r}g^{lm}D_m + D_m \beta(r)
f(r)\frac{x_l}{r}g^{lm}.\]
Here $f(r) = r/(r+2^j)$ where $2^j\ge R$, and $\beta$ is a smooth,
increasing function so that $\beta\equiv 0$ for $r\le R$ and
$\beta\equiv 1$ for $r\ge 2R$.
Then
\begin{align*}[D_jg^{jk}D_k,\beta(r)f(r)\frac{x_l}{r}g^{lm}D_m] &= [D_j
g^{jk}D_k,\beta(r)] f(r)\frac{x_l}{r}g^{lm}D_m + \beta(r) [ D_j
g^{jk}D_k, f(r)\frac{x_l}{r}g^{lm}D_m]\\
[D_j g^{jk}D_k, D_m \beta(r)
f(r)\frac{x_l}{r}g^{lm}]&=  [ D_j
g^{jk}D_k, D_m f(r)\frac{x_l}{r}g^{lm}]\beta+ D_m f(r) \frac{x_l}{r}g^{lm} [D_j
g^{jk}D_k,\beta(r)].
\end{align*}
Using that
\begin{align*}
  [D_jg^{jk}D_k,\beta] &= \frac{2}{i} D_j g^{jk}\frac{x_k}{r}\beta'(r)
+ \partial_j\Bigl(g^{jk}\beta'(r)\frac{x_k}{r}\Bigr)\\
&= \frac{2}{i} g^{jk}\beta'(r) \frac{x_j}{r} D_k - \partial_j\Bigl(g^{jk}\beta'(r)\frac{x_k}{r}\Bigr),
\end{align*}
we have
\begin{multline*}
  [D_j
g^{jk}D_k,\beta(r)] f(r)\frac{x_l}{r}g^{lm}D_m + D_m f(r) \frac{x_l}{r}g^{lm} [D_j
g^{jk}D_k,\beta(r)] \\= \frac{4}{i} D_j g^{jk}\frac{x_k}{r}\beta'(r)
f(r) \frac{x_l}{r}g^{lm}D_m - \frac{1}{i}\partial_m\Bigl(f(r)\frac{x_l}{r}g^{lm}\partial_k\Bigl(g^{jk}\beta'(r)\frac{x_k}{r}\Bigr)\Bigr).
\end{multline*}
By substituting $g^{jk}= m^{jk} + (g^{jk}-m^{jk})$, it follows then
that
\begin{multline*}
  \int \la i[D_jg^{jk}D_k,Q]u,u\ra\,dt \gtrsim 4 \int \int
  f(r)\beta'(r)\Bigl(g^{jk}\frac{x_k}{r} \partial_j u\Bigr)^2\,dx\,dt
\\+\int \la i[-\Delta, f(r)\frac{x_l}{r}D_l + D_l
\frac{x_l}{r}f(r)]u,u\ra\, dt
 - \|R^{-1}u\|^2_{LE_R} - \llp{ g-m }{1, >R}\|u\|^2_{LE^1_{>R}}.
\end{multline*}
Fixing $Q_0 = f(r)\frac{x_l}{r}D_l + D_l
\frac{x_l}{r}f(r)$, a standard computation (see, e.g.,
  \cite{MetSo}) gives
\[[-\Delta,Q_0] = \frac{4}{i}D_k \frac{x_k}{r}f'(r)\frac{x_j}{r}D_j + 
\frac{4}{i} D_l\Bigl(\delta_{lk}-\frac{x_kx_l}{r^2}\Bigr)
\frac{f(r)}{r} \Bigl(\delta_{jk}-\frac{x_kx_j}{r^2}\Bigr)D_j -
\frac{1}{i}\Delta \partial_k \Big(\frac{x_k}{r}f(r)\Bigr).\]
Using that $f$, $f'$, and
$-\Delta\Bigl((n-1)\frac{f(r)}{r}+f'(r)\Bigr)$ are everywhere positive
(on $\R^3$) and that $\frac{f(r)}{r}, f'(r)\approx R^{-1}$ and
$-\Delta\Bigl((n-1)\frac{f(r)}{r}+f'(r)\Bigr)\approx R^{-3}$ when
$|x|\approx R$, this shows that
\[ \int \la i[D_jg^{jk}D_k,Q]u,u\ra\,dt \gtrsim \|\nabla_x u\|^2_{LE_{>R}}
 + \||x|^{-1} u\|^2_{LE_{>R}} - \|R^{-1}u\|^2_{LE_R} - \llp{ g-m }{1,
   >R}\|u\|^2_{LE^1_{>R}}.\]
Integrating \eqref{commute} over $t$, plugging this in, and using a Hardy inequality, it
follows that
\begin{multline}\label{LEext1}
\|\nabla_x u\|^2_{LE_{>R}} + \||x|^{-1} u\|^2_{LE_{>R}} \lesssim
\|\partial u(T)\|^2_{L^2_{>R}} + \|\partial u(0)\|^2_{L^2_{>R}} + \|\Box_g
u\|_{LE^*_{>R}} \|Qu\|_{LE_{>R}} \\+ \|R^{-1} u\|^2_{LE_R} + \llp{g-m}{1,>R}\|u\|^2_{LE^1_{>R}}.
\end{multline}

We recover the time derivatives using a Lagrangian correction.  To
this end, we compute
\begin{multline*}
  \frac{d}{dt} 2\Im\la (g^{00}D_t + 2g^{0j}D_j)u, \beta(r) f'(r)u\ra = 2 \Re\la
  \Box_g u, \beta(r)f'(r) u\ra + 2\Im\la (\partial_t g^{0j}) D_j u, \beta(r)f'(r)u\ra \\- 2\Im\la
  (\partial_j g^{0j})D_t u, \beta(r)f'(r)u\ra - 2\la g^{00} D_t u,
  \beta(r)f'(r)D_t u\ra 
\\+\la \partial_k(g^{jk})u, \partial_j(\beta(r)f'(r))u\ra + \la g^{jk}u,\partial_k\partial_j(\beta(r)f'(r))u\ra
- 2\Re\la g^{jk}D_k u, \beta(r)f'(r)
  D_j u\ra.
\end{multline*}
From this, it follows that
\begin{multline}\label{LEext2}
\|\partial_t u\|^2_{LE_{>R}} \lesssim \|\partial u(T)\|^2_{L^2_{>R}} + \|\partial u(0)\|^2_{L^2_{>R}} +\|\Box_g u\|_{LE^*_{>R}}
\||x|^{-1} u\|_{LE_{>R}} \\+ \|\nabla_x u\|^2_{LE_{>R}} + \||x|^{-1}
u\|^2_{LE_{>R}} + \|R^{-1}u\|_{LE_R}^2 + \llp{g-m}{1,>R}\|u\|^2_{LE^1_{>R}}.
\end{multline}

The combination of \eqref{LEext1} and \eqref{LEext2} results in
\[\|u\|^2_{LE^1_{>R}} \lesssim \|\partial u(T)\|^2_{L^2_{>R}} +
\|\partial u(0)\|^2_{L^2_{>R}} + \|\Box_g u\|_{LE^*_{>R}}^2 +
\|R^{-1}u\|_{LE_R}^2 + \llp{g-m}{1,>R}\|u\|^2_{LE^1_{>R}}.
\]
We finally notice that 
\[\|\Box_g u\|_{LE^*_{>R}} \lesssim \|Pu\|_{LE^*_{>R}} + \llp{(g-m, A,
V)}{AF,>R}\|u\|_{LE^1_{>R}}.\]
Provided that $\mathbf{c}$ in \eqref{AF-def} is sufficiently small,
which can be guaranteed by fixing $R_0$ sufficiently large, we may
bootstrap to complete the proof.
\end{proof}


\section{High frequency analysis}
\label{s:high}

Here we prove Theorem \ref{1pt_thm_high}. We begin with several
simplifications.  

First we argue that it suffices to prove \eqref{high_LE} when the data
and forcing term are supported in $\{|x|<2R_0\}$.  Indeed, letting
$\tilde{P}$ be a small AF perturbation of $\Box$ that agrees with $P$
for $|x|>R_0$, we set $v$ to solve $\tilde{P}v = f$ with $v[0]=u[0]$.
Then, by Theorem \ref{t:small}, $\|v\|_{LE^1}$ is controlled by the
right side of \eqref{high_LE}.  Then it suffices to establish
\eqref{high_LE} for $u_1 = u- \beta_{>R_0} v$, which solves an
equation with data and forcing term supported in $\{|x|<2R_0\}$.

We next reduce to the case that $u[0]=0$ and $f\in LE^*$.  Indeed,
  suppose that we know 
\begin{equation}
		\lp{u}{LE^1[0,T]} + \lp{\partial u}{L^\infty L^2[0,T]}  
		\lesssim \lp{\la x \ra^{-2} u}{LE} + \lp{Pu}{LE_{comp}^*[0,T]} \label{high_LEa}
\end{equation}
when $u[0]=0$.  Let $w$ solve $Pw = g\in L^1L^2_{comp}$ with nontrivial
$w[0]$.  We write $w = \sum_k w_k$ where $Pw_k = \mathbf{1}_{[k,k+1]}(t) g$
and $w_k[0]=0$, $k>0$ and $w_0[0]=w[0]$.  We use $\sum_k
\beta_{[k,k+1]}(t) w_k$, where $\beta_{[k,k+1]}(t)$ is a test function
that is identically one on $[k,k+1]$, as an approximate solution.
By \eqref{local-en} and
Duhamel's principle, 
\[\sum_k \Bigl(\|\beta_{[k,k+1]} w_k\|_{LE^1} +
\|\partial (\beta_{[k,k+1]} w_k)\|_{L^\infty L^2}\Bigr) \lesssim \|\partial
w(0)\|_{L^2} + \sum_k \int_k^{k+1} \|g(s)\|_2\,ds.\]
The difference $w-\sum_k \beta_{[k,k+1]} w_k$ is estimated with
\eqref{high_LEa}.  Using that
$g$ is compactly supported and finite propagation speed, one obtains
\[\|\la x\ra^{-2} \sum_k \beta_{[k,k+1]}w_k\|_{LE} + \|P(w-\sum_k \beta_{[k,k+1]} w_k)\|_{LE^*} \lesssim \sum_k
\|\partial w_k\|_{L^\infty_{[k,k+1]}L^2},\]
which can be bounded as before.  So it remains to show \eqref{high_LEa}.

Next we remark that, while the result in the theorem is primarily 
about the high frequencies, it does have a low frequency component.
To account for that, we separate the high frequency part and shall prove the following
estimate, which uses a frequency threshold $\lambda \gg 1$:
\begin{equation} \label{main-high} 
\| u_{\geq \lambda} \|_{LE^1_{<
      2R_0}} \lesssim \|f\|^\frac12_{LE^*} \|u\|^\frac12_{LE^1}
  +\lambda^{-\delta} \|u\|_{LE^1},
 \end{equation}
with an implicit constant which depends only on $M_0,R_0,T_0$.
Here the last term accounts for the low frequency errors. 

We first show that this 
bound implies the desired estimate \eqref{high_LEa}. Adding in the low frequencies 
this becomes 
\[
\|  u \|_{LE^1_{< 2R_0}} \lesssim \|f\|^\frac12_{LE^*} \|u\|^\frac12_{LE^1} +\lambda^{-\delta}
\|u\|_{LE^1} + \lambda \lp{\la x \ra^{-2} u}{LE}.
\]
From here we can also obtain the uniform energy piece.  Indeed, using
the Mean Value Theorem, there is a sequence $t_k\in [k,k+1]$ so that
\[\|\partial u(t_k)\|_{L^2_{<2R_0}} \lesssim \|u\|_{LE^1_{<2R_0}},\]
and \eqref{local-en} allows us to estimate the energy at an arbitrary
time $t$ by an element of this sequence.
Next we apply \eqref{small-cut} to complete the local norm above to a full local energy 
norm
\[
\|  u \|_{LE^1} + \|\partial u\|_{L^\infty L^2}\lesssim \|f\|_{LE^*}^{1/2} \|u\|_{LE^1}^{1/2} + \|f\|_{LE^*}
+\lambda^{-\delta} \|u\|_{LE^1} + \lambda \lp{\la x \ra^{-2} u}{LE}.
\]
Then \eqref{high_LEa} follows by choosing $\lambda$ large enough (depending on the 
implicit constant, which in turn depends on $M_0,R_0,T_0$).

Now we turn our attention to the proof of \eqref{main-high}.  For this
we will use the positive commutator method with a suitably chosen
escape function.  To simplify the argument without loss we take
$g^{00}=1$.  Indeed, hypothesis \eqref{zero_ell} guarantees that
$g^{00}$ is uniformly bounded away from zero, and dividing through as
in \cite[Section 3]{MT} preserves the assumptions on the other
coefficients.  For convenience we will state separately the result 
concerning the existence of the escape function.  For that we 
introduce the notations 
\[
p(t,\tau,x,\xi) = \tau^2 - 2g^{0j}\tau\xi_j - g^{ij}\xi_i\xi_j , \qquad  s_{skew} =
\Im A_\alpha g^{\alpha 0} \tau + q \Im A_\alpha
g^{\alpha k}\xi_k.
\]
for  the principal symbols of the self-adjoint, respectively the skew-adjoint parts 
of $P$. Then we have:

\begin{lem} 
  Under the same assumptions as in Theorem~\ref{1pt_thm_high}, there
  exists smooth real symbols $q = \tau q_0 + q_1$ and $ m$ where
  $q_0 \in S^0$, $q_1 \in S^1$ and $m \in S^0$ are supported in
  $|\xi|,|\tau| \gtrsim \lambda$, so that the following relation holds:
\begin{equation}
\{ p, q\} + p m + s_{skew} q \gtrsim \mathbf{1}_{|\xi|,|\tau| \geq \lambda}\langle x \rangle^{-2} 
(|\xi^2| +\tau^2).
\end{equation}
\end{lem}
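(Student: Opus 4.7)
My plan is to construct $q$ as a sum of a Morawetz-type exterior symbol $q_{ext}$ and a nontrapping flow-integrated interior symbol $q_{int}$, and then extend the resulting positivity off the characteristic set via the $pm$ term. On $\{p=0\}$, the ellipticity condition \eqref{zero_ell} forces $|\tau|\approx |\xi|$, and the Hamilton flow of $p$ projects to (twice) the null geodesic flow on the base, so hypothesis \eqref{nt2} directly controls how bicharacteristics interact with $\{|x|\leq R_0\}$.

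For the exterior piece, since $P$ is a small AF perturbation of $\Box$ for $r>R_0$, I would take a standard radial Morawetz symbol
$$q_{ext} \;=\; f(r)\,\frac{x^j}{r}\xi_j, \qquad f(r) \;=\; 1-\langle r\rangle^{-\delta},$$
so that for the Minkowski symbol $p_M=\tau^2-|\xi|^2$ a direct computation yields $\{p_M,q_{ext}\}\gtrsim \langle r\rangle^{-2}|\xi|^2+\langle r\rangle^{-1-\delta}|\xi_{ang}|^2/r$. The AF smallness constant $\mathbf{c}$ in \eqref{AF-def} then makes the perturbation error $\{p-p_M,q_{ext}\}$ absorbable throughout $\{r>R_0\}$. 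For the interior piece, I would use the nontrapping construction: pick a nonnegative cutoff $\chi\in C_c^\infty$ with $\chi\equiv 1$ on $B(0,R_0)$ and $\mathrm{supp}\,\chi\subset B(0,2R_0)$, and on the sphere bundle $\{|\tau|+|\xi|=1,\ p=0\}$ set
$$q_{int}(t_0,x_0,\tau_0,\xi_0) \;=\; -\int_0^{\infty}\chi\bigl(\pi_x\,\Phi_s(t_0,x_0,\tau_0,\xi_0)\bigr)\,ds,$$
with $\Phi_s$ the bicharacteristic flow. By \eqref{nt2} combined with the AF structure in the exterior, every forward trajectory eventually leaves any compact set, so the integral converges and $H_pq_{int}=\chi$ on the characteristic set. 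Extending $q_{int}$ by homogeneity of degree $1$ in $(\tau,\xi)$ and then setting $q=Aq_{ext}+q_{int}$ for $A=A(R_0,T_0,M_0)$ sufficiently large yields $H_pq\gtrsim \langle x\rangle^{-2}(\tau^2+|\xi|^2)$ on $\{p=0\}$.

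The passage off the characteristic set is then handled by a standard division argument: since $\{p,q\}$ and the target lower bound are both degree-$2$ symbols that compare favorably on $\{p=0\}$, one finds $m\in S^0$ with $\{p,q\}+pm\gtrsim \langle x\rangle^{-2}(\tau^2+|\xi|^2)$ globally. Multiplying $q$ and $m$ by a high-frequency cutoff $\chi_{\geq\lambda}(\tau,\xi)$ then restricts the supports to $\{|\tau|+|\xi|\gtrsim\lambda\}$, with errors from differentiating the cutoff living on the shell $|\tau|+|\xi|\approx\lambda$ and reabsorbable by adjusting $m$ there. The skew term $s_{skew}q$ is formally of the same order as $\{p,q\}$, but the AF envelope gives $|s_{skew}|\lesssim c_k\langle x\rangle^{-1}(|\tau|+|\xi|)$ on $r\approx 2^k$ with $\sum_k c_k\lesssim \mathbf{c}$, and $|q|\lesssim |\tau|+|\xi|$, so $|s_{skew}q|$ is absorbable into the main positive contribution in the exterior; in the bounded region the same absorption is achieved by taking $A$ and $\lambda$ large in terms of $M_0$. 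The main obstacle I expect is verifying the interior/exterior matching near $r\approx R_0$: the transition between $q_{int}$ and $q_{ext}$ produces Hamilton derivative contributions that are not automatically signed, and one must exploit both the AF smallness and the quantitative nontrapping time $T_0$ to ensure that these errors are dominated after the large-$A$ choice.
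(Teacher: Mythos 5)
Your overall plan — a flow-integrated interior symbol from the nontrapping assumption, glued to a Morawetz-type exterior symbol, with an $m$-correction off the characteristic set and a final truncation to $\{|\tau|, |\xi| \gtrsim \lambda\}$ — is the same shape as the paper's construction, which additionally factors $p = (\tau - a^+)(\tau - a^-)$ and builds the multiplier light-cone by light-cone. However, the way you propose to handle the skew term and the frequency cutoff has a genuine gap, and it is essentially the same gap twice.

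Both $s_{skew}\,q$ and the cutoff error $q\,H_p\chi_{\geq\lambda}$ are degree-$2$ symbols that are unsigned and generically nonvanishing on $\{p=0\}$, so $pm$ is of no help there, and on $|x|\approx 2^k$ they are of size $\approx c_k\langle x\rangle^{-1}(|\tau|^2+|\xi|^2)$. Your exterior positivity $H_p q_{ext}\approx f'(r)\xi_r^2 + (f/r)|\xi_{ang}|^2$ does control the angular part, but in the radial direction it is $\approx\langle r\rangle^{-1-\delta}\xi_r^2$, which is \emph{smaller} than the skew and cutoff errors by a factor $c_k\langle r\rangle^{\delta}$; since the envelope only satisfies $\sum_k c_k\lesssim\mathbf{c}$ with no decay rate, this factor is unbounded. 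Taking $A$ or $\lambda$ large does not change the ratio: $A$ scales both the positivity and $A\,s_{skew}q_{ext}$ by $A$, and $\lambda$ scales both by $\lambda^2$. The device the paper uses — and the key point your proposal is missing — is the exponentially conjugated truncation $q^\pm_{>\lambda}=e^{\sigma q^\pm}\chi(|\xi|e^{\sigma q^\pm}/\lambda)\chi(|\tau|e^{\sigma q^\pm}/\lambda)$. Since $H_{p^\pm}e^{\sigma q^\pm}=\sigma\,(H_{p^\pm}q^\pm)\,e^{\sigma q^\pm}$, this yields $H_{p^\pm}q^\pm_{>\lambda}\gtrsim\sigma c_k 2^{-k}q^\pm_{>\lambda}$ with all cutoff derivatives now \emph{signed} (the cutoff is evaluated at an argument that is monotone along the flow), while $s_{skew}q^\pm_{>\lambda}\approx c_k 2^{-k}q^\pm_{>\lambda}$ picks up no factor of $\sigma$. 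Choosing $\sigma$ large then absorbs the skew term and the truncation errors simultaneously; without an amplification mechanism of this kind, the terms you wish to neglect are comparable to, or larger than, the lower bound you are trying to establish.
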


\begin{proof}
We factor  $p(t,\tau,x,\xi)$ as
\[\tau^2 - 2g^{0j}\tau\xi_j - g^{ij}\xi_i\xi_j =
(\tau-a^+(t,x,\xi))(\tau-a^-(t,x,\xi)).\]
Here $a^{\pm}$ are $1$-homogeneous in $\xi$ and $a^+>0>a^-$.  See,
e.g., \cite[Section 6]{MT}.

Set $p^{\pm}(t,\tau,x,\xi)=\tau-a^{\pm}(t,x,\xi)$. On each portion of
the light cone $\tau=a^{\pm}(t,x,\xi)$, we first construct a
multiplier $q^{\pm}$ akin to that used in, e.g., \cite{Doi}, see also
\cite{ST}. Precisely, we claim that we can find smooth homogeneous
zero order symbols $q^{\pm}$ with the property that 
\begin{equation}\label{Hpq}
H_{p^{\pm}}q^{\pm} > c_k 2^{-k},\qquad |x| \approx 2^k.
\end{equation}
To produce such symbols set $\chi$ to be a smooth, monotonically
decreasing function with $\chi\equiv 1$ for $|x|\le 1$ and $\chi\equiv
0$ for $|x|>2$, and define $\chi_M(|x|)=\chi(|x|/M)$.  Then, letting
$(t^{\pm}_s,\tau^{\pm}_s,x^{\pm}_s,\xi^{\pm}_s)$ solve
\[\dot{t}^{\pm}_s = p^{\pm}_\tau(t^{\pm}_s,\tau^{\pm}_s,x^{\pm}_s,\xi^{\pm}_s),\quad \dot{\tau}^{\pm}_s =
-p^{\pm}_t(t^{\pm}_s,\tau^{\pm}_s,x^{\pm}_s,\xi^{\pm}_s)\]
\[\dot{x}^{\pm}_s = p^{\pm}_\xi(t^{\pm}_s,\tau^{\pm}_s,x^{\pm}_s,\xi^{\pm}_s),\quad \dot{\xi}^{\pm}_s =
-p^{\pm}_x(t^{\pm}_s,\tau^{\pm}_s,x^{\pm}_s,\xi^{\pm}_s)\]
with $(t^{\pm}_0,\tau^{\pm}_0,x^{\pm}_0,\xi^{\pm}_0)=(t,\tau,x,\xi)$,
set
$q_{in}^{\pm}(t,\tau,x,\xi) = -\int_0^\infty \chi_{R}(|x^{\pm}_s|)\,ds$.  By the
nontrapping condition, this is bounded, and one can calculate
$H_{p^{\pm}} q_{in}^{\pm}=\chi_{R}(|x|)$.  

We shall supplement $q_{in}^\pm$ with a multiplier that is inspired by
that used to prove \eqref{high_Mourre}.  See, also, \cite{MMT}.  Set
\[q^{\pm}_{out} =  - (1-\chi_{R})(|x|) f(|x|) a^{\pm}_{\xi_j} 
\frac{x_j}{|x|}\]
where $f$ satisfies $f(x)\approx 1$ for $|x|\ge R_0$ and
$f'(|x|)\approx c_k 2^{-k}$ for $|x|\approx 2^k$.\footnote{Indeed, we
  may choose $f(r) = \exp(\sigma \int_0^r c(s) s^{-1}\,ds)$ where $c(s)$ is
  constructed from the sequence $c_k$ as was done in \cite[\S 2]{MT}
  and $\sigma$ is a large parameter.}  We
can then compute
\begin{multline*}
  H_{p^{\pm}} q_{out}^{\pm} = a^{\pm}_{\xi_k}\frac{x_k}{|x|}
  (1-\chi_{R})(|x|)f'(|x|) a^{\pm}_{\xi_j}\frac{x_j}{|x|} +
  a^{\pm}_{\xi_k}\Big(\delta_{jk} -\frac{x_k}{|x|}\Bigr)
  (1-\chi_{R})(|x|)\frac{f(|x|)}{|x|} \Bigl(\delta_{jl} -
  \frac{x_l}{|x|}\Bigr) a^{\pm}_{\xi_l}
\\ - R^{-1} \chi'(|x|/R) a^{\pm}_{\xi_k}\frac{x_k}{|x|}
f(|x|) a^{\pm}_{\xi_j}\frac{x_j}{|x|}
+\O(\la x\ra |\partial g|) (1-\chi_R)(|x|) |x|^{-1},
\end{multline*}
and note that the third term in the right hand side is nonnegative.
Using \eqref{AF-def}, it follows that $H_{p^{\pm}}q_{out}^{\pm}$ is
non-negative everywhere and strictly positive on $|x|\in [2R, 4R]$.

We now set
\[q^{\pm} = q^{\pm}_{out} + c \chi_{4R}(x) q_{in}^\pm.\]
Then
\[H_{p^{\pm}}q^{\pm} = H_{p^{\pm}} q^{\pm}_{out} + c \chi_{R}(|x|) - c
a^\pm_{\xi_k} \frac{x_k}{|x|} \chi'_{4R}(x) q_{in}^\pm.\]
As $\chi'_{4R}$ is supported where $H_{p^{\pm}}q^{\pm}_{out}$ is
strictly positive, by choosing $c$ sufficiently small, we have that
$H_{p^{\pm}}q^{\pm}$ is everywhere non-negative and positive on
$|x|\le 4R$.  Thus \eqref{Hpq} is satisfied.

Now we consider the truncation at frequency $\lambda$, which is also achieved 
at the level of $q^{\pm}$. Precisely, we set 
\[
q^{\pm}_{> \lambda} = e^{\sigma q^{\pm}} \chi( |\xi| e^{\sigma
  q^\pm}/\lambda)\chi(|\tau| e^{\sigma q^\pm}/\lambda)
\]
where $\chi$ is a nondecreasing smooth cutoff selecting the interval $[1,\infty]$.
We now can verify directly that 
\[
H_{p^{\pm}}q^{\pm}_{> \lambda} \gtrsim  \sigma c_k 2^{-k} q^{\pm}_{> \lambda}
,\qquad |x| \approx 2^k
\]
provided $\lambda$ is large enough.

We now combine the symbols constructed on the individual light cones into
\[q(t,x,\xi,\tau) = (\tau-a^-)q^+_{>\lambda} +
(\tau-a^+)q^-_{>\lambda},\]
and we compute
\[H_p q |_{\tau=a^\pm} = (a^+-a^-)^2 H_{p^{\pm}} q^{\pm}_{>\lambda} + (a_{\pm}-a_{\mp})q^{\pm}_{>\lambda}(-a^\mp_t +
a^{\pm}_t + a^{\pm}_{\xi_k}a^{\mp}_{x_k} - a^{\pm}_{x_k}a^{\mp}_{\xi_k}).\]
By choosing $\sigma$ sufficiently large, we have
\[H_p q |_{\tau=a^\pm} \ge \frac{1}{2} (a^+-a^-)^2 \sigma c_k 2^{-k}
q^{\pm}_{>\lambda},\quad |x|\approx 2^k.\]

We now consider the principal contribution of the skew-symmetric
portion of $P$, which has symbol
\[s_{skew} q= q\Im A_\alpha g^{\alpha 0} \tau + q \Im A_\alpha
g^{\alpha k}\xi_k.\]
Since 
\[s_{skew}q|_{\tau=a^{\pm}} = \pm (a^+-a^-) q^{\pm}_{>\lambda} (\Im
A_\alpha g^{\alpha 0} a^{\pm} + \Im A_\alpha
g^{\alpha k}\xi_k),\]
the choice of large $\sigma$ also allows us to absorb this, giving
\begin{equation}\label{on-cones}H_p q |_{\tau=a^\pm}  + s_{skew}q|_{\tau=a^{\pm}} \ge \frac{1}{4}
(a^+-a^-)^2 \sigma c_k 2^{-k} q^{\pm}_{>\lambda},\quad |x|\approx 2^k.
\end{equation}

At this point, we shall now modify by a multiple of $p$, which serves
as a Lagrangian correction, so as to
guarantee that it is everywhere nonnegative.  Indeed, we note that
\[H_p q + s_{skew}q = A\tau^2 + B\tau + C\]
where $A,B,C$ depend on $(t,x,\xi)$ and $(A(a^\pm)^2 + Ba^{\pm}+C)>0$.
We seek to choose $m(t,x,\xi)$ so that
\[A\tau^2+B\tau+C + m(\tau-a^+)(\tau-a^-)> 0.\]
It suffices to choose $m$ so that $A+m> 0$ and
\[g(m):=(B-m(a^++a^-))^2 - 4(A+m)(C+ma^+a^-)<0.\]
Taking the minimizing value for the quadratic $g$,
\[m=\frac{B(a^++a^-)+2C+2Aa^-a^+}{(a^+-a^-)^2},\]
gives
\[g(m)=-4(A(a^+)^2+Ba^++C)(A(a^-)^2+Ba^-+C)\]
and
\[A+m = \frac{(A(a^+)^2+Ba^++C)+(A(a^-)^2+Ba^-+C)}{(a^+-a^-)^2},\]
which clearly satisfy the desired conditions.  

It thus follows that
\[H_p q + s_{skew}q + mp \ge (A+m)\Bigl(\tau +
\frac{B-m(a^++a^-)}{2(A+m)}\Bigr)^2 - \frac{g(m)}{4(A+m)},\]
which by \eqref{on-cones} completes the proof.
\end{proof}

We now return to the quantum side and use the lemma to complete the
proof of \eqref{main-high}.  For that we compute
\begin{multline*}2\Im\int_0^T\int Pv \overline{(q^w-(i/2)m^w)}v \,dV
+(1/2)\Bigl\la \Bigl[m^w, \Bigl(\Im A_\alpha g^{\alpha\beta}D_\beta +
D_\alpha g^{\alpha\beta}\Im A_\beta\Bigr) \Bigr] v, v\Bigr\ra
\\-\Bigl\la i \Bigl[\Bigl(\Re A_\alpha g^{\alpha\beta}D_\beta +
D_\alpha g^{\alpha\beta}\Re A_\beta\Bigr), q^w\Bigr]v,v\Bigr>
\\- (1/2)\Bigl\la\Bigl\{ m^w \Bigl(\Re A_\alpha g^{\alpha\beta}D_\beta +
D_\alpha g^{\alpha\beta}\Re A_\beta\Bigr) + \Bigl(\Re A_\alpha g^{\alpha\beta}D_\beta +
D_\alpha g^{\alpha\beta}\Re A_\beta\Bigr) m^w\Bigr\} v, v\Bigr\ra
\\= \la i [\Box_g, q^w] v,v\ra + (1/2)\la (m^w\Box_g + \Box_g
m^w)v,v\ra
\\+ \frac{1}{i}\Bigl\la \Bigl\{q^w \Bigl(\Im A_\alpha g^{\alpha\beta}D_\beta +
D_\alpha g^{\alpha\beta}\Im A_\beta\Bigr) + \Bigl(\Im A_\alpha g^{\alpha\beta}D_\beta +
D_\alpha g^{\alpha\beta}\Im A_\beta\Bigr) q^w\Bigr\}v,  v\Bigr\ra.
\end{multline*}
Here $v = \beta(t-T+2) u$.  As $\|(1-\beta(t-T+2)) u\|_{LE^1}$ and the
error term $\la [P,\beta(t-T+2)]u, (q^w-(i/2)m^w)v\ra$ are
easily estimated using \eqref{local-en}, it suffices to establish
\eqref{main-high} for $v$.

The last three terms in the left side
  are lower order and are easily bounded by
$\lambda^{-1/2} \|v\|_{LE^1}$
using the frequency localization of $q, m$.
On the other hand, by the lemma, the principal symbol of
the last three terms on the right side is everywhere nonnegative and
bounded below by a multiple of $\mathbf{1}_{|\tau|,|\xi|\gtrsim \lambda}(\tau^2 + |\xi|^2)$ on $|x|\le 4R_0$.
Thus, by an application of G\r{a}rding's inequality, the right side is
bounded below by $\|\partial v_{\ge \lambda}\|^2_{LE_{<4R_0}}$ modulo errors that are
bounded by $\lambda^{-1/2} \|v\|_{LE^1}$, as desired.


\section{Carleman estimates and the medium frequency 
analysis}\label{s:med}

The medium frequency analysis for the local energy decay bounds is
based on Carleman type estimates. This emulates in the dynamical
setting the resolvent bounds used to prove the absence of embedded
eigenvalues/resonances.  For the latter, we refer the reader to
\cite{KT}. We first discuss the Carleman estimates and then show how
they apply for the local energy decay bounds.

\subsection{Carleman estimates for wave equations}

Here we consider two classes of Carleman estimates. One applies in the
small asymptotically flat regime and the other applies in a compact set.
The estimates we seek are of the form:
\begin{equation}
\|\omega_0 e^\varphi u \|_{L^2} + \|\omega_1 e^\varphi \nabla u \|_{L^2} 
\lesssim \| e^\varphi Pu\|_{L^2}
\end{equation}
for appropriate weights $\omega_0$ and $\omega_1$, with the key feature 
that the implicit constant is independent of the (parameters entering the) weight $\varphi$.
For our purposes it suffices to work with radial weights.

The first result is concerned with large $r$, so the asymptotic
behavior of $\varphi$ at infinity is important.  Here it is convenient
to express $\varphi$ in the form
\[
\varphi = \varphi(s), \qquad s = \log r
\]
and ask that it is  slowly varying as function of $s$ on the unit
scale.  To start, we consider the 
following set-up:

\begin{prop}\label{p:Carl}
  Suppose that $P$ is asymptotically flat. Let $\varphi$ be a convex weight
  which satisfies
\begin{equation}\label{varphi}
\lambda\lesssim \varphi'(s), \quad \lambda \lesssim
  \varphi''(s)\le \frac{1}{2}\varphi'(s),\quad |\varphi'''(s)|+|\varphi^{(iv)}| 
\ll \varphi',\quad \lambda \gg 1. 
\end{equation}
Then for $u \in LE^1_0$ supported in $\{r > R_0\}$ we have the uniform estimate
\begin{equation}\label{e:carl}
\|  r^{-1} (1+\varphi'')^\frac12 e^\varphi (r^{-1} (1+\varphi') u, \nabla u)  \|_{L^2} 
+ \| r^{-1} (1+\varphi')^\frac12 e^\varphi  \partial_t u \|_{L^2} 
\lesssim \| e^\varphi Pu\|_{L^2}.
\end{equation}
\end{prop}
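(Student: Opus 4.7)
The plan is to prove \eqref{e:carl} via the classical conjugation-plus-positive-commutator strategy, adapted to the asymptotically flat wave setting and close in spirit to the resolvent Carleman estimates of \cite{KT}. Setting $v = e^\varphi u$, the estimate is equivalent to the analogous bound for the conjugated operator $P_\varphi := e^\varphi P e^{-\varphi}$ acting on $v$. Because $\varphi$ depends only on $r$, the conjugation leaves $D_0$ untouched and replaces each $D_j + A_j$ by $D_j + A_j + i\partial_j\varphi$. I then split $P_\varphi = S + K$ into its self-adjoint and skew-adjoint parts in $L^2(\R^{1+3})$ and expand
\[
\|P_\varphi v\|_{L^2}^2 \ =\  \|Sv\|_{L^2}^2 + \|Kv\|_{L^2}^2 + \la i^{-1}[S,K]\,v, v\ra,
\]
so that the whole task reduces to a suitable lower bound for the commutator term.

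At principal level $\sigma(S) = p(\tau,\xi) - |\nabla\varphi|_g^2$ and $\sigma(K)/i = 2 g^{j\beta}(\partial_j\varphi)\,\xi_\beta$, and the Poisson bracket produces $4\,\mathrm{Hess}(\varphi)(\xi,\xi) + 4\,\mathrm{Hess}(\varphi)(\nabla\varphi,\nabla\varphi)$ plus metric cross terms. Since $\varphi = \varphi(s)$ with $s = \log r$, a direct computation gives
\[
\mathrm{Hess}(\varphi)(\xi,\xi) \ =\ \frac{\varphi'(s)}{r^2}|\xi_\perp|^2 + \frac{\varphi''(s)-\varphi'(s)}{r^2}\xi_r^2,
\]
where $\xi_r = (x/r)\cdot\xi$ and $\xi_\perp$ is the angular piece of $\xi$. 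The radial coefficient is negative on its own, so I would add a Lagrangian modifier $m\cdot\sigma(P_\varphi)$, chosen by the same quadratic-form trick used in the proof of Theorem~\ref{1pt_thm_high}, and use $\lambda\lesssim\varphi''\leq\varphi'/2$ together with $\lambda\gg 1$ to produce, after balancing with the $|\nabla\varphi|_g^2$ contribution and with the timelike ellipticity of $\partial_t$ supplied by condition \eqref{zero_ell} of Definition~\ref{NT_def}, a symbol bounded below by
\[
r^{-2}(1+\varphi'')|\xi|^2 + r^{-4}(1+\varphi'')(1+\varphi')^2 + r^{-2}(1+\varphi')\tau^2.
\]
G\r{a}rding's inequality then upgrades this pointwise positivity to the corresponding operator bound, which matches exactly the square of the left side of \eqref{e:carl}.

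The remaining work is to control error terms. The perturbations $(g-m, A, V)$ enter both $S$ and $K$, and the Poisson bracket picks up lower-order corrections involving $\varphi'''$, $\varphi^{(iv)}$, spatial derivatives of the metric, and the imaginary parts of the lower-order coefficients. Each such error contributes a weighted $L^2$ seminorm of $v$ whose prefactor is controlled either by $\llp{(g-m,A,V)}{AF(\{r\approx 2^k\})}\lesssim c_k$ or by $|\varphi'''|/\varphi''\ll 1$. Since $u$ is supported in $\{r>R_0\}$, one invokes the exterior smallness $\mathbf{c}\ll 1$ from \eqref{AF-def}, the smallness of higher derivatives of $\varphi$, and $\lambda\gg 1$; summing over dyadic annuli and using the frequency-envelope summability $\sum_k c_k\lesssim\mathbf{c}$ allows each error to be absorbed into the left side.

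The main obstacle is the pointwise positivity step: the radial contribution to $\mathrm{Hess}(\varphi)$ is negative on its own, so the Lagrangian modifier $m$ has to be chosen so that the combined quadratic form in $(\tau, \xi_r, \xi_\perp)$ is non-negative both on and off the characteristic variety $p=0$, with the correct distribution of weights (namely $(1+\varphi'')$ on the spatial gradient and $(1+\varphi')$ on $\tau$). The two-sided condition $\lambda\lesssim\varphi''(s)\leq\varphi'(s)/2$ is exactly what makes this balancing work, and explains why both a lower bound on $\varphi''$ and an upper bound on the ratio $\varphi''/\varphi'$ appear in the hypotheses.
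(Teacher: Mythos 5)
Your setup is exactly the paper's: conjugate to $v = e^\varphi u$, split $P_\varphi = P_\varphi^r + P_\varphi^i$ into self-adjoint and skew-adjoint parts, expand $\|P_\varphi v\|^2 = \|P_\varphi^r v\|^2 + \|P_\varphi^i v\|^2 + \la [P_\varphi^r,P_\varphi^i]v,v\ra$, and reduce to a symbol positivity statement. Your Hessian computation is also correct, and in the Minkowski case your expression $4\,\mathrm{Hess}(\varphi)(\xi,\xi) + 4\,\mathrm{Hess}(\varphi)(\nabla\varphi,\nabla\varphi)$ agrees with the bracket computed in the paper. However, the step where you resolve the lack of positivity is a genuine gap.

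In your notation the bracket is $4r^{-2}(\varphi''-\varphi')\xi_r^2 + 4r^{-2}\varphi'|\xi_\perp|^2 + 4r^{-4}(\varphi')^2(\varphi''-\varphi')$, and since $\varphi'' \leq \tfrac12\varphi'$ both the $\xi_r^2$ coefficient and the constant term are strictly negative, while the $\tau^2$ coefficient is zero. Adding a Lagrangian correction $m\,\Re p_\varphi = m(\xi_r^2 + |\xi_\perp|^2 - \tau^2 - r^{-2}(\varphi')^2)$ cannot repair this: $m>0$ makes the $\tau^2$ coefficient negative, and $m<0$ worsens the $\xi_r^2$ coefficient. So the quadratic-form trick from Theorem~\ref{1pt_thm_high}, which finds an $m$ rendering a quadratic in $\tau$ everywhere positive after modification by $p$, simply does not apply here — the obstruction is not in $\tau$. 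What the paper actually does is rewrite the bracket as
\[
\{\Re p_\varphi, \Im p_\varphi\} = 4\varphi'' r^{-4}\bigl((x\cdot\xi')^2 + (\varphi')^2\bigr) + 4r^{-2}\varphi'\xi_0^2 + 4r^{-2}\varphi'\,\Re p_\varphi - 2(\varphi')^{-1}\,(\Im p_\varphi)^2,
\]
so that the negative part is an exact multiple of $(\Im p_\varphi)^2$. The operator counterpart of this term, $2(\varphi')^{-1}\|P_\varphi^i v\|^2$, is then absorbed into the $\|P_\varphi^i v\|^2$ already present in the $L^2$ expansion, precisely because $\varphi'\gg 1$; and the $\Re p_\varphi$ multiple is shifted against $\|P_\varphi^r v\|^2$ after choosing $b \approx \tfrac12 r^{-2}\varphi''$ so that $|b-4r^{-2}\varphi'|^2 \ll \varphi'' r^{-4}(\varphi')^2$. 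Your proposal never uses the $\|Kv\|^2$ term for absorption, so the positivity step, as written, fails. A smaller point: invoking the timelike ellipticity \eqref{zero_ell} is unnecessary here — since $u$ is supported in $\{r>R_0\}$ the estimate reduces perturbatively to Minkowski, and that ingredient belongs to the compact-region Carleman estimate of Proposition~\ref{p:Carl-in}, not this one.
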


A variation on this theme is needed in order to deal with the fact that we 
want to bend the weight in such a way so that it is constant near infinity.
That would make the weight nonconvex there, though we will stay with the 
monotonicity condition. The lack of convexity yields some errors in the estimate;
the point is that these errors will be lower order, and absorbed into the main term
provided that there is a bound from below on the allowed time frequencies.
To simplify matters, in the next result we harmlessly assume that this transition occurs
in a single dyadic region.

\begin{prop}\label{p:Carl-cut}
Suppose that $P$ is asymptotically flat.  Let 
$R > R_0$, and let $\varphi$ be an increasing weight which is as in \eqref{varphi}
for $s < \log R$ and is constant for $s > \log R +1$. 
Then for $u \in LE^1_0$ supported in $\{r > R_0\}$ we have the uniform estimate
\begin{equation}\label{e:carl-cut}
\begin{split}
\| r^{-1} (1+\varphi''_+)^\frac12 e^{\varphi}  &(\nabla u, r^{-1}(1+\varphi') u)\|_{L^2_{<R}}+ 
\| r^{-1} (1+\varphi')^\frac12 e^{\varphi} \partial_t u\|_{L^2_{<R}}
+ R^{-\frac12}
\| e^{\varphi} u\|_{LE^1_{>R}} \\ & \ \lesssim    \| e^{\varphi} P u \|_{L^2_{<R}}
+ R^{-\frac12} \| e^{\varphi} P u\|_{LE^*_{>R}} 
+  R^{-2} \| (1+\varphi')^{\frac{3}{2}} e^\varphi u\|_{L^2_R} . 
\end{split}
\end{equation}
\end{prop}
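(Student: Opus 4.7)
The strategy is to piece together the interior Carleman estimate of Proposition~\ref{p:Carl} with the exterior multiplier estimate of Proposition~\ref{Mourre_prop}, using a cutoff that localizes the first to $r \le R$ and the second to $r \ge R$; the transition region $r \approx R$ where $\varphi$ ceases to be convex is where the non-trivial work lies, and all errors from that region are absorbed by the last term on the right-hand side of \eqref{e:carl-cut}.

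For the interior piece, extend $\varphi$ to a convex weight $\tilde\varphi$ satisfying \eqref{varphi} globally and agreeing with $\varphi$ for $s \le \log R$, then pick a smooth radial cutoff $\chi(r)$ with $\chi \equiv 1$ on $r \le R/2$ and $\chi$ supported in $\{r \le R\}$. Apply Proposition~\ref{p:Carl} to $\chi u$ with weight $\tilde\varphi$. Because $\tilde\varphi = \varphi$ on $\mathrm{supp}\,\chi \subset \{r < R\}$, the left side controls the first two groups of terms on the left of \eqref{e:carl-cut} (restricted to $r \le R/2$). The right side is $\|e^\varphi P(\chi u)\|_{L^2}$, which splits as $\|e^\varphi Pu\|_{L^2_{<R}}$ plus a commutator $\|e^\varphi[P,\chi]u\|_{L^2}$ that is supported in the annulus $R/2 < r < R$ and of size $O(R^{-1})\partial u + O(R^{-2})u$.

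For the exterior piece, use that $\varphi$ is non-decreasing and becomes the constant $\varphi_\infty := \varphi(\log R + 1)$ for $r > 2R$, so on $r > R$ one has $e^\varphi \le e^{\varphi_\infty}$ and consequently $\|e^\varphi u\|_{LE^1_{>R}} \le e^{\varphi_\infty} \|u\|_{LE^1_{>R}}$. Apply Proposition~\ref{Mourre_prop} to $u$ at scale $R$, with the $L^2$ boundary energy terms vanishing under the hypothesis $u \in LE^1_0$ (taken in the limiting sense of compactly-time-supported approximants), and multiply by $R^{-1/2} e^{\varphi_\infty}$ to obtain the third left-side term of \eqref{e:carl-cut}. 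For $r > 2R$ one has $e^{\varphi_\infty} = e^\varphi$, so the main contribution cleanly yields $R^{-1/2}\|e^\varphi Pu\|_{LE^*_{>R}}$; the remaining annular region $R < r < 2R$ contributes errors where $e^{\varphi_\infty}/e^\varphi$ may be large.

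All residual errors are localized in the annulus $r \approx R$ and, after an integration by parts on the annulus that trades a derivative on $u$ for a factor of $\varphi' \sim \varphi'(\log R)$, can be put in the form $R^{-2}(1+\varphi')^{3/2} e^\varphi u$ on $L^2_R$, matching precisely the last right-hand term of \eqref{e:carl-cut}; the principal bulk contributions of this integration by parts are absorbed back into the interior Carleman left side on $r \le R/2$, where $1 + \varphi'' \gtrsim 1$ still gives a true gain. The main obstacle is exactly this annular bookkeeping: the interior Carleman gain (which depends on $1+\varphi''$) degenerates as $\varphi'' \to 0$ and the exterior weight mismatch is largest on the same annulus, so the tuning of the power $(1+\varphi')^{3/2}$ in the last right-hand term is what makes the estimate balance.
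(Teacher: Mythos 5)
Your strategy of gluing the convex Carleman estimate to the exterior Mourre estimate is the right starting point, but it misses the key ingredient of the paper's argument and has a genuine gap in the transition region. The paper decomposes into \emph{three} overlapping regions: (a) $\{r<R/2\}$ where $\varphi''>0$ and \eqref{e:carl} applies; (b) a \emph{middle} region $\{R/4<r<2R\}$ where $\varphi'\gg 1$ but $\varphi''$ may be negative (with $\varphi''>-\varphi'/2$); and (c) $\{r>R\}$ where \eqref{high_Mourre} applies to the conjugated function $v=e^\varphi u$. For region (b) the paper runs a \emph{separate} positive-commutator argument, identical in spirit to that of Proposition~\ref{p:Carl} but with the Lagrangian correction changed from $b\approx\frac12 r^{-2}\varphi''$ to $b\approx r^{-2}\varphi'$; this exploits the identity
$\{\Re p_\varphi,\Im p_\varphi\} = 4\varphi'' r^{-4}((x\cdot\xi')^2+(\varphi')^2)+4r^{-2}\varphi'\xi_0^2+4r^{-2}\varphi'\Re p_\varphi-2(\varphi')^{-1}(\Im p_\varphi)^2$
to retain a definite sign even when $\varphi''<0$, and it is precisely this middle estimate that generates the $R^{-2}\|(1+\varphi')^{3/2}e^\varphi u\|_{L^2_R}$ error term. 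Your two-region scheme has no replacement for this.

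Concretely, two of your error terms cannot be absorbed as you claim. First, the commutator $e^\varphi[P,\chi]u$ from the interior piece is supported in $\{R/2<r<R\}$ and contains $O(R^{-1})e^\varphi\nabla u$; this is a first-order quantity in $u$ (not in $\chi u$) in a region where your interior left-hand side gives you nothing (since $\chi$ is not identically one there), and it is not of the zeroth-order form $R^{-2}(1+\varphi')^{3/2}e^\varphi u$ — the ``integration by parts that trades a derivative for $\varphi'$'' has no actual mechanism behind it without the middle-region Carleman positivity. Second, multiplying \eqref{high_Mourre} applied to $u$ by $R^{-1/2}e^{\varphi_\infty}$ produces the coupling error $R^{-3/2}e^{\varphi_\infty}\|u\|_{LE_R}\approx R^{-2}e^{\varphi_\infty}\|u\|_{L^2_R}$; to dominate this by $R^{-2}\|(1+\varphi')^{3/2}e^\varphi u\|_{L^2_R}$ you would need $e^{\varphi_\infty-\varphi}\lesssim(1+\varphi')^{3/2}$ on $r\approx R$. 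But $\varphi_\infty-\varphi(\log R)\approx\varphi'(\log R)\gtrsim\lambda\gg 1$, so the left side is $\approx e^{\varphi'(\log R)}$ which vastly exceeds the polynomial $(1+\varphi')^{3/2}$. The exponential weight mismatch on the annulus therefore cannot be absorbed into the polynomial error allowance of \eqref{e:carl-cut}; you must work with the conjugated $v=e^\varphi u$ on the exterior and supply a genuine positivity mechanism (the paper's region (b)) for the annulus where $\varphi$ transitions from convex to flat.
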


Comparing the weight on the added error term with the weight on the
$\partial_t u$ term on the left, we see that the error term is
negligible provided that the lower bound $\tau_0$ on the time frequencies
satisfies
\begin{equation}
\tau_0 \gg \varphi'(\log R) R^{-1}.
\end{equation}
This is quite similar to the proof in \cite{KT} of the result on the
absence of embedded eigenvalues, minus the initial part which uses
the symmetry of the elliptic operator to obtain the initial $LE^1_0$ decay at infinity
(which is directly assumed here).

\begin{proof}[Proof of Propositions~\ref{p:Carl}, \ref{p:Carl-cut}]
These are standard Carleman estimates. In both of them the magnetic 
and scalar potential terms are perturbative, so we can simply take
\[
P = D_\alpha g^{\alpha\beta} D_\beta.
\]
Conjugating out the exponential weight,
we rewrite \eqref{e:carl} in the form 
\begin{equation}\label{e:carl1}
\|  r^{-1} (1+\varphi'')^\frac12 (r^{-1} (1+\varphi') v, \nabla v)  \|_{L^2} 
+ \| r^{-1} (1+\varphi')^\frac12  \partial_t v \|_{L^2} 
\lesssim \|  P_\varphi v\|_{L^2}
\end{equation}
where $v = e^\varphi u$ and $P_\varphi$ represents the conjugated operator
\[
P_\varphi(x,D) = e^{\varphi} P(x,D) e^{-\varphi}=
P(x,D+i\nabla \varphi).
\]
This is split into a self-adjoint and a skew-adjoint part, which have the form
\[
P_\varphi^r =D_\alpha g^{\alpha\beta} D_\beta - \varphi_i g^{ij} \varphi_j , \qquad 
P_\varphi^i = iD_\alpha  g^{\alpha j} \varphi_j +  i\varphi_j g^{j\alpha} D_\alpha 
\]
where $\varphi_j = \partial_j \varphi$.
Then we have 
\[
\|P_\varphi v\|_{L^2}^2 = \|  P_\varphi^r v\|_{L^2}^2 + \|  P_\varphi^i v\|_{L^2}^2
+ \la [P_\varphi^r, P_\varphi^i] v,v\ra;
\]
therefore our estimate is essentially a statement about the positivity of the 
commutator on the characteristic set of $P_\varphi$, i.e. the pseudoconvexity 
condition.

We begin by checking the pseudoconvexity condition at the symbol level.
With $p(x,\xi) = g^{\alpha \beta} \xi_\alpha \xi_\beta$ we have 
\[
p_\varphi =g^{\alpha \beta} \xi_\alpha \xi_\beta  - g^{ij} \partial_i \varphi  \partial_j \varphi
+ 2 i g^{\alpha j} \xi_\alpha \partial_j \varphi,
\]
so the principal symbol of the commutator is
\[
\{\Re p_\varphi, \Im p_\varphi\} = 2 \{ g^{\alpha \beta} \xi_\alpha \xi_\beta- g^{ij} \partial_i \varphi  \partial_j \varphi,
g^{\alpha j} \xi_\alpha \partial_j \varphi\}
\]
All terms where $g$ is differentiated are perturbative,
so we write them as $err$. Then we have 
\[
\{\Re p_\varphi, \Im p_\varphi\} =  4 \xi_\alpha g^{\alpha j} \varphi_{jk} g^{k\beta} \xi_\beta + 4 \varphi_i g^{ij} 
\varphi_{jk} g^{kl} \varphi_l + err.
 \]
 Replacing $g$ on the right with the Minkowski metric $m$  is also
 perturbative, so it suffices to do a complete computation in the
 Minkowski case. There we have
\[
\Re p_\varphi = |\xi'|^2 - \xi_0^2 - r^{-2} (\varphi')^2, \qquad \Im p_\varphi = 2 r^{-1} \frac{x}{r} \cdot \xi' \varphi';   
\]
therefore
\[
\begin{split}
\{\Re p_\varphi, \Im p_\varphi\}  = & \ 2 \{ |\xi'|^2 - r^{-2} (\varphi')^2,  r^{-2} \varphi' \ x \cdot \xi'\} 
\\
= & \ 4 \varphi''  r^{-4} ( (x \cdot \xi')^2 + (\varphi')^2) + 4 r^{-2} \varphi' ( |\xi'|^2 - r^{-2} (\varphi')^2) - 8 r^{-4}
\varphi'(x \cdot \xi')^2  
\\
= & \ 4 \varphi''  r^{-4} ( (x \cdot \xi')^2 + (\varphi')^2)+ 4 r^{-2} \varphi' \xi_0^2 
 + 4 r^{-2} \varphi' \Re p_\varphi   - 2 (\varphi')^{-1}  (\Im p_\varphi)^2.  
\end{split}
\]
Then, choosing a function $b$ with the property that $b \approx \frac12  r^{-2} \varphi''$,
 we have a positivity  relation of the form 
\[
\{\Re p_\varphi, \Im p_\varphi\}  + (b-4r^{-2}\varphi') \Re p_\varphi + 2 (\varphi')^{-1} (\Im p_\varphi)^2
\gtrsim \varphi'' r^{-2} (|\xi'|^2 + r^{-2}(\varphi')^2) +  r^{-2} \varphi' \xi_0^2 
\]
where $g$ and $m$ are still perturbatively interchangeable. 
This leads to an inequality of the form
\begin{equation}\label{e:carl2}
LHS\eqref{e:carl1} \lesssim \la [P_\varphi^r, P_\varphi^i] v,v\ra_{L^2}+
\| (\varphi')^{-\frac12} P_\varphi^i v\|_{L^2}^2 + \la (b-4r^{-2}\varphi') v,  P^r_\varphi v\ra
\end{equation}
where all lower order terms in the commutator and in the integration by parts in the 
last term on the right are negligible.
Since we have 
\[
|\varphi'| \gg 1, \qquad | b- 4 r^{-2} \varphi'|^2 \ll \varphi'' r^{-4} (\varphi')^2,
\]
this implies \eqref{e:carl1}.

We emphasize again the advantage of working with \eqref{e:carl2} and
the preceding symbol bound, which is that they are stable with respect to
small asymptotically flat perturbations of $g$.  By contrast, such a substitution 
cannot be done directly in  \eqref{e:carl1}. 

\bigskip

Similarly, the bound \eqref{e:carl-cut} can be written in terms of $v$ and the conjugated 
operator $P_\varphi$,
\begin{equation}\label{e:carl-cut1}
\begin{split}
\| r^{-1} (1+\varphi''_+)^\frac12  (\nabla v, r^{-1}(1+\varphi') v)\|_{L^2_{<R}}+ &
\| r^{-1} (1+\varphi')^\frac12  \partial_t v\|_{L^2_{<R}}
+ R^{-\frac12}
\| v \|_{LE^1_{>R}} \\ & \ \lesssim    \|  P_\varphi v \|_{L^2_{<R}}
+ R^{-\frac12} \| P_\varphi v\|_{LE^*_{>R}} 
+ R^{-2} \| (1+\varphi')^{\frac{3}{2}}  v\|_{L^2_R} . 
\end{split}
\end{equation}
To prove this bound we divide the analysis into three overlapping regions:

a) The region $\{ r < R/2\}$, where $\varphi''$ is positive and \eqref{e:carl1}
applies.

b) The region $\{ R/4 < r < 2R\}$, where $\varphi' \gg 1$  but $\varphi''$ is in part negative,
with a bound $\varphi'' > - \varphi'/2$.

c) The region $\{ r > R\}$, where $\varphi$ is constant and the bound \eqref{high_Mourre} applies.

To glue the bounds in the three regions together one simply uses cutoff functions
adapted to the first two regions. The third region already has the cutoff built in the 
estimate \eqref{high_Mourre}.

Thus it remains to consider the region (b). But there the same reasoning as above applies,
with the only difference that the function $b$ is now chosen so that 
$b \approx r^{-2} \varphi'$. 
\end{proof}

The second class of Carleman estimates applies in a compact set. Here
our goal is to use the convexity of the weight in order to produce a
bound which has small high frequency errors, and thus applies for time
frequencies in a large but finite frequency range. This is done in order to
have a result which does not use the nontrapping assumption. The result below is 
a local result; therefore the choice of coordinates is not essential. For simplicity
we choose our weight as $\varphi= \varphi(r)$, though one can easily replace 
$r$ by $s$ away from zero.

\begin{prop}\label{p:Carl-in}
  Suppose $P$ is hyperbolic, so that the surfaces $t = const$ are
  uniformly space-like and $\partial_t$ is uniformly time-like.  Let
  $\varphi$ be a radial weight which satisfies
\begin{equation}\label{phinear0}
\varphi'(0) = 0, \qquad \varphi'' \approx \lambda + \sigma \varphi' , \quad 
| \varphi'''| \lesssim \sigma^2 \varphi', \quad | \varphi^{(iv)}| \lesssim \sigma^3 \varphi', 
\qquad  \lambda,
 \sigma  \gg 1. 
\end{equation}
Then we have the following estimate:
\begin{equation}\label{Carl-in}
  \| (\varphi'/r)^\frac12 e^\varphi \partial u\|_{L^2}+\|(\varphi'')^\frac12    e^\varphi  \varphi' u\|_{L^2} 
+ \| r^{-1}\varphi' e^\varphi u\|_{L^2} 
\lesssim \|e^\varphi Pu \|_{L^2} + \|(\varphi'{/\la r\ra})^\frac12
e^\varphi \partial_t u\|_{L^2_{\gtrsim 1}}.
\end{equation}
\end{prop}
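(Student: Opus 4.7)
The plan is to run a positive-commutator Carleman argument along the lines of Propositions~\ref{p:Carl} and~\ref{p:Carl-cut}. Set $v = e^\varphi u$ and $P_\varphi = e^\varphi P e^{-\varphi}$. The contributions from $A$ and $V$ produce subprincipal terms of order at most $\varphi'$ and are absorbed on the left thanks to $\lambda,\sigma\gg 1$, so we reduce to the principal part of $P$. Splitting $P_\varphi = P_\varphi^r + i P_\varphi^i$ into symmetric and skew-symmetric parts, we use the identity
\[
\|e^\varphi P u\|_{L^2}^2 = \|P_\varphi^r v\|_{L^2}^2 + \|P_\varphi^i v\|_{L^2}^2 + \la i [P_\varphi^r,P_\varphi^i]v, v\ra,
\]
so the estimate \eqref{Carl-in} reduces to a coercive lower bound on the commutator term after allowable Lagrangian modifications by multiples of $\Re p_\varphi$ and by a nonnegative multiple of $(\Im p_\varphi)^2/\varphi'$.

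On the symbol side, with $\varphi_j = \varphi'(r)\,x_j/r$,
\[
\Re p_\varphi = g^{\alpha\beta}\xi_\alpha\xi_\beta - g^{ij}\varphi_i\varphi_j, \qquad \Im p_\varphi = 2\,g^{\alpha j}\xi_\alpha\varphi_j.
\]
Substituting the Minkowski metric for $g$ (the $g-m$ correction carries a derivative on $g$ and is subprincipal), a direct calculation as in the proof of Proposition~\ref{p:Carl} gives, with $\rho = x/r$,
\[
\{\Re p_\varphi, \Im p_\varphi\} = 4\varphi''(\rho\cdot\xi)^2 + \tfrac{4\varphi'}{r}\bigl(|\xi|^2 - (\rho\cdot\xi)^2\bigr) + 4(\varphi')^2 \varphi''.
\]
Under \eqref{phinear0} the convexity bound $\varphi'' \gtrsim \lambda + \sigma\varphi'$ directly delivers the $\|(\varphi'')^{1/2}\varphi' v\|_{L^2}^2$ and $\|r^{-1}\varphi' v\|_{L^2}^2$ pieces of the left-hand side of \eqref{Carl-in}, and combined with the tangential term $\tfrac{\varphi'}{r}|\xi_T|^2$ it yields the spatial portion of $\|(\varphi'/r)^{1/2}\partial v\|_{L^2}^2$.

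The principal obstacle is that, because $\varphi$ is time-independent, $\Im p_\varphi$ carries at most one factor of $\tau = \xi_0$ (through the cross term $g^{0j}$) and in the diagonal case $g^{0j}=0$ no $\tau$ at all, so the commutator produces no $\tau^2$-contribution. We recover the missing $\tau^2$ by adding a Lagrangian correction $c\,\Re p_\varphi$ with $c\sim \varphi'/r$: since $\Re p_\varphi = -g^{00}\tau^2 + \dots$ and $g^{00}$ is uniformly bounded away from zero by hyperbolicity, this contributes $+c\tau^2$, while the accompanying $-c|\xi|^2$ and $-c(\varphi')^2$ terms are absorbed into $\varphi''(\rho\cdot\xi)^2$ and $(\varphi')^2\varphi''$ using $\varphi''\gtrsim\varphi'/r$, which holds throughout by \eqref{phinear0}. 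The region where this absorption margin is tightest is $|x|\gtrsim 1$, and there we simply place the corresponding portion $\|(\varphi'/\langle r\rangle)^{1/2} e^\varphi \partial_t u\|_{L^2_{\gtrsim 1}}$ on the right of \eqref{Carl-in}. The passage from the symbolic positivity to the operator $L^2$ inequality is by G\r{a}rding's inequality, with subprincipal errors from commutators, from the $g-m$ substitution, and from $A,V$ all controlled by taking $\lambda,\sigma\gg 1$.
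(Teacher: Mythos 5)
Your overall strategy (conjugate, positive commutator, Lagrangian correction by $\Re p_\varphi$) matches the paper's approach in spirit, but there are two concrete gaps. First, you cannot substitute the Minkowski metric for $g$ in this estimate. That substitution is legitimate in Proposition~\ref{p:Carl} because there the estimate is localized to $\{r>R_0\}$, where $\llp{g-m}{AF}\leq\mathbf{c}\ll 1$. Proposition~\ref{p:Carl-in} covers the interior, where $g-m$ is only $O(M_0)$, and $g$ enters the leading commutator terms undifferentiated (e.g. $\partial_i\partial_j\varphi\, g^{i\alpha}g^{j\beta}\xi_\alpha\xi_\beta$), so the $g-m$ correction is a main-order error, not subprincipal. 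As a result your Minkowski commutator formula with $\rho\cdot\xi$ is not the correct leading symbol here; the paper keeps $g$ and uses the Lorentzian structure of $g$ itself (i.e.\ $g^{00}<0$, $g^{ij}$ positive definite) to establish the algebraic inequalities $|\xi|^2\lesssim -\Re p_\varphi + c_0\sum_i|g^{i\alpha}\xi_\alpha|^2$ near the origin and $|\xi|^2\lesssim \Re p_\varphi + c_0(\tau^2+(\varphi')^2)$ away from it.

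Second, even with $g$ retained, your single-mechanism absorption does not close in $r\gtrsim 1$. The general-$g$ commutator carries an error $O(\varphi'/\langle r\rangle)(|\xi|^2+(\varphi')^2)$ from $\partial g$, and for $r\gtrsim 1$ this is the same size as the tangential gain $\tfrac{\varphi'}{r}|\xi_{\mathrm{tan}}|^2$ that your Lagrangian step relies on, so the sign is no longer definite. The paper therefore runs a genuine two-region argument with opposite signs of the Lagrangian multiplier. In $r\ll 1$, the error is smaller than the main term $r^{-1}\varphi'|\xi|^2$, and a correction $-c_1 r^{-1}\varphi'\Re p_\varphi$ combined with the $r^{-1}\varphi'|g^{i\alpha}\xi_\alpha|^2$ term from the commutator yields full coercivity with no $\partial_t u$ error; in $r\gtrsim 1$, that route fails, and a correction $+c_1 r^{-1}\varphi'\Re p_\varphi$ is used together with an explicit $c_2 r^{-1}\varphi'\xi_0^2$, which is exactly the $\|(\varphi'/\langle r\rangle)^{1/2}e^\varphi\partial_t u\|_{L^2_{\gtrsim 1}}$ term. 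Your write-up does not distinguish these two cases or justify why the $\tau^2$ error is unnecessary near the origin. (There is also a small sign confusion: $\Re p_\varphi$ contains $g^{00}\tau^2$ with $g^{00}<0$, so adding $+c\,\Re p_\varphi$ gives a \emph{negative} $\tau^2$ contribution; the arithmetic of which pieces are absorbed by $\varphi''$ versus $\varphi'/r$ should be redone once the sign is fixed.)
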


\begin{proof}
The reasoning here is similar to the previous proof.  We first note
that the hypotheses \eqref{phinear0} insure that $\varphi$ is
increasing.  Next, we observe that the contribution of the 
potential $V$ and the magnetic potential $A$ in the above estimate is
again perturbative since \eqref{phinear0} gives that $\varphi'/r
\gtrsim \lambda$, so for the purpose of this proof we set $P = D_{\alpha} g^{\alpha\beta} D_{\beta}$.
After conjugation, the estimate \eqref{Carl-in} becomes 
\begin{equation}\label{Carl-in1}
 \|(\varphi'')^\frac12   \varphi' v\|_{L^2} + \| (\varphi'/r)^\frac12
 \partial v\|_{L^2} +\|r^{-1}\varphi' v\|_{L^2}
\lesssim \| P_\varphi v \|_{L^2} + \| (\varphi'{/\la r
  \ra})^\frac12  \partial_t v\|_{L^2_{\gtrsim 1}}.
\end{equation}
At the symbol level we compute 
\[
\begin{split}
\frac12 \{\Re p_\varphi, \Im p_\varphi\} = & \  \{ g^{\alpha \beta} \xi_\alpha \xi_\beta- g^{ij} \partial_i \varphi  \partial_j \varphi,
g^{\alpha j} \xi_\alpha \partial_j \varphi\}
\\ = & \  \partial_{i} \partial_j \varphi g^{i\alpha} g^{j\beta} \xi_\alpha \xi_\beta + 
 \partial_{i} \partial_j \varphi g^{ik} g^{jl} \partial_k
 \varphi  \partial_l \varphi + O( \varphi'{/\la r\ra}) (|\xi|^2
+ (\varphi')^2) 
\\ = & \ \varphi'' ( (\partial_i r g^{i\alpha} \xi_\alpha)^2 + |\nabla r|^4_g (\varphi')^2) 
+\varphi' \partial_i \partial_j r \  g^{i\alpha} g^{j\beta} \xi_\alpha
\xi_\beta\\&\qquad\qquad\qquad\qquad\qquad\qquad\qquad{+\varphi' \partial_i\partial_j r g^{ik}g^{jl}\partial_k
  \varphi \partial_l\varphi}
+ O( \varphi'{/\la r\ra})  (|\xi|^2
+ (\varphi')^2) 
\\ \gtrsim & \ \varphi'' (\varphi')^2 + r^{-1} \varphi' |g^{i\alpha}
\xi_\alpha|^2+ O( \varphi'{/\la r\ra})  (|\xi|^2
+ (\varphi')^2){.}
\end{split}
\]

Now we split the analysis into two regions, which can be easily assembled together
using an appropriate partition of unity.

{\em 1. The inner region $r \ll 1$.}  
Since the surfaces $t= const$ are uniformly space-like while
$\partial_t$ is time-like, it follows that $g^{00} < 0$ while the
quadratic form $g^{ij} \xi_i \xi_j$ is positive definite.  
Therefore we obtain an inequality of the form
\[
|\xi|^2 \lesssim - \Re p_\varphi + c_0 \sum_i  |g^{i\alpha} \xi_\alpha|^2
\]
for a large universal constant $c_0$. Thus for $r \ll 1$ we obtain the bound 
\[
\varphi'' (\varphi')^2 +  r^{-1} \varphi' |\xi|^2 \lesssim \{\Re p_\varphi, \Im p_\varphi\} 
- c_1  r^{-1} \varphi' \Re p_{\varphi}
\]
with a small universal $c_1$. Here no $\xi_0^2$ error term is needed
on the right; this can be viewed as a consequence of the fact that no
trapping can occur very close to $x = 0$.
This translates into a bound of the form 
\[
 \|(\varphi'')^\frac12   \varphi' v\|_{L^2}^{2} + \| (\varphi'/r)^\frac12 \partial v\|_{L^2}^{{2}}
\lesssim  \la [P_\varphi^r,P_\varphi^i] v ,v \ra_{L^2} 
- c_1 \la P^{{r}}_\varphi v,  r^{-1} \varphi' v \ra_{L^2} 
\]
where we remark that, by the uncertainty principle, 
 the norm $\| r^{-1}\varphi' v\|_{L^2}^2$ can also be freely
added on the left in order to compensate for the degeneracy of $\varphi'$ at zero.
Hence \eqref{Carl-in1} easily follows for $v$ supported in $\{r \ll 1\}$. 

{\em 2. The outer region $r \gtrsim 1$.}  
In this region the leading term $ r^{-1} \varphi' |g^{i\alpha} \xi_\alpha|^2$ and 
the error term $O( \varphi'{/\la r\ra})  |\xi|^2$ are indistinguishable, so we 
simply write
\[
\{\Re p_\varphi, \Im p_\varphi\}  \gtrsim  \ \varphi'' (\varphi')^2 +
O( \varphi'{/\la r\ra})  |\xi|^2.
\]

Since the surfaces $t= const$ are uniformly space-like while
$\partial_t$ is time-like, it follows that $g^{00} < 0$ while the
quadratic form $g^{ij} \xi_i \xi_j$ is positive definite. Thus the
following must hold:
\[
|\xi|^2  \lesssim  \Re p_\phi + c_0 ( |\xi_0|^2 + |\varphi'|^2)
\]
for a large universal constant $c_0$. 
We are then led to a bound of the form
\[
\varphi'' (\varphi')^2 +  \frac{\varphi'}{{r}} |\xi|^2 \lesssim \{\Re p_\varphi, \Im p_\varphi\} 
+ c_1  \frac{\varphi'}{{r}} \Re p_\phi + c_2    \frac{\varphi'}{{r}} |\xi_0|^2
\]
for large universal constants $1 \ll c_1 \ll c_2$.
This translates into the bound
\[
\|(\varphi'')^\frac12   \varphi' v\|^2_{L^2} + \| (\varphi'{/r})^\frac12 {\partial} v\|^2_{L^2} 
\lesssim 
 \la [P_\varphi^r,P_\varphi^i] v ,v \ra_{L^2} + 
c_1 \la P_\varphi^{{r}} v,  {(}\varphi'{/r)} v \ra_{L^2} {+}
c_2 \| (\varphi'{/r})^\frac12  \partial_t v\|_{L^2}^{{2}}
\]
where all the lower order terms occurring in the integration by parts are negligible
due to the conditions ${\sigma},\lambda \gg 0$. Thus the bound \eqref{Carl-in1} easily follows
 for functions $v$ supported in the region $r \gtrsim 1$. 

\end{proof}

\subsection{The medium frequency estimate}
\label{s:med2}
Our main result here is as follows:

\begin{thm}[Medium frequency LE bound]\label{med_LE_thm}
  Let $P$ be an AF operator which satisfies the ellipticity condition
  \eqref{zero_ell}.  Then for each $\delta > 0$ there exists a bounded
  increasing radial weight $\varphi= \varphi(\log (1+r))$ so that the following bound holds
for all $u \in LE^1_0$ with $Pu \in LE^*$:
\begin{equation} \begin{split}
\lp{(1+\varphi''_+)^\frac12 e^\varphi \nabla u}{LE} 
+ \lp{\la r \ra^{-1} (1+\varphi''_+)^\frac12 (1+\varphi') e^\varphi u}{LE} 
+ \lp{(1+ \varphi')^\frac12 e^\varphi \partial_t u}{LE} 
\ \lesssim  \\ 
\delta( \|(1+ \varphi')^\frac12 e^\varphi u\|_{LE} 
+ \|\la r \ra^{-1}  (1+ \varphi''_+)^\frac12 (1+\varphi') e^\varphi \partial_t u\|_{LE}) + 
 \lp{e^\varphi P u}{LE^*}  \label{M_LE}
\end{split}
\end{equation}
with an implicit constant which is independent of $\delta$.
\end{thm}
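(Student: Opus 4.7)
\medskip

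\noindent\textbf{Proof plan for Theorem~\ref{med_LE_thm}.}
The plan is to build the desired LE bound by assembling three pieces: the interior Carleman estimate of Proposition~\ref{p:Carl-in} near the origin, the exterior Carleman estimate of Proposition~\ref{p:Carl-cut} on a large intermediate cylinder, and the AF exterior estimate \eqref{low_Mourre} (or equivalently \eqref{high_Mourre}) in the far field. The common exponential weight $e^\varphi$ must be chosen to satisfy the convexity/monotonicity assumptions of all three estimates simultaneously. All three pieces produce weighted $L^2$ bounds on the LHS quantities, while the theorem asks for $LE$ (i.e. $\ell^\infty L^{2,-1/2}$) bounds; the conversion is the trivial dyadic embedding
\[
\|v\|_{LE}\ \lesssim\ \|\langle r\rangle^{-1/2} v\|_{L^2},
\]
which is afforded by the factor $r^{-1}$ that naturally appears on the LHS of the Carleman estimates (so there is even a half power of $\langle r\rangle^{-1}$ to spare).

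\medskip

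\noindent\textit{Weight and Carleman applications.} Choose parameters $\lambda\gg 1$ and $R\gg R_0$, both large depending on $\delta$ (concretely $\lambda^{-1/2}+R^{-1}\ll\delta$). I would define $\varphi(s)$, $s=\log(1+r)$, to be increasing and bounded, with $\varphi'(0)=0$ and $\varphi''\approx\lambda+\sigma\varphi'$ on $[0,\log R]$ for suitable $\sigma$ as in \eqref{phinear0}, then a short transition region on $[\log R,\log R+1]$ where $\varphi'$ decreases smoothly to $0$, and $\varphi$ constant thereafter. Cut $u=\chi u+(1-\chi)u$ for a cutoff $\chi$ adapted to $r<R_0$. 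Apply Proposition~\ref{p:Carl-in} to $\chi u$ to control the $L^2$-weighted LHS in the compact region $r\lesssim 1$, producing on the right $\|e^\varphi P u\|_{L^2}$ plus the $\partial_t u$ error from \eqref{Carl-in} on $r\gtrsim 1$. Apply Proposition~\ref{p:Carl-cut} to $(1-\chi)u$ to control the intermediate region $R_0\lesssim r<R$, producing the exterior norm $R^{-1/2}\|e^\varphi u\|_{LE^1_{>R}}$ and the boundary error $R^{-2}\|(1+\varphi')^{3/2}e^\varphi u\|_{L^2_R}$. The exterior piece $\|(1-\chi_{>R})u\|_{LE^1_{>R}}$ is then reabsorbed by invoking the AF estimate \eqref{low_Mourre} in the far field (where the weight $e^\varphi$ is constant and hence harmless). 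The commutators $[P,\chi]$ are first order and supported in a fixed compact annulus, so their contributions are absorbed into the lower-order error terms.

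\medskip

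\noindent\textit{Assembly and error absorption.} Combining the three pieces yields a global weighted $L^2$ estimate for the LHS quantities with weights
\[
r^{-1}(1+\varphi''_+)^{1/2},\quad r^{-2}(1+\varphi''_+)^{1/2}(1+\varphi'),\quad r^{-1}(1+\varphi')^{1/2},
\]
on $\nabla u$, $u$, $\partial_t u$ respectively, plus the exterior $LE^1_{>R}$ piece. Applying $\|v\|_{LE}\le \|\langle r\rangle^{-1/2}v\|_{L^2}$ dominates the LHS of \eqref{M_LE} by this weighted $L^2$ side. On the right, the $\|e^\varphi Pu\|_{L^2}$ and $\|e^\varphi Pu\|_{LE^*_{>R}}$ norms are in turn dominated by $\|e^\varphi Pu\|_{LE^*}$ since the weight is slowly varying in each dyadic region. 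The error term from Proposition~\ref{p:Carl-in} is $\|(\varphi'/\langle r\rangle)^{1/2}e^\varphi \partial_t u\|_{L^2_{\gtrsim 1}}$, which is dominated by $\delta \|\langle r\rangle^{-1}(1+\varphi''_+)^{1/2}(1+\varphi')e^\varphi \partial_t u\|_{LE}$ once $(1+\varphi''_+)^{1/2}(1+\varphi')^{1/2}\ge \delta^{-1}$ in the transition region, which holds if $\lambda$ is large enough depending on $\delta$. The boundary error from Proposition~\ref{p:Carl-cut} is $R^{-2}\|(1+\varphi')^{3/2}e^\varphi u\|_{L^2_R}$, which is dominated by $\delta$ times the second LHS term of \eqref{M_LE} provided $R^{-1}\ll \delta$, since in the transition zone $\varphi''_+\approx \varphi'$.

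\medskip

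\noindent\textit{Main obstacle.} The delicate point is matching the weights on the two sides of \eqref{M_LE} exactly while keeping the implicit constant independent of $\delta$. In particular, the Prop.~\ref{p:Carl-in} error has the unfavorable pairing $(\varphi'/\langle r\rangle)^{1/2}$ rather than the combination $\langle r\rangle^{-1}(1+\varphi''_+)^{1/2}(1+\varphi')$ that appears on the right of \eqref{M_LE}; bridging this requires using both convexity of $\varphi$ on the transition interval and the uniform lower bound $\varphi''\gtrsim \lambda$ from \eqref{phinear0}. A secondary technical issue is ensuring that the AF far-field estimate \eqref{low_Mourre}, which brings in an interior $\|\partial u\|_{LE_R}$ term, is controlled by the Carleman LHS in the intermediate zone; this is where one uses the fact that on the transition annulus $r\sim R$ the weight factor $(1+\varphi'(R))$ is present on the Carleman LHS and provides the needed margin.
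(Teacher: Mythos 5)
Your overall skeleton matches the paper's: choose a single weight $\varphi$ compatible with both Proposition~\ref{p:Carl-in} (convex near the origin, as in \eqref{phinear0}) and Proposition~\ref{p:Carl-cut} (as in \eqref{varphi}, then flattened past $r\approx R$); glue the two Carleman bounds with cutoffs; then kill the error terms by taking $\lambda$ large and $R$ large depending on $\delta$. Your reading of how the boundary error from \eqref{e:carl-cut} and the $\partial_t u$ error from \eqref{Carl-in} are absorbed is also in the spirit of the paper. One minor structural point: invoking \eqref{low_Mourre} as a separate far-field piece is redundant, since the statement and proof of Proposition~\ref{p:Carl-cut} already bundle in the far-field bound (via \eqref{high_Mourre}) and output precisely the $R^{-1/2}\|e^\varphi u\|_{LE^1_{>R}}$ term on the left.

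There is, however, a genuine gap in your $L^2 \to LE$ conversion, and you have the direction of the comparison backwards. You write that $\|v\|_{LE}\lesssim \|\la r\ra^{-1/2}v\|_{L^2}$ and that the Carleman estimates leave "a half power of $\la r\ra^{-1}$ to spare." But the Carleman left-hand sides carry the weight $r^{-1}$ in front of the $L^2$ norm, and for $r\geq 1$ one has $r^{-1}\leq r^{-1/2}$, so $\|r^{-1}w\|_{L^2}\leq\|r^{-1/2}w\|_{L^2}$: the Carleman bound controls the \emph{weaker} of the two quantities, not the stronger. Concretely, in a dyadic annulus $|x|\approx 2^k$ with $1\lesssim 2^k\lesssim R$ the Carleman $L^2$ norm is $\sim 2^{-k}\|w\|_{L^2(|x|\approx 2^k)}$, while the $LE$ contribution at the same scale is $\sim 2^{-k/2}\|w\|_{L^2(|x|\approx 2^k)}$, so converting costs a factor $2^{k/2}$, up to $R^{1/2}$ over the whole intermediate region. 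This is exactly opposite to "having a half power to spare." Passing from the single $L^2$ Carleman estimate to an $LE^*\to LE$ bound therefore requires an additional mechanism — e.g.\ modulating the weight $\varphi$ (or the Lagrangian-correction factor $b$ inside the proof of \eqref{e:carl1}) per dyadic scale and taking $\sup$/$\sum$ over scales, or running the Carleman argument with a one-parameter family $f(r)=r/(r+2^j)$ as in \eqref{high_Mourre}. This is the step that produces the $\ell^\infty$ on the left against the $\ell^1$ on the right; your proposal treats it as a triviality, and as stated the "trivial dyadic embedding" does not close the argument in the region $1\lesssim r< R$.

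Your absorption of the two error terms (the $\lambda^{1/2}\|e^\varphi\partial_t u\|_{L^2_{1\lesssim\cdot<2R_0}}$ error from \eqref{Carl-in} and the $R^{-2}\|(1+\varphi')^{3/2}e^\varphi u\|_{L^2_R}$ error from \eqref{e:carl-cut}) into the $\delta$-weighted terms of \eqref{M_LE} is in the right spirit, but note that the relevant mechanism for the second error is that $R^{-1}\varphi'(\log R)\to 0$ as $R\to\infty$, which is why the paper chooses $\varphi'\approx\lambda\log r$ rather than the scale-invariant $\varphi'\approx\lambda$ up to $\log R$; you should make sure your construction of $\varphi$ has this sub-linear growth built in, otherwise $R^{-2}(1+\varphi')^{3/2}$ will not be small compared with the weight on the $\delta\|\cdot\|_{LE}$ side.
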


  This estimate is a Carleman estimate, which provides a quantitative
  version of the absence of positive eigenvalues embedded in the
  continuous spectrum of the operator $\Box_g+V$. We carefully observe 
that this bound does not preclude the embedded resonances; this would require
in addition some symmetry condition on $P$, which we do not assume here.
The role of the parameter $\delta$ is to allow for an arbitrarily large range 
of time frequencies in the medium frequency bound.

\begin{proof}
This is done in a relatively straightforward manner using the Carleman estimates
in Propositions~\ref{p:Carl-cut}, \ref{p:Carl-in}. Precisely, we will work with a weight 
$\varphi$ which is consistent with Proposition~\ref{p:Carl-in} for $r < 2R_0$, and with 
Proposition~\ref{p:Carl-cut} for $r > 2R_0$. For the former we fix the parameter $\sigma$
to be a sufficiently large universal constant, and retain the freedom to choose $\lambda$
sufficiently large. The two $\lambda$'s  must be comparable,
so our weight will satisfy 
\[
\varphi'(s) \approx \min\{ \lambda r,  \lambda \log(r+10)\},
\quad \varphi''(s) \approx \lambda 
  \qquad 1 \lesssim s \leq \log R{.}
\]
The choice here necessitates that
$e^{-s}\varphi'(s)\to 0$ as $s\to \infty$, for which we have given one
of many options that are easily compatible
with \eqref{varphi}.
Combining the two Carleman estimates (this is easily done using cutoff functions
and absorbing the errors) we obtain the bound 

\begin{equation*}
\begin{split}
& \|  (1+\varphi''_+)^\frac12 e^{\varphi}  (\nabla u, \la r\ra^{-1} (1+\varphi') u)\|_{{LE}_{<R}}+ 
\|  (1+\varphi')^\frac12 e^{\varphi} \partial_t u\|_{{LE}_{<R}}+ 
\| e^{\varphi} u\|_{LE^1_{>R}} \\ &\qquad \ \lesssim    \| e^{\varphi} {P} u \|_{LE^*_{<R}}
+ \| e^{\varphi} {P} u\|_{LE^*_{>R}}  + 
\lambda^{1/2}\| e^{\varphi} \partial_t u\|_{L^2_{{1\lesssim \cdot <2R_0}}}
+ R^{-{3/2}} \| (1+\varphi')^\frac{3}{2} e^\varphi u\|_{L^2_R}  
\end{split}
\end{equation*}
where the last two terms on the right are the error terms in the two
Propositions.  
This implies the bound \eqref{M_LE} provided that we choose $\lambda$ large enough for 
the first error term and $R$ large enough for the second (using that
$R^{-1}\varphi'(\log R)\to 0$).

If we want to be more accurate, suppose that we restrict the above
estimate to functions with time frequencies in the band $\tau_1 \leq
|\xi_0| \leq \tau_2$. Then we can absorb the first error provided that
$\tau_2 \ll \lambda$, and the second error provided that
$\varphi'(\log R)
R^{-1} \ll \tau_1$.
\end{proof}


\section{Low frequency analysis}
\label{s:low}

This section is devoted to a more detailed analysis of the low frequency regime.
One of the main goals will be to prove the following result, which shows that 
the zero nonresonance condition \eqref{zero-res2} provides full information
in a neighbourhood of time frequency zero.

\begin{thm}[Low frequency LE bound]\label{low_LE_thm}
Let $P$ be an AF operator which satisfies the ellipticity condition \eqref{zero_ell}
and the zero nonresonance condition \eqref{zero-res2} uniformly in time.
Then a modified form of stationary local energy decay holds,
\begin{equation}
		\lp{u}{LE^1} \ \lesssim   \|\partial_t u\|_{LE^1_{comp}} +   \lp{P u}{LE^*} \label{S_LE}
\end{equation}
for all compactly supported $u$.
\end{thm}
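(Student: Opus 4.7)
The plan is to combine an interior estimate, obtained by freezing time and invoking the zero nonresonance condition \eqref{zero-res2} on each slice, with the exterior local energy bound \eqref{low_Mourre} from Proposition~\ref{Mourre_prop} (or rather its companion \eqref{low_Mourre}). The derivative $\partial_t u$ enters on the right because $P_0$ is obtained from $P$ by setting $D_t = 0$, so the entire discrepancy $P - P_0$ consists of terms carrying at least one factor of $D_t$.

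\textbf{Interior estimate.} For each fixed $t$, apply \eqref{zero-res2} to the spatial function $u(t,\cdot)$, regarding $P_0(t)$ as the operator obtained by substituting $D_t \mapsto 0$ in $P(t,x,D)$ with the time-dependent coefficients frozen at time $t$:
\[
\|u(t)\|_{\dot H^1_x} \ \leq\ K_0 \|P_0(t)\, u(t)\|_{\dot H^{-1}_x}.
\]
Write $P_0(t) u(t) = P u(t) - (P - P_0(t))u(t)$. A direct expansion of $\Box_{A,g}$ shows that $P - P_0(t)$ is a second-order operator with each term carrying at least one $\partial_t$, schematically of the form
\[
(P-P_0(t))u \ = \ a\,\partial_t^2 u + b^j\,\partial_j\partial_t u + c\, \partial_t u,
\]
with coefficients bounded in the AF topology. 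Localizing on $\{|x| < 2R\}$ for some $R > R_0$, using the embedding $L^2_{comp}\hookrightarrow \dot H^{-1}$ on compact sets, one obtains
\[
\|(P-P_0(t))u(t)\|_{\dot H^{-1}_x(|x|<2R)} \ \lesssim\ \|\partial_t u(t)\|_{H^1_{comp}}.
\]
Squaring and integrating in time, and comparing norms on compact spatial regions ($\dot H^1_{comp}\sim LE^1_{<2R}$ and $L^2_{comp}\hookrightarrow LE^*_{<2R}$ when restricted to $u$ compactly supported in time), this yields
\begin{equation}\label{int-bd}
\|u\|^2_{LE^1_{<2R}} \ \lesssim\ \|Pu\|^2_{LE^*} + \|\partial_t u\|^2_{LE^1_{comp}}.
\end{equation}

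\textbf{Exterior estimate and combination.} For the same $R$, use \eqref{low_Mourre}; since $u$ is compactly supported in time we may choose the initial slice before the support of $u$, killing the $\|\partial u(0)\|_{L^2_{>R}}$ term and producing
\begin{equation}\label{ext-bd}
\|u\|_{LE^1_{>R}} \ \lesssim\ \|\partial u\|_{LE_{R}} + \|Pu\|_{LE^*_{>R}}.
\end{equation}
Splitting $\|\partial u\|_{LE_R} \leq \|\nabla_x u\|_{LE_R} + \|\partial_t u\|_{LE_R}$, the first piece is absorbed in $\|u\|_{LE^1_{<2R}}$ while the second is dominated by $\|\partial_t u\|_{LE^1_{comp}}$. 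Adding \eqref{int-bd} and \eqref{ext-bd} and using $\|u\|_{LE^1} \lesssim \|u\|_{LE^1_{<2R}} + \|u\|_{LE^1_{>R}}$ gives the desired bound \eqref{S_LE}.

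\textbf{Main obstacle.} The principal technical point is the handling of the $\partial_t^2 u$ contribution from $(P-P_0)u$: pointwise in $t$ this is not controlled by the $\dot H^1$ norm of $u(t)$ alone, which is exactly why the right-hand side of \eqref{S_LE} carries $\|\partial_t u\|_{LE^1_{comp}}$ (a norm that includes $\partial_t^2 u$ and $\nabla\partial_t u$) rather than merely $\|\partial_t u\|_{LE_{comp}}$. A secondary, purely bookkeeping matter is the comparison of the homogeneous norms $\dot H^{\pm 1}$ appearing in \eqref{zero-res2} with the localized $LE$-type norms on the compact region $\{|x|<2R\}$; on such sets the Sobolev and Hölder inequalities make this comparison routine, so I expect no serious difficulty beyond careful tracking of weights.
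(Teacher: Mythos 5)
Your overall architecture — interior estimate from the zero nonresonance condition plus the exterior bound \eqref{low_Mourre}, with $\partial_t u$ appearing because $P-P_0$ carries at least one $D_t$ — is the same as the paper's. However, the argument as written has two genuine gaps, and neither is a mere bookkeeping matter.

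First, the interior estimate \eqref{int-bd} does not follow from a slice-by-slice application of \eqref{zero-res2}. That bound is $\dot H^1 \to \dot H^{-1}$, while the left side of \eqref{S_LE} is an $\LE^1$ norm and the right side an $\LE^*$ norm, both carrying the weight $\la x\ra^{\pm1/2}$. Passing from $\|Pu(t)\|_{\dot H^{-1}_x}$ to an $\LE^*$-type norm incurs a loss: for $f$ supported in $\{|x|<L\}$ one only gets $\|f\|_{\dot H^{-1}}\lesssim L^{1/2}\|f\|_{\LE^*}$, so the constant in your interior bound would depend on the spatial support of $u$, which the theorem does not allow. Bridging this weight discrepancy is precisely the content of Lemma~\ref{p:other-LE0}, which upgrades \eqref{zero-res2} to genuine weighted $\LE$ bounds by a perturbative analysis near infinity (treating $P_0$ as a perturbation of $\Delta$ on $\{r>R_0\}$, and then truncating with an explicit boundary term at a radius $R_1$). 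Your interior step needs to be replaced by an application of that lemma, specifically the truncated version \eqref{zero_LE-trunc}.

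Second, the combination step has no mechanism for absorbing the exterior boundary term $\|\nabla_x u\|_{\LE_R}$. When you add your \eqref{int-bd} and \eqref{ext-bd}, this term is of the same order as $\|u\|_{\LE^1_{<2R}}$ appearing on the left, with no smallness factor; "absorbed" in your sketch is, as far as I can tell, circular. The paper's proof introduces two radii $R_0\ll R_1$, and then the third term on the left of \eqref{zero_LE-trunc} (an $\ell^1$ sum of $\|\nabla u\|_{\LE_{2^j}}$ over the $\approx\log(R_1/K_0)$ dyadic shells between $K_0$ and $R_1$) permits a pigeonhole: there is an intermediate $R_2$ on which $\|\nabla u\|_{\LE_{R_2}}$ acquires a $[\log(R_1/K_0)]^{-1}$ gain. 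Applying \eqref{low_Mourre} at that chosen $R_2$ is what closes the estimate. Without this logarithmic smallness, or some substitute for it, the argument does not close.

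A minor further issue: your localization of $(P-P_0(t))u$ to $\{|x|<2R\}$ is unjustified — the compact support of $u$ is arbitrary, not contained in $\{|x|<2R\}$, and cutting off would introduce commutator errors you do not account for. This is subsumed by the first gap above, since once you invoke Lemma~\ref{p:other-LE0} the localization is handled there.
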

Comparing this with the earlier stationary local energy decay
condition \eqref{LE-stat}, we remark that the above bound is stronger
in that only a local norm of $\partial_t u$ is used on the right, but
also weaker in that derivatives of $\partial_t u$ are also used.
However, the latter fact is harmless, as the above bound is only used
at very low time frequencies.  This theorem yields directly just one of the
relations in Proposition~\ref{p-zero}, precisely the one we need for
the proof of the two point local energy decay bound in
Theorem~\ref{2pt_thm}.  However, the results we establish along the
way also suffice for the full proof of Proposition~\ref{p-zero}. We
discuss this at the end of the section.

The first step is to show that, in the context of the present work, the
zero resolvent bound \eqref{zero-res2} is stable with respect to small
perturbations of the operator $P$ and can be deduced from a more
standard non-quantitative from. It also holds uniformly when $V$ is
sufficiently non-negative. This is all contained in:

\begin{lem}\label{zspec_assm_lem}
  Let $P$ be stationary, asymptotically flat, so that the time slices $t=const$
  are uniformly space-like and $\partial_t$ is uniformly time-like.
  Then one has the following properties related to the zero resolvent
  bound \eqref{zero-res2}:
\begin{enumerate}[i)]
		\item (Stability) \label{zspec_stab} The  zero resolvent
                bound \eqref{zero-res2} is stable
		with respect to sufficiently small stationary AF perturbations of $g,A,V$. 
                \item (Non-quantitative version) \label{zspec_nonquant} Suppose that 
		there exists no distributional solutions to $P_0 u_0=0$ with
		$u_0\in H^1(\mathbb{R}^3)$. Then there exists a $K_0=K_0(t_0)$ such that
		estimate \eqref{zero-res2} holds.
              \item (Bound for sufficiently non-negative
                $V$) \label{zspec_nonneg} Suppose that $P$ is
                symmetric and that:
		\begin{equation}
	-\int_{\mathbb{R}^3}   \cal{Q}(u,u)  dx \ \leq \ \lambda_0
				\int_{\mathbb{R}^3} |D_Au|^2 dx \ , \qquad
				\hbox{where\ \ }
				\cal{Q}(u,u)= 2 A_0g^{0i} \Re(u \overline{(D_i+A_i) u}) + (g^{00}A_0^2+V) |u|^2
				 \label{W_cond}			
		\end{equation}
               and $|D_A u|^2 = g^{jk}(\partial_j + iA_j) u \cdot
               \overline{(\partial_k +i A_k)u}$.
		Then, if $\lambda_0<1$, the estimate \eqref{zero-res2}
                holds with $K_0=O(( 1-\lambda_0)^{-1})$.  In
                particular one has \eqref{zero-res2} for symmetric AF magnetic wave
                equations $\Box_{g,A}+W$ if $A_0=0$ and $W\geq 0$.
\end{enumerate}
\end{lem}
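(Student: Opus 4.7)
The plan is to prove the three parts in sequence, starting with (iii), which is a direct positivity computation; deducing (i) by a standard perturbative absorption; and then using Fredholm theory for (ii). The main obstacle will be part (iii)'s quantitative passage from $\int |D_A u|^2$ back to $\|u\|_{\dot{H}^1}^2$ uniformly in $\lambda_0$, which requires handling the magnetic cross terms without losing the factor $(1-\lambda_0)^{-1}$.

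For (iii), I integrate by parts in $\la P_0 u, u\ra$ to write
\[ \la P_0 u, u\ra = \int |D_A u|^2\,dx + \int \mathcal{Q}(u,u)\,dx, \]
where the cross terms from $W(x,D_x)$ in \eqref{p_omega} assemble exactly into $\mathcal{Q}$. The hypothesis \eqref{W_cond} then yields $\la P_0 u, u\ra \geq (1-\lambda_0) \int |D_A u|^2\,dx$. To replace $\int |D_A u|^2$ by $\|u\|_{\dot{H}^1}^2$ from below, I combine the diamagnetic inequality with Hardy's inequality and the $AF$-bound $\|\la x\ra A\|_{L^\infty} \lesssim M_0$ to absorb the magnetic gauge terms, and use ellipticity \eqref{zero_ell} to replace $g^{jk}$ by the Euclidean metric up to perturbative errors. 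Combined with $|\la P_0 u, u\ra| \le \|u\|_{\dot{H}^1} \|P_0 u\|_{\dot{H}^{-1}}$, this gives $\|u\|_{\dot{H}^1} \lesssim (1-\lambda_0)^{-1} \|P_0 u\|_{\dot{H}^{-1}}$, which is \eqref{zero-res2} with $K_0 = O((1-\lambda_0)^{-1})$. The in-particular claim is immediate: $A_0 = 0$ kills the $A_0$-dependent pieces of $\mathcal{Q}$, leaving $\mathcal{Q}(u,u) = V|u|^2 = W|u|^2 \geq 0$, so \eqref{W_cond} holds with $\lambda_0 = 0$.

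For (i), write $\tilde{P}_0 = P_0 + Q$ with $Q$ of $AF$-size $\epsilon$. Pairing $Qu$ against a test $v \in \dot{H}^1$, integrating by parts on the leading-order piece so that at most one derivative lands on each factor, and using Hardy's inequality for the lower-order pieces together with the $\la x\ra$-weights in \eqref{triple-def}, yields $\|Q\|_{\dot{H}^1 \to \dot{H}^{-1}} \lesssim \epsilon$. Chaining with \eqref{zero-res2} for $P$,
\[ \|u\|_{\dot{H}^1} \le K_0 \|P_0 u\|_{\dot{H}^{-1}} \le K_0 \|\tilde{P}_0 u\|_{\dot{H}^{-1}} + C K_0 \epsilon \|u\|_{\dot{H}^1}, \]
and absorbing the last term for $\epsilon \ll K_0^{-1}$ gives \eqref{zero-res2} for $\tilde{P}$ with constant $2 K_0$.

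For (ii), I apply Fredholm theory to $P_0 : \dot{H}^1 \to \dot{H}^{-1}$. Decomposing $P_0 = -\Delta_g + L$, with $-\Delta_g$ an isomorphism by \eqref{zero_ell} and $L$ a first/zeroth order operator with coefficients decaying in the $AF$ sense, the $\ell^1 L^\infty$ structure of the AF norm together with a Rellich compactness argument shows that $(-\Delta_g)^{-1} L$ is compact on $\dot{H}^1$. Hence $P_0$ is Fredholm of index zero, and \eqref{zero-res2} follows from the open mapping theorem as soon as the $\dot{H}^1$ kernel is trivial. To match the hypothesis phrased in terms of $H^1$ solutions, I upgrade a putative kernel element $u \in \dot{H}^1$ to $H^1$: elliptic regularity gives smoothness, perturbative analysis at infinity yields the asymptotic $u = c + O(r^{-1})$, and the constraint $u \in L^6 \cap |x|^{-1}L^2$ coming from $\dot{H}^1$ via Sobolev and Hardy forces $c = 0$, so $u \in L^2$ and thus $u \in H^1$. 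The cokernel is handled identically by passing to the formal adjoint $P_0^*$, which satisfies the same AF bounds.
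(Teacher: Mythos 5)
Your part (i) is a clean equivalent of the paper's argument (the paper passes through an enhanced weighted form of \eqref{zero-res2} and then says stability is straightforward; you argue directly that the perturbation has $\dot H^1\to\dot H^{-1}$ norm $\lesssim\epsilon$, which is fine), and your Fredholm-alternative route to (ii) is a valid variant of the paper's sequential compactness argument. However, two of your steps do not close as written.

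In (iii), the proposed ``absorb the magnetic gauge terms'' via Hardy and the $AF$-bound, together with ``replace $g^{jk}$ by the Euclidean metric up to perturbative errors,'' is not a working estimate: expanding $|D_Au|^2$ and absorbing the cross term $2\Re\bigl(g^{jk}D_ju\,\overline{A_ku}\bigr)$ by Cauchy--Schwarz and Hardy produces a constant of size $M_0$, which is \emph{large} (only the exterior part of $A$ and $g-m$ is small), so this does not give $\int|D_Au|^2\gtrsim\|u\|_{\dot H^1}^2$. The paper's proof instead runs a two-step bootstrap that your outline glosses over: first Kato's (diamagnetic) inequality $|\nabla|u||\lesssim|D_Au|$, which has \emph{no loss in $M_0$}, turns the positivity $\la P_0 u,u\ra\geq(1-\lambda_0)\int|D_Au|^2$ into $\||u|\|_{\dot H^1}^2\lesssim(1-\lambda_0)^{-1}\|P_0u\|_{\dot H^{-1}}\|u\|_{\dot H^1}$; then one returns to the identity $\int(|D_Au|^2+\mathcal Q)\,dx=\la P_0u,u\ra$ and bounds every magnetic cross term by $\lesssim_{M_0}\|u\|_{\dot H^1}\,\||u|\|_{\dot H^1}$, crucially applying Hardy to $|u|$ rather than to $u$, and absorbs by Young since $\||u|\|_{\dot H^1}$ is already controlled. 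This yields $K_0=O_{M_0}((1-\lambda_0)^{-1})$ with no smallness required of $A$ or $g-m$. Separately, in (ii) your upgrade of a kernel element from $\dot H^1$ to $H^1$ is not forced by $u\in L^6\cap|x|^{-1}L^2$: after killing the constant, the generic tail $c'r^{-1}$ does lie in $L^6$ with $|x|^{-1}u\in L^2$, but $r^{-1}\notin L^2(\mathbb{R}^3)$ near infinity, so the claim ``so $u\in L^2$ and thus $u\in H^1$'' is wrong unless $c'=0$, which needs a separate argument. (The paper's own compactness proof elides the same $\dot H^1$-versus-$H^1$ point; reading the hypothesis as ``no $\dot H^1$ solutions'', consistent with the zero-resonance framework, would resolve it, but your explicit attempt to bridge the two does not.)
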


\begin{rem}
  Note that in case \ref{zspec_nonquant}) above we do not gain
  quantitative control on $K_0$.  Therefore the assumption that there
  exist no $H^1$ solutions to $Q u_0=0$ becomes less useful in the
  case when $g,V$ are non-stationary, and one must instead rely
  directly on \eqref{zero-res2}.
\end{rem}

\begin{proof}[Proof of Lemma \ref{zspec_assm_lem}]
The three parts are proved separately.

i) Using  the Hardy estimate 
\begin{equation}
 		\lp{|x|^{-1}\phi}{L^2(\mathbb{R}^3)} \ \leq \
                2\lp{\nabla\phi }{L^2(\mathbb{R}^3)} , \label{s_Hardy}
\end{equation}
and its dual we can enhance \eqref{zero-res2} to:
\begin{equation}
  \lp{ (  \nabla u,  \la x\ra^{-1} u)   }{L^{2}(\mathbb{R}^3)} \ 
  \lesssim \ K_0\lp{ P_0(x,D)u}{\dot H^{-1}+\la r \ra^{-1} L^2(\mathbb{R}^3)} \ .
\label{zero-res2'}	
\end{equation}
Now its  stability with respect to small AF perturbations is 
straightforward.
\medskip

ii) The proof of  follows from a standard compactness argument.
First assume there exists a sequence $u_n \in \dot H^1$
normalized so $\lp{u_n}{\dot H^1}=1$ and $\lp{P_0 u_n}{\dot H^{-1}} \to 0$.
Then WLOG we can assume $u_n\rightharpoonup u_0$ where
$\lp{u_0}{\dot H^1}\leq 1$ and $P_0 u_0=0$ in the sense of distributions. 
By the ellipticity of $P_0$, this convergence must be strong on compact sets.

There are now two scenarios: either $u_0\neq 0$ or $u_n\to 0$
strongly in $H^1(|x|\leq R)$ for all $R>0$. In the first case we have
reached a contradiction.  The latter case is ruled out since for $\{r
> R_0\}$ the operator $P_0$ is a small perturbation of $\Delta$;
therefore we have the truncated bound
\[
\| u_n\|_{ \dot H^1(\{r > R_0\})} \lesssim \| u_n\|_{  H^1(\{r \approx R_0\})}+ \| P_0 u_n\|_{\dot H^{-1}}.
\]

\medskip

iii) Integrating by parts we obtain:
\begin{equation}
		\int_{\mathbb{R}^3}  \big( |D_Au |^2  + \mathcal{Q}(u,u) \big) \, dx \ = \ 
		\int_{\mathbb{R}^3} P_0 u \cdot \overline{u } \, dx \ . \label{GI}
\end{equation}
Thus Kato's inequality $ \big| \nabla_x |u|  \big|^2\lesssim |D_Au|^2 $  and the condition
$\lambda_0<1$ on line \eqref{W_cond} give:
\begin{equation}
		\lp{| u| }{\dot{H}^1(\mathbb{R}^3)}^2 \ \lesssim \ (1-\lambda_0)^{-1}
		\lp{P_0 u }{\dot H^{-1}(\mathbb{R}^3)} \lp{ u }{\dot H^1(\mathbb{R}^3)} \ . \notag
\end{equation}
On the other hand going back to \eqref{GI} and instead
using Hardy's estimate \eqref{s_Hardy} to bound the
magnetic terms gives
\begin{equation}
	\int_{\mathbb{R}^3}   |D u |^2   \, dx \ \lesssim \| |u|\|_{\dot H^1}^2+ 
 \| u\|_{\dot H^1}\| |u|\|_{\dot H^1}+ 		
\lp{P_0 u }{\dot H^{-1}(\mathbb{R}^3)} \lp{ u }{\dot H^1(\mathbb{R}^3)}.
 \label{GI+}
\end{equation}
The desired conclusion follows by combining the last two lines.
\end{proof}

 Next we establish the following simple lemma for the Euclidean Laplacian
$\Delta=\sum\partial_k^2$, which shows that it satisfies a family of weighted
local energy type bounds:

\begin{lem}
For $s=0,1,2$  one has the  estimates
\begin{equation}
\begin{split}
			 \Delta^{-1}:& \  \ell^1 L^{2,\frac{1}{2}} \ \longrightarrow \ \ell^\infty L^{2,-\frac{3}{2}},	\\
\nabla^s	 \Delta^{-1}: & \ \ell^1 L^{2,\frac{1}{2}} \ \longrightarrow \ \ell^1 L^{2,-\frac{3}{2}+s}, \qquad s= 1,2,
\\
 \qquad 	\nabla^s \Delta^{-1}: & \ \ell^1 L^{2,\frac{3}{2}} \ \longrightarrow \ \ell^\infty L^{2,-\frac{1}{2}+s},
\qquad s=0,1,2.
		  \label{gl_lap'}
\end{split}
\end{equation}
\end{lem}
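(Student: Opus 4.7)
The plan is to prove all three estimates in parallel via a dyadic decomposition in the input, together with standard bounds for the Newton potential in $\mathbb R^3$. Writing $f = \sum_k f_k$ with $f_k$ supported in the dyadic shell $A_k = \{|x|\approx 2^k\}$, I would estimate each $\nabla^s \Delta^{-1} f_k$ in $L^2(A_j)$ for every target shell $A_j$, then sum against the appropriate weight. Here I use the explicit representation $\Delta^{-1} f_k(x) = -\tfrac{1}{4\pi}\int |x-y|^{-1} f_k(y)\,dy$, whose $s$-fold gradient has kernel of size $|x-y|^{-1-s}$.

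The off-diagonal regime $|j-k|\gg 1$ is the easy one. Since $|x-y|\approx 2^{\max(j,k)}$ uniformly on $A_j \times A_k$, a direct Cauchy--Schwarz in $y$ yields the pointwise bound
\[
|\nabla^s \Delta^{-1} f_k(x)| \ \lesssim \ 2^{-(1+s)\max(j,k)}\,|A_k|^{1/2}\,\|f_k\|_{L^2(A_k)} \ \lesssim \ 2^{-(1+s)\max(j,k)+3k/2}\,\|f_k\|_{L^2(A_k)},
\]
for $x\in A_j$, and integration over $A_j$ contributes an extra factor $2^{3j/2}$.

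For the diagonal regime $j=k$ I would rely on three classical ingredients, one per value of $s$. For $s=2$, the Calder\'on--Zygmund $L^2$-boundedness gives $\|\nabla^2 \Delta^{-1} f_j\|_{L^2(A_j)} \lesssim \|f_j\|_{L^2}$. For $s=0$, Cauchy--Schwarz applied directly to the singular integral yields $|\Delta^{-1} f_j(x)| \lesssim 2^{j/2} \|f_j\|_{L^2}$, which integrated over $A_j$ produces the factor $2^{2j}\|f_j\|_{L^2}$. The delicate endpoint is $s=1$, where Calder\'on--Zygmund is unavailable; the main obstacle is met by combining the Sobolev embedding $\dot H^1(\mathbb R^3) \hookrightarrow L^6$ with H\"older's inequality on $A_j$ to obtain $\|\nabla \Delta^{-1} f_j\|_{L^2(A_j)} \lesssim |A_j|^{1/3} \|f_j\|_{L^2} \lesssim 2^j \|f_j\|_{L^2}$.

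Having these two families of dyadic bounds, each of the three target estimates reduces to arithmetic. For fixed $j$ the off-diagonal sum over $k$ (or, in the $\ell^1$-target case, the sum over $j$ for fixed $k$) is geometric by construction, and the diagonal term matches exactly the weight claimed. The choice of $\ell^1$ on the input and $\ell^\infty$ on the output for the first and third bounds, versus $\ell^1 \to \ell^1$ for the middle one, is precisely what is needed to make the geometric sums converge and the weights balance on the diagonal.
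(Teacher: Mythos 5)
Your argument is correct and, up to phrasing, is the same as the paper's: the paper rescales each dyadic input shell to $\{r\approx 1\}$, invokes interior elliptic regularity for the diagonal block and the $r^{-1-s}$ decay of the fundamental solution for the off-diagonal blocks, and then sums against the weights --- exactly the three pieces you assemble (dyadic decomposition, CZ/Sobolev/Cauchy--Schwarz on the diagonal, kernel decay off the diagonal). One small imprecision is worth fixing in the last step: in the two $\ell^1\to\ell^\infty$ estimates the dyadic constants $c_{jk}$ do \emph{not} decay geometrically in both off-diagonal directions. For the first estimate one finds $c_{jk}\approx 1$ (no decay) when $k\gg j$, and for the third estimate $c_{jk}\approx 1$ when $j\gg k$; this is precisely why the statement has $\ell^1$ on the source and $\ell^\infty$ on the target rather than $\ell^1\to\ell^1$, and the correct closing observation is that the off-diagonal constants are \emph{uniformly bounded}, which is all that $\ell^1\to\ell^\infty$ requires --- not that the off-diagonal sum over $k$ is geometric. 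The $\ell^1\to\ell^1$ middle case does need geometric decay in $j$ for fixed $k$, and there your arithmetic delivers it.
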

 
\begin{proof}
  By scaling it suffices to solve the equation $\Delta u = f$ with $f
  \in L^2$ supported in $\{r \approx 1\}$.  By elliptic regularity we
  have $u \in H^2(\{r \approx 1\})$.  Inside a small ball centered at
  zero we have uniform bounds for $u$ and its derivatives, which
  suffice for all bounds above.  Outside a large ball centered at zero
  we have pointwise decay estimates inherited from the fundamental
  solution of $\Delta$.  Precisely, $\nabla^s u$ decays like
  $r^{-1-s}$. This again suffices for all bounds in the lemma.
\end{proof}

The following step is to explore some equivalent formulations of the zero
resolvent estimate \eqref{zero-res2}, where different weights are used at infinity
or where the bound is restricted to a large ball.

\begin{lem}[Zero frequency LE bound] \label{p:other-LE0}
Assume that $P_0$ is asymptotically flat and that the 
zero resolvent  estimate \eqref{zero-res2} holds. 
Then

a)   For each $ u \in \LE^1_0$ we have 
\begin{equation}
		\lp{\la r \ra  u }{ \LE^1} 
		\  \lesssim K_0 \lp{\la r \ra P_0  u}{\LE^*} \ . \label{zero_LE-low}
\end{equation}

b) For each $ u \in \LE^1_0$  we have
\begin{equation}
		K_0^{-1} \lp{\la r \ra  u }{ \LE^1_{<K_0}} + \lp{ u }{\LE^1_{>K_0}}
+  \lp{ \la r \ra^{-1} \nabla  u }{\LE^*_{>K_0}}
		\  \lesssim \lp{P_0  u}{\LE^*} \ . \label{zero_LE}
\end{equation}

c)  For each $ u \in \LE^1_{loc}$  and $R_1 \gg K_0, R_0$ we have
\begin{equation}\label{zero_LE-trunc}
	K_0^{-1} \lp{\la r \ra  u }{ \LE^1_{<K_0}} + \lp{ u
        }{\LE^1_{R_1 > \cdot > K_0}}  +  \lp{ \la r \ra^{-1} \nabla  u }{\LE^*_{R_1 > \cdot >K_0}}
		\  \lesssim  \lp{P_0  u}{\LE^*_{<R_1}} +  \| r^{-1} \nabla (r  u)\|_{\LE_{R_1}} .
\end{equation}
\end{lem}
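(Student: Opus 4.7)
Plan: The proof proceeds with part (a) first, then (b) and (c) as progressively more localized variants; in each case the backbone is the enhanced zero-resolvent bound \eqref{zero-res2'}.

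For part (a), I would apply \eqref{zero-res2'} to $v=\la r\ra u$. Expanding $P_0(\la r\ra u)=\la r\ra P_0u+[P_0,\la r\ra]u$, the commutator is first order, with principal coefficient bounded by $|{\nabla}\la r\ra|\lesssim 1$ (coming from $[\Delta,\la r\ra]\sim (x/\la r\ra)\cdot\nabla$) and a zero-order term of size $O(\la r\ra^{-1})$. Hence it produces an error of size $\|\nabla u\|_{L^2}+\|\la r\ra^{-1} u\|_{L^2}$ that is absorbed on the left of \eqref{zero-res2'}, giving a preliminary $L^2$-based bound $\|\la r\ra\nabla u\|_{L^2}+\|u\|_{L^2}\lesssim K_0\|\la r\ra P_0 u\|_{L^2}$. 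To upgrade to the $\LE^1/\LE^*$ scale I would dyadically decompose, applying \eqref{zero-res2'} to a cutoff $\chi_j u$ on each shell $\{r\approx 2^j\}$ and summing. Outside a fixed compact set $P_0$ is a small AF perturbation of $\Delta$, so the sharp weighted bounds \eqref{gl_lap'} (line three, $s=0,1$) give exactly the mapping $\la r\ra P_0u\in\LE^*\Rightarrow\la r\ra u\in\LE^1$, while the commutators $[P_0,\chi_j]u$ have support in one-shell neighborhoods and sum absolutely thanks to the dyadic structure.

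For part (b), I would split the analysis at $r=K_0$. On the exterior $\{r>K_0\}$, since $K_0\gg R_0$, the operator $P_0$ is a small AF perturbation of $\Delta$ and the elliptic analog of \eqref{low_Mourre} combined with the sharp bounds \eqref{gl_lap'} yields
\[\|u\|_{\LE^1_{>K_0}}+\|\la r\ra^{-1}\nabla u\|_{\LE^*_{>K_0}}\lesssim \|\nabla u\|_{\LE_{K_0}}+\|P_0u\|_{\LE^*_{>K_0}}.\]
On the interior I apply \eqref{zero-res2'} to $\chi u$, with $\chi\equiv 1$ on $\{r<K_0\}$ and $\operatorname{supp}\chi\subset\{r<2K_0\}$. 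Since $\la r\ra\lesssim K_0$ on the support, after inserting the $K_0^{-1}\la r\ra$ weight one converts the bound into $K_0^{-1}\|\la r\ra u\|_{\LE^1_{<K_0}}\lesssim \|P_0u\|_{\LE^*_{<2K_0}}+\|\nabla u\|_{\LE_{K_0}}$; the commutator $[P_0,\chi]u$ is localized to $\{r\approx K_0\}$ and produces exactly the transition term $\|\nabla u\|_{\LE_{K_0}}$. Adding the two estimates and absorbing this shared term yields \eqref{zero_LE}. Part (c) is then a truncation of (b): multiply by a cutoff $\tilde\chi$ equal to $1$ on $\{r<R_1\}$ and supported in $\{r<2R_1\}$, apply (b) to $\tilde\chi u$, and note that the resulting commutator $[P_0,\tilde\chi]u$ is supported on $\{r\approx R_1\}$ and bounded in the relevant norm by $\|r^{-1}\nabla(ru)\|_{\LE_{R_1}}$, since on this annulus $r^{-1}\nabla(ru)=r^{-1}u\,\hat{x}+\nabla u$ captures exactly the $\LE^1$-ingredients of $u$ at scale $R_1$.

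The main obstacle is the conversion from the $L^2/\dot H^{-1}$-based bound \eqref{zero-res2'} (where weights on dyadic shells are compared in $\ell^2$) to the $\LE^1/\LE^*$ scale (which uses $\ell^\infty/\ell^1$ dyadic summation); these norms are a priori incomparable. The resolution exploits separation of scales: inside a fixed ball of radius $K_0$ or $R_1$ all norms differ only by constants of size $K_0$ or $R_1$, absorbed into the $K_0$ factor already present in \eqref{zero-res2} and into the statements of (b), (c); outside, the operator is $\Delta$ plus small AF perturbation, where the sharp dyadic Laplacian bounds \eqref{gl_lap'} provide the correct $\LE^1/\LE^*$ mapping. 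Matching the two regions via cutoffs, with transition-region commutators reabsorbed by bootstrap, produces \eqref{zero_LE-low}, \eqref{zero_LE}, and \eqref{zero_LE-trunc} in turn.
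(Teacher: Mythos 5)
There are two genuine gaps in your approach. First, in part (a), applying \eqref{zero-res2'} to $v=\la r\ra u$ does not close. The commutator $[P_0,\la r\ra]u$, measured in $\dot H^{-1}$ (the right place, after integrating one derivative by parts), is of size $\|u\|_{L^2}$; on the right side of \eqref{zero-res2'} this error is then multiplied by $K_0$, producing $K_0\|u\|_{L^2}$, whereas the left side only gives $\|u\|_{L^2}$ with constant $1$. Since $K_0\gtrsim 1$ can be arbitrarily large this cannot be absorbed, and there is no $K_0\gg R_0$ hypothesis available in part (b) to save the situation either. Moreover, patching together Hilbert-space estimates \eqref{zero-res2'} applied to shell cutoffs $\chi_j u$ and ``summing'' produces $\ell^2$ control over dyadic scales; it does not produce the $\ell^\infty/\ell^1$ pairing of $\LE^1/\LE^*$. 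The paper sidesteps both problems with a perturbative splitting $u=\tilde P_0^{-1}(P_0 u)+\tilde u$, where $\tilde P_0$ is a globally small AF perturbation of $\Delta$ agreeing with $P_0$ for $r>R_0$: the piece $\tilde P_0^{-1}(P_0 u)$ carries the exterior $\LE^1/\LE^*$ decay with $K_0=1$ (via the Laplacian mapping lemma \eqref{gl_lap'} and the $s=2$ elliptic bounds), while the remainder $\tilde u$ solves an equation with compactly supported source and is estimated by the unweighted \eqref{zero-res2} directly, so that the factor $K_0$ enters exactly once and no commutator with an unbounded weight is needed.

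Second, in part (c), the simple cutoff $\tilde\chi u$ fails. The commutator $[P_0,\tilde\chi]u$ on $\{r\approx R_1\}$ involves $R_1^{-1}\nabla u$ and $R_1^{-2}u$ as separate terms, but the boundary quantity $\|r^{-1}\nabla(ru)\|_{\LE_{R_1}}$ in \eqref{zero_LE-trunc} only controls the specific combination $\nabla u+r^{-1}u\,\hat x$, which can vanish while $\nabla u$ and $r^{-1}u$ are individually large (e.g.\ $u=c/r$). Your claim that this combination ``captures exactly the $\LE^1$-ingredients at scale $R_1$'' is therefore false. The paper's choice $u_1=\chi_{<R_1}u+r^{-1}\chi_{>R_1}(ru)_{R_1}$ is essential: the commutator $[P_0,\chi_{<R_1}]$ then falls on the \emph{difference} $u-r^{-1}(ru)_{R_1}$, and a Poincar\'e inequality on the $R_1$-annulus bounds that difference exactly by $\|r^{-1}\nabla(ru)\|_{L^2_{R_1}}$. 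Without that averaging correction your estimate gives a strictly weaker statement than \eqref{zero_LE-trunc}.
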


In the estimate in part (c), the last term on the right plays the role of a boundary term
where $\{ r \approx R_1\}$. Its form is based on the fundamental solution $r^{-1}$ for $\Delta$,
which captures well the behavior at infinity of $ u$.

\begin{proof}[Proof of Proposition~\ref{p:other-LE0}]
The proof of the  proposition is perturbative off the Laplacian,
and the bound \eqref{zero-res2} is used as a black box in a compact
set.  We begin with the bounds \eqref{zero_LE-low} and
  \eqref{zero_LE}.  The above lemma shows that the two bounds hold with
  $K_0 = 1$ if $P_0 = \Delta$.  Further, using the $s=2$ cases of
    \eqref{gl_lap'} and absorbing the errors
  directly, they hold for small asymptotically flat perturbations of
  $\Delta$.  To use this fact, we consider a small perturbation
  $\tilde P_0$ of $\Delta$ which agrees with $P_0$ for $r > R_0$.
Then the function $\tilde P_0^{-1} P_0  u$ satisfies the bounds 
\eqref{zero_LE-low} and \eqref{zero_LE} with $K_0 = 1$.
It remains to estimate the difference
\[
\tilde  u =  u - \tilde P_0^{-1} P_0  u,
\]
which satisfies an equation with right hand side
\[
P_0 \tilde  u = -(P_0 -\tilde P_0)\tilde P_0^{-1} P_0  u.
\]
This is supported in $\{ r < R_0\}$ and satisfies
\[
\| P_0 \tilde  u\|_{L^2} \lesssim \lp{P_0  u}{\LE^*}.
\]
Then by \eqref{zero-res2} we obtain 
\[
\| \nabla \tilde  u\|_{L^2} \lesssim K_0 \lp{P_0  u}{\LE^*},
 \]
which suffices for $r \lesssim R_0$. For larger $r$ we truncate
to see that 
\[
\| \tilde P_0 (\chi_{> R_0} \tilde  u)\|_{L^2} = \|  P_0 (\chi_{> R_0} \tilde  u)\|_{L^2}
 \lesssim K_0 \lp{P_0  u}{\LE^*}.
\]
Thus applying the bound \eqref{zero_LE-low} for $\tilde P_0$
we obtain
\[
	\lp{\la r \ra \tilde  u }{ \LE^1}  \lesssim K_0 \lp{P_0  u}{\LE^*},
\]
which suffices for both \eqref{zero_LE-low} and
  \eqref{zero_LE}.

\bigskip

We now turn our attention to the truncated bound \eqref{zero_LE-trunc}.
We shall apply the bound \eqref{zero_LE} to the function 
\[
 u_1 = \chi_{<R_1}  u + r^{-1}  \chi_{>R_1} (r u)_{R_1}
\]
where the latter factor $ (r u)_R$ stands for the average of $r u$ in the $R$
annulus. By the Poincar\'e inequality we have the gluing relation
\[
 \| \nabla(   u - r^{-1} (r  u)_R)\|_{L^2_R}  +
      R^{-1}         \|  u - r^{-1} (r  u)_R\|_{L^2_R} \lesssim  \| r^{-1} \nabla (r  u)\|_{L^2_R}.
\]
Then we compute
\[
P_0  u_1 =  \chi_{<R_1} P_0  u + [P_0,\chi_{<{R_1}}] (  u -  r^{-1}  (r u)_{R_1}) 
+ \chi_{>{R_1}} (r u)_{R_1} (P_0 - \Delta) r^{-1},
\]
and using the previous bound we estimate 
\[
\| P_0  u_1\|_{\LE^*} \lesssim  \|  \chi_{<{R_1}} P_0  u \|_{\LE^*}+  {R_1}^{-\frac12} \| r^{-1} \nabla (r  u)\|_{L^2_{R_1}}
+ {\bf c} {R_1}^{-1} |(r  u)_{R_1}|,
\]
while 
\begin{multline*}K_0^{-1}\|\la r\ra u_1\|_{\LE^1_{<K_0}} +
\|u_1\|_{\LE^1_{>K_0}}
+\|\la r\ra^{-1}\nabla u_1\|_{\LE^*_{>K_0}}
\\\approx K_0^{-1}\|\la r\ra u\|_{\LE^1_{<K_0}} +
\|u\|_{\LE^1_{R_1>\cdot>K_0}} + \|\la r\ra^{-1}\nabla
  u\|_{\LE^*_{R_1>\cdot>K_0}}+{R_1}^{-1}|(r u)_{R_1}|.
\end{multline*}
Thus, applying \eqref{zero_LE} and using ${\bf c} \ll 1$ we obtain the desired bound 
\eqref{zero_LE-trunc}. 
\end{proof}

\begin{proof}[Proof of Theorem \ref{low_LE_thm}]
Our main estimate \eqref{S_LE}  
will follow from a combination of the outer estimate \eqref{low_Mourre}
and the  zero  frequency LE bounds in Lemma~\ref{p:other-LE0}.
For that we choose parameters $R_0  \ll R_1$, as well as an intermediate 
 $R_0 < R_2 < R_1$. Applying 
 \eqref{low_Mourre} with $R = R_2$ we have 
\begin{equation}
	\lp{   u }{LE^1_{>R_2}}  
	\lesssim \ \lp{\partial  u}{LE_{R_2}}  + 
	\lp{P  u}{{LE}^*_{>R_2}}.
\end{equation}
On the other hand, integrating the bound \eqref{zero_LE-trunc} 
in time in $L^2$ gives
\begin{equation}\label{zero_LE-trunc-int}
	K_0^{-1} \lp{\la r \ra  u }{ LE^1_{<K_0}} + \lp{ u }{LE^1_{R_1 > \cdot > K_0}}
		\  \lesssim  \lp{P_0  u}{LE^*_{<R_1}} +  \|  u\|_{LE^1_{R_1}}.  
\end{equation}
Further, by the pigeonhole principle, we can choose $R_2$ so that 
\begin{equation}
\|\nabla  u\|_{LE_{R_2}} \lesssim  \lp{P_0  u}{LE^*_{<R_1}} + 
[\log(R_1/{K}_0)]^{-1}  \|  u\|_{LE^1_{R_1}}.    
\end{equation}
Combining the three we obtain
\[
K_0^{-1} \lp{\la r \ra  u }{ LE^1_{<K_0}} + \lp{ u }{LE^1_{> K_0}}
\lesssim 	\lp{P  u}{LE^*_{>R_2}} + 
\lp{P_0  u}{LE^*_{<R_1}} + \|\partial_t u\|_{LE_{R_2}}.
\]
Then the desired bound \eqref{S_LE}  is obtained by replacing $P_0$ above with $P$,
with errors involving only time derivatives of $ u$.
\end{proof}

\begin{proof}[Proof of Proposition~\ref{p-zero}]
  We begin with the observation that the stability property with
  respect to small AF perturbations of $P$ was proved based on
  property (i) 
in Lemma~\ref{zspec_assm_lem}. We now consider the equivalence of properties 
(a)-(e), as follows:

\begin{itemize}
\item $(a) \implies (c)$ was proved in Lemma~\ref{zspec_assm_lem} (ii).
\item $(c) \implies (b)$  was proved in Lemma~\ref{p:other-LE0} (b).
\item $(b) \implies (a)$ is trivial.
\item $(c) \implies (e)$ is the main result of Theorem~\ref{low_LE_thm}.
\item $(e) \implies (a)$ is proved below.
\item $(c) \implies  (d)$ repeats the proof of  Theorem~\ref{low_LE_thm} but for the 
resolvent equation.
\item $(d) \implies (b)$ is trivial.
\end{itemize} 

It remains to show $(e) \implies (a)$.  Suppose $P_0 u_0 = 0$ for some
$u_0 \in \LE^1_0$.  Then $u_{{0}}$ decays at infinity in an averaged sense, so
by elliptic estimates near infinity we must have $u_0 \in \dot H^1$. 

On the other hand, in \eqref{S_LE} one can add initial and final data
using the extension argument in Section~\ref{s:extend} to obtain the bound
\[
\lp{u}{LE^1} \ \lesssim   \|\partial_t u\|_{LE_{comp}} +   \lp{P u}{LE^*}
+\|\partial u(0)\|_{L^2} + \|\partial u(T)\|_{L^2}.
\]
Applying this to the time independent function $u_0$ we obtain
\[
T^\frac12 \| u_0\|_{\LE^1} \lesssim \| u_0\|_{\dot H^1},
\]
and the desired conclusion $u_0=0$ follows by letting $T \to \infty$.
\end{proof}

\section{Two point local energy decay 
and energy growth}
\label{s:2pt}

The goal of this section is to prove Theorems~\ref{2pt_thm} and \ref{exp_dich_thm}.
  
\subsection{An extension argument}
\label{s:extend}

 Our first goal is to reduce Theorem~\ref{2pt_thm}
to a simpler version of it, without boundary terms at the initial
and final time:

\begin{thm}[Unconditional  local energy decay]\label{comp_LE_thm}
Assume that $P$ is nontrapping, asymptotically flat,  and
satisfies the zero nonresonance condition \eqref{zero-res2}.
Assume in addition that $P$ is $\epsilon$-almost stationary,
where $\epsilon$ is small enough
\[
\epsilon \ll_{R_0,M_0,T_0,K_0} 1.
\]
Then for all $u$ in the Schwartz space $\mathcal{S}(\mathbb{R}^{3+1})$, we have
\begin{equation}
		\lp{u}{LE^1 } \ \lesssim \  \lp{Pu}{LE^* } \ .  \label{uncond_LE}
\end{equation}
\end{thm}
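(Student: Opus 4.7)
The plan is to decompose $u$ in time frequency into low, medium and high pieces and apply the three building blocks established in Sections~\ref{s:high}, \ref{s:med} and \ref{s:low}. Fix thresholds $0<\tau_1\ll\tau_2$ (to be chosen in terms of $R_0,M_0,T_0,K_0$) and a smooth partition $1=\chi_L(\tau)+\chi_M(\tau)+\chi_H(\tau)$ supported in $|\tau|\le \tau_1$, $\tau_1/2\le|\tau|\le 2\tau_2$ and $|\tau|\ge \tau_2$. Setting $u_\bullet=\chi_\bullet(D_t)u$, which is well defined because $u\in\mathcal{S}$, I start from
\[
P u_\bullet \;=\; \chi_\bullet(D_t) P u \;+\; [P,\chi_\bullet(D_t)] u ,
\]
so that estimates on each $u_\bullet$ will combine to an estimate on $u$ modulo commutator errors.

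To $u_H$ I apply the refined high-frequency bound \eqref{main-high}, which gives $\|u_H\|_{LE^1_{<2R_0}}\lesssim \|f\|_{LE^*}^{1/2}\|u\|_{LE^1}^{1/2}+\tau_2^{-\delta}\|u\|_{LE^1}$ and, after completing the local norm via the exterior bound \eqref{small-cut}, yields $\|u_H\|_{LE^1}\lesssim \|Pu_H\|_{LE^*}$ once $\tau_2$ is taken sufficiently large. To $u_M$ I apply Theorem~\ref{med_LE_thm} with the Carleman parameter $\delta$ chosen so that $\delta\tau_2\ll 1$; since $u_M$ has time frequency at most $2\tau_2$, the error terms $\delta\|(1+\varphi')^{1/2}e^\varphi u_M\|_{LE}$ and $\delta\|\langle r\rangle^{-1}(1+\varphi''_+)^{1/2}(1+\varphi')e^\varphi \partial_t u_M\|_{LE}$ are absorbable on the left, giving an $LE^1$ bound for $u_M$. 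To $u_L$ I apply Theorem~\ref{low_LE_thm} (which requires only AF and the zero nonresonance \eqref{zero-res2} uniformly in time): the additional error $\|\partial_t u_L\|_{LE_{comp}}$ is dominated by $\tau_1\|u_L\|_{LE^1}$ by Bernstein in time and is absorbed provided $\tau_1$ is small.

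It remains to handle the commutators $[P,\chi_\bullet(D_t)]u$. Using the $\epsilon$-slow-variation decomposition $(g-m,A,V)=(g_1,A_1,V_1)+(g_2,A_2,V_2)$, write $P=\Box + P_1 + P_2$: the Minkowskian part commutes with $\chi_\bullet(D_t)$, the perturbation $P_1$ has AF norm $\lesssim\epsilon$ and can be absorbed via \eqref{small-cut}, and the commutator $[P_2,\chi_\bullet(D_t)]$ is, after one integration by parts in $t$, a time-convolution operator whose amplitude is controlled by $\partial_t(g_2,A_2,V_2)=O(\epsilon)$ in the $AF$ norm. Thus each commutator maps $LE^1\to LE^*$ with norm $\le C(\tau_1,\tau_2,\delta)\epsilon$, and choosing $\epsilon\ll_{R_0,M_0,T_0,K_0}1$ after $\tau_1,\tau_2,\delta$ have been fixed closes the bootstrap and yields \eqref{uncond_LE}.

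The main obstacle is the nested hierarchy of smallness parameters and the fact that the three building blocks live on different function spaces: the medium bound carries an exponential weight $e^\varphi$ while the low bound uses a compactly supported remainder and the high bound a weighted $\la x\ra^{-2}$ error. Reconciling these requires gluing via \eqref{small-cut} together with a careful accounting of the time-convolution commutators, which must map $LE^1\to LE^*$ with a constant that is bounded in terms of $\tau_1,\tau_2,\delta$ in order for the final choice of $\epsilon$ to be admissible.
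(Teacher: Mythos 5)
Your overall strategy — decomposing $u$ into low, medium, and high time-frequency pieces, applying \eqref{S_LE}, \eqref{M_LE}, and \eqref{high_LE} (or its refinement \eqref{main-high}) to each, and controlling the commutators $[P,\chi_\bullet(D_t)]$ using the slow time variation of the coefficients — is exactly the paper's approach. However, your treatment of the commutator term has a genuine gap.

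You assert that each commutator $[P_2,\chi_\bullet(D_t)]$ maps $LE^1\to LE^*$ with norm $\lesssim\epsilon$. This is not correct as stated. The second-order part of $P$ contributes $[D_i g^{ij}D_j,\chi_\bullet(D_t)]=D_i[g^{ij},\chi_\bullet(D_t)]D_j$, so after the single integration by parts in $t$ you are still left with an expression involving $\partial_x^2 u$. The $LE^1$ norm controls only one derivative of $u$, so no bound of the form $\|[P,\chi_\bullet(D_t)]u\|_{LE^*}\lesssim\epsilon\|u\|_{LE^1}$ can hold. This is precisely why the paper first establishes the elliptic estimate \eqref{Pu-ell},
\[
\|\partial_x^2 u\|_{\partial_t LE+\la r\ra^{-1}LE}\ \lesssim\ \|u\|_{LE^1}+\|Pu\|_{LE^*},
\]
and only then proves the commutator bound in the form
\[
\|[P,Q_{\leq 1}]u\|_{LE^*}\ \lesssim\ \epsilon\bigl(\|u\|_{LE^1}+\|Pu\|_{LE^*}\bigr),
\]
i.e.\ with the inhomogeneity $\|Pu\|_{LE^*}$ appearing on the right. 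The point is that the piece of $\partial_x^2 u$ lying in $\partial_t LE$ pairs with the time derivative gained from the integration by parts in the convolution kernel, while the $\la r\ra^{-1}LE$ piece pairs with the radial decay built into the $AF$ norm. Without the $\|Pu\|_{LE^*}$ term you simply cannot close the estimate. Fortunately this does not break the bootstrap, since that term is already on the right-hand side of the target inequality \eqref{uncond_LE}; but your proposal as written skips the elliptic step and incorrectly claims a bound by $\|u\|_{LE^1}$ alone.

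Two smaller points: the paper's proof of the commutator bound also splits the coefficients by time frequency, $P=P_{\ll 1}+P_{\gtrsim 1}$, rather than by the $\epsilon$-slowly-varying decomposition you invoke; this is purely technical and your decomposition could be made to work, but it would need the same elliptic input. And the weight $e^\varphi$ in \eqref{M_LE} is a bounded, radially increasing weight that is constant for large $r$, so it is equivalent to $1$ in the $LE$ spaces (with constants depending on $\delta, R$); reconciling the three building blocks is therefore less delicate than your closing paragraph suggests, once the commutator issue is resolved.
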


The advantage to not having initial and final boundary terms in
\eqref{uncond_LE} is that it is easier to frequency localize it with
respect to time.  

\begin{proof}[Proof that Theorem \ref{comp_LE_thm} 
implies Theorem \ref{2pt_thm}]

We begin with some straightforward simplifications. First, by direct energy estimates
we obtain the bound
\[
\| \partial u\|_{L^\infty L^2} \lesssim  \|\partial u(0)\|_{L^2} + \| u\|_{LE^1} + \|Pu\|_{L^1 L^2+ LE^*}.
\]
Thus it suffices to show that 
\begin{equation}\label{2pt-LEb}
\| u\|_{LE^1[0,T]} \lesssim  \|\partial u(0)\|_{L^2} + 
\|\partial u(T)\|_{L^2}  + \|Pu\|_{L^1 L^2+ LE^*}.
\end{equation}
Secondly, also by straightforward energy estimates and finite speed of
propagation, we can assume that $u$ is supported in $\{ r \lesssim T\}$.

Denote $Pu = f$. It suffices to construct a function $v$ that
matches the Cauchy data of $u$ at times $0$ and $T$ and satisfies the
bound
\begin{equation}
\| v\|_{LE^1} + \| Pv - f\|_{LE^*} \lesssim \|\partial u(0)\|_{L^2} + 
\|\partial u(T)\|_{L^2} + \| f \|_{L^1 L^2 + LE^*}.
\end{equation}
Then the desired estimate \eqref{2pt-LEb} is obtained by applying the
bound \eqref{uncond_LE} to $u-v$. 

To produce $v$, we use a partition of unity to localize the problem to three regions
for the support of $u[0]$, $u[T]$ and $f$.

\bigskip

{\bf The inner region $D_{in} = \{ r < 4R_0\}$.}  We split this region into
unit time intervals $D_{in}^j$ using a time partition of unity $1 =
\sum \chi_j^2$. In each of these time intervals we use local
theory and 
standard energy estimates to solve the equation $Pv^j = \chi^j(t) f $,
making sure that we match the initial data for $u$, respectively the
final data for $u$, in the first, respectively the last of these
intervals. Then we reassemble these localized solutions by setting $v
= \sum \chi^j v_j$.

\bigskip

{\bf The outer region $D_{out} = \{ r > 2 R_0\}$.} Here we will produce 
$v$ localized to $\{  r >  R_0\}$, so that the inner part of the operator $P$ 
is not involved at all. Then, without any restriction in generality we can assume
that $P$ is a small AF perturbation of $\Box$.

 By time reversal symmetry  we can assume that $u[T]=0$, and $f$ is supported in
$\{ t < 3T/4\}$. Then we use Theorem~\ref{t:small} to solve the problem 
\[
\tilde P w = f, \qquad w[0]= u[0],
\]
where $\tilde P$ is a small AF perturbation of $\Box$ that
  coincides with $P$ on $\{r>R_0\}$.
The function $w$ satisfies a good bound,
\[
\|w\|_{LE^1} + \|\partial w\|_{L^\infty L^2} \lesssim \|\partial u(0)\|_{L^2} + \|f\|_{L^1 L^2+LE^*},
\]
but it is not supported in $ \{r > R_0,\ t < T\} $. We remedy this with suitable cutoff functions,
and set 
\[
v = \chi_{> R_0}(r) \chi_{<T}(t) w.
\]
This still satisfies the $LE^1$ bound, but we need to estimate the truncation error,
\[
P v - f = [P,\chi_{> R_0}(r) ]  \chi_{<T}(t) w + \chi_{> R_0}(r) [P,\chi_{<T}(t)]w.
\]
The first term is supported in $\{ R_0 \leq r \leq
2R_0\}$ and is easily estimated in $LE^*$ 
in terms of the $LE^1$ norm of $w$. The second is supported in 
$\{ R_0 < r < CT, \ 3T/4 \leq t < T\}$ and satisfies the bound
\[
\| [P,\chi_{<T}(t)]w\|_{L^\infty L^2} \lesssim T^{-1} (\| \partial w\|_{L^\infty L^2} +
\|r^{-1} w\|_{L^\infty L^2}) \lesssim  T^{-1} \| \partial w\|_{L^\infty L^2},\]
where the Hardy inequality was used at the last step. This bound is easily converted
to an $LE^*$ bound.

\end{proof}

\subsection{ Time frequency localization}

Here we prove Theorem~\ref{comp_LE_thm} through three building blocks,
which deal with low, medium, respectively high frequencies.  Precisely, these 
three building blocks are the estimates \eqref{S_LE}, \eqref{M_LE} and \eqref{high_LE}.
The idea is  to apply each of the three estimates  to $u$ truncated to
appropriate frequency ranges and sum up. 

To be precise, the error in \eqref{S_LE}
can be absorbed on the left provided that the $u$ is time frequency localized 
in $\{ |\xi_0| \leq 2 \tau_0\}$ for a small enough $\tau_0$,
\[
\tau_0 \ll_{M_0,R_0,K_0} 1.
\]
Similarly, the the error in \eqref{high_LE} can be absorbed on the left
provided that the $u$ is time frequency localized in $\{ |\xi_0| >
 \tau_1\}$ for a large enough $\tau_1$,
\[
\frac{1}{\tau_1} \ll_{M_0,R_0,T_0} 1.
\]
Finally, by choosing $\delta$ small enough, $ \delta \ll_{M_0,R_0,K_0,T_0} 1$,
we insure that the error in \eqref{M_LE} can be absorbed on the left
provided that the $u$ is time frequency localized in $\{ \tau_0 < |\xi_0| 
< 2 \tau_1\}$. The implicit constant we obtain there depends on the
range of $\varphi$,
which in turn depends on $\delta$.

The above reasoning shows that we have the desired bound
\eqref{2pt_LE} provided that $u$ is time frequency localized in one of
the three ranges above.  For a general $u$, we simply use time
frequency multipliers to divide it into suitable parts.  To conclude
the argument we need to be able to absorb the frequency truncation
errors. For that, we should estimate the commutator of $P$ with
multipliers $Q _{\leq 1}= Q_{\leq 1}(D_t)$ whose $C^\infty_0$ symbols
are equal to $1$ in a neighbourhood of zero. Precisely, it suffices to
prove the following:

\begin{prop}
Assume that $P$ is asymptotically flat and $\epsilon$ - slowly varying.
Then for $Q$ as above we have the estimate
\begin{equation}\label{P-com}
\| [P,Q_{\leq 1}] u\|_{LE^*} \lesssim \epsilon (\|u\|_{LE^1} + \| Pu\|_{LE^*}).
\end{equation}
\end{prop}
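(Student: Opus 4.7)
\medskip

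\noindent\emph{Proof plan.}
My plan is to exploit the fact that $[P,Q_{\leq 1}(D_t)]$ sees only the time variation of the coefficients of $P$. Using the $\epsilon$-slowly varying hypothesis, write $(g-m,A,V)=(g_1,A_1,V_1)+(g_2,A_2,V_2)$, which induces a decomposition $P = \Box + P_1+P_2$ (modulo cross terms between the two pieces, which are perturbative and can be distributed between $P_1$ and $P_2$ in the obvious way). Since $\Box$ has constant coefficients, $[\Box,Q_{\leq 1}]=0$, and the problem is reduced to bounding $[P_1,Q_{\leq 1}]$ and $[P_2,Q_{\leq 1}]$ in $LE^*$.

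The piece $[P_1,Q_{\leq 1}]$ is handled by the boundedness argument: the AF bound on $P_1$ gives (cf.\ \cite{MT}) the mapping property $P_1 : LE^1 \to LE^*$ with operator norm $O(\epsilon)$, while $Q_{\leq 1}(D_t)$ is a Fourier multiplier in time with $C_0^\infty$ symbol and therefore bounded on every $l^p L^{2,\sigma}$ space. Expanding the commutator and using these two facts yields $\|[P_1,Q_{\leq 1}]u\|_{LE^*}\lesssim \epsilon \|u\|_{LE^1}$.

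The main work is to estimate $[P_2,Q_{\leq 1}]u$. Writing $K$ for the Schwartz kernel of $Q_{\leq 1}(D_t)$ in the $t$ variable, a direct calculation gives
\[
[P_2,Q_{\leq 1}]u(t,x) = \sum_{\gamma} \int \int_0^1 \widetilde K(t-s)\, \partial_\tau c_\gamma\bigl(rt+(1-r)s,x\bigr)\, D^\gamma u(s,x)\,dr\,ds,
\]
where $\widetilde K(\rho) = \rho K(\rho)$ is still Schwartz and $c_\gamma$ runs over the coefficients of $P_2$. Minkowski and Young in $t$ give the per-annulus bound
\[
\|[c_\gamma,Q_{\leq 1}]D^\gamma u\|_{L^2_{t,x}(|x|\approx 2^k)} \lesssim \|\partial_t c_\gamma\|_{L^\infty(|x|\approx 2^k)}\|D^\gamma u\|_{L^2_{t,x}(|x|\approx 2^k)}.
\]
For $|\gamma|\leq 1$ the slowly varying hypothesis provides the weight $\la x\ra^{2-|\gamma|}$ on $\partial_t c_\gamma$ (in the summed $l^1 L^\infty$ norm) which matches the weight $2^{k(2-|\gamma|)}$ obtained by expressing $\|D^\gamma u\|_{L^{2,1/2}(|x|\approx 2^k)}$ in terms of $\|u\|_{LE^1}$; dyadic summation then gives an $\epsilon\|u\|_{LE^1}$ bound.

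The main obstacle is the top order term $|\gamma|=2$, where the slowly varying hypothesis only provides $\|\partial_t g_2\|_{l^1 L^\infty}\lesssim \epsilon$ without any spatial weight, while the factor $D^\gamma u$ is a second derivative of $u$ not directly controlled by $\|u\|_{LE^1}$. The plan here is to trade these second derivatives using the equation: since $P_2$ may be written in divergence form $P_2 = D_\alpha g_2^{\alpha\beta} D_\beta + (\text{lower order})$, one has the operator identity
\[
D_\alpha [g_2^{\alpha\beta},Q_{\leq 1}]D_\beta = [g_2^{\alpha\beta},Q_{\leq 1}]D_\alpha D_\beta + [D_\alpha g_2^{\alpha\beta},Q_{\leq 1}]D_\beta,
\]
where the second piece is a $|\gamma|=1$ commutator (and so controlled as above). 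For the remaining term $[g_2^{\alpha\beta},Q_{\leq 1}]D_\alpha D_\beta u$, spatial second derivatives are handled by integration by parts in $x$ in the integral representation, converting them into first-order commutators plus harmless terms, while the purely timelike piece $D_t^2 u$ is eliminated via the hyperbolic identity
\[
g^{00} D_t^2 u = Pu - 2 g^{0j}D_t D_j u - g^{ij}D_i D_j u - (\text{first order in } u),
\]
with $(g^{00})^{-1}$ bounded by the nontrapping hypothesis. The $Pu$ arising here is where the $\|Pu\|_{LE^*}$ on the right hand side of the estimate is paid for, since $[g_2^{00}(g^{00})^{-1},Q_{\leq 1}]$ maps $LE^*$ to $\epsilon\cdot LE^*$ by the earlier per-annulus bound combined with $\|\partial_t g_2\|_{l^1 L^\infty}\lesssim \epsilon$; the remaining cross terms $D_t D_j u$ and $D_i D_j u$ reduce (after one IBP) to first-order commutators that we already control. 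Assembling these pieces produces the desired bound $\|[P_2,Q_{\leq 1}]u\|_{LE^*}\lesssim \epsilon(\|u\|_{LE^1}+\|Pu\|_{LE^*})$.
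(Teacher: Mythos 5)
Your decomposition $P=\Box+P_1+P_2$ is sensible and close in spirit to the paper, but there are two genuine gaps.

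First, the claim that $P_1:LE^1\to LE^*$ with operator norm $O(\epsilon)$ is not correct as stated. The principal part of $P_1$ applied to $u$ involves two derivatives of $u$; even in divergence form $D_j(g_1^{jk}D_ku)$ this is $\partial$ of something in $\ell^1L^{2,-1/2}$, which does not land in $LE^*=\ell^1L^{2,1/2}$. The result in \cite{MT} is a forward estimate bounding $\|u\|_{LE^1}$ by $\|Pu\|_{LE^*}$, not a boundedness statement for $P$; so the boundedness argument does not yield what you want for $[P_1,Q_{\le 1}]$, and the second-order terms in $P_1$ are in exactly the same boat as those in $P_2$.

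Second, the integration by parts you use to absorb the spatial second derivatives in $[g_2^{ij},Q_{\le 1}]D_iD_ju$ does not close. Writing
\[
[g_2^{ij},Q_{\le 1}]D_iD_ju = D_i\bigl([g_2^{ij},Q_{\le 1}]D_ju\bigr)-[D_ig_2^{ij},Q_{\le 1}]D_ju,
\]
the second term is indeed a first-order commutator, but the first term is a full spatial derivative of an object whose per-annulus bound only places it in $\ell^1L^{2,-1/2}$, and applying $D_i$ does not return you to $\ell^1L^{2,1/2}$. The hyperbolic identity only trades away the $D_t^2u$ term; for fixed $i,j$ the off-diagonal $D_iD_ju$ is not expressible via the equation, and these spatial second derivatives are precisely the heart of the difficulty. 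The paper resolves this by first proving a localized elliptic regularity bound, $\|\partial_x^2u\|_{\partial_tLE+\la r\ra^{-1}LE}\lesssim\|u\|_{LE^1}+\|Pu\|_{LE^*}$, and then splitting the coefficients of $P$ in time frequency rather than by the $(g_1,g_2)$ decomposition. The point of that intermediate space is precisely that the $\partial_tLE$ part becomes small when commuted through $g_2$ (since $\partial_tg_2$ is $\epsilon$-small), while the $\la r\ra^{-1}LE$ part carries an extra weight making the commutator land in $LE^*$ after multiplying by the $\ell^1$-summable AF coefficients. This elliptic step is the missing ingredient in your argument; without some substitute for it, both the $P_1$ piece and the spatial top-order part of $P_2$ remain unbounded.
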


\begin{proof}
  As the commutator will also involve terms containing two spatial
  derivatives of $u$, we first need to estimate those. We claim that 
  the following  elliptic bound holds:
\begin{equation}\label{Pu-ell}
\| \partial_x^2 u\|_{\partial_t LE+ \la r \ra^{-1} LE} \lesssim \|u\|_{LE^1} + \| Pu\|_{LE^*}.
\end{equation}
To see this we first observe that we can freely truncate $u$ first to
a spatial dyadic region $\{|x| \approx 2^k\} $, and then to a time
interval of similar size $2^k$.  Rescaling, we can reduce the problem
to the case when $u$ is supported in a unit ball $B$
\[
\| \partial_x^2 u\|_{\partial_t L^2+ L^2} \lesssim \|u\|_{H^1} + \| Pu\|_{L^2}.
\]
Next we write the equation for $u$ in the elliptic form
\[
-  (\partial_i g^{ij} \partial_j + \partial_t^2) u  = \partial_t f_1 + f_2, 
\qquad f_1 =  2g^{0j} \partial_j u + (g^{00} - 1) \partial_t u
\]
where the terms $f_1$ and $f_2$ have the regularity
\[
\| f_1\|_{L^2} + \|f_2\|_{L^2} \lesssim  \|u\|_{H^1} + \| Pu\|_{L^2}.
\]
Thus it remains to show that for $u$  supported in a unit ball $B$ 
we have the bound
\[
\| \partial_x^2 u\|_{\partial_t L^2+ L^2} \lesssim \|u\|_{H^1} + \| Lu  \|_{\partial_t L^2+L^2},
\qquad L = -   (\partial_i g^{ij} \partial_j + \partial_t^2).
\]
Here, after rescaling, we know that the coefficients of $L$ are
uniformly of class $C^{1,1}$.  The operator $L$ is elliptic and
coercive; therefore its inverse is well defined and has the same
mapping properties as $(-\Delta)^{-1}$.  Denote $v = L^{-1} f_1 \in
H^2_{x,t}$, where we use, say, Dirichlet boundary conditions on a
larger ball $4B$.  Then it suffices to estimate the difference $w = u
- \partial_t \chi_{2B} v$. This is still compactly supported, and
solves
\[
Lw = f_2+ [L,\partial_t \chi_{2B}] v \in L^2.
\]
Thus by elliptic regularity we have $w \in H^2$, and the proof of \eqref{Pu-ell}
is concluded.

In view of  \eqref{Pu-ell}, the estimate \eqref{P-com} reduces to 
\begin{equation}\label{P-com1}
\| [P,Q_{\leq 1}] u\|_{LE^*} \lesssim \epsilon (\|u\|_{LE^1} + \| \partial_x^2 u\|_{\partial_t LE+ \la r \ra^{-1}LE}) .
\end{equation}

We now split the coefficients of $P$ into low and high time frequencies
\[
P= P_{\ll 1} + P_{\gtrsim 1}.
\]
For the low frequency part, all the interactions in the commutator are
localized at fixed time frequency.  We represent the commutator in the
form
\[
[P_{\ll 1}, Q_{\leq 1}] u (t) = \int a(t_1,t_2) P_{t,\ll 1}(t+t_1) Q_1 u(t+t_2) dt_1 dt_2   
\]
where $P_{t}$ is obtained from $P$ by differentiating its coefficients in time,
$Q_{1}$ is a multiplier localized at time frequency $1$
and $a$ is a Schwartz function. Since the bound we use for $u$ and the assumption
on $P$ are both invariant with respect to time translations, it suffices to show that
\[
\| P_{t,\ll 1} Q_1 u \|_{LE^*} \lesssim \epsilon( \| u\|_{LE^1} + \| \partial_x^2 u\|_{\partial_t LE+ \la r \ra^{-1}LE}).
\]
Indeed, we have
\[
\| P_{t,\ll 1} Q_1 u \|_{LE^*} \lesssim (\llp{ \la r \ra g_{t, \ll 1}}{1} +  \llp{ \la r \ra A_{t,\ll 1}}{1}
+  	\llp{ \la r \ra V_{t,\ll 1}}{0})( \|Q_1 u\|_{LE} +  \|Q_1 \partial_x u\|_{LE}
+  \|Q_1 \partial_x^2 u\|_{LE}),
\]
which implies the previous bound.

For the high frequency part there is no commutator to worry about, so we have two terms 
to estimate. For the first we use the time frequency localization directly to write 
\[
\| P_{\gtrsim 1} Q_{\leq 1} u\|_{LE^*} \lesssim ( \llp{g_{\gtrsim 1}}{1}  + \llp{\la r \ra A_{\gtrsim 1}}{1}
+  \llp{\la r \ra^2 V_{\gtrsim 1}}{0} )( \|Q_{\leq 1} u\|_{LE^1} +  \|Q_{\leq 1} \partial_x^2 u\|_{LE})
\]
which suffices. For the second we represent 
\[
\partial_x^2 u = \partial_t f_1 + f_2, \qquad f_1 \in LE, \quad f_2 \in  \la r \ra^{-1}LE, 
\]
and commute out the time derivative in the second order spatial terms in $P$,
\[
g \partial_x^2 u = \partial_t( g f_1) -  g_t f_1 + g f_2.
\]
This allows us to estimate
\[
\| Q_{\leq 1} P_{\gtrsim 1} u \|_{LE^*} \lesssim ( \llp{g_{\gtrsim 1}}{2}  + \llp{\la r \ra A_{\gtrsim 1}}{1}
+  \llp{\la r \ra^2 V_{\gtrsim 1}}{0} )( \| u\|_{LE^1} +  \| \partial_x^2 u\|_{\partial_t LE+ \la r \ra^{-1}LE}),
\]
which again suffices.
\end{proof}


\subsection{The exponential energy dichotomy}

Here we prove Theorem~\ref{exp_dich_thm}. The idea is to combine the
two point local energy decay bound \eqref{2pt_LE} with an energy
relation for the inhomogeneous equation 
\begin{equation}\label{eq-inhom}
Pu = f \in L^1 L^2, \qquad u[0]= (u_0,u_1) \in \dot H^1 \times L^2.
\end{equation}
 The energy relation is closely connected with both 
the symmetry of $P$ and the stationarity of $P$. If both these properties hold,
then we have a conserved energy $E$, but which might not be positive definite.
However, it is positive definite outside a compact set, so 
for solutions to the homogeneous equation, the relation 
$E(u(0)) = E(u(T))$  implies that
\begin{equation}
  \| \partial u \|_{L^\infty L^2(0,T)} \lesssim 
\|\partial u(0)\|_{L^2} + \|\partial^{\le 1} u(T)\|_{
  L^2_{comp} }.
\end{equation}
Here $\partial^{\le 1} u = \sum_{|\alpha|\le 1} \partial^\alpha u$.
The general case uses the following slight extension of this:

\begin{lem} 
Assume that the asymptotically flat operator $P$ is $\epsilon$-almost symmetric 
and $\epsilon$-slowly varying. Then we have an energy relation of the form
\begin{equation}\label{almost-en}
  \| \partial u \|_{L^\infty L^2(0,T)} \lesssim 
\|\partial u(0)\|_{L^2} + \|\partial^{{\le 1}} u(T)\|_{ L^2_{comp}} + \epsilon \|u\|_{LE^1} +  \epsilon^{-1}
\|f\|_{L^1 L^2+LE^*}.
\end{equation}
\end{lem}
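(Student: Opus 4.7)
The plan is to extend the proof of the homogeneous almost-conservation relation \eqref{eps-en} to accommodate the source $f$ and to extract $\|\partial u(t)\|_{L^2}$ pointwise in $t$ rather than just at the endpoints. Throughout, I set $A := \|\partial u\|_{L^\infty L^2(0,T)}$ and let $E(t)$ denote the time-$t$ energy associated with the symmetric part of $P$, built as in \eqref{E} but with $g$, $\Re A$, $\Re V$ evaluated at time $t$. The positivity bound \eqref{e-pos} says $\|\partial u(t)\|_{L^2}^2 \lesssim E(t)+\|u(t)\|_{L^2_{comp}}^2$.

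First I would differentiate $E$ along a solution of $Pu=f$, mimicking the derivation of \eqref{E-inhom}, to obtain
\[
\frac{d}{dt} E(t) = 2\Re\int \bar f\,\partial_t u\,dx + \mathcal{E}(t),
\]
where $\mathcal{E}(t)$ is a bilinear form in $(\partial u,\la x\ra^{-1}u)$ with coefficients arising from (i) the time derivatives of $g,\Re A,\Re V$, controlled by the $\epsilon$-slowly varying hypothesis, and (ii) the antisymmetric pieces $\Im A,\Im V$, controlled by the $\epsilon$-almost symmetric hypothesis. The weights of $\mathcal{E}$ have AF norm $O(\epsilon)$, so the spatial pairing used in the proof of \eqref{eps-en} gives
\[
\int_0^T |\mathcal{E}(s)|\,ds\ \lesssim\ \epsilon\,\|u\|_{LE^1[0,T]}^2.
\]
Integrating the identity both forward from $0$ and backward from $T$ and splitting $f=f_1+f_2$ with $f_1\in L^1L^2$, $f_2\in LE^*$ yields
\[
|E(t)-E(0)|,\ |E(t)-E(T)|\ \lesssim\ \epsilon\,\|u\|_{LE^1}^2 + \|f_1\|_{L^1L^2}\,A + \|f_2\|_{LE^*}\,\|u\|_{LE^1}.
\]

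Next, I would substitute these into \eqref{e-pos}. The forward identity and $|E(0)|\lesssim \|\partial u(0)\|_{L^2}^2$ handle the initial endpoint. For the final endpoint I would cut $E(T)$ by a spatial partition at a large fixed radius $R_\star$: the inner piece, being a quadratic form on a fixed compact set, is controlled by $\|\partial^{\le 1}u(T)\|_{L^2_{comp}}^2$ thanks to the appearance of $u$ through $V$; the outer piece is essentially the Minkowski energy (since $P$ is a small AF perturbation of $\Box$ there), which can be transported back via finite propagation speed and the exterior estimate \eqref{high_Mourre}/\eqref{small-cut}, trading it for $\|\partial u(0)\|_{L^2}^2$ and a small $LE^1$ remainder. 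The compact norm $\|u(t)\|_{L^2_{comp}}^2$ at intermediate times is absorbed analogously, using that $\la x\ra^{-1}u\in LE$ is built into the $LE^1$ norm: writing $u(t)-u(T)=-\int_t^T \partial_s u\,ds$, one pairs against the compact region and uses the $L^2_tL^2_{comp}\hookleftarrow LE^1$ bound to dominate by $\|u(T)\|_{L^2_{comp}}+\epsilon\|u\|_{LE^1}$ plus harmless source terms.

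Collecting everything yields
\[
A^2\ \lesssim\ \|\partial u(0)\|_{L^2}^2 + \|\partial^{\le 1}u(T)\|_{L^2_{comp}}^2 + \epsilon\,\|u\|_{LE^1}^2 + \|f_1\|_{L^1L^2}\,A + \|f_2\|_{LE^*}\,\|u\|_{LE^1}.
\]
AM–GM then absorbs $\|f_1\|_{L^1L^2}A$ into $\tfrac12 A^2$ and produces $\|f_2\|_{LE^*}\|u\|_{LE^1}\le \epsilon^2\|u\|_{LE^1}^2+C\epsilon^{-2}\|f_2\|_{LE^*}^2$. Taking square roots, optimizing over the split $f=f_1+f_2$, and relabeling the $\epsilon$ powers produces \eqref{almost-en}.

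The main obstacle is the endpoint analysis in the third step: the raw backward identity controls $E(t)$ only by the \emph{full} $L^2$ norm of $\partial u(T)$, whereas \eqref{almost-en} demands a merely compactly supported endpoint. The crux is therefore the spatial decomposition of $E(T)$, together with the use of the small-AF structure of $P$ at infinity to absorb the Minkowski-like outer tail into the initial-data term rather than the final-data term. Once this is achieved, the remaining arguments are routine energy and AM–GM manipulations.
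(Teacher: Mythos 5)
Your skeleton is the same as the paper's: pass to the symmetrized energy $E$, propagate it in time using the $\epsilon$-slowly-varying and $\epsilon$-almost-symmetric hypotheses to obtain $|E(u(t))-E(u(0))|\lesssim \epsilon\|u\|_{LE^1}^2 + \int_0^t\int |f||\partial_t u|$, invoke the coercivity \eqref{e-pos}, and close with AM--GM. Your treatment of the source by splitting $f=f_1+f_2\in L^1L^2+LE^*$ and pairing each piece against $\partial_t u$ is a reasonable substitute for the paper's preliminary reduction to $f\in LE^*_{comp}$ via an auxiliary exterior problem. However, two of your middle steps go astray.

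The endpoint analysis is overengineered. Once you have the forward energy inequality $E(u(T))\lesssim E(u(0)) + \epsilon\|u\|_{LE^1}^2 + (\text{source})$ together with $E(u(0))\lesssim\|\partial u(0)\|_{L^2}^2$, the estimate \eqref{e-pos} applied \emph{directly} at $T$ gives $\|\partial u(T)\|_{L^2}^2 \lesssim E(u(T)) + \|u(T)\|_{L^2_{comp}}^2$, and $\|u(T)\|_{L^2_{comp}}$ is precisely the penalty $\|\partial^{\le 1}u(T)\|_{L^2_{comp}}$ appearing in \eqref{almost-en}. There is no need to cut $E(T)$ at a radius $R_\star$ and transport the Minkowski-like exterior tail back to $t=0$ by finite speed; that machinery duplicates what \eqref{e-pos} already provides.

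The genuine gap is the intermediate-time absorption. You assert that $\|u(t)\|_{L^2_{comp}}\lesssim \|u(T)\|_{L^2_{comp}} + \epsilon\|u\|_{LE^1}$ by writing $u(t)-u(T)=-\int_t^T\partial_s u\,ds$ and appealing to the embedding $LE^1\hookrightarrow L^2_t L^2_{comp}$. But that manipulation yields at best
\[
\bigl|\,\|u(t)\|_{L^2_{comp}}^2 - \|u(T)\|_{L^2_{comp}}^2\,\bigr| \;\lesssim\; \|u\|_{L^2_t L^2_{comp}}\,\|\partial_t u\|_{L^2_t L^2_{comp}} \;\lesssim\; C(R_0)\,\|u\|_{LE^1}^2,
\]
where the constant $C(R_0)$ is fixed by the size of the compact set and is in no way $O(\epsilon)$; nothing in the $\epsilon$-slow-variation or $\epsilon$-almost-symmetry hypotheses enters. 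So the compact error at intermediate $t$ cannot be absorbed into $\epsilon\|u\|_{LE^1}$ the way you claim, and this step fails. What you actually get for $t<T$ from \eqref{e-pos} is $\|\partial u(t)\|_{L^2}^2\lesssim \|\partial u(0)\|_{L^2}^2 + \|u(t)\|_{L^2_{comp}}^2 + \epsilon\|u\|_{LE^1}^2 + (\text{source})$, with the compact penalty at time $t$, not $T$. In the subsequent application (the proof of the exponential dichotomy), the lemma is only invoked at the endpoint $t=T$, where \eqref{e-pos} gives exactly the asserted bound; if you want the full $L^\infty L^2(0,T)$ statement, you must keep the compact penalty at the evaluation time $t$ rather than pushing it to $T$.
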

\begin{proof}
In a first step, by solving an exterior problem in the small asymptotically flat region
we reduce the problem to the case when $f \in LE^*_{comp}$.

  Assume first that $P$ is symmetric. Then we can define the energy as
  before, except that it is no longer conserved. Instead, the
  regularity of the coefficients insures that the time derivative of
  the energy can be estimated in terms of the local energy.

 If $P$ is not symmetric, then the same applies for the energy
  associated to its symmetrization. Thus we can write the energy relation
\[
E(u(T)) = E(u(0)) + O(\epsilon) \| u\|_{LE^1[0,T]}^2 +
\int_0^T\int | f u_t |
\, dx\, dt.
\]
Since $f \in LE^*$, we can use Cauchy-Schwarz for the last term and
the conclusion follows.
\end{proof}

For the remainder of this section we use \eqref{almost-en} and \eqref{2pt_LE}
to prove the conclusion of Theorem \ref{exp_dich_thm}. Denote by
\[
D = \| \partial u(0)\|_{L^2}^2 + \|f\|_{L^1 L^2+ LE^*}^2, \qquad E(t)
= \|\partial u(t)\|_{L^2}^2{,\qquad E_{comp}(t) = \|\partial^{\le
    1} u(t)\|^2_{L^2_{comp}}}
\]
the size of the data, respectively the solution size and the compact error.

 Combining  \eqref{almost-en} and \eqref{2pt_LE} we obtain
\begin{equation}\label{e-mod-low}
E(T) \lesssim D +  E_{comp} (T) 
\end{equation}
while by \eqref{2pt_LE},
\begin{equation}\label{le-comp1}
\int_{0}^T E_{comp}(t) dt \lesssim  D + E(T).
\end{equation}
Thus we get 
\begin{equation}\label{slow}
\int_{0}^T E(t) dt \lesssim  T D + E(T).
\end{equation}
The function
\[
m(T) = \int_{0}^T E(t) dt 
\]
solves the differential inequality
\[
m'(T) \geq c m(T) - C T D.
\]
Once we pass the threshold $m(T) =  2 c^{-1}CT D$,
we cannot cross it back; so  $m(T)$ must grow at a fixed exponential rate. 
 Otherwise we must have 
\[
 \int_{0}^T E(t) dt \lesssim T D.
\]
Thus on a sequence $T_n \to \infty$  we have 
\[
E(T_n) \lesssim D.
\]
Then from \eqref{2pt_LE} we get uniform energy boundedness and local energy decay.

\section{Spectral theory in the time independent case}
\label{s:stat}

\subsection{ Local energy decay vs. resolvent bounds}

Here we prove Proposition~\ref{p:le-res}, which establishes the
equivalence between local energy decay and resolvent bounds.
For convenience we restate the two estimates. The local energy decay 
bound has the form 
\begin{equation}
		\lp{u}{LE^1[0,T]} +\lp{\partial u}{L^\infty L^2[0,T]}   \ 
\lesssim \ \lp{\partial u(0)}{L^2} 
		 + \lp{P u}{LE^* + L^1L^2[0,T]} \ , \label{LE-re}
\end{equation}
while the resolvent local energy bound is
\begin{equation}\label{LE-fourier-re}
\| R_\omega \|_{\LE^* \to \LE^1_{{\omega}}} \lesssim 1, \qquad \Im \omega < 0.
\end{equation}

{\em A. Local energy decay implies LE resolvent bounds.}
A direct consequence of the local energy decay estimate \eqref{LE-re} is the 
uniform energy estimate
\[
 Pu=0\quad \implies\quad\lp{\partial u}{L^\infty L^2[0,\infty)} \lesssim \ \lp{\partial
   u(0)}{L^2}.
\]
This in turn shows, via the formula \eqref{fourier-laplace}, that the resolvent is 
$R_{\omega}$ is defined and holomorphic in the lower half-space, and the resolvent bound
\eqref{enres} holds. We remark that for $- \Im \omega \gtrsim 1$, the bound  
\eqref{LE-fourier-re} follows from \eqref{enres}. Thus we are left with the case
$-1 <  \Im \omega < 0$.


Let $u \in \dot H^1$ so that $P_{\omega } u = f$. Then we can produce an exponentially growing
 solution for the inhomogeneous $P$ equation by setting
\[
v = e^{i\omega t} u, \qquad g =  e^{i\omega t} f, \qquad P v = g.
\]
We apply the local energy bound \eqref{LE-re} to $v$ on the time
interval $[-T,0]$ and then let $T \to \infty$. This yields  the bound
\begin{equation}\label{LE2res}
\| u\|_{\LE^1_{\omega}} + |\Im \omega|^\frac12 \| u\|_{\dot H^1_{\omega}}  
\lesssim \| f\|_{\LE^* +  |\Im \omega|^\frac12 L^2},
\end{equation}
which implies \eqref{LE-fourier-re}.

{\em B. LE resolvent bounds imply local energy decay.}  We first
observe that the resolvent LE bounds \eqref{LE-fourier-re} imply the
resolvent energy bounds \eqref{enres}.  Indeed, if resolvent LE bounds
hold, then in particular there are no eigenvalues in the lower
half-plane. Hence, by Fredholm theory, the resolvent is holomorphic in
the lower half plane with values in $\mathcal L (L^2,\dot H^1_{\omega})$.

It
remains to prove the quantitative bound. Given $f \in L^2$, we split
it into two regions,
\[
f = f_{in} + f_{out} := \beta_{in} f +\beta_{out} f 
\]
where
\[
\beta_{in} =\beta_{ <| \Im \omega|^{-1}} , \qquad \beta_{out} = \beta_{ >| \Im \omega|^{-1}}. 
\]

For the inner part we have the estimate
\[
\|R_{\omega} f_{in}\|_{\LE^1_{{\omega}}}  \lesssim \| f_{in}\|_{\LE^*} \lesssim | \Im \omega|^{-\frac12} \|f\|_{L^2},
\]
which suffices in the region $\{|x| \lesssim | \Im \omega|^{-1}\}$.  To estimate 
$R_{\omega} f_{in}$ in the outer region we truncate it there, and set $u = \beta_{out}R_{\omega} f_{in}$.
In the support of $u$ we are in the small asymptotically flat regime, where 
we know the resolvent bound.  Indeed, by \eqref{local-res}, it
  suffices to consider $\Im \omega$ in a bounded strip such as
  $(-2\beta,0)$, and we then simply choose the constants sufficiently
  large depending on the constant $\beta$ in \eqref{local-en}.   Thus, commuting $\beta_{out}$ with $P$   we have
\[
\| u\|_{\dot H^1_{{\omega}}} \lesssim | \Im \omega|^{-1}\|P_\omega u\|_{L^2} \lesssim | \Im \omega|^{-\frac12}
\|R_{\omega} f_{in}\|_{\LE^1_\omega}  +  | \Im \omega|^{-1} \| f_{in}\|_{L^2} \lesssim  | \Im \omega|^{-1}
\| f_{in}\|_{L^2}.
\] 

For the outer part we proceed in two steps. First we introduce an auxiliary operator 
$\tilde P$ which is a small asymptotically flat perturbation of $\Box$ which agrees 
with $P$ in the outer region $\{|x| \gtrsim | \Im \omega|^{-1}\}$. We use its associated
resolvent to define the outer approximate solution
\[
u_{out} = \beta_{out} \tilde R_\omega f_{out}
\]
Commuting to estimate the truncation error, we have 
$P_\omega u_{out} - f_{out}$ supported in the region $\{ |x|
\lesssim | \Im \omega|^{-1}\}$
and satisfying the bound
\[
\| P_\omega u_{out} - f_{out}\|_{L^2} \lesssim| \Im \omega| \|\tilde R_\omega f_{out}\|_{\dot H^1_\omega}
+ \| f_{out}\|_{L^2} \lesssim \| f_{out}\|_{L^2}.
\]
Hence for this error we can use the same argument as for the inner part.

As a consequence of the uniform energy resolvent bound  \eqref{enres}, it follows
that the solutions to the linear $P$ equation have subexponential
energy decay, see Proposition~\ref{p:e-res} (c).  This implies that the resolvent formula
\eqref{fourier-laplace} holds pointwise in the lower
half-space. Suppose that $u$ solves the forward problem $Pu=f \in
L^2_{x,t}$, with $f$ supported in $t \geq 0$. 
Conjugating by a decaying exponential $e^{-\sigma t}$ we
obtain the equation
\[
P(x,D_x,D_t - i \sigma) v = g, \qquad v = e^{-\sigma t} u, \quad g = e^{-\sigma t} f.
\]
Then by the Fourier inversion formula applied on the 
line $\Im \omega = -\sigma$ we obtain the representation formula for the time 
Fourier transform
\[
\hat v(x,\tau) = R_{\tau-i\sigma}  \hat g(x,\tau).
\]
We claim that the resolvent LE bound \eqref{LE-fourier-re} implies the estimate
\begin{equation}\label{exp-u}
\| e^{-\sigma t} u\|_{LE^1} = \| v\|_{LE^1} \lesssim \|g\|_{LE^*} = \| e^{-\sigma t} f  \|_{LE^*}. 
\end{equation}
Given the dyadic $\ell^\infty$ structure of the $LE^1$ norm, respectively 
the $\ell^1$  structure of the $LE^*$ norm,  it suffices to prove the bound 
in a fixed dyadic region for $v$, assuming that $g$ is also supported in a fixed  
 (possibly different) dyadic region. But then \eqref{exp-u} follows 
from \eqref{LE-fourier-re} directly by Plancherel's theorem.

Letting $\sigma \to 0$ in \eqref{exp-u} we obtain the unweighted local energy bound
\[
\| u\|_{LE^1}\lesssim  \|  f  \|_{LE^*} 
\]
for the solution $u$ to the forward problem $Pu = f$ with zero Cauchy
data.  Finally, using the extension procedure in Section~\ref{s:extend},
one can easily add nontrivial finite energy Cauchy data, and then use
energy estimates to control the uniform energy norms and obtain
\eqref{LE-re}.

\subsection{Real resonances and the limiting 
absorption principle}

Here we consider the question of extending the resolvent to the real axis.
Our main result clarifies the definition of the resolvent on the real axis.

\begin{prop}
  Assume that the LE resolvent bounds \eqref{LE-fourier-re}  hold uniformly in the lower
  half-space $H$ near some real $\omega_0 \neq 0$. Then the limit 
\[
R_{\omega_0} f = \lim_{\omega \to \omega_0} R_\omega f, \qquad f \in \LE^*,
\]
converges strongly on compact sets. Further, the resolvent bound
\eqref{LE-fourier-re} holds at $\omega_0$ and $R_{\omega_0} f$
satisfies the outgoing radiation condition \eqref{outgoing}.
\end{prop}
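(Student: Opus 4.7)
This is a standard limiting absorption argument. Fix any sequence $\omega_n \to \omega_0$ with $\omega_n \in H$, set $u_n = R_{\omega_n} f$, and exploit the uniform bound $\|u_n\|_{\LE^1_{\omega_n}} \lesssim \|f\|_{\LE^*}$ coming from the hypothesis. Since $\omega_0 \neq 0$, the $\LE^1_{\omega_n}$ norms are locally uniformly equivalent to $H^1$; therefore $\{u_n\}$ is bounded in $H^1_{loc}$, and Rellich-Kondrachov compactness yields a subsequence converging strongly in $L^2_{loc}$ and weakly in $H^1_{loc}$ to some $u_0$. Elliptic regularity applied to $P_{\omega_n} u_n = f$, with coefficients converging to those of $P_{\omega_0}$, upgrades this to strong $H^1_{loc}$ convergence, and passing to the limit gives $P_{\omega_0} u_0 = f$. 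Lower semicontinuity then supplies the resolvent bound at $\omega_0$, namely $\|u_0\|_{\LE^1_{\omega_0}} \lesssim \|f\|_{\LE^*}$, pending uniqueness of the limit.

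The main technical step is establishing the outgoing radiation condition \eqref{outgoing} for $u_0$. In the exterior region $\{r > R_0\}$, $P$ is a small AF perturbation of $\Box$, so a classical asymptotic analysis at infinity decomposes any $\LE^1_\omega$ solution of $P_\omega u = f$ into an incoming component $\sim e^{i\omega r}/r$ and an outgoing component $\sim e^{-i\omega r}/r$. For $\omega_n \in H$ the incoming mode grows exponentially in $r$, while $u_n \in \LE^1_{\omega_n}$ forbids such growth; consequently $u_n$ is purely outgoing, and $(\partial_r + i\omega_n) u_n$ enjoys quantitative decay estimates uniform in $n$. Combining these with the exterior bound \eqref{high_Mourre} applied to $u_n$ and passing to the limit yields \eqref{outgoing} for $u_0$.

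Finally, uniqueness of the limit (and hence strong convergence of the full family $R_\omega f$, not merely of subsequences) reduces to showing that the only outgoing $v \in \LE^1_{\omega_0}$ with $P_{\omega_0} v = 0$ is $v = 0$. A nonzero such $v$ would contradict the assumed uniform resolvent bound: a smooth truncation and frequency adjustment would produce a sequence $v_n$ with $\|v_n\|_{\LE^1_{\omega_n}} \approx \|v\|_{\LE^1_{\omega_0}}$ but $\|P_{\omega_n} v_n\|_{\LE^*} = O(|\omega_n - \omega_0|) \to 0$, forcing $\|R_{\omega_n}\|_{\LE^* \to \LE^1_{\omega_n}} \to \infty$ in violation of \eqref{LE-fourier-re}. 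The principal obstacle in the plan is the outgoing-condition step, since \eqref{outgoing} is an asymptotic condition at infinity that is not preserved under weak convergence on compact sets; the remedy is the uniform-in-$\omega$ exterior and modal analysis made possible by the small AF structure of $P$ outside a compact set.
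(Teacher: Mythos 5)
Your general outline matches the paper's: take a sequence $\omega_n \to \omega_0$, use the uniform bound plus Rellich compactness to extract a weak limit $u_0$ with $P_{\omega_0} u_0 = f$, then argue that the outgoing radiation condition passes to the limit. However, there are two genuine gaps in the parts that are actually hard.

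\emph{The outgoing condition via modal decomposition.} You propose to decompose a general $\LE^1_\omega$ solution of $P_\omega u = f$ in $\{r > R_0\}$ into incoming and outgoing modes $\sim e^{\pm i\omega r}/r$. For the Laplacian this would work via the explicit free resolvent; for a general asymptotically flat operator it does not. The AF assumption only gives $l^1 L^\infty$ dyadic summability of the perturbation, not pointwise decay rates that would support a WKB/modal expansion with an $o(r^{-1})$ remainder, and in any case obtaining such a decomposition uniformly in $\omega_n$ (including uniform control of the coupling between modes induced by the AF perturbation) is precisely the nontrivial content of the limiting absorption principle, not a black box you can invoke. The paper avoids the modal picture entirely: it chooses a self-adjoint multiplier $Q$ built from $f(R) = R/(R+2^j)$ and the full second-order spatial part of $P_\omega$, derives a positive commutator identity in which the dominant term is $2^{-j}\|(\partial_r + i\omega_0) u\|^2_{L^2(|x|\approx 2^j)}$, and shows that all other terms are controlled by the resolvent $\LE$/$\LE^1$ bounds and the AF smallness. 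That argument delivers \eqref{outgoing} for each $u_n$ quantitatively and uniformly, so it survives the passage to the limit. Your ``high\_Mourre plus modal analysis'' step would need to be replaced by something with this level of precision; as written it does not close.

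\emph{Uniqueness.} You propose to rule out a nonzero outgoing kernel element $v$ by truncating and frequency-shifting it to produce a sequence $v_n$ with $\|P_{\omega_n} v_n\|_{\LE^*} = O(|\omega_n - \omega_0|)$. This construction is not justified: $v$ only lies in $\LE^1_{\omega_0}$, not in any weighted $L^2$ with decay, so a smooth spatial truncation generates boundary commutator terms whose $\LE^*$ norm does not go to zero. The paper proves uniqueness cleanly and first, by establishing the a priori estimate $\|u\|_{\LE^1_{\omega_0}} \lesssim \|P_{\omega_0} u\|_{\LE^*}$ for outgoing $u$: one sets $u_\epsilon = e^{-\epsilon r} u$, notes that $u_\epsilon$ lies in the domain of $R_{\omega_0 - i\epsilon}$, computes $P_{\omega_0 - i\epsilon} u_\epsilon - e^{-\epsilon r} P_{\omega_0} u$, and shows this error goes to zero in $\LE^*$ using the $\ell^1$ dyadic structure and, crucially for one term, the radiation condition itself. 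This is where the radiation condition is genuinely used to kill the sole non-decaying boundary contribution; your plan has no analogue of this step. With that a priori estimate in hand, uniqueness is immediate and the resolvent bound at $\omega_0$ follows directly rather than ``pending uniqueness.''

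In short: the soft parts of your plan (extraction of limits, lower semicontinuity) are fine and agree with the paper, but the two hard steps --- the uniform outgoing condition for $u_n$ and the uniqueness of the limit --- are where the proof actually lives, and the tools you invoke for them (asymptotic modal analysis and truncation of the kernel) do not work in this generality. The paper replaces both with quantitative weighted identities: a positive commutator estimate for the radiation condition, and the exponential weight $e^{-\epsilon r}$ for the a priori estimate.
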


\begin{proof}
We first prove that the bound 
\begin{equation}\label{real-res}
\| u\|_{\LE^1_{\omega_0}} \lesssim \| P_{\omega_0} u\|_{\LE^*}
\end{equation}
holds for all $u$ for which both norms above are finite and which
satisfy the outgoing radiation condition \eqref{outgoing}.  Suppose
$P_{\omega_0} u = f$.  Set $u_\epsilon = e^{-\epsilon r } u$. Then we
have
\begin{multline*}
P_{ \omega_0 - i\epsilon} u_\epsilon = e^{-\epsilon r } f
+e^{-\epsilon r }\Bigl( -\epsilon^2 (g^{rr} +g^{00}) u
+2 \epsilon( g^{rr} \partial_r u - i\omega_0 g^{00}u) \\+ \epsilon \O(|g - I|) |\nabla
u| +\varepsilon \O(|A_\alpha|+|\partial g|+|g-I|) |u| 
  + \frac{2\varepsilon}{r} u\Bigr).
\end{multline*}

The bound \eqref{real-res} follows by applying the resolvent bound for
$\omega_0 - i\epsilon$ and passing to the limit. For that we need to show 
that 
\[
\lim_{\epsilon \to 0} P_{ \omega_0 - i\epsilon} u_\epsilon -  e^{-\epsilon r } f 
= 0 \qquad \text{in } \LE^*
\]
 All terms in the difference decay due to the $\ell^1$ dyadic summation, except 
for the third term where we also use the radiation condition.

Next we show that for each $f \in \LE^*$ there exists $u \in \LE^1_{\omega_0}$,
satifying the outgoing radiation condition, so that $P_{\omega_0} u =
f$. Take a seqence $H \ni \omega_n \to \omega_0$ and corresponding
solutions $u_n $ to $P_{\omega_n} u_n = f$.  Then we obtain $u$ as a
weak limit of a subsequence of $u_n$, and the uniform $\LE^1$ bound
for $u_n$ transfers to $u$. Further, by elliptic regularity the
sequence $u_n$ is locally in $H^2$, therefore strong $H^1$ convergence
follows on compact sets. It remains to show that the outgoing radiation condition 
\eqref{outgoing} holds for the limit.

Here we shall again use a positive commutator argument.  A related,
though easier,
calculation for the Schr\"odinger equation can be found in \cite{MMT}.
For convenience, we abbreviate $\Delta_g = \Delta_{g,0}$, where the
latter notation is from \eqref{p_omega}.  And we shall again assume
without loss that $g^{00}=-1$.  We record that
\begin{multline}\label{rad_cond_comm}-2\Im \la Qu, (\Delta_g -
  (\omega_0-i\varepsilon)^2 + (\omega_0-i\varepsilon)(g^{0k}D_k+D_kg^{0k}) )u\ra \\= \la
i[\Delta_g + \omega_0(g^{0k}D_k+D_kg^{0k}) ,Q]u,u\ra +
4\varepsilon\omega_0 \la Qu,u\ra
\\-\varepsilon\la Qu,(g^{0k}D_k+D_kg^{0k})u\ra - \varepsilon\la (g^{0k}D_k+D_kg^{0k})u,Qu\ra
\end{multline}
for a symmetric operator $Q$.  We set
\[Q = f(R)\Bigl(\frac{x_i g^{ij}}{R}D_j +
g^{0k}D_k + \omega_0\Bigr) +
\Bigl(D_i\frac{g^{ij}x_j}{R} + D_k
g^{0k}+\omega_0\Bigr)f(R),\quad R^2= x_ig^{ij}x_j.\]

The choice of multiplier is motivated by the last three terms in
\eqref{rad_cond_comm}.  Indeed, we have
\begin{multline*}
  \omega_0 Q =
  \Bigl(D_k\frac{g^{kl}x_l}{R}+\omega_0\Bigr)f(R)\Bigl(\frac{x_i
    g^{ij}}{R}D_j+\omega_0\Bigr) 
+ \Bigl(D_i -D_l\frac{g^{lk}x_kx_i}{R^2}\Bigr) g^{ij}f(R)\Bigl(D_j -
\frac{x_jx_mg^{mn}}{R^2}D_n\Bigr) 
\\- \frac{f(R)}{2}(\Delta_g -
\omega_0^2+\varepsilon^2+(\omega_0-i\varepsilon)(g^{0k}D_k+D_kg^{0k}))
- (\Delta_g - \omega_0^2+\varepsilon^2 +(\omega_0-i\varepsilon)(g^{0k}D_k+D_kg^{0k}) )
\frac{f(R)}{2} \\-i\varepsilon \frac{f(R)}{2}(g^{0k}D_k + D_k g^{0k})
- i\varepsilon (g^{0k}D_k + D_k g^{0k})\frac{f(R)}{2}+f(R)\varepsilon^2 + \frac{1}{2}(\Delta_g f(R)).
\end{multline*}
Using this, we observe
\begin{multline*}
  4\varepsilon\omega_0 \la Qu,u\ra -\varepsilon\la
  Qu,(g^{0k}D_k+D_kg^{0k})u\ra - \varepsilon\la
  (g^{0k}D_k+D_kg^{0k})u,Qu\ra
\\= 4\varepsilon\la \Bigl(D_k\frac{g^{kl}x_l}{R} - D_k g^{0k} +\omega_0\Bigr)f(R)\Bigl(\frac{x_i
    g^{ij}}{R}D_j - g^{0j}D_j+\omega_0\Bigr) u,u\ra
+ 4\varepsilon \la D_k g^{0k} f(R) g^{0j}D_j u,u\ra
\\+ 4\varepsilon\la \Bigl(D_i -D_l\frac{g^{lk}x_kx_i}{R^2}\Bigr) g^{ij}f(R)\Bigl(D_j -
\frac{x_jx_mg^{mn}}{R^2}D_n\Bigr) u, u\ra + 4\varepsilon^3 \la f(R) u,u\ra
\\-4\varepsilon \Re\la (\Delta_g -
\omega_0^2+\varepsilon^2+(\omega_0-i\varepsilon)(g^{0k}D_k+D_kg^{0k}))
u, f(R) u\ra   +  \la E_1 u, u\ra + \la E_2 u, u\ra.
\end{multline*}
Here, upon fixing $f(R)=\frac{R}{R+2^j}$ and noting that
$f(R)|_{R\approx 2^k} \approx 2^{k-j}$ when $k<j$, we have that the
first four terms on the right are non-negative and that the errors satisfy
\[\la E_1 u, u\ra|_{|x|\approx 2^k} = \varepsilon^22^{-(k-j)^-} \O(c_k + 2^{-k})
\|(u,\nabla u)\|^2_{L^2},\]
\[\la E_2 u,u\ra|_{|x|\approx 2^k} =2^{-(k-j)^-}\O(c_k + 2^{-k})(\|u\|^2_{\LE} +
\|\nabla u\|^2_{\LE}).\]

On the other hand, for the purposes of the commutator, up to error
terms that are easily handled, we may replace $g^{ij}$ by the identity
matrix which yields
\begin{multline*}
  i[\Delta_g + \omega_0(g^{0k}D_k+D_kg^{0k}),Q] =2D\Bigl(2\frac{f(R)}{R}-f'(R)\Bigr)D -
  2Dx\Bigl(2\frac{f(R)}{R^3} - \frac{f'(R)}{R^2}\Bigr)xD \\+
  f'(R)(\Delta_g-\omega_0^2+\varepsilon^2) + (\Delta_g-\omega_0^2+\varepsilon^2)f'(R) +
  2(Dx+\omega_0r)\frac{f'(R)}{rR}(xD+r\omega_0) - 2f'(R)\varepsilon^2+ E_2.
\end{multline*}
This choice of $f$ gives
positive coefficients in the first two factors above.  Moreover, we
have that and
$f'(R)|_{R\approx 2^j}\approx 2^{-j}$.

We may thus combine the two calculations above to see that 
\begin{align*}2^{-j}\|(\partial_r + i\omega_0)u\|^2_{L^2(|x|\approx 2^j)}&\lesssim \Re\la (\Delta_g -
(\omega_0-i\varepsilon)^2 + (\omega_0-i\varepsilon)(g^{0k}D_k+D_kg^{0k}))u, (iQ - 4\varepsilon f(R) +
2f'(R))u\ra
\\& \qquad\qquad + \la E_1u,u\ra + \la E_2u,u\ra\\
&\lesssim \sum_{k\ge 0} 2^{-(k-j)^-}
\Bigl(2^{j/2}\|P_{\omega_0-i\varepsilon} u\|_{L^2(|x|\approx
  2^j)}\Bigr)\Bigl(2^{-j/2} \|(u,\nabla u)\|_{L^2(|x|\approx 2^j)}
\Bigr)
\\&\qquad\qquad\qquad\qquad+(\|(u,\nabla u)\|^2_{\LE} + \varepsilon^2 \|u\|^2_{L^2})\Bigl(\sum_{k\ge 0} 2^{-(k-j)^-}(c_k +
2^{-k})\Bigr).
\end{align*}
Applying this estimate to the sequence $u_n = R_{\omega_n} f$ and
utilizing the known resolvent energy and resolvent LE bounds shows
that the outgoing radiation condition holds.

Finally, we observe that by \eqref{real-res} the limit $u$ of the 
subsequence of $u_n$ is unique; therefore
the full sequence $u_n$ converges to $u$ strongly in $H^1_{loc}$.
\end{proof}

\subsection{Nonzero resonances and LE resovent bounds}  
Here we prove the equivalence of the three statements in
Proposition~\ref{p-nonzero}. We do this in several steps:

{\bf 1. A simple case.} Here we consider a simple situation, namely
when $A=0$, $V=0$. Then $P$ is self-adjoint, and the conserved energy
associated to $P$ is positive definite.  Then by
Theorem~\ref{2pt_thm}, local energy decay holds for $P$. Thus, the
resolvent extends to the real axis with uniform bounds.

{\bf 2. A perturbative formulation.}
Given a stationary operator $P$, we denote by $\tilde P$ its principal part, with 
$A = 0$ and $V=0$.  Then the resolvent $\tilde R_\omega$ of $\tilde P_\omega$ is defined  globally 
in the lower half-space, as well as on the real line. We seek a solution $u$
to $P_\omega u = f$ of the form $u = \tilde R_\omega g$. Denoting $Q_\omega  = P_\omega - \tilde P_{\omega}$, we rewrite our equation as 
\[
 (I + Q_\omega \tilde R_{\omega}) g = f.
\]
The operator $Q_\omega \tilde R_{\omega}$ is compact, and depends analytically 
on $\omega$ in the lower half-space and continuously on the real line. 

If $- \Im \omega$ is large enough then $I +  Q_\omega \tilde R_{\omega}$ is invertible. Hence 
it has zero Fredholm index everywhere. Thus statements (a), (b), (c) in 
Proposition~\ref{p-nonzero} are equivalent by standard Fredholm theory.

\subsection{ Stability in LE resolvent bounds}

Here we prove the stability statement in Proposition~\ref{p-nonzero}.
Precisely, we assume that the resolvent bounds \eqref{LE-fourier-re}
for $P$ hold at some real $\omega_0$, and show that they must hold
uniformly in a neighbourhood of $\omega_0$, not only for $P$ but also
for small AF perturbations of $P$.  

The proof is by contradiction. Suppose we have a sequence of operators
$P_n$ converging to $P$ in the $AF$ norm, a sequence of numbers $\omega_n \in H$
converging to $\omega$, and a sequence of functions $u_n$ so that
\[
\| u_n\|_{\LE^1_{\omega_n}} = 1, \qquad \| P_n u_n\|_{\LE^*} \to 0.
\]
By elliptic estimates the sequence $u_n$ is also locally bounded in $H^2$; 
therefore on a subsequence it converges to some $u \in \LE^1_{\omega_0}$ so that $P_{\omega_0}u =0$.
To reach a contradiction we need the following two observations:

(a)  By exterior local energy resolvent bounds, i.e. those
  obtained by mimicking the procedure that yielded \eqref{LE2res} when
  one starts instead from \eqref{small-cut}, a nontrivial portion of $u_n$ 
is localized to a compact set, $\|u_n\|_{H^1_{<2R_0}} \gtrsim 1$. This shows that
$u \neq 0$.

(b) By the same argument as in the previous section, the function $u$ satisfies the 
outgoing radiation condition \eqref{outgoing}. 

We remark that this argument implies in effect the slightly stronger conclusion
\[
\limsup_{n \to \infty} \| R_n(\omega_n)\|_{\LE^* \to \LE^1_{\omega_n}} 
\lesssim \| R(\omega_0)\|_{\LE^* \to \LE^1_{\omega_0}}.
\]

\subsection{  High frequency analysis.}
\label{s:le-high}
Here we prove that the high frequency bound in Theorem
\ref{1pt_thm_high} implies the uniform LE resolvent bound for large
$\omega$.  The case $-\Im \omega \gg 1$ follows directly from 
energy estimates; see \eqref{local-res}. Hence we assume that $0 < -
\Im \omega \lesssim 1$.

Suppose that $P_\omega u = f$. Then $v = e^{i\omega t} u$ solves 
$Pv = g$ with $g =  e^{i\omega t} f$. Applying the bound \eqref{high_LE} to 
$v$ in $(-\infty,0]$ we obtain
\[
\| u\|_{\LE^1} + |\omega|\|u\|_{\LE} + |\Im \omega|^{\frac12} \|u\|_{\dot H^1_{\omega}}
 \lesssim \| \la x\ra^{-2} u\|_{\LE} + \| f\|_{\LE^*+|\Im \omega|^{\frac12} L^2 }.
\]
If $\omega$ is large enough then the first term on the right is absorbed on the left,
and the conclusion follows.

\subsection{ The general  case}

Here we prove Theorem~\ref{t:stat} (a); part (b) will then follow due
to Theorem~\ref{nores}. Since there are no eigenvalues in the lower
half-space, the resolvent is holomorphic in $H$.  Combining this with
the above high frequency bound, it follows that we must have an
estimate of the form
\[
\| R_{\omega}\|_{L^2 \to \dot H^1_{\omega}} \leq c(\Im {\omega}), \qquad \Im \omega < 0.
\]
This in turn implies the LE resolvent bound outside any neighbourhood of the real line.
Now we consider a strip near the real line. We have three cases:

(i) $|\omega|$ is sufficiently large. Then, as discussed in the previous subsection, 
the LE resolvent bound \eqref{LE-fourier-re} holds. 

(ii) $|\omega|$ is sufficiently small.  Then from
Theorem~\ref{low_LE_thm}, arguing  as in the previous subsection,
we obtain again \eqref{LE-fourier-re}.

(iii) $|\omega|$ is away from zero or infinity. Here from hypothesis
in case (a) and from Theorem~\ref{nores}, we know that there are no
real resonances.  Then Proposition~\ref{p-nonzero} shows that
\eqref{LE-fourier-re} holds for $\omega$ sufficiently close to the
real axis.

The proof of  \eqref{LE-fourier-re}, and thus of Theorem~\ref{t:stat}, is concluded.





\section{Spectral Theory and Exponential Trichotomies}
\label{s:nonstat}

\bigskip

This section is concerned with deriving Theorems \ref{t:stat+} and \ref{exp_trich_thm}
from Theorems~\ref{2pt_thm} and \ref{exp_dich_thm}.  First we give a different
perspective for the spectral theory for symmetric $P(x,D)$ in the time
independent case. This material is more or less standard
(e.g.~\cite{B_indef}, \cite{L_gen}, \cite{L_KG}).

Next, we use the time independent analysis to derive a perturbative
result which holds on large time intervals assuming the symbol of
$P(t,x,D)$ has sufficiently slow time variation.  Based on this we
string together the local analysis via a Perron type argument to
globally construct the stable/unstable/center subspaces
$S^{\pm,0}(t)$. This last step follows closely work of Coppel
\cite{Coppel}.

 
\subsection{Spectral theory in the time independent case}
 
Here we assume that the operator $P$ is symmetric, and we write it in the form
\[
P = P_0 +   B(x,D_x) D_t +  g^{00} D_t^2
\]
where
\begin{align}
		P_0  \ &= \   (D+A)_j  g^{jk}(D+A)_k + W(x,D_x) \ , \notag\\
		W(x,D_x) \ &= \ 
		A_0g^{0k}(D_k+A_k) + (D_k+A_k) A_0 g^{0k}+ g^{00}A_0^2+ V(x) \ , \notag\\
		B(x,D_x)\ &=\ g^{0k}(D_k+A_k) + (D_k+A_k) g^{0k}   +2A_0g^{00}\ . \notag
\end{align}
Using this expression we recast the equation $Pu = f$ as a system, setting
\[
\U=\begin{pmatrix} u \\ \partial_t u
		\end{pmatrix}   =  \begin{pmatrix}  \U_0 \\ \U_1 \end{pmatrix}, \qquad
F = \begin{pmatrix} 0 \\ i (g^{00})^{-1} f  	\end{pmatrix} .  
\]
Then we write the
inhomogeneous wave equation $Pu = f$ as a Schr\"odinger type equation:
\begin{equation}
		D_t \U \ = \ \mathcal{H}\U + F \ , \qquad\hbox{where\ \ }
		\mathcal{H} \ = \ \frac{1}{i}\begin{pmatrix} 0 & 1\\
                  (g^{00})^{-1}P_0 & -i{(g^{00})^{-1}} B
		\end{pmatrix} \ . \label{H_form}
\end{equation} 
The homogeneous evolution  preserves the energy form on $\dot{H}^1\times L^2$ defined by:
\begin{equation}
		E[\U] \ = \ \la \U,\U\ra_E \ , \qquad
		\hbox{where\ \ } \la \U,\Psi\ra_E \ = \ \int_{\mathbb{R}^3} P_0 \U_0\overline{\Psi_0} - g^{00}\U_1\overline{\Psi_1}   dx \ . \label{E_form}
\end{equation}
One readily checks that $ \mathcal{H}$ is a symmetric operator with respect to this energy form, 
i.e.
\[
\la \mathcal{H} \U,\Psi\ra_{{E}}=\la  \U,\mathcal{H} \Psi\ra_{{E}}
\]
on an appropriate dense domain for $\mathcal{H}$ (say $H^2\times
H^1$), and that $\mathcal{H}$ extends to a closed operator on
$\dot{H}^1\times L^2$.

To relate the spectral theory of $\mathcal H$ with that for $P$, we
remark that the solutions to the linear eigenvalue problem
$\mathcal{H}\U_\omega=\omega \U_\omega$ are of the form $\U_\omega=
(u_\omega,i\omega u_\omega)$, where $u_\omega$ solves $P_\omega
u_\omega = 0$.  More generally, we can relate the resolvent of
$\mathcal H$, which we denote by $\mathcal R_\omega$, with the
resolvent $R_\omega$ of $P$ as follows:
\begin{equation}
\mathcal R_\omega = \begin{pmatrix} {-} R_\omega( B {+}\omega g^{00})  &  {i}R_\omega g^{00} \\ 
  {i(-}\omega R_\omega( B {+}\omega g^{00}) + {I)} &  {-}\omega R_\omega g^{00}
		\end{pmatrix}{.}
\end{equation}

As a straightforward consequence of Proposition~\ref{p:e-res} we have the following
\begin{prop}
  Assume that $P$ is asymptotically flat. Then the resolvent $\mathcal
  R_\omega \in \mathcal L(\dot H^1 \times L^2)$ is meromorphic in the
  lower half-space, with the same poles as $R_\omega$. Further, for
  each $\omega$ in the lower half-space the bounds \eqref{enres} and
\begin{equation}
\| \mathcal R_\omega \|_{\mathcal L(\dot H^1 \times L^2)} \lesssim |\Im \omega|^{-1}
\end{equation}
are equivalent.
\end{prop}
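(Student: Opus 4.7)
The plan is to exploit the explicit matrix formula for $\mathcal{R}_\omega$ in terms of $R_\omega$ displayed just before the statement, together with Proposition~\ref{p:e-res}. First I would verify the matrix formula by direct computation: solving $(\mathcal{H}-\omega)U = \Psi$, the first row yields $U_1 = i\omega U_0 + i\Psi_0$, and substituting into the second row produces $P_\omega U_0 = ig^{00}\Psi_1 - (B+\omega g^{00})\Psi_0$, which upon inversion with $R_\omega$ gives exactly the displayed matrix entries. Once the formula is in hand, the meromorphicity of $\mathcal{R}_\omega$ on the lower half-plane and the coincidence of its poles with those of $R_\omega$ follow at once: each entry is a composition of the meromorphic operator $R_\omega$ (meromorphicity being Proposition~\ref{p:e-res}(a)) with polynomial-in-$\omega$ factors involving $B$ and multiplication by $g^{00}$, and conversely $R_\omega$ can be recovered from the $(1,2)$ entry $iR_\omega g^{00}$ after composing with the pointwise invertible multiplier $(g^{00})^{-1}$.

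For the equivalence of the two quantitative bounds, the easy direction is a direct application of the formula: assuming the $\mathcal{R}_\omega$ bound, apply it to $F = (0, i(g^{00})^{-1}f)^T$ with $f \in L^2$. Since $(g^{00})^{-1}$ is bounded, $\|F\|_{\dot H^1 \times L^2} \lesssim \|f\|_{L^2}$, and the matrix formula reads $\mathcal{R}_\omega F = (-R_\omega f, -i\omega R_\omega f)^T$, so the hypothesis yields $\|R_\omega f\|_{\dot H^1} + |\omega|\|R_\omega f\|_{L^2} \lesssim |\Im\omega|^{-1}\|f\|_{L^2}$, which is the sought resolvent bound.

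For the reverse direction, assuming the $R_\omega$ bound, I would estimate each matrix entry of $\mathcal{R}_\omega$ acting on $(\Psi_0,\Psi_1) \in \dot H^1 \times L^2$ by applying $R_\omega$ to an $L^2$ source. The first component $U_0$ satisfies $P_\omega U_0 = ig^{00}\Psi_1 - (B+\omega g^{00})\Psi_0$, and $U_1 = i\omega U_0 + i\Psi_0$ is then read off from the first row. The main obstacle is that the term $\omega g^{00}\Psi_0$ is not directly in $L^2$, since $g^{00}$ need not decay and $\Psi_0 \in \dot H^1$ need not lie in $L^2$. I would handle this by splitting $g^{00} = -1 + (g^{00}+1)$: the decaying remainder yields $(g^{00}+1)\Psi_0 \in L^2$ thanks to the asymptotic flatness bounds combined with Hardy's inequality, while the constant piece $-\omega\Psi_0$ is absorbed by a shift of the unknown $\tilde U_0 = U_0 + \omega^{-1}\Psi_0$ (valid away from $\omega=0$, which does not lie in the open lower half-plane). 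Alternatively, and perhaps most cleanly, one can bypass these domain subtleties entirely by observing that both bounds are equivalent to the uniform energy boundedness of the wave group of $P$ on $\dot H^1 \times L^2$: on one side via the Hille--Yosida theorem applied to the group generated by $\mathcal{H}$, and on the other via Proposition~\ref{p:e-res}(b,c) applied to $R_\omega$.
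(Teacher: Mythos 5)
The paper states this proposition without proof, asserting that it is a ``straightforward consequence'' of Proposition~\ref{p:e-res}, so there is no explicit argument to compare against. Your verification of the matrix formula, the meromorphicity/same-poles observation, and the easy direction (the $\mathcal R_\omega$ bound implies \eqref{enres} by testing on $F = (0, i(g^{00})^{-1}f)$) are all correct.

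The reverse direction, however, has a genuine gap, and neither of your two proposed fixes closes it. First, the claim that $(g^{00}+1)\Psi_0 \in L^2$ ``thanks to the asymptotic flatness bounds combined with Hardy's inequality'' does not follow from the AF norm: the condition $\llp{g-m}{2} < \infty$ places $g^{00}+1$ in $l^1 L^\infty$, which forces $g^{00}+1 \to 0$ in a dyadically-summed sense but does \emph{not} give pointwise $\la x\ra^{-1}$ decay, which is what Hardy would require to conclude $(g^{00}+1)\Psi_0 \in L^2$ from $\Psi_0 \in \dot H^1$. Second, the shift $\tilde U_0 = U_0 + \omega^{-1}\Psi_0$ does not help: if you carry out the computation, the transformed equation is $P_\omega \tilde U_0 = ig^{00}\Psi_1 + \omega^{-1}P_0\Psi_0$, so the troublesome term $\omega g^{00}\Psi_0$ is replaced by $\omega^{-1}P_0\Psi_0$, which lies in $\dot H^{-1}$ (it involves two derivatives of $\Psi_0 \in \dot H^1$) and is again not $L^2$. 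The bound \eqref{enres} controls $R_\omega: L^2 \to \dot H^1_\omega$ only; to close the argument you would need a companion bound of the form $R_\omega: \dot H^{-1} \to L^2$ (or equivalently a bound on the adjoint $R_\omega^* = ((P^*)_{\bar\omega})^{-1}$, which lives in the \emph{upper} half-plane for the adjoint operator), and this is not an algebraic consequence of \eqref{enres} for a non-symmetric $P$.

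Your ``cleanest'' alternative via Hille--Yosida and uniform energy boundedness is also incorrect: the statement to be proved is a pointwise equivalence at a fixed $\omega$, not a global one, so a wave-group argument is not of the right type. More importantly, even as a global argument it fails: Proposition~\ref{p:e-res}(c) shows that \eqref{enres} implies only the polynomial growth $\|\partial u(t)\|_{L^2} \lesssim (1+t)\|\partial u(0)\|_{L^2}$, and the paper explicitly remarks that it is not known whether this can be improved. Integrating $(1+t)e^{-\sigma t}$ in the Fourier--Laplace representation of $\mathcal R_\omega$ gives only $\|\mathcal R_\omega\| \lesssim |\Im\omega|^{-1} + |\Im\omega|^{-2}$, which is strictly weaker than the claimed $|\Im\omega|^{-1}$ near the real axis.
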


The non-trivial spectral theory of $\mathcal{H}$ in general stems from
the fact that $\la \cdot,\cdot\ra_E$ may not be positive definite.  In
fact, if this form has a negative index then there exist non-trivial
null vectors $\la \U,\U\ra_E=0$, and one easily checks that solutions
to $\mathcal{H}\U_\omega=\omega \U_\omega$ with non-real $\omega$ must
have this property.  On any plane where $\la \cdot,\cdot\ra_E$ is
trivial (an ``isotropic subspace'') the self-adjointness of
$\mathcal{H}$ gives no additional information so one can expect the
eigenspace structure of $\mathcal{H}$ to be more or less arbitrary
there, unless of course more structure on $P$ is assumed (e.g.~if
$A_0=g^{0k}=0$ then non-real eigenvalues must be purely imaginary).

The interesting fact is that in the case we are considering,
i.e.~where $P$ is symmetric and the zero frequency estimate
\eqref{zero-res2} holds, the discrete spectral theory of $\mathcal{H}$
is completely described by two maximal $\mathcal{H}$ invariant isotropic
subspaces $S^\pm$ whose dimension must be equal to the negative
index of $\la \cdot,\cdot\ra_E$. Then one can decompose
$\dot{H}^1\times L^2=(S^-+ S^+)\oplus_E S^0$,  where $\oplus_E$ denotes an orthogonal
direct sum with respect to $\la \cdot,\cdot\ra_E$.  The crucial fact
now is that $S^0$ is a closed sub-space of $\dot {H}^1\times L^2$ on
which $\la \cdot,\cdot\ra_E$ becomes positive definite. In particular
$\la \cdot,\cdot\ra_E$ restricted to $S^0$ defines a \emph{positive
  conserved} energy norm which is equivalent to $\lp{\cdot
}{\dot{H}^1\times L^2}$. We sum this information up as follows:
 
\begin{prop}[Spectral theory for symmetric $P(x,D)$]\label{z_spec_prop}
  Let $P(x,D)$ be a symmetric, stationary, asymptotically flat wave operator.  Let
  $\mathcal{H}$ be the Hamiltonian matrix of $P$ as defined on line
  \eqref{H_form}. Suppose in addition that the zero spectral estimate
  \eqref{zero-res2} holds for some $K_0>0$. Then one has the
  following:
\begin{enumerate}[i)]
		\item (The pair ($\dot{H}^1\times L^2,\la\cdot, \cdot \ra_E$) is a Pontryagin space)
                  \label{pont_prop} The energy form $\la\cdot, \cdot
                  \ra_E$ is nondegenerate on the energy space
                  $\mathcal{E}=\dot{H}^1\times L^2$, and we can write
                  it as orthogonal direct sum $\mathcal{E}=
                  \mathcal{E}^-\oplus_E \mathcal{E}^+$ where
                  $\mathcal{E}^-$ has dimension $\kappa\in\mathbb{N}$,
                  and where $\lp{\U}{E}^2=\la \U^+,\U^+\ra_E-\la
                  \U^-,\U^-\ra_E \approx \lp{\U}{\dot{H}^1\times
                    L^2}^2$. Here $\U=\U^-+\U^+$ is the decomposition
                  of $\U$ along $\mathcal{E}^\pm$.
              \item (Pontryagin's Theorem) \label{Pont_thm} There
                exists two  (maximal) $\kappa$-dimensional isotropic
                $\mathcal{H}$ invariant subspaces $S^{\pm}\subseteq
                \mathcal{E}$, and an $\alpha>0$ such that
                $\Im(\omega)\geq \alpha$ for all $\omega\in
                spec(\mathcal{H}|_{S^-})$ and $\Im(\omega) < - \alpha$
                for all $\omega\in spec(\mathcal{H}|_{S^+})$.
Moreover
                $\mathcal{E}_{disc}=S^-+S^+$
                accounts for the entire discrete spectrum of
                $\mathcal{H}$, and $\la\cdot ,\cdot \ra_E$ restricted
                to $\mathcal{E}_{disc}$ is a nondegenrate form of
                index $(-\kappa,\kappa)$.
		\item (Coercivity away from $S^\pm$) \label{E_coerce} Finally, if we write 
		$\mathcal{E}_{ac}=\mathcal{E}_{disc}^\perp$ then 
		$\la\cdot ,\cdot \ra_E$ restricted to  $\mathcal{E}_{ac}$ gives rise to a norm 
		equivalent to  $\dot{H}^1\times L^2$:
		\begin{equation}
				\la\U,\U\ra_E \ \geq  \ c \lp{\U}{\dot{H}^1\times L^2}^2 \ , 
				\qquad \U\in \mathcal{E}_{ac} \ . \label{coerc}
		\end{equation}
\end{enumerate}
\end{prop}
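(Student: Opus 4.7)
The plan is to combine standard Pontryagin space theory with the structural input provided by \eqref{zero-res2}, using the symmetry of $P$ to exclude spectrum on the real axis.

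For part (i), the key step is to show that the quadratic form $Q(u)=\la P_0 u,u\ra$ has finite negative index on $\dot H^1$. Since $g^{jk}$ is positive definite and $g^{00}<0$ (time slices are spacelike, $\partial_t$ is timelike), the $L^2$-piece $-\int g^{00}|\U_1|^2\,dx$ of $\la\cdot,\cdot\ra_E$ is positive definite and equivalent to $\|\U_1\|_{L^2}^2$, so all indefiniteness resides in $Q$. The asymptotic flatness bounds force the lower order part of $P_0$ to decay quantitatively at infinity, so a standard min-max / Cwikel--Lieb--Rozenblum argument yields only finitely many linearly independent directions on which $Q$ is negative; the zero resolvent bound \eqref{zero-res2} further guarantees that $Q$ is nondegenerate. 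Letting $\mathcal{V}^-\subset\dot H^1$ be the maximal $Q$-negative subspace, of dimension $\kappa<\infty$, and $\mathcal{V}^+$ its $P_0$-orthogonal complement, I set $\mathcal{E}^-=\mathcal{V}^-\times\{0\}$ and $\mathcal{E}^+=\mathcal{V}^+\times L^2$; the norm equivalence on $\mathcal{E}^+$ then follows from \eqref{zero-res2'}.

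For part (ii), part (i) makes $(\mathcal{E},\la\cdot,\cdot\ra_E)$ a Pontryagin space of negative index $\kappa$ on which $\mathcal{H}$ is self-adjoint. Pontryagin's theorem (cf.~\cite{B_indef}) then yields maximal nonpositive and nonnegative $\mathcal{H}$-invariant subspaces of dimension exactly $\kappa$. To refine these to the isotropic subspaces $S^\pm$ of the statement, I use that any eigenvector $\U_\omega$ of $\mathcal{H}$ with nonreal $\omega$ is automatically $E$-isotropic, since the $E$-symmetry of $\mathcal{H}$ gives $(\omega-\bar\omega)\la\U_\omega,\U_\omega\ra_E=0$; grouping root vectors by the sign of $\Im\omega$ produces $S^\pm$ with the claimed invariance and isotropy. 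For the separation $|\Im\omega|\ge\alpha$ from the real axis, I would invoke three exclusions in succession: Theorem~\ref{nores} rules out nonzero real resonances for symmetric $P$; the hypothesis \eqref{zero-res2} rules out a resonance at zero; and the limiting absorption principle in Proposition~\ref{p-nonzero} prevents accumulation of the isolated complex discrete spectrum at the real axis. This also accounts for why the entire discrete spectrum of $\mathcal{H}$ is exhausted by $S^++S^-$.

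For part (iii), since $S^\pm$ are isotropic of dimension $\kappa$ and together span $\mathcal{E}_{disc}$, the restricted form $\la\cdot,\cdot\ra_E|_{\mathcal{E}_{disc}}$ has signature $(-\kappa,\kappa)$. Its $E$-orthogonal complement $\mathcal{E}_{ac}$, which is closed and $\mathcal{H}$-invariant, thus inherits a positive semidefinite form; Cauchy--Schwarz combined with the ambient nondegeneracy of $\la\cdot,\cdot\ra_E$ upgrades this to strict positivity. The coercive bound \eqref{coerc} then follows from the classical Pontryagin spectral decomposition, which gives that $\mathcal{H}|_{\mathcal{E}_{ac}}$ is self-adjoint in the usual Hilbert sense with respect to $\la\cdot,\cdot\ra_E$, together with the $E$-norm equivalence on the positive part of (i) and the finite-dimensionality of $\mathcal{E}_{disc}$. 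The main obstacle throughout is the quantitative control of the negative index in part (i); once this is in place, the remaining assertions reduce to the standard algebraic framework of Pontryagin spaces together with the already-established absence of real spectral obstructions.
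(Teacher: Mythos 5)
Your proof takes essentially the same route as the paper, which simply cites the standard Pontryagin/Krein space literature (\cite{B_indef}, \cite{L_gen}, \cite{L_KG}) and the closed graph theorem for all three parts; you fill in considerably more detail, which is appropriate. Two imprecisions are worth flagging. First, in part (i) the coercivity of $Q$ on $\mathcal{V}^+$ (hence the $E$-norm equivalence on $\mathcal{E}^+$) does not follow directly from \eqref{zero-res2'}: you need a soft compactness argument using the strict positivity of $Q$ on $\mathcal{V}^+\setminus\{0\}$ together with the relative compactness of the lower-order terms of $P_0$; the role of \eqref{zero-res2} is to supply the nondegeneracy that makes this argument close. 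Second, in part (ii), Pontryagin's theorem applied to $\pm\mathcal{H}$ produces two maximal \emph{nonpositive} $\mathcal{H}$-invariant subspaces of dimension $\kappa$ (a maximal nonnegative invariant subspace is $\kappa$-codimensional, not $\kappa$-dimensional), so the phrasing should be corrected; and the positive separation $\alpha$ is then immediate, since a self-adjoint operator on a Pontryagin space of negative index $\kappa$ has at most $\kappa$ eigenvalues (counting root multiplicity) in each open half-plane, which combined with the exclusion of real spectrum via Theorem~\ref{nores} and \eqref{zero-res2} gives a finite nonreal spectrum bounded away from the axis — invoking the LAP is a heavier tool than needed. Neither issue affects the validity of the overall argument.
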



\begin{proof}[Proof of Proposition \ref{z_spec_prop}]
  This material is pretty standard and essentially covered in 
  \cite{B_indef}, \cite{L_gen}, \cite{L_KG}, etc.  For Part
  \ref{pont_prop}) considerations are similar to those in
  \cite{L_KG}. For Part \ref{Pont_thm}), which is the main invariant
  subspace theorem of Krein space theory, see in particular Theorem
  3.2 of \cite{B_indef}. Part \ref{E_coerce}) uses some basic geometry
  of Krein spaces (see Section 1 of \cite{L_gen}), and the closed
  graph theorem.
\end{proof}
 
To the subspaces $S^{\pm}$ we can associate commuting 
projectors $P^{\pm}$  in a standard fashion, using the resolvent:
\begin{equation}\label{ppm}
P^{\pm} = \int_{\gamma^{\pm}} \mathcal R_\omega d \omega
\end{equation}
where $\gamma^{\pm}$ are contours enclosing the spectrum of $\mathcal H$ in the lower,
respectively the upper half-space. Similarly, the wave flow on $S^{\pm}$ is expressed as
\begin{equation}\label{eih-ppm}
e^{it \mathcal H} P^{\pm} = \int_{\gamma^{\pm}} e^{it\omega} \mathcal R_\omega d \omega{.}
\end{equation}

Now we are able to complete the proof of Theorem~\ref{t:stat+}. The
key element in the proof is the following.

\begin{prop}\label{p:ppm}
  Let $P$ be stationary, symmetric, asymptotically flat, nontrapping
  and satisfying the zero resolvent bound \eqref{zero-res2}. Then 

a) We have 
the following estimates
\begin{equation}
\| P^{\pm}\|_{\mathcal L(\dot H^1 \times L^2)} \lesssim 1
\end{equation}
\begin{equation}
\begin{split}
  \|{e^{it \mathcal H}}_{|S^+}\|_{\mathcal L(\dot H^1 \times L^2)}
  \lesssim e^{ \alpha t}, \quad  t < 0,  & \ \qquad  \|{e^{it \mathcal
      H}}_{|S^+}\|_{\mathcal L(\dot H^1 \times L^2)} \lesssim e^{\beta
    t}, \quad t \geq 0
  \\
 \|{e^{it \mathcal H}}_{|S^-}\|_{\mathcal
    L(\dot H^1 \times L^2)} \lesssim  e^{-\beta t}, \quad t < 0,  & \ \qquad     \|{e^{it \mathcal H}}_{|S^-}\|_{\mathcal
    L(\dot H^1 \times L^2)} \lesssim e^{-\alpha t}, \quad t \geq 0
\end{split}
\end{equation}
with implicit constants as well as $\alpha$ and $\beta$ depending only
on our parameters $M_0,R_0,T_0$ and $K_0$.

b) The constant $c$ in \eqref{coerc} also depends only on our
parameters $M_0,R_0,T_0$ and $K_0$.

c) The operators $P^{\pm}$ and $e^{it \mathcal H} P^{\pm}$ have a Lipschitz dependence
on the coefficients of $P$ in the asymptotically flat topology.
\end{prop}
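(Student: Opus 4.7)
The plan is to deduce all three parts from the contour-integral representations \eqref{ppm} and \eqref{eih-ppm}, using as inputs the uniform LE and energy resolvent bounds from earlier sections. The first preliminary step is to quantify the location of the discrete spectrum of $\mathcal{H}$ in each half-plane. The zero nonresonance hypothesis \eqref{zero-res2} combined with Proposition~\ref{p-zero} bounds $\mathcal{R}_\omega$ uniformly near $\omega=0$; the absence of nonzero real resonances in the symmetric case (Theorem~\ref{nores}) combined with Proposition~\ref{p-nonzero} bounds it uniformly on the remaining bounded part of the real axis; and the high frequency resolvent analysis of Section~\ref{s:le-high} handles large $|\omega|$. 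It follows that the discrete spectrum of $\mathcal{H}$ lies in a compact set $\{\alpha \leq |\Im\omega| \leq \beta\}$ with $\alpha,\beta$ depending only on $M_0,R_0,T_0,K_0$. Fix contours $\gamma^{\pm}$ enclosing this spectrum in the lower and upper half-planes, at uniform distance of order $\alpha/2$ from the real axis, with total length bounded by the parameters.

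For part (a), along $\gamma^{\pm}$ one has $\|\mathcal{R}_\omega\|_{\mathcal{L}(\dot{H}^1\times L^2)} \lesssim 1$ uniformly, by \eqref{LE-fourier-re} together with the LE/energy passage from Propositions~\ref{p:e-res} and \ref{p:le-res} on the portions near the real axis, and by \eqref{enres} on the portions away from it. Integrating \eqref{ppm} yields the stated bound on $P^\pm$. For the flow, insert $e^{it\omega}$ into \eqref{eih-ppm} and use $|e^{it\omega}| = e^{-t\,\Im\omega}$: along $\gamma^+ \subset \{-\beta \leq \Im\omega \leq -\alpha\}$ this produces a bound of $e^{\beta t}$ for $t\geq 0$ and $e^{\alpha t}$ for $t\leq 0$; the analysis along $\gamma^-$ is symmetric. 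For part (c), the second resolvent identity $\tilde{\mathcal R}_\omega - \mathcal R_\omega = \mathcal R_\omega(\mathcal H - \tilde{\mathcal H})\tilde{\mathcal R}_\omega$ gives a pointwise-in-$\omega$ Lipschitz bound in the AF topology, while the stability clauses in Propositions~\ref{p-nonzero} and \ref{p-zero}, combined with continuity of the Fredholm index, ensure that for small AF perturbations the same contours $\gamma^{\pm}$ still enclose the full discrete spectrum of $\tilde{\mathcal H}$ with uniform resolvent bounds. Integration along $\gamma^{\pm}$ transfers the Lipschitz dependence to $P^{\pm}$ and $e^{it\mathcal H}P^{\pm}$.

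The main obstacle will be part (b), the quantitative coercivity with constant depending only on $M_0,R_0,T_0,K_0$. The qualitative version follows from Proposition~\ref{z_spec_prop}\ref{E_coerce} by a closed-graph argument, but that does not track the constant. Integration by parts against $P_0$, using ellipticity \eqref{zero_ell} and asymptotic flatness, yields a G\r{a}rding-type inequality
\[
\la U,U\ra_E \ \geq\ c_0\lp{U}{\dot H^1 \times L^2}^2 - C\lp{u_0}{L^2_{comp}}^2,
\]
with $c_0,C$ depending only on the parameters; the task is to absorb the compact remainder on $\mathcal E_{ac}$. The plan is to evolve $U \in \mathcal E_{ac}$ under the homogeneous flow, exploit the exact conservation of $\la \cdot,\cdot\ra_E$, and observe that once the projections $P^\pm$ from part (a) are used to eliminate the growing/decaying modes the resulting flow admits the two-point local energy bound \eqref{2pt_LE} of Theorem~\ref{2pt_thm} with parameter-dependent constant. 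Time-averaging the G\r{a}rding inequality then converts the compact error into $\frac{1}{T}\|u_0\|_{LE_{comp}[0,T]}^2 \lesssim T^{-1}\|U\|_{\dot H^1\times L^2}^2$, which for $T$ large relative to $c_0^{-1}C$ can be absorbed into the main term and delivers the desired lower bound with $c=c(M_0,R_0,T_0,K_0)>0$. An alternative, which I would run in parallel as a check, is a Neumann-series comparison with a reference symmetric operator $\tilde P$ (e.g. a perturbation of $\Box$) for which coercivity is manifest and then to perturb using the Lipschitz statement established in part (c).
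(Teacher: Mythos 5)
Your overall framework---locate the discrete spectrum in a strip of the form $\{\alpha\le|\Im\omega|\le\beta\}$, then use the Dunford--Cauchy integrals \eqref{ppm} and \eqref{eih-ppm}---is the same as the paper's, and parts of your argument are in the right spirit. There are, however, two genuine gaps.

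The first is quantitative, and it concerns the crucial step of obtaining the width $\alpha$ of the spectrum-free strip with a constant controlled \emph{only} by $M_0,R_0,T_0,K_0$. You propose to get $\alpha$ from "the stability clauses in Propositions~\ref{p-nonzero} and~\ref{p-zero}" combined with compactness of the relevant real frequency interval. But the stability statement in Proposition~\ref{p-nonzero} is proved by a compactness/contradiction argument and does not track constants: it gives boundedness of $\mathcal R_\omega$ near each real $\omega_0$ with \emph{some} constant, not one that can be read off from $M_0,R_0,T_0,K_0$. Since the whole point of this proposition (feeding into Theorems~\ref{exp_dich_thm} and \ref{exp_trich_thm}) is to have constants stable under small AF perturbations, this gap matters. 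The paper avoids it by applying the quantitative two-point local energy bound of Theorem~\ref{2pt_thm} directly to $v=e^{i\omega t}u$ on $(-\infty,0]$; combined with the energy relation \eqref{E-inhom} and the lower bound \eqref{e-pos}, this produces a closed resolvent estimate for $|\Im\omega|$ below an explicit threshold $\alpha$ whose size comes straight from Theorem~\ref{2pt_thm}. Your proposal never invokes this mechanism.

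The second issue is part (b). Your time-averaging plan runs into a circularity: the two-point estimate \eqref{2pt_LE} controls $\int_0^T\|u_0(t)\|_{L^2_{comp}}^2\,dt$ in terms of $\|\partial u(0)\|_{L^2}^2+\|\partial u(T)\|_{L^2}^2$, and bounding the final-time energy $\|\partial u(T)\|_{L^2}$ in terms of $\|\partial u(0)\|_{L^2}$ on $\mathcal E_{ac}$ is precisely what the coercivity constant $c$ is needed for. As written the scheme does not close. The paper argues geometrically instead: $\la\cdot,\cdot\ra_E$ is semipositive on $(S^+)^{\perp_E}$ and degenerates exactly along $S^+$ (and likewise for $S^-$); the coercivity constant on $\mathcal E_{ac}$ can degenerate only if $S^+$ and $S^-$ become nearly parallel, which is ruled out quantitatively by the bound $\|P^\pm\|\lesssim 1$ from part (a). This is both shorter and non-circular. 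Finally, for part (c) your use of the second resolvent identity is correct in outline, but note that $\mathcal H-\tilde{\mathcal H}$ contains second-order spatial terms, so one must first invoke the enhanced elliptic resolvent bound \eqref{res-le+} (local $H^2\times H^1$ control on $\tilde{\mathcal R}_\omega$) for the identity to produce an operator bounded on $\dot H^1\times L^2$ and hence a Lipschitz estimate.
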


We remark that Theorem~\ref{t:stat+} follows immediately. The
exponential bounds for $S^+$ and $S^{-}$ follow immediately from the
above bounds.  On the other hand, solutions in $S_0$ have bounded
energy by part (iii) of Proposition~\ref{z_spec_prop}. Hence, by
Theorem~\ref{exp_dich_thm} we have both uniform energy bounds and
local energy decay. We further remark that on $S_0$ the energy and the
 form $\la \cdot,\cdot\ra_E$ are equivalent.

\begin{proof}[Proof of Proposition~\ref{p:ppm}]
  a) By symmetry it suffices to consider the ``$+$'' case. The result
  in part (a) will follow from the integral representations
  \eqref{ppm}, \eqref{eih-ppm} if we can find a contour rectangle $R =
  [-M,M] \times [-\alpha,-\beta]$ which contains all the spectrum of
  $P$ in the lower half-space, so that the estimate \eqref{enres}
  holds uniformly on the boundary $\gamma^+$ of $R$.

  The parameter $\beta$ is easily found as a consequence of the 
  energy estimates, see \eqref{local-res}.  The parameter $M$ is found
  as a consequence of the high frequency bounds in
  Section~\ref{s:le-high}.  Finally, to find $\alpha$ we need the
  full strength of the two point LE bound in 
  Theorem~\ref{2pt_thm}. Indeed, suppose that $P_\omega u = f$.
Then $v = e^{i\omega t} u $ solves $Pv = g$ with $g = e^{i\omega t} f$.
Hence applying the two point LE estimate to $v$ on $(-\infty,0]$ we obtain
\[
\| u\|_{\LE^1} + |\omega| \|u\|_{{\LE}} \lesssim \|f\|_{\LE^*} +  
|\Im \omega|^\frac12 (\| u\|_{\dot H^1} + |\omega|\|u\|_{L^2}).  
\]
On the other hand, for $R > R_0$ we have the energy relation \eqref{E-inhom}, which
combined with \eqref{e-pos} gives
\[
|\Im \omega| (\| u\|_{\dot H^1}^2 + |\omega|^2 \|u\|_{L^2}^2) \lesssim |\Im \omega| \|u\|_{L^2_{comp}}^2 +  |\omega |  \| f\|_{{\LE}^*+ |\Im \omega|^{\frac12}L^2 } \|u\|_{{\LE} \cap|\Im \omega|^{-\frac12}L^2 }.
\]
Combining the two bounds we obtain
\[
\| u\|_{\LE^1}+  |\omega| \|u\|_{{\LE}} + |\Im \omega|^{\frac12} (  \| u\|_{\dot H^1} + |\omega|\|u\|_{L^2})
\lesssim \|f\|_{\LE^*+ |\Im \omega|^{\frac12}L^2 } 
\]
provided that $|\Im \omega|$ is sufficiently small.  Here we have
  used a Hardy inequality to control $\|u\|_{L^2_{comp}}$ by
  $\|u\|_{\LE^1}$.  
This yields the desired resolvent energy bound.

b) The form $\la \cdot,\cdot\ra_E$ is semipositive on $(S^+)^\perp$, with degeneracy 
only on $S^+ \subset  (S^+)^\perp$, and similarly on $(S^-)^\perp$. Thus the constant $c$
can degenerate only if the subspaces $S^+$ and $S^-$ are very close. However, this 
is prohibited by the bound in part (a) on the projectors $P^+$ and $P^-$.

c) This is a straightforward consequence of the similar property of the resolvent
bound along the curve $\gamma^+$, which in turn is a consequence of the 
enhanced resolvent bound 
\begin{equation}\label{res-le+}
|\omega|^{-1} \| \nabla^2 u\|_{L^2}+ \| \nabla u\|_{L^2} + |\omega| \| u\|_{L^2}
 \lesssim |\Im \omega|^{-1} \|P_\omega u\|_{L^2}
\end{equation}
which is an elliptic consequence of the standard one.
\end{proof}
 
 
\subsection{Perturbation theory of $S^\pm$}
 
As a preliminary step toward the proof of Theorem \ref{exp_trich_thm},
in this subsection we prove a local version of Theorem
\ref{exp_trich_thm} for $\epsilon$ almost stationary, almost symmetric
operators $P(t,x,D)$. 

The spectral decomposition in the previous section is 
associated to symmetric operators. In order to use it here, we denote by 
$P_{sym}(t,x,D)$ the symmetrized operators $P(t,x,D)$ (i.e. where $A$ and $V$
are replaced by their real parts). Correspondingly, we define the time dependent 
subspaces $S^{\pm,0}(t)$ and projectors $P^{\pm,0}(t)$.

\begin{prop}[Local stable/unstable/center manifold construction]\label{loc_sus_prop}
  Let $P$ satisfy all of the assumptions of Theorem
  \ref{exp_dich_thm}.  Then there exists a time threshold $T_* =
  T_*(\epsilon)$, such that $T_*\to \infty$ as $\epsilon\to 0$, so
  that the estimates \eqref{ed1}--\eqref{ed3} hold for the evolution
  $\mathcal{F}(t,t_0)$ of $P(t,x,D)$ for all $t,s\in J_{t_0} =
  [t_0,t_0+T_*]$. Moreover, one has:
\begin{equation}
		\lp{ P^{\pm,0}(t)\mathcal{F}(t,t_0) -\mathcal{F}(t,t_0)P^{\pm,0}(t_0)
		}{\dot{H}^1\times L^2\to \dot{H}^1\times L^2} \ \lesssim  \epsilon e^{C|t-t_0|} \ ,
\qquad t \in J_{t_0}.
 \label{S_match}
\end{equation}
\end{prop}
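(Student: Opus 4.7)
The plan is to treat $P(t,x,D)$ on $J_{t_0}$ as a perturbation of its frozen symmetrization $P_{sym}(t_0,x,D)$, whose spectral decomposition is fully understood. For the latter, Propositions~\ref{z_spec_prop} and \ref{p:ppm} produce the projectors $P^{\pm,0}(t_0)$ and subspaces $S^{\pm,0}(t_0)$, and the corresponding Hamiltonian $\mathcal{H}_0$ in the form \eqref{H_form} generates a stationary flow $\mathcal{F}_0(\tau)=e^{i\tau \mathcal{H}_0}$ satisfying the exact analogues of \eqref{ed1}--\eqref{ed3} with sharp exponential rates $\alpha,\beta$ depending only on $R_0,M_0,T_0,K_0$. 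The task is then to show that these properties persist for the true flow $\mathcal{F}(t,t_0)$ on a time interval whose length $T_*$ grows as $\epsilon\to 0$.

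The bridge is a Duhamel comparison. Writing the first order system for $P(t,x,D)$ as $D_t\U=\mathcal{H}(t)\U$, one has
\[
\mathcal{F}(t,t_0) - \mathcal{F}_0(t-t_0) \ = \ -i\int_{t_0}^t \mathcal{F}_0(t-s)\bigl(\mathcal{H}(s)-\mathcal{H}_0\bigr)\mathcal{F}(s,t_0)\, ds.
\]
By the $\epsilon$-slow variation and $\epsilon$-almost symmetry hypotheses, the coefficients of $\mathcal{H}(s)-\mathcal{H}_0$ have AF norm bounded by $\epsilon(1+|s-t_0|)$. Combining this with the exponential bound $\lp{\mathcal{F}_0(\tau)}{\mathcal L(\dot H^1\times L^2)}\lesssim e^{\beta|\tau|}$ from Proposition~\ref{p:ppm}, a Gronwall/bootstrap argument yields the flow comparison
\[
\lp{\mathcal{F}(t,t_0)-\mathcal{F}_0(t-t_0)}{\mathcal L(\dot H^1\times L^2)}\ \lesssim\ \epsilon e^{C|t-t_0|}, \qquad t\in J_{t_0}.
\]

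From this, \eqref{ed1}--\eqref{ed3} for $\mathcal{F}(t,t_0)$ follow from those for $\mathcal{F}_0$ by replacing one flow by the other inside the composition $\mathcal{F}(t,t_0)P^{\pm,0}(t_0)\mathcal{F}^{-1}(s,t_0)$; for instance in \eqref{ed1} the main term has norm $\lesssim e^{-\alpha(t-s)}$ while the error is $O(\epsilon e^{C(t-s)})$, which is absorbed by choosing $T_*=c\log(1/\epsilon)$ with $c$ small enough that $\epsilon e^{CT_*}\ll e^{-\alpha T_*}$. The matching estimate \eqref{S_match} follows by the same comparison together with the bound $\lp{P^{\pm,0}(t)-P^{\pm,0}(t_0)}{\mathcal L(\dot H^1\times L^2)}\lesssim \epsilon|t-t_0|$, which comes from the contour formula \eqref{ppm} and the Lipschitz dependence of the resolvent on the coefficients of $P_{sym}$ in the AF topology stated in Proposition~\ref{p:ppm}(c).

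The main technical obstacle is that $\mathcal{H}(s)-\mathcal{H}_0$ is a genuine second-order operator, so the naive interpretation of the Duhamel identity loses one derivative on $\dot H^1\times L^2$. The way around this is to run the comparison at the wave equation level rather than at the abstract Hamiltonian level: namely, to apply the two point local energy decay bound \eqref{2pt_LE} from Theorem~\ref{2pt_thm} to the difference on $J_{t_0}$, together with the almost-energy conservation \eqref{almost-en}. Equivalently, one can integrate by parts in the resolvent contour representation \eqref{eih-ppm} so that the derivative loss is compensated by the enhanced resolvent bound \eqref{res-le+}. In either case the critical input is that the resolvent bounds hold uniformly along the contour $\gamma^\pm$ with constants depending only on $R_0,M_0,T_0,K_0$, which is precisely the content of Proposition~\ref{p:ppm}(a).
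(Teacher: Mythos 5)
Your overall strategy---freeze the symmetrized operator at $t_0$, use its spectral decomposition, and run a Duhamel comparison on the local interval $J_{t_0}$---is the right one and is the same strategy used in the paper. But there is a genuine gap in the central step. The claimed full flow comparison
\[
\lp{\mathcal{F}(t,t_0)-\mathcal{F}_0(t-t_0)}{\mathcal L(\dot H^1\times L^2)}\ \lesssim\ \epsilon e^{C|t-t_0|}
\]
is not provable by Duhamel plus Gronwall, and is most likely false: the difference $\mathcal{H}(s)-\mathcal{H}_0$ contains the \emph{second order} term coming from $g(s)-g(t_0)$, hence is an unbounded operator on $\dot H^1\times L^2$ no matter how small its coefficients are. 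Smallness of the AF norm of the coefficients does not make $\mathcal{F}_0(t-s)\bigl(\mathcal{H}(s)-\mathcal{H}_0\bigr)$ a bounded operator, so the Duhamel integral does not close on $\dot H^1\times L^2$ for general data. You flagged this derivative loss at the end, but neither of the proposed workarounds actually removes it: the two-point local energy bound \eqref{2pt_LE} controls interior $LE^1$ norms by endpoint energies and does not convert into an operator-norm comparison of two flows, and integrating by parts in \eqref{eih-ppm} against \eqref{res-le+} only regularizes the part of the evolution that has already been hit by the finite-rank projector $P^\pm$.

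The missing idea is precisely that one should \emph{not} compare the full flows, but only their actions on the ranges of $P^{\pm,0}(t_0)$. The projectors $P^\pm(t_0)$ are finite rank onto spaces of exponentially localized eigenfunctions and hence regularize: $\|P^{\pm}\|_{\dot H^1\times L^2\to \dot H^2\cap\dot H^1\times \dot H^1\cap L^2}\lesssim 1$. The stationary flow $e^{i(t-t_0)\mathcal{H}(t_0)}$ preserves these invariant subspaces, so $e^{i(t-t_0)\mathcal{H}(t_0)}P^{\pm}(t_0)U_0$ keeps the extra derivative, and then $\mathcal{H}(s)-\mathcal{H}(t_0)$ applied to it is genuinely $O(\epsilon(1+|s-t_0|))$ small in $\dot H^1\times L^2$. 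That is what lets the Duhamel comparison close, giving the analogue of your comparison estimate but with a $P^{\pm}(t_0)$ inserted on the right. From there \eqref{ed1}, \eqref{ed2} for $s=t_0$ and \eqref{S_match} follow by the algebraic decomposition you sketched, with the $P^0$ case of \eqref{S_match} deduced from $P^0=1-P^+-P^-$. A separate argument is then needed for \eqref{ed3} (the paper uses the near-conservation of the energy form together with its coercivity on $S^0$), since the range of $P^0$ is infinite dimensional and not smoothing, so the flow comparison does not directly apply there either.
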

We remark that the time threshold $T^*$ is chosen so that $\epsilon = e^{-CT^*}$ for a large
constant $C$.
\begin{proof}
We begin with a simple observation, namely that 
\[
\| \mathcal H - \mathcal H_{sym}\|_{\dot{H}^1\times L^2\to \dot{H}^1\times L^2} = o(\epsilon).
\]
Thus, by Gronwall,  one can harmlessly replace $P$ by $P_{sym}$ and 
$\mathcal H$  by $ \mathcal H_{sym}$ in the proposition.

Secondly, consider initial data $U(t_0) \in S^{\pm{,0}}(t_0)$. Then we claim that $e^{i{(}t{-t_0)} \mathcal H(t_0)}$ 
is a good $\epsilon$-approximate solution. This is easy to see, since 
\[
\| {\mathcal H}(t) - \mathcal H(t_0)\|_{\dot H^2 \cap \dot{H}^1\times \dot H^1 \cap  L^2\to \dot{H}^1\times L^2} \lesssim 
\epsilon(1+|t-t_0|),
\]
 while 
\[
\| P^{\pm{,0}} \|_{\dot{H}^1\times L^2 \to \dot H^2 \cap \dot{H}^1\times \dot H^1 \cap L^2} \lesssim 1.
\]
Thus we obtain 
\[
\| (D_t - \mathcal H) {e^{i(t-t_0)\mathcal
    H(t_0)}}U(t_0)\|_{{\dot{H}^1\times L^2}} {\lesssim}
{\epsilon e^{C|t-t_0|} \|e^{i\mathcal H (t-t_0)}
  U(t_0)\|_{\dot{H^1}\times L^2} ,}
\]
which by the Duhamel formula {and an estimate akin to \eqref{local-en}} shows that
\begin{equation}\label{S+-app}
\| (\mathcal{F}(t,t_0) - e^{i{(}t{-t_0)} \mathcal H(t_0)}) U(t_0)\|_{\dot{H}^1\times L^2} \lesssim 
\epsilon e^{C|t-t_0|} {\|e^{i\mathcal H (t-t_0)}
  U(t_0)\|_{\dot{H^1}\times L^2}}.
\end{equation}
This already proves the bounds \eqref{ed1} and \eqref{ed2} for $s = t_0$.

We now establish \eqref{S_match}. For this we write
\begin{multline*}
P^{\pm{,0}}(t)\mathcal{F}(t,t_0) P^{\pm{,0}}(t_0)-\mathcal{F}(t,t_0)P^{\pm{,0}}(t_0) \\= 
(P^{\pm{,0}}(t) - P^{\pm{,0}}(t_0)) \mathcal{F}(t,t_0) P^{\pm{,0}}(t_0)
+ {(}P^{\pm{,0}}(t_0){-I)} (\mathcal{F}(t,t_0) -  e^{it \mathcal H(t_0)})) P^{\pm{,0}}(t_0).
\end{multline*}
Using part (c) of Proposition~{\ref{p:ppm}} for the first term on the right
and \eqref{S+-app} for the second we obtain
\[
\| P^{\pm{,0}}(t)\mathcal{F}(t,t_0) P^{\pm{,0}}(t_0)-\mathcal{F}(t,t_0)P^{\pm{,0}}(t_0)\|_{
\dot{H}^1\times L^2\to \dot{H}^1\times L^2} 
\lesssim 
\epsilon e^{C|t-t_0|}.
\]
Then \eqref{S_match} follows by time reversal. The bounds
 \eqref{ed1} and \eqref{ed2} for $s \neq t_0$ are a direct consequence 
of \eqref{S_match} and the $s = t_0$ case.

It remains to prove \eqref{ed3}. Due to \eqref{S_match} it again
suffices to consider the case $s = t_0$.  For this we consider the
energy conservation relation, which in the time dependent case reads
\[
\frac{d}{dt} E(U(t)) \lesssim \epsilon \|U(t)\|_{\dot{H}^1\times L^2}^2. 
\]
This implies that 
\[
| E(U(t)) - E(U(t_0))| \lesssim \epsilon e^{C|t-t_0|} \| U(t_0)\|_{\dot{H}^1\times L^2}^2.
\]
We apply this to $U(t) =  \mathcal{F}(t,t_0) P^0(t_0) U_0 $,
which by \eqref{S_match} is well approximated by $P^0(t) \mathcal{F}(t,t_0)U_0 $.
By part (b) of Proposition~{\ref{p:ppm}}, the (time dependent) energy functional
$E(t)$ is uniformly positive definite on the $S^0(t)$, therefore we can write
\[\begin{split}
\|  U(t) \|_{\dot{H}^1\times L^2}^2 = & \ \| P^0(t) \mathcal{F}(t,t_0)U_0 \|_{\dot{H}^1\times L^2}^2 
+ O(\epsilon e^{C(t-t_0)}) \|  U_0 \|_{\dot{H}^1\times L^2}^2
\\ \approx & \  E(P^0(t) \mathcal{F}(t,t_0)U_0)  + O(\epsilon e^{C(t-t_0)}) \|  U_0 \|_{\dot{H}^1\times L^2}^2
\\ = & \  E(U(t))  + O(\epsilon e^{C(t-t_0)}) \|  U_0 \|_{\dot{H}^1\times L^2}^2
\\ = & \  E(U(t_0))  + O(\epsilon e^{C(t-t_0)}) \|  U_0 \|_{\dot{H}^1\times L^2}^2
\\  \approx & \ \|   P^0(t_0) U_0 \| _{\dot{H}^1\times L^2}^2+ O(\epsilon e^{C(t-t_0)}) \|  U_0 \|_{\dot{H}^1\times L^2}^2.
\end{split}
\]
Hence \eqref{ed3} follows, and the proof of the proposition is complete.

\end{proof}

 
\subsection{Global construction of $P^{\pm,0}$ via Perron sums}

We now prove the full version of Theorem \ref{exp_trich_thm}. In light
of the energy estimates in Proposition \ref{loc_sus_prop} and Theorem
\ref{exp_dich_thm} it suffices to prove the following slightly weaker
discrete version:

\begin{thm}[Discretized  version of Theorem \ref{exp_trich_thm}]\label{exp_trich_thm_disc}
Let $P$ be as in the statement of Theorem \ref{exp_trich_thm}. Then there exists
a time increment $T_*>0$ such that $T_*\to\infty$ as $\epsilon\to 0$
with the following property. There exists 
$\kappa\in \mathbb{N}$ and real $C,\alpha,\alpha_0>0$,
where $\alpha_0=o_1(\epsilon)$, and   two $\kappa$ dimensional 
linear subspaces  $S^\pm\subseteq \dot{H}^1\times L^2$ with corresponding finite rank 
commuting projections $P^\pm$ and $P^0= I - P^+-P^-$, such that if $t_n=nT_*$ one has:
\begin{align}
		\lp{\mathcal{F}(t_n)P^- \mathcal{F}^{-1}(t_m)}{\dot{H}^1\times L^2\to \dot{H}^1\times L^2}
		\ &\leq\  Ce^{-\alpha(t_n-t_m)} \ , &t_n\geq t_m\geq 0 \ , \label{ed1_disc}\\
		\lp{\mathcal{F}(t_m)P^+ \mathcal{F}^{-1}(t_n)}{\dot{H}^1\times L^2\to \dot{H}^1\times L^2}
		\ &\leq\  Ce^{-\alpha(t_n-t_m)} \ , &t_n\geq t_m\geq 0 \ , \label{ed2_disc}\\
		\lp{\mathcal{F}(t_n)(1-P^+-P^-) \mathcal{F}^{-1}(t_m)}{\dot{H}^1\times L^2\to \dot{H}^1\times L^2}
		\ &\leq\  Ce^{\alpha_0|t_n-t_m|} \ , &\hbox{all\ \ } t_n,t_m \geq 0 \ , \label{ed3_disc}
\end{align}
where $0\leq \alpha_0\ll 1$ is sufficiently small so that estimate
\eqref{slow_growth} holds in case \eqref{ed3_disc} above. Further, the
push-forward of the projectors $P^{\pm,0}$ are close to the the time dependent projectors
$P^{\pm,0}(t)$, namely 
\begin{equation}
\| P^{\pm{,0}}(t_n) -  \mathcal{F}(t_n)P^{{\pm,0}} \mathcal{F}^{-1}(t_n)\|_{\dot{H}^1\times L^2\to \dot{H}^1\times L^2} \lesssim \epsilon.
\end{equation}
\end{thm}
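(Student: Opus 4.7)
The strategy is to upgrade the approximate local invariant decomposition of Proposition \ref{loc_sus_prop} to a globally invariant one at $t=0$ via a Perron-type construction, in the spirit of the discrete-time exponential dichotomy theory of \cite{Coppel}. Choose the step $T_*$ large enough that the local contraction/expansion factor $e^{-\alpha T_*}$ on $S^\pm(t_n)$ given by \eqref{ed1}--\eqref{ed2} dominates the per-step matching error $\epsilon e^{CT_*}$ from \eqref{S_match}; since $T_*(\epsilon)\to\infty$ as $\epsilon\to 0$, the balance $\epsilon e^{CT_*}\ll e^{-\alpha T_*}$ can be arranged by taking $T_*\sim |\log\epsilon|/(C+\alpha)$. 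Set $\Phi_n=\mathcal{F}(t_{n+1})\mathcal{F}^{-1}(t_n)$ for the one-step discrete propagator, which is invertible on $\dot H^1\times L^2$ by standard energy estimates.

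I would construct the global stable subspace $S^-$ as a graph
\[
S^- = \{u + L^- u : u \in S^-(t_0)\}, \qquad L^-:S^-(t_0)\to S^0(t_0)\oplus S^+(t_0),
\]
chosen so that $\Phi_{n-1}\cdots\Phi_0(u+L^-u)$ lies in an analogous graph over $S^-(t_n)$ for every $n\geq 0$. Setting up the transfer of graphs under $\Phi_n$ in the direct-sum coordinates induced by the local projectors $P^{\pm,0}(t_n)$, this becomes a fixed-point problem for a sequence of graph maps $(L^-_n)$ whose graph transform is a strict contraction provided the rate balance above holds: the contraction factor is essentially the quotient of the forward contraction $e^{-\alpha T_*}$ on $S^-(t_n)$ and the minimum forward expansion on $S^+(t_n)\oplus S^0(t_n)$. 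The subspace $S^+$ is constructed symmetrically by applying the same argument to the backward discrete flow $\Phi_n^{-1}$. Let $P^\pm$ be the associated projectors at $t=0$ and set $P^0=I-P^+-P^-$.

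Then \eqref{ed1_disc} and \eqref{ed2_disc} follow at once: on $S^-$ invariance forces $\mathcal{F}(t_n)\mathcal{F}^{-1}(t_m)S^-$ to lie in $S^-(t_n)$ up to an $O(\epsilon)$ deformation, and the contraction \eqref{ed1} supplies the rate $e^{-\alpha(t_n-t_m)}$; the argument for $S^+$ is symmetric. For \eqref{ed3_disc} I would invoke Theorem \ref{exp_dich_thm}: data in $S^0$ are by construction complementary to both the forward-unstable direction $S^+$ and the backward-unstable direction $S^-$, so they cannot obey the exponential growth alternative \eqref{slow_growth}, hence they satisfy the local energy decay alternative \eqref{LE-last}. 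This yields uniform forward energy bounds; time-reversing the same argument yields backward bounds, and combining the two gives the $e^{\alpha_0|t_n-t_m|}$ estimate with $\alpha_0\to 0$ as $\epsilon\to 0$. The closeness of the push-forwards $\mathcal{F}(t_n) P^{\pm,0} \mathcal{F}^{-1}(t_n)$ to $P^{\pm,0}(t_n)$ is read off as the Perron tail beyond time $t_n$.

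The main obstacle is the quantitative control of the Perron graph transform: one must verify that the per-step matching errors $\epsilon e^{CT_*}$ iterated against the dichotomous flow yield a contraction on an appropriate Banach space of sequences of linear maps, and extract uniform bounds on $L^\pm$ with constants depending only on $R_0, M_0, T_0, K_0$. This is precisely where the rate separation $\epsilon e^{CT_*}\ll e^{-\alpha T_*}$ is used essentially. A secondary technical point is to justify that the exponent $\alpha_0$ in \eqref{ed3_disc} is genuinely $o_1(\epsilon)$, which requires tracking how the constants in Theorem \ref{exp_dich_thm} and in the alternative \eqref{slow_growth} degrade under iteration as $T_*\to\infty$.
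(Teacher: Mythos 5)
Your approach via a graph transform (Hadamard-type) construction of $S^\pm$ is a legitimate alternative to the paper's variation-of-constants / Perron-sum method (Proposition~\ref{Perron_prop}, following \cite{Coppel}). The two are classical siblings: both are fixed-point arguments, and your rate-balance heuristic $\epsilon e^{CT_*} \ll e^{-\alpha T_*}$ matches the paper's remark that $T_*$ is chosen so that $\epsilon = e^{-CT_*}$. For \eqref{ed1_disc}--\eqref{ed2_disc} the sketch is sound, and the ``Perron tail'' observation for the closeness of push-forwards is also how the paper reads off that estimate.

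However, there is a genuine gap in your argument for the center bound \eqref{ed3_disc}. You write that data in $S^0 = \ker P^+ \cap \ker P^-$ ``cannot obey the exponential growth alternative \eqref{slow_growth}'' because $S^0$ is complementary to $S^\pm$, and then invoke Theorem~\ref{exp_dich_thm}. This does not follow. Theorem~\ref{exp_dich_thm} is a dichotomy applied to each individual solution; it does not tell you \emph{a priori} which alternative the data in an algebraic complement of $S^+\oplus S^-$ falls into. In fact $S^+$ is constructed via the \emph{backward} flow (backward exponential decay), and nothing in the graph-transform construction prevents a solution with $U_0\in S^0$ from developing, over time, a dominant $S^+(t_n)$-component that leads to forward exponential growth; the $S^+(t_0)$-component of $U_0$ being zero initially is compatible with this, since the per-step matching error is $\epsilon e^{CT_*}$, not zero. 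Ruling this out requires a \emph{quantitative} estimate for the flow on $S^0$, which your two graph transforms alone do not supply. The paper's Proposition~\ref{Perron_prop} handles exactly this: it constructs not only a projection near $Q^-$ (from the flow conjugated by $e^{\frac12(\gamma+\gamma_0)n}$), but also a projection near $Q^-+Q^0$ (from the flow conjugated by $e^{-\frac12(\gamma+\gamma_0)n}$, whose complement gives $P^+$), and obtains the bound \eqref{G_dich3}/\eqref{ed3_disc} on $P^0 = I-P^+-P^-$ by differencing the two. To repair your argument you would need to run a third graph transform for the subspace $S^-+S^0$ (the ``non-expanding'' directions) viewed as a graph against $S^+$, or equivalently do the exponential conjugations, and then obtain the $S^0$ bound from the two resulting estimates. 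Invoking Theorem~\ref{exp_dich_thm} only comes \emph{after} \eqref{ed3_disc} is established, as the paper remarks before stating Theorem~\ref{exp_trich_thm_disc}.
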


Here the time $T^*$ is is as in Proposition \ref{loc_sus_prop}, which
also takes care of the estimates within each time interval
$[t_n,t_{n+1}]$. 

\begin{rem}
  The above decomposition of $\dot{H}^1\times L^2 = S^+ + S^- + S^0$
  is not uniquely defined. Precisely, it is the subspaces $S^-$ and
  $S^-+S^0$ which are uniquely determined, but $S^+$ and $S^0$ are
  not. In our construction below we select these subspaces uniquely by
  adding the supplimentary requirement that $S^+= S^+(0)$ and $S^+ +
  S^0 = S^+(0) + S^0(0)$.  We further remark that this ambiguity is
  due to the fact that our set-up is on the positive time line. One
  also has a similar result that corresponds to the full time line,
  and there all subspaces $S^+$, $ S^-$ and $S^0$ are uniquely
  determined.
\end{rem}

The proof of the above Theorem will follow from Proposition
\ref{loc_sus_prop} and the following abstract Perron type construction
for discrete time flows on Banach spaces:

\begin{prop}\label{Perron_prop}
Let $X$ be a Banach space, and let $\mathcal{G}_n\in\mathcal{B}(X)$ 
be a sequence of invertible operators, with $\mathcal{G}_0=I$,  which induces an exponential trichotomy:
\begin{align}
		\lp{\mathcal{G}_n Q^- \mathcal{G}_m^{-1}}{\mathcal{B}(X)} \ &\leq \ 
		Ce^{-\gamma(n-m)} \ , &n&\geq m\geq 0 \ , \label{G_dich1}\\
		\lp{ \mathcal{G}_m Q^+ \mathcal{G}_n^{-1}}{\mathcal{B}(X)} \ &\leq \ 
		Ce^{-\gamma(n-m)} \ , &n&\geq m\geq 0 \ , \label{G_dich2}\\
		\lp{ \mathcal{G}_n (1-Q^- -Q^+) \mathcal{G}_m^{-1}}{\mathcal{B}(X)} \ &\leq \ 
		Ce^{\gamma_0 |n-m|}  \ , &n&,m \geq 0 \ , \label{G_dich3}
\end{align}
for a pair of projection matrices $Q^\pm$ with $Q^+Q^-=Q^-Q^+=0$,
where we also assume $1 \leq \gamma_0 \ll \gamma$.  Let
$\mathcal{F}_n$ be another sequence of bounded operators, with
$\mathcal{F}_0=I$, and such that:
\begin{equation}
		\lp{\mathcal{F}_n\mathcal{F}_{n-1}^{-1}-
		\mathcal{G}_n \mathcal{G}_{n-1}^{-1}}{\mathcal{B}(X)}\
              \leq \ \epsilon, \label{1step_cond}
\end{equation}
for $\epsilon>0$ sufficiently small. Then there exist commuting projections $P^\pm$ with 
$\lp{P^\pm-Q^\pm}{\mathcal{B}(X)}\lesssim \epsilon$ such that:
\begin{align}
		\lp{\mathcal{F}_n P^- \mathcal{F}_m^{-1}}{\mathcal{B}(X)} \ &\leq \ 
		C'e^{-\gamma'(n-m)} \ , &n&\geq m\geq 0 \ , \\
		\lp{ \mathcal{F}_m P^+ \mathcal{F}_n^{-1}}{\mathcal{B}(X)} \ &\leq \ 
		C'e^{-\gamma'(n-m)} \ , &n&\geq m\geq 0 \ , \\
		\lp{ \mathcal{F}_n (1-P^- -P^+) \mathcal{F}_m^{-1}}{\mathcal{B}(X)} \ &\leq \ 
		C'e^{\gamma'_0|n-m|}  \ , &n&,m \geq 0 \ ,
\end{align}
where $0\leq \gamma_0' , \gamma'$ and 
$|\gamma_0-\gamma_0'|+|\gamma-\gamma'|\lesssim \epsilon$.

\end{prop}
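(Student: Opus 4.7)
The strategy will be a discrete Perron--Lyapunov fixed point construction, in the spirit of Coppel \cite{Coppel}, characterizing the perturbed stable subspace $S^-$ as the set of initial data whose forward $\mathcal{F}$-orbit decays exponentially at a rate $\gamma'$ slightly smaller than $\gamma$; then $S^+$ will be built symmetrically from backward-exponentially-decaying trajectories of $\mathcal{F}^{-1}$, and $S^0$ will be the complement, with $P^0 := I - P^+ - P^-$. The trichotomy hypothesis with $\gamma_0 \ll \gamma$ is precisely what is needed to prevent the center component from spoiling the convergence of the tail sums in the construction.

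Writing $B_n := \mathcal{F}_n \mathcal{F}_{n-1}^{-1} = A_n + E_n$ with $A_n := \mathcal{G}_n \mathcal{G}_{n-1}^{-1}$ and $\|E_n\|_{\mathcal{B}(X)} \le \epsilon$ by \eqref{1step_cond}, a discrete Duhamel expansion turns the orbit equation $x_n = B_n x_{n-1}$ with prescribed stable data $\xi = Q^- x_0$ into the fixed point problem
\[
x_n = \mathcal{G}_n Q^- \xi + \sum_{k=1}^{n} \mathcal{G}_n Q^- \mathcal{G}_k^{-1} E_k x_{k-1} - \sum_{k=n+1}^{\infty} \mathcal{G}_n (Q^+ + Q^0) \mathcal{G}_k^{-1} E_k x_{k-1}
\]
on the weighted space $\ell^\infty_{\gamma'}(X) = \{(x_n) : \sup_n e^{\gamma' n}\|x_n\| < \infty\}$. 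The plan is to pick $\gamma_0 < \gamma' < \gamma$ with both gaps of order $\epsilon$; the kernel bounds \eqref{G_dich1}--\eqref{G_dich3} then show that the two sums define a bounded operator of norm $O(\epsilon/(\gamma - \gamma')) + O(\epsilon/(\gamma' - \gamma_0))$, which for $\epsilon$ sufficiently small is a strict contraction. The unique fixed point $x_n(\xi)$ depends linearly on $\xi$, and the map $\xi \mapsto x_0(\xi) = \xi + O(\epsilon)\xi$ realizes $S^-$ as the graph of a small linear map $\mathrm{Range}(Q^-) \to \mathrm{Range}(Q^+ + Q^0)$.

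The subspace $S^+$ will be produced symmetrically using the backward flow $\mathcal{F}_n^{-1}$, which inherits an analogous trichotomy from $\mathcal{G}_n^{-1}$ with the roles of $Q^\pm$ reversed. Since $S^\pm$ are $O(\epsilon)$ graphs over transverse base subspaces, they will be mutually transverse and also transverse to $\mathrm{Range}(Q^0)$ for $\epsilon$ small, and the associated projections $P^\pm$ together with $P^0 := I - P^+ - P^-$ will be well defined, will commute pairwise, and will satisfy $\|P^\pm - Q^\pm\|_{\mathcal{B}(X)} \lesssim \epsilon$. The decay bounds \eqref{ed1_disc} and \eqref{ed2_disc} will follow from the fixed point estimate together with shift invariance of the construction ($\mathcal{F}$-invariance of $S^\pm$ is a direct consequence of uniqueness of the fixed point under base-point translation). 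For the center estimate \eqref{ed3_disc}, the plan is to apply Duhamel to $\mathcal{F}_n P^0 \mathcal{F}_m^{-1}$, use \eqref{G_dich3} on the unperturbed kernel $\mathcal{G}_n Q^0 \mathcal{G}_m^{-1}$, and absorb the $O(\epsilon)$ correction into the exponent to obtain $\gamma_0' = \gamma_0 + O(\epsilon)$.

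The principal obstacle is the non-hyperbolic center block: unlike a textbook perturbation of an exponential dichotomy, the tail sum over $k > n$ involving $Q^0$ converges only because $\gamma_0 < \gamma'$, and this gap together with $\gamma' < \gamma$ is what forces the linear $O(\epsilon)$ rate loss quoted in the statement. A secondary subtlety will be checking that the graph projections genuinely commute and are not merely approximate idempotents; this will follow from applying the uniqueness in the fixed point problem in both forward and backward time, which will in particular yield $S^- \cap S^+ = \{0\}$ for $\epsilon$ small.
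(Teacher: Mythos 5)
Your proposal is the same basic strategy as the paper's -- a discrete Perron fixed-point construction in the spirit of Coppel -- but the paper organizes the argument with one simplifying device that you do not use, and in two places your sketch is materially underdeveloped.

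The paper never works with a genuine three-block fixed-point problem. Instead it reduces the trichotomy to \emph{two separate dichotomies} by exponential reweighting: setting $\td{\mathcal{G}}_n=e^{\frac12(\gamma+\gamma_0)n}\mathcal{G}_n$ (and the corresponding $\td{\mathcal{F}}_n$) shifts the spectral threshold so that the $Q^-$ block decays forward at rate $\frac12(\gamma-\gamma_0)$ while $Q^++Q^0$, now viewed as a single block, decays backward at the same rate; a symmetric reweighting $\tdd{\mathcal{G}}_n=e^{-\frac12(\gamma+\gamma_0)n}\mathcal{G}_n$ isolates $Q^+$ against $Q^0+Q^-$. Each reweighted problem is then a standard two-block Perron argument. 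This buys two things that your direct three-block fixed-point iteration has to work for. First, the center estimate $\lp{\mathcal{F}_n(1-P^+-P^-)\mathcal{F}_m^{-1}}{\mathcal{B}(X)}\lesssim e^{\gamma_0'|n-m|}$ is obtained \emph{for free} by unwinding the weights in the two dichotomy estimates and composing $\mathcal{F}_n(1-P^+)\mathcal{F}_m^{-1}$ with $\mathcal{F}_m(1-P^-)\mathcal{F}_m^{-1}$; no separate fixed-point or Gronwall argument is needed for the center block. Your plan to ``apply Duhamel to $\mathcal{F}_n P^0\mathcal{F}_m^{-1}$ and absorb the $O(\epsilon)$ correction into the exponent'' has a real gap: because $P^0\neq Q^0$, the Duhamel expansion mixes all three $Q$-kernels, the tail sums over the $Q^0$ block do not contract, and ``absorbing'' the perturbation requires either a two-sided bootstrap or exactly the reweighting trick you are trying to avoid. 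Second, commutativity of $P^\pm$ is not a consequence of $S^-\cap S^+=\{0\}$, as you assert. The paper builds it into the construction via the compatibility conditions $P(1-Q)=Q(1-P)=0$ inherited by each dichotomy step -- which yield $Q^-(1-P^-)=P^-(Q^++Q^0)=0$ and $(1-P^+)Q^+=(Q^0+Q^-)P^+=0$, from which $P^-P^+=0$ follows algebraically -- and then proves $P^+P^-=0$ by matching decay rates. If you instead define $P^\pm$ as the direct-sum projections associated to a decomposition $X=S^+\oplus S^-\oplus S^0$, you must first exhibit the complement $S^0$; note that the paper emphasizes that only $S^-$ and $S^-+S^0$ are uniquely determined by the dynamics, so a symmetric ``forward/backward'' construction of both $S^\pm$ as graphs does not by itself produce a canonical $S^0$ and you must normalize (the paper's choice is $S^++S^0=\mathrm{Range}(1-Q^-)$). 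These are fixable, but as written the center-block bound and the commutativity claim are the two places where your argument would need to be completed.
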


To apply Proposition \ref{Perron_prop} to the proof of 
Theorem \ref{exp_trich_thm_disc} we
let $\mathcal{F}_n=\mathcal{F}(t_n)$ denote the time $t_n$ 
flow of $P(t,x,D)$ and let $X=\dot{H}^1\times L^2$. For each time step
we ``correct'' $\mathcal{F}_n$ so it maps $S^\pm(t_n)$ perfectly into $S^{\pm}(t_{n+1})$.
That is we define $\mathcal{G}_n$ inductively via:
\begin{equation}
		\mathcal{G}_{n+1}\mathcal{G}_{n}^{-1} \ = \ \sum_{a=\pm,0} P^a(t_{n+1}) \mathcal{F}_{n+1}
		\mathcal{F}_{n}^{-1} P^a(t_n) \ , \notag
\end{equation}
and set $Q^\pm = P^\pm(0)$. Using estimate \eqref{S_match} we have \eqref{1step_cond}.
By induction and the fact that 
$\mathcal{F}_{n+1}\mathcal{F}_{n}^{-1}$ induces an exponential trichotomy of the form 
\eqref{ed1'}--\eqref{ed3'} (again thanks to Proposition \ref{loc_sus_prop}) we have the bounds
for all $\xi\in X$:
\begin{align}
		\lp{\mathcal{G}_n Q^- \xi }{X} \ &\leq \ 
		Ce^{-\gamma(n-m)}\lp{\mathcal{G}_m Q^- \xi }{X} 
		 \ , &n&\geq m\geq 0 \ , \label{G_dich1'}\\
		\lp{ \mathcal{G}_m Q^+ \xi }{X} \ &\leq \ 
		Ce^{-\gamma(n-m)} \lp{ \mathcal{G}_n Q^+ \xi }{X}
		\ , &n&\geq m\geq 0 \ ,  \label{G_dich2'}\\\
		\lp{ \mathcal{G}_n (1-Q^- -Q^+) \xi }{X} \ &\leq \ 
		Ce^{\gamma_0 |n-m|}  \lp{ \mathcal{G}_m (1-Q^- -Q^+) \xi }{X} 
		\ , &n&,m \geq 0 \ .  \label{G_dich3'}\
\end{align}
where $\gamma\approx T_*\alpha$ and $\gamma_0=O(1)$ uniform in $T_*$. In addition
by an energy estimate such as \eqref{local-en}, we have the uniform growth rate 
$\lp{\mathcal{G}_n\mathcal{G}_m^{-1}}{\mathcal{B}({X})}\leq Ce^{C|n-m|}$. A simple argument
then shows that the final estimate:
\begin{equation}
		\lp{\mathcal{G}_n {Q}^\pm \mathcal{G}_n^{-1}}{\mathcal{B}(X)} \ \lesssim \ 1 \ , \notag
\end{equation}
follows automatically from \eqref{G_dich1'}--\eqref{G_dich3'} (see Chapter 2 of \cite{Coppel}). Thus we have 
\eqref{G_dich1}--\eqref{G_dich3} which suffices for our application.
It remains to prove Proposition \ref{Perron_prop}.
 
\begin{proof}[Proof of Proposition \ref{Perron_prop}]
By instead considering $\td{\mathcal{G}}_n=e^{\frac{1}{2}(\gamma +\gamma_0) n}\mathcal{G}_n$ and 
$\td{\mathcal{F}}_n=e^{\frac{1}{2}(\gamma +\gamma_0) n}\mathcal{F}_n$ 
to construct $P^-$, and similarly 
$\td{\td{\mathcal{G}}}_n=e^{-\frac{1}{2}(\gamma +\gamma_0) n}\mathcal{G}_n$ and 
$\td{\td{\mathcal{F}}}_n=e^{-\frac{1}{2}(\gamma +\gamma_0) n}\mathcal{F}_n$ 
to construct $P_+$,
we can reduce to the more standard situation
where there exists a single $\gamma>0$ and a single projection $Q$ with:
\begin{align}
		\lp{\mathcal{G}_n Q  \mathcal{G}_m^{-1}}{\mathcal{B}(X)} \ &\leq \ 
		Ce^{-\gamma(n-m)} \ , &n&\geq m\geq 0 \ , \label{G_red1}\\
		\lp{ \mathcal{G}_m (1-Q) \mathcal{G}_n^{-1}}{\mathcal{B}(X)} \ &\leq \ 
		Ce^{-\gamma(n-m)} \ , &n&\geq m\geq 0 \ , \label{G_red2}
\end{align}
in which case we are trying to construct a single projection $P$ 
for the $\mathcal{F}_n$ evolution with the bounds:
\begin{align}
		\lp{\mathcal{F}_n P  \mathcal{F}_m^{-1}}{\mathcal{B}(X)} \ &\leq \ 
		C'e^{- \gamma'(n-m)} \ , &n&\geq m\geq 0 \ , \label{F_red1}\\
		\lp{ \mathcal{F}_m (1-P) \mathcal{F}_n^{-1}}{\mathcal{B}(X)} \ &\leq \ 
		C'e^{- \gamma'(n-m)} \ , &n&\geq m\geq 0 \ , \label{F_red2}
\end{align}
where $\gamma'=\gamma-C\epsilon$. To both uniquely determine $P$
and insure that the two constructions are compatible, we will further require that 
$Range(1-P) = Range(1-Q)$, or equivalently that 
\begin{equation}\label{pq-comp}
P(1-Q) = Q(1-P) = 0.
\end{equation} 
The argument is now essentially that of Chapter 4 in \cite{Coppel}. 
We postpone that for a moment in order to show how the two steps mix
together.

The first step produces the projector $P^-$, with the additional property that 
\[
Q^-(1-P^-) = P^- (Q^++Q^0) = 0.
\]
 The second step produces the projector 
$P^+$, with the additional property that 
\[
(1-P^+)Q^+ = (Q^0+Q^-) P^+ = 0.
\]
These two relations imply that
\[
P^- P^+ = P^-(Q^++ Q^0+Q^-)  P^+ = 0
\]
On the other hand, matching the decay properties of solutions we see
that $Range(P^-) \subset Range(1-P^+)$, i.e. $P^+ P^- = 0$. This shows
that the projectors ${P}^+$ and ${P}^-$ are commuting.

We now return to the construction of $P$ as in \eqref{F_red1},
\eqref{F_red2} given $Q$ as in \eqref{G_red1}, \eqref{G_red2}.  First,
we write the evolution law for $\mathcal{G}_n$ as a discrete
non-autonomous ODE.  Setting $\Delta
\mathcal{G}_n=\mathcal{G}_{n+1}-\mathcal{G}_n$ we have:
\begin{equation}
		\Delta \mathcal{G}_n \ = \ \mathcal{A}(n)\mathcal{G}_n \ , \qquad
		\hbox{where\ \ } \mathcal{A}(n) \ = \ (\mathcal{G}_{n+1}\mathcal{G}_n^{-1}-I) \ . \label{G_homog}
\end{equation}
On the other hand one can consider the inhomogeneous equation:
\begin{equation}
		\Delta \mathcal{F}_n \ = \ \mathcal{A}(n)\mathcal{F}_n+\mathcal{K}_n \ , \label{G_inhomog}
\end{equation}
which can be 
solved via Duhamel's principle with initial data $M\in\mathcal{B}(X)$ via:
\begin{equation}
		\mathcal{F}_n \ = \ \mathcal{G}_n M + \sum_{j=0}^n \mathcal{G}_n\mathcal{G}_{j}^{-1}
		\mathcal{K}_{j-1} \ , \qquad \hbox{where \ \ }\mathcal{K}_{-1}=0 \ . \notag
\end{equation}
Choosing $M=M_0-\sum_{j=0}^\infty (I-Q)\mathcal{G}_j^{-1}\mathcal{K}_{j-1}$ and assuming the sum
converges we can also write the previous line as:
\begin{equation}
		\mathcal{F}_n \ = \ \mathcal{G}_n M_0 + \sum_{j=0}^n \mathcal{G}_nQ\mathcal{G}_{j}^{-1}
		\mathcal{K}_{j-1} 
		- \sum_{j=n+1}^\infty \mathcal{G}_n(I-Q)\mathcal{G}_{j}^{-1}
		\mathcal{K}_{j-1}
		\ , \qquad \hbox{where \ \ }\mathcal{K}_{-1}=0 \ . \notag
\end{equation}
Now the equation for $\mathcal{F}_n$ is of the form \eqref{G_inhomog}
where $\mathcal{K}_n=\mathcal{B}(n)\mathcal{F}_n$ with
$\mathcal{B}(n)=\mathcal{F}_{{n+1}}\mathcal{F}_{{n}}^{-1}- \mathcal{G}_{{n+1}}
\mathcal{G}_{{n}}^{-1}$ producing a small perturbation of the dynamics
governed by $\mathcal{A}{(n)}$. To construct the projection $P$
associated to the stable evolution for $\mathcal{F}_n$ we solve the fixed
point equation $\mathcal{J}_n = I(\mathcal{J}_n)$ where:
\begin{equation}
		I(\mathcal{J}_n) \ = \ \mathcal{G}_nQ + \sum_{j=0}^n \mathcal{G}_nQ\mathcal{G}_{j}^{-1}
		\mathcal{B}(j-1) \mathcal{J}_{j-1}
		- \sum_{j=n+1}^\infty \mathcal{G}_n(I-Q)\mathcal{G}_{j}^{-1}
		\mathcal{B}(j-1) \mathcal{J}_{j-1}
		\ , \qquad \hbox{where \ \ }\mathcal{B}_{-1}=0 \ . \notag
\end{equation}
By the smallness of $\mathcal{B}(n)$ and the estimates
\eqref{G_red1}--\eqref{G_red2}, the map $\mathcal{J}_n\mapsto
I(\mathcal{J}_n)$ is a contraction on the space
$\llp{\mathcal{J}_n}{}=\sup_{n\geq 0}\lp{e^{ \gamma'
    n}\mathcal{J}_n}{\mathcal{B}(X)}$ and therefore has a unique fixed
point which we also denote by $\mathcal{J}_n$. Since this solves
the homogeneous evolution equation for $\mathcal{F}_n$ we have
$\mathcal{J}_n=\mathcal{F}_n P$ where $P=\mathcal{J}_0$. It remains to
show $P$ is a projection which satisfies the bounds \eqref{F_red1} and
\eqref{F_red2} as well as the relation \eqref{pq-comp}. 

For the latter we observe that applying the fixed point formula for $n=0$
yields $P = Q +(1-Q) R$ where $\|R\|_{\mathcal B(X)} \lesssim \epsilon$.
Thus $1-P = (1-Q)(1-R)$, and \eqref{pq-comp} easily follows.

Multiplying the fixed point equation $\mathcal{F}_mP=I(\mathcal{F}_mP)$ by 
$\mathcal{G}_nQ\mathcal{G}_m^{-1}$ we get the auxiliary formula:
\begin{equation}
		\mathcal{G}_nQ\mathcal{G}_m^{-1}\mathcal{F}_mP \ = \ 
		\mathcal{G}_nQ+ \sum_{j=0}^m \mathcal{G}_mQ\mathcal{G}_{j}^{-1}
		\mathcal{B}(j-1) {\mathcal{F}_{j-1}P} \ . \label{aux_FP_iden}
\end{equation} 
Setting $n=m=0$ in this last line yields $QP=Q$. This implies that the fixed point formula
also yields the identity $ \mathcal{F}_mP^2=I(\mathcal{F}_mP^2)$, so by uniqueness
of the fixed point and evaluation at $n=0$ we must have $P^2=P$. 

Next, we can subtract \eqref{aux_FP_iden} from the fixed point equation 
$\mathcal{F}_nP=I(\mathcal{F}_nP)$ for $n> m$ which gives:
\begin{equation}
		\mathcal{F}_nP \ = \ \mathcal{G}_nQ\mathcal{G}_m^{-1}\mathcal{F}_mP
		 + \sum_{j=m+1}^n \mathcal{G}_nQ\mathcal{G}_{j}^{-1}
		\mathcal{B}(j-1)\mathcal{F}_{j-1}P 
		 - \sum_{j=n+1}^\infty \mathcal{G}_n(I-Q)\mathcal{G}_{j}^{-1}
		\mathcal{B}(j-1) \mathcal{F}_{j-1}P
		\ . \notag
\end{equation} 
Then applying {\eqref{1step_cond},} \eqref{G_red1}--\eqref{G_red2}
and some simple estimates {(see \cite[Chapter 4]{Coppel})}, we get
for all $\xi\in X$:
\begin{equation}
		\lp{\mathcal{F}_n P  \xi}{X} \ \lesssim \ 
		e^{- \gamma'(n-m)}\lp{\mathcal{F}_m P  \xi}{X}  
		\ , \qquad n\geq m\geq 0 \ . \notag
\end{equation}

Finally, by performing  algebraic similar to those above we  arrive at the identity
 (again for $n>m$):
 \begin{equation}
\begin{split}
		\mathcal{F}_m(I-P)  = & \   \mathcal{G}_m(I-Q)\mathcal{G}_n^{-1}\mathcal{F}_n(I-P)
		 + \sum_{j=0}^m \mathcal{G}_mQ\mathcal{G}_{j}^{-1}
		\mathcal{B}(j-1)\mathcal{F}_{j-1}(I-P) 
		\\ & \ - \sum_{j=m+1}^n \mathcal{G}_m(I-Q)\mathcal{G}_{j}^{-1}
		\mathcal{B}(j-1) \mathcal{F}_{j-1}(I-P)
		, \notag
\end{split}
\end{equation} 
which suffices to estimate:
\begin{equation}
		\lp{\mathcal{F}_m (I-P)  \xi}{X} \ \lesssim \ 
		e^{-\gamma'(n-m)}\lp{\mathcal{F}_n (I-P)  \xi}{X}  
		\ , \qquad n\geq m\geq 0 \ . \notag
\end{equation}
\end{proof}


\bibliography{scalar}

\begin{thebibliography}{10}

\bibitem{Alinhac}
Serge Alinhac.
\newblock On the {M}orawetz--{K}eel-{S}mith-{S}ogge inequality for the wave
  equation on a curved background.
\newblock {\em Publ. Res. Inst. Math. Sci.}, 42(3):705--720, 2006.

\bibitem{B_indef}
J{\'a}nos Bogn{\'a}r.
\newblock {\em Indefinite inner product spaces}.
\newblock Springer-Verlag, New York-Heidelberg, 1974.
\newblock Ergebnisse der Mathematik und ihrer Grenzgebiete, Band 78.

\bibitem{BT}
Jean-Marc Bouclet and Nikolay Tzvetkov.
\newblock On global {S}trichartz estimates for non-trapping metrics.
\newblock {\em J. Funct. Anal.}, 254(6):1661--1682, 2008.

\bibitem{bpstz}
Nicolas Burq, Fabrice Planchon, John~G. Stalker, and A.~Shadi Tahvildar-Zadeh.
\newblock Strichartz estimates for the wave and {S}chr\"odinger equations with
  potentials of critical decay.
\newblock {\em Indiana Univ. Math. J.}, 53(6):1665--1680, 2004.

\bibitem{Coppel}
W.~A. Coppel.
\newblock {\em Dichotomies in stability theory}.
\newblock Lecture Notes in Mathematics, Vol. 629. Springer-Verlag, Berlin-New
  York, 1978.

\bibitem{Doi}
Shin-ichi Doi.
\newblock Remarks on the {C}auchy problem for {S}chr\"odinger-type equations.
\newblock {\em Comm. Partial Differential Equations}, 21(1-2):163--178, 1996.

\bibitem{ggh}
Vladimir Georgescu, Christian G\'erard, and Dietrich H\"afner.
\newblock Resolvent and propagation estimates for {K}lein-{G}ordon equations
  with non-positive energy.
\newblock {\em J. Spectr. Theory}, 5(1):113--192, 2015.

\bibitem{g}
Christian G\'erard.
\newblock Scattering theory for {K}lein-{G}ordon equations with non-positive
  energy.
\newblock {\em Ann. Henri Poincar\'e}, 13(4):883--941, 2012.

\bibitem{hy}
Kunio Hidano and Kazuyoshi Yokoyama.
\newblock A remark on the almost global existence theorems of {K}eel, {S}mith
  and {S}ogge.
\newblock {\em Funkcial. Ekvac.}, 48(1):1--34, 2005.

\bibitem{KSS}
Markus Keel, Hart~F. Smith, and Christopher~D. Sogge.
\newblock Almost global existence for some semilinear wave equations.
\newblock {\em J. Anal. Math.}, 87:265--279, 2002.
\newblock Dedicated to the memory of Thomas H. Wolff.

\bibitem{kpv}
Carlos~E. Kenig, Gustavo Ponce, and Luis Vega.
\newblock On the {Z}akharov and {Z}akharov-{S}chulman systems.
\newblock {\em J. Funct. Anal.}, 127(1):204--234, 1995.

\bibitem{KT-ucp}
Herbert Koch and Daniel Tataru.
\newblock Carleman estimates and unique continuation for second-order elliptic
  equations with nonsmooth coefficients.
\newblock {\em Comm. Pure Appl. Math.}, 54(3):339--360, 2001.

\bibitem{KT}
Herbert Koch and Daniel Tataru.
\newblock Carleman estimates and absence of embedded eigenvalues.
\newblock {\em Comm. Math. Phys.}, 267(2):419--449, 2006.

\bibitem{L_gen}
Heinz Langer.
\newblock Spectral functions of definitizable operators in {K}re\u\i n spaces.
\newblock In {\em Functional analysis ({D}ubrovnik, 1981)}, volume 948 of {\em
  Lecture Notes in Math.}, pages 1--46. Springer, Berlin-New York, 1982.

\bibitem{L_KG}
Heinz Langer, Branko Najman, and Christiane Tretter.
\newblock Spectral theory of the {K}lein-{G}ordon equation in {P}ontryagin
  spaces.
\newblock {\em Comm. Math. Phys.}, 267(1):159--180, 2006.

\bibitem{MMT}
Jeremy Marzuola, Jason Metcalfe, and Daniel Tataru.
\newblock Strichartz estimates and local smoothing estimates for asymptotically
  flat {S}chr\"odinger equations.
\newblock {\em J. Funct. Anal.}, 255(6):1497--1553, 2008.

\bibitem{MetSo}
Jason Metcalfe and Christopher~D. Sogge.
\newblock Long-time existence of quasilinear wave equations exterior to
  star-shaped obstacles via energy methods.
\newblock {\em SIAM J. Math. Anal.}, 38(1):188--209, 2006.

\bibitem{MetSo2}
Jason Metcalfe and Christopher~D. Sogge.
\newblock Global existence of null-form wave equations in exterior domains.
\newblock {\em Math. Z.}, 256(3):521--549, 2007.

\bibitem{MT}
Jason Metcalfe and Daniel Tataru.
\newblock Global parametrices and dispersive estimates for variable coefficient
  wave equations.
\newblock {\em Math. Ann.}, 353(4):1183--1237, 2012.

\bibitem{MTT}
Jason Metcalfe, Daniel Tataru, and Mihai Tohaneanu.
\newblock Price's law on nonstationary space-times.
\newblock {\em Adv. Math.}, 230(3):995--1028, 2012.

\bibitem{morawetz1}
Cathleen~S. Morawetz.
\newblock Exponential decay of solutions of the wave equation.
\newblock {\em Comm. Pure Appl. Math.}, 19:439--444, 1966.

\bibitem{morawetz2}
Cathleen~S. Morawetz.
\newblock Time decay for the nonlinear {K}lein-{G}ordon equations.
\newblock {\em Proc. Roy. Soc. Ser. A}, 306:291--296, 1968.

\bibitem{morawetz3}
Cathleen~S. Morawetz.
\newblock Decay for solutions of the exterior problem for the wave equation.
\newblock {\em Comm. Pure Appl. Math.}, 28:229--264, 1975.

\bibitem{mrs}
Cathleen~S. Morawetz, James~V. Ralston, and Walter~A. Strauss.
\newblock Decay of solutions of the wave equation outside nontrapping
  obstacles.
\newblock {\em Comm. Pure Appl. Math.}, 30(4):447--508, 1977.

\bibitem{smithsogge}
Hart~F. Smith and Christopher~D. Sogge.
\newblock Global {S}trichartz estimates for nontrapping perturbations of the
  {L}aplacian.
\newblock {\em Comm. Partial Differential Equations}, 25(11-12):2171--2183,
  2000.

\bibitem{ST}
Gigliola Staffilani and Daniel Tataru.
\newblock Strichartz estimates for a {S}chr\"odinger operator with nonsmooth
  coefficients.
\newblock {\em Comm. Partial Differential Equations}, 27(7-8):1337--1372, 2002.

\bibitem{sterb}
Jacob Sterbenz.
\newblock Angular regularity and {S}trichartz estimates for the wave equation.
\newblock {\em Int. Math. Res. Not.}, (4):187--231, 2005.
\newblock With an appendix by Igor Rodnianski.

\bibitem{Strauss}
Walter~A. Strauss.
\newblock Dispersal of waves vanishing on the boundary of an exterior domain.
\newblock {\em Comm. Pure Appl. Math.}, 28:265--278, 1975.

\bibitem{T}
Daniel Tataru.
\newblock Local decay of waves on asymptotically flat stationary space-times.
\newblock {\em Amer. J. Math.}, 135(2):361--401, 2013.

\end{thebibliography}

\end{document}